\documentclass[reqno,10pt,centertags]{amsart}
\usepackage{amsmath,amsthm,amscd,amssymb,latexsym,upref,enumerate,stmaryrd}
\usepackage{amsfonts,mathrsfs}
\usepackage{esint}
\usepackage{color}
\usepackage{hyperref} 




\newcommand{\R}{{\bbR}}

\newcommand{\bbC}{{\mathbb{C}}}

\newcommand{\bbN}{{\mathbb{N}}}

\newcommand{\bbR}{{\mathbb{R}}}

\newcommand{\cB}{{\mathcal B}}
\newcommand{\cC}{{\mathcal C}}
\newcommand{\cD}{{\mathcal D}}
\newcommand{\cE}{{\mathcal E}}

\newcommand{\cH}{{\mathcal H}}

\newcommand{\cK}{{\mathcal K}}

\newcommand{\cM}{{\mathcal M}}

\newcommand{\cR}{{\mathcal R}}
\newcommand{\cS}{{\mathcal S}}

\newcommand{\cV}{{\mathcal V}}
\newcommand{\cW}{{\mathcal W}}
\newcommand{\cX}{{\mathcal X}}

\newcommand{\gR}{\mathfrak R}

\newcommand{\ga}{\mathfrak a}
\newcommand{\gb}{\mathfrak b}

\newcommand{\gq}{\mathfrak q} 

\newcommand{\B}{\vec{B}}

\newcommand{\Q}{\widetilde{Q}}

\newcommand{\uu}{\overline{u}}


\newcommand{\e}{{\varepsilon}}

\newcommand{\U}{\Upsilon}
\newcommand{\Ut}{\widetilde{\Upsilon}}

\newcommand{\dott}{\,\cdot\,}

\newcommand{\eps}{\varepsilon}

\newcommand{\Sect}{\text{\rm Sect}}


\DeclareMathOperator{\diam}{diam}

\DeclareMathOperator{\dist}{dist}
\DeclareMathOperator{\supp}{supp}

\DeclareMathOperator{\ran}{ran}
\DeclareMathOperator{\dom}{dom}

\DeclareMathOperator{\tr}{tr}

\renewcommand{\Re}{\text{\rm Re}}
\renewcommand{\Im}{\text{\rm Im}}
\renewcommand{\ln}{\text{\rm ln}}

\newcommand{\beq}{\begin{equation}}
\newcommand{\enq}{\end{equation}}

\newcommand{\no}{\notag}
\newcommand{\lb}{\label}
\newcommand{\f}{\frac}

\newcommand{\ol}{\overline}

\newcommand{\wti}{\widetilde}
\newcommand{\Oh}{O}

\newcommand{\hatt}{\widehat}
\newcommand{\bi}{\bibitem}
\newcommand{\vp}{\varphi}
\newcommand{\vpt}{\tilde{\varphi}}

\renewcommand{\ge}{\geqslant}
\renewcommand{\le}{\leqslant}

\let\geq\geqslant
\let\leq\leqslant

\newcommand{\dv}{\operatorname{div}}





\makeatletter
\def\theequation{\@arabic\c@equation}

\newcommand*{\mailto}[1]{\href{mailto:#1}{\nolinkurl{#1}}}
\newcommand{\arxiv}[1]{\href{http://arxiv.org/abs/#1}{arXiv:#1}}


\allowdisplaybreaks 
\numberwithin{equation}{section}

\newtheorem{theorem}{Theorem}[section]

\newtheorem{lemma}[theorem]{Lemma}
\newtheorem{corollary}[theorem]{Corollary}

\newtheorem{hypothesis}[theorem]{Hypothesis}

\theoremstyle{remark}
\newtheorem{remark}[theorem]{Remark}


\begin{document}

\title[Stability of Square Root Domains]{On Stability of Square Root Domains for Non-Self-Adjoint Operators Under Additive Perturbations}

\author[F.\ Gesztesy]{Fritz Gesztesy}
\address{Department of Mathematics,
University of Missouri, Columbia, MO 65211, USA}
\email{\mailto{gesztesyf@missouri.edu}}
\urladdr{\url{http://www.math.missouri.edu/personnel/faculty/gesztesyf.html}}

\author[S.\ Hofmann]{Steve Hofmann}
\address{Department of Mathematics,
University of Missouri, Columbia, MO 65211, USA}
\email{\mailto{hofmanns@missouri.edu}}
\urladdr{\url{http://www.math.missouri.edu/$\sim$hofmann/}}

\author[R.\ Nichols]{Roger Nichols}
\address{Mathematics Department, The University of Tennessee at Chattanooga, 
415 EMCS Building, Dept. 6956, 615 McCallie Ave, Chattanooga, TN 37403, USA}
\email{\mailto{Roger-Nichols@utc.edu}}
\urladdr{\url{http://www.utc.edu/faculty/roger-nichols/index.php}}

\thanks{To appear in {\it Mathematika.}}
\date{\today}
\subjclass[2010]{Primary 35J10, 35J25, 47A07, 47A55; Secondary 47B44, 47D07, 47F05.}
\keywords{Square root domains, Kato problem, additive perturbations, uniformly elliptic 
second-order differential operators.}

\begin{abstract} 
Assuming $T_0$ to be an m-accretive operator in the complex Hilbert space $\cH$, 
we use a resolvent method due to Kato to appropriately define 
the additive perturbation $T = T_0 + W$ and prove 
stability of square root domains, that is, 
$$
\dom\big((T_0 + W)^{1/2}\big) = \dom\big(T_0^{1/2}\big). 
$$
Moreover, assuming in addition that $\dom\big(T_0^{1/2}\big) = \dom\big((T_0^*)^{1/2}\big)$, 
we prove stability of square root domains in the form 
$$
\dom\big((T_0 + W)^{1/2}\big) = \dom\big(T_0^{1/2}\big) = \dom\big((T_0^*)^{1/2}\big) 
= \dom\big(((T_0 + W)^*)^{1/2}\big), 
$$
which is most suitable for PDE applications. 

We apply this approach to elliptic second-order partial differential operators of the 
form 
$$
- {\rm div}(a\nabla \, \cdot \,) + \big(\B_1\cdot \nabla \cdot \big) + \dv \big(\B_2 \cdot \big) + V    
$$
in $L^2(\Omega)$ on certain open sets $\Omega \subseteq \bbR^n$, $n \in \bbN$, with 
Dirichlet, Neumann, and mixed boundary conditions on $\partial \Omega$, under general 
hypotheses on the (typically, nonsmooth, unbounded) coefficients and on $\partial\Omega$. 
\end{abstract}

\maketitle

\section{Introduction}  \lb{s1}

The principal aim of this paper is to discuss an abstract approach to the problem of stability 
of square root domains of a class of non-self-adjoint operators with respect to additive 
perturbations, and to apply it to the concrete case of uniformly elliptic second-order partial 
differential operators of the form 
\begin{equation} 
- {\rm div}(a\nabla \, \cdot \,) + V   \lb{1.0} 
\end{equation} 
in $L^2(\Omega)$ on certain open sets $\Omega \subseteq \bbR^n$, $n \in \bbN$, assuming 
$a \in L^\infty (\Omega)$, under  
various boundary conditions on $\partial\Omega$, and under fairly general conditions on the 
(typically, nonsmooth, unbounded) coefficients $\B_j$, $j=1,2$, and $V$. In addition, we discuss in 
great detail the operator 
\begin{equation} 
- {\rm div}(a\nabla \, \cdot \,) + \big(\B_1\cdot \nabla \cdot \big) + \dv \big(\B_2 \cdot \big) + V   \lb{1.0A} 
\end{equation} 
in $L^2(\bbR^n)$, $n\in\bbN$, $n\geq 2$, with optimal $L^p$-conditions on $a$, $\B_j$, $j=1,2$, 
and $V$ of the type 
\begin{align}
a \in L^\infty (\bbR^n)^{n \times n}, \quad 
\B_1,\B_2 \in L^n(\bbR^n)^n + L^\infty(\bbR^n)^n, \quad V\in L^{n/2}(\bbR^n) + L^\infty(\bbR^n). 
\end{align}
   
In a nutshell, we are interested in the solution of the following abstract problem: if $T_0$ is an  
appropriate non-self-adjoint operator, for which non-self-adjoint additive perturbations $W$ of 
$T_0$ can one conclude that  
\begin{equation}
\dom\big((T_0 + W)^{1/2}\big) = \dom\big(T_0^{1/2}\big),    \lb{1.0a}
\end{equation}
and hence conclude stability of square root domains under additive perturbations? 
More precisely, driven by applications to PDEs, we are particularly interested in the following variant of 
this stability problem for square root domains: if $T_0$ is an  
appropriate non-self-adjoint operator for which it is known that 
\begin{equation}
\dom\big(T_0^{1/2}\big) = \dom\big((T_0^*)^{1/2}\big)    \lb{1.1} 
\end{equation}
is valid, for which non-self-adjoint additive perturbations $W$ of $T_0$ can one conclude that also 
\begin{equation}
\dom\big((T_0 + W)^{1/2}\big) = \dom\big(T_0^{1/2}\big) = \dom\big((T_0^*)^{1/2}\big) 
= \dom\big(((T_0 + W)^*)^{1/2}\big).     \lb{1.2}
\end{equation}
Of course, in 
answering such a question one first needs to be able to define the square root of a 
non-self-adjoint operator and it also necessitates a detailed discussion of how to interpret the sum of 
$T_0$ and $W$. We postpone a more detailed account of these issues for now, and instead describe 
a bit of the motivation behind posing such a problem (see also Remark \ref{r2.13} which provides 
additional motivations in terms of trace formulas and Fredholm determinants). 

First of all, when discussing variational approaches to PDE (eigenvalue) problems, formulations in 
terms of quadratic (resp., sesquilinear) forms have become a standard tool and hence the quadratic 
form of a non-self-adjoint operator, say, $T_0$,  in a complex, separable Hilbert space $\cH$, naturally 
enters the scene. Assuming the non-self-adjoint operator $T_0$ to be m-accretive (cf.\ \eqref{2.7A})  
so its fractional powers can be defined in a standard manner, such a sesquilinear form $\gq_{T_0}^{}$, associated with $T_0$, should be of the type 
\begin{equation}
\gq_{T_0}^{}(f,g) = \big((T_0^*)^{1/2} f, T_0^{1/2} g\big)_{\cH} \, \text{ for } \,  f, g \in \dom(\gq_{T_0}^{}), 
\lb{1.3} 
\end{equation}
such that one can expect to have 
\begin{equation}
\gq_{T_0}^{}(f,g) = (f, T_0 g)_{\cH} \, \text{ for } \,   f \in \dom(\gq_{T_0}^{}) \,
\text{ and } \, g \in \dom(T_0).      \lb{1.4} 
\end{equation}
However, this instantly begs the question of what to consider for $\dom(\gq_{T_0}^{})$? Given \eqref{1.3}, 
it would be natural to hope that the common square root domain in \eqref{1.1} can serve for 
$\dom(\gq_{T_0}^{})$, that is, one would hope for
\begin{equation}
\dom(\gq_{T_0}^{}) = \dom\big(T_0^{1/2}\big) = \dom\big((T_0^*)^{1/2}\big),     \lb{1.5}
\end{equation}
and hence it is natural to hope for the same in connection with its analog for the perturbed situation 
with $T_0$ replaced by $T_0 + V$, see \eqref{1.2}.

As it turned out, for an m-accretive operator $T_0$, Kato \cite{Ka61} indeed proved in 1961 that 
\begin{equation}
\dom\big(T_0^{\alpha}\big) = \dom\big((T_0^*)^{\alpha}\big), \quad \alpha \in (0,1/2),    \lb{1.6} 
\end{equation}
and he also showed that equality in \eqref{1.6} is violated in general if $\alpha > 1/2$, but the 
important case $\alpha = 1/2$, and hence precisely what is needed in connection with sesquilinear 
forms, was left open at that point. Within a year, a counter example (the operator associated with 
$d/dx$ on $W^{1,2}_0((0,\infty))$ in $L^2((0,\infty))$) to relation \eqref{1.1} was provided by 
Lions \cite{Li62} in 1962 (see also \cite{Mc70} for additional material on such counter examples). 
Moreover, a counter example invoking a sectorial operator (cf.\ the 
paragraph following \eqref{2.7A}) was provided by McIntosh \cite{Mc72} in 1972. In particular, 
as a consequence of additional results by Kato \cite{Ka62}, this implies that also the form domain 
and the square root domain differed from each other in McIntosh's counter example. 

Digressing a bit, we note that while these negative results ruled out the validity of \eqref{1.2} 
for m-sectorial operators in 
general, they were not the end of the story. Quite to the contrary, and due to a great extent as 
a consequence of continuing efforts by McIntosh over the following decades, the emphasis shifted 
to concrete PDE situations for which an affirmative answer to \eqref{1.1} was feasible, leading to the 
notion of the Kato square root problem for elliptic differential operators. In this 
context we refer to \cite{CMM82}, \cite{Fu67}, \cite{KM85}, \cite{Mc82}, \cite{Mc85}, 
\cite{Mc90}, \cite{Mi91}, where second-order elliptic partial differential operators of the type 
\begin{equation}
-\sum_{1\leq j,k\leq n} \partial_j a_{j,k} (x) \partial_k + \sum_{1 \leq j \leq n} b_j (x) \partial_j  
+ \sum_{1 \leq j \leq n} \partial_j (c_j (x) \, \cdot \,) + d(x),      \lb{1.7} 
\end{equation}
are discussed in $L^2(\Omega)$, $\Omega \subseteq \bbR^n$ open, under mild 
regularity assumptions on $\partial \Omega$ and the coefficients $a_{j,k}$, $b_j$, $c_j$, and $d$ 
(or certain symmetry hypotheses on these coefficients). The problem continued its great attraction 
throughout the 1990s as is evidenced by the efforts in \cite{Al91}, \cite{AO12}, 
\cite{AMN97}, \cite{AT91}, \cite{AT92}, \cite{AT96},  
\cite{ER97}, \cite{Gu92}, \cite{Jo91}, \cite{Ya84}, \cite{Ya87}, culminating in the treatise 
by Auscher and Tchamitchian \cite{AT98} (see also \cite{AT01}). The final breakthrough and 
the complete solution of Kato's square root problem for elliptic operators of the type 
$- {\rm div}(a\nabla \, \cdot \,)$ on $\bbR^n$ with $L^\infty$-coefficients 
$a_{j,k}$, that is, without any smoothness hypotheses on $a_{j,k}$, occurred in 2001 in papers 
by Auscher, Hofmann, Lacey, Lewis, McIntosh, and Tchamitchian \cite{AHLLMT01}, 
\cite{AHLMT02}, \cite{AHLT01}, \cite{AHMT01}, \cite{HLM02} (see also \cite{Au02}, \cite{Ho01}, 
\cite{Ho01a}). For subsequent developments, including higher-order operators, operators on 
Lipschitz domains $\Omega \subset \bbR^n$, $L^p$-estimates, 
and mixed boundary conditions, leading up to recent work in this area, we also refer to 
\cite{Au04}, \cite{Au07}, \cite{AAM10}, \cite{AAM10a}, \cite{ABHDR12}, \cite{AT03}, \cite{AKM06}, \cite{BtEM12}, \cite{CUR12}, \cite{HM03}, \cite{HMP08}, \cite{Mo12}, \cite[Ch.\ 8]{Ou05}, 
\cite[Sect.\ 2.8, Ch.\ 16]{Ya10}.  

Returning to the principal topic of this paper, we will now assume $T_0$ to be an m-accretive 
operator from the outset and describe a class of 
non-self-adjoint perturbations $W$ of $T_0$ such that $T_0 + W$ is closed and densely defined 
(thereby giving precise meaning to the ``sum'' of $T_0$ and $W$) and such that the stability of 
square root domains \eqref{1.0a} holds. To define the sum $T$ of $T_0$ and $W$, we use 
a factorization approach $W = B^* A$ 
(permitting a two-Hilbert space approach, where $A$, $B$ act between possibly different Hilbert 
spaces $\cH$ and $\cK$) and follow a device due to Kato \cite{Ka66}, which introduces the resolvent 
of $T$ rather than $T$ itself in the form  
\begin{align}\lb{1.8}
\begin{split} 
&(T-zI_{\cH})^{-1} = (T_0-zI_{\cH})^{-1}    \\
& \quad -\overline{(T_0-zI_{\cH})^{-1}B^*}
\big[I_{\cK} - \ol{A (T -z I_{\cH})^{-1} B^*}\big]^{-1}A(T_0-zI_{\cH})^{-1}.     
\end{split}
\end{align}
In particular, under natural hypotheses on $A$ and $B$, $T$ defined in terms of \eqref{1.8}, 
represents a closed and densely defined extension of the operator (or form) sum $T_0 + B^* A$ 
(cf.\ Section \ref{s2} and Appendix \ref{sA} for details). Given $T$, and under the assumption 
that $T$ is m-accretive, we then employ the standard representation of square roots of 
m-accretive operators, (cf., e.g., \cite[Sect.\ V.3.11]{Ka80})
\begin{equation}\lb{1.9}
(T+EI_{\cH})^{-1/2}=\frac{1}{\pi}\int_0^{\infty}d\lambda \, \lambda^{-1/2}(T+(E+\lambda)I_{\cH})^{-1}, 
\quad E>0, 
\end{equation}
and combine it with the resolvent equation \eqref{1.8} for $T$ in terms of the resolvent of $T_0$. 
Using the analog of \eqref{1.9} with $T$ replaced by $T_0$, and under appropriate integrability 
conditions on 
\begin{equation} 
\lambda^{-1} \big\|A(T_0+(\lambda+E)I_{\cH})^{-1/2}\big\|_{\cB(\cH,\cK)} 
\big\|\overline{(T_0+(\lambda+E)I_{\cH})^{-1/2}B^{\ast}}\big\|_{\cB(\cK,\cH)},     \lb{1.10} 
\end{equation} 
with respect to $\lambda$ near $+\infty$, we reduce $(T+EI_{\cH})^{-1/2}$ to a 
multiplicative perturbation of $(T_0+EI_{\cH})^{-1/2}$ (cf.\ \eqref{2.32}), which enables us to prove 
\eqref{1.0a} (and subsequently, \eqref{1.2} given \eqref{1.1}). 

In this context we emphasize that additive perturbation problems have of course been studied in the 
literature. We refer, for instance, to \cite{AT98}, \cite{Mc82}, \cite{Mc85}, \cite{Mc90}, and \cite{Ya87}, 
where applications to second-order uniformly elliptic differential operators, typically, with bounded 
coefficients, have been treated. Moreover, the case of three-dimensional Schr\"odinger type 
operators with (strongly singular) Rollnik potentials was studied in \cite{Ya87}. (In addition, we note that multiplicative perturbations were also discussed in 
\cite{AMN97a}.) Our principal objective in this paper was to develop an abstract perturbation approach 
to this circle of ideas that is sufficiently general to comprise applications to PDE operators of the type   
\eqref{1.0} under very general conditions on the nonsmooth coefficients $a$ and $V$. 

Next, we briefly turn to a description of the content of each section: Section \ref{s2} contains our 
abstract approach to the stability of square root domains \eqref{1.2}. We describe Kato's additive 
perturbation approach defining the resolvent of $T$ in terms of $T_0$ and $W = B^* A$, and prove 
our principal stability results, Theorem \ref{t2.4}, Corollary \ref{c2.5a}, and a variation of it, 
Theorem \ref{t2.12}, which 
permits a straightforward application to PDE situations in our subsequent Section \ref{s3}. In it we 
consider various $L^2$-realizations of $- {\rm div}(a\nabla \, \cdot \,) $ and 
its additive perturbation, an operator of multiplication by the function $V$, in the Hilbert space 
$L^2(\Omega; d^n x)$, $n \in \bbN$. We explicitly treat Dirichlet, Neumann, and mixed boundary 
conditions on $\partial \Omega$, under varying assumptions on $\Omega$ (from just an open 
subset of $\bbR^n$ in the Dirichlet case to $\Omega \subseteq \bbR^n$ a strongly Lipschitz 
domain in the case of Neumann or mixed boundary conditions), see Theorem \ref{t3.12}. 
Particular emphasis is also given to the special case $\Omega = \bbR^n$ (cf.\ Theorem \ref{t3.9}). 
Our conditions on $V$ are of the type $V \in L^p(\Omega) + L^\infty(\Omega)$ with $p > n/2$.  The 
case of critical (i.e., optimal) $L^p$-conditions is the prime objective in Section \ref{s4} which 
is devoted to a proof of the fact that the square root domain associated with 
\begin{equation}\label{1.11}
- {\rm div}(a\nabla \, \cdot \,)  + \big(\B_1 \cdot \nabla \cdot \big) + \dv \big(\B_2 \cdot \big) + V,
\end{equation}
in $L^2(\bbR^n)$ under the conditions 
\begin{equation} 
a \in L^\infty(\bbR^n)^{n \times n}, \quad 
\B_1,\B_2 \in L^n(\bbR^n)^n + L^\infty(\bbR^n)^n, \quad V\in L^{n/2}(\bbR^n) + L^\infty(\bbR^n),    
\lb{1.12} 
\end{equation} 
is given by the standard Sobolev space $W^{1,2}(\bbR^n)$ for $n \geq 3$ (the case $n=2$ is 
more subtle, see Remark \ref{r4.6}\,$(ii)$). We also note that in our applications we focus primarily 
on dimensions $n \geq 2$. The case $n=1$ with $\Omega \subseteq \bbR$ an open interval will be 
considered separately in great detail in \cite{GHN13}. The principal result of Appendix \ref{sA} 
establishes, in short, equality between $T$ defined according to Kato's resolvent method \eqref{1.8} 
on one hand, and the form sum of $T_0$ and $B^* A$ on the other.  
 
Finally, we briefly summarize some of the notation used in this paper: 
Let $\cH$ be a separable complex Hilbert space, $(\cdot,\cdot)_{\cH}$ the scalar product in $\cH$
(linear in the second argument), and $I_{\cH}$ the identity operator in $\cH$. 
Next, if $T$ is a linear operator mapping (a subspace of) a Hilbert space into another, then 
$\dom(T)$ and $\ker(T)$ denote the domain and kernel (i.e., null space) of $T$. 
The closure of a closable operator $S$ is denoted by $\ol S$. 
The spectrum
and resolvent set 
of a closed linear operator in a Hilbert space will be abbreviated by $\sigma(\cdot)$ 
and $\rho(\cdot)$, respectively. 
The form sum of two (appropriate) operators $T_0$ and $W$ is denoted by $T_0 +_{\gq} W$. 

The Banach spaces of bounded and compact linear operators on a separable complex Hilbert 
space $\cH$ are denoted by $\cB(\cH)$ and $\cB_\infty(\cH)$, respectively; the corresponding 
$\ell^p$-based trace ideals will be denoted by $\cB_p (\cH)$, $p>0$. The trace of trace class 
operators in $\cH$ is denoted by ${\tr}_{\cH}(\cdot)$, similarly, ${\det}_{\cH}(\cdot)$ abbreviates 
the Fredholm determinant.  The analogous notation 
$\cB(\cX_1,\cX_2)$, $\cB_\infty (\cX_1,\cX_2)$, etc., will be used for bounded and compact 
operators between two Banach spaces $\cX_1$ and $\cX_2$. 
Moreover, $\cX_1\hookrightarrow \cX_2$ denotes the continuous embedding
of the Banach space $\cX_1$ into the Banach space $\cX_2$. 
 
In addition, $I_n$ denotes the $n\times n$ identity matrix in $\bbC^n$, and  
we use the convention that $\lesssim$ abbreviates $\leq C$ for an appropriate constant 
$C>0$. Moreover, the symbol $\approx$ between two norms indicates equivalent norms. 
Finally, we abbreviate $L^p(\Omega; d^n x) := L^p(\Omega)$ and 
$L^p(\Omega, \bbC^n; d^n x) := L^p(\Omega)^n$.

\section{Some Abstract Results}  \lb{s2}

In this section we present our abstract results on the stability of square root 
domains: Given an m-accretive operator $T_0$, we determine conditions on 
an additive perturbation $W$ such that 
the square root domain of $T_0 + W$ coincides with that of $T_0$.

We start by with an appropriate definition of the sum of $T_0$ and $W$ 
following Kato \cite{Ka66} (see also the slight extension in \cite{GLMZ05}): 

\begin{hypothesis}\lb{h2.1}
$(i)$  Suppose that $T_0$ is a densely defined, closed, linear operator in $\cH$ with nonempty resolvent set,
\begin{equation}\lb{2.1}
\rho(T_0)\neq \emptyset,
\end{equation}
$A:\dom(A)\rightarrow \cK$, $\dom(A)\subseteq \cH$, is a densely defined, closed, linear operator from $\cH$ to $\cK$, and $B:\dom(B)\rightarrow \cK$, $\dom(B)\subseteq \cH$, is a densely defined, closed, linear operator from $\cH$ to $\cK$ such that
\begin{equation}\lb{2.2}
\dom(A)\supseteq \dom(T_0), \quad \dom(B)\supseteq \dom(T_0^{\ast}).
\end{equation}
$(ii)$ For some $($and hence for all\,$)$ $z\in \rho(T_0)$, the operator $-A(T_0-zI_{\cH})^{-1}B^{\ast}$, defined on $\dom(B^{\ast})$, has a bounded extension in $\cK$, denoted by $K(z)$,
\begin{equation}\lb{2.3}
K(z)=-\overline{A(T_0-zI_{\cH})^{-1}B^*} \in \cB(\cK).
\end{equation}
$(iii)$  $1\in \rho(K(z_0))$ for some $z_0\in \rho(T_0)$. 
\end{hypothesis}

Suppose Hypothesis \ref{h2.1} holds and define
\begin{equation}\lb{2.4}
\begin{split}
R(z)=(T_0-zI_{\cH})^{-1}-\overline{(T_0-zI_{\cH})^{-1}B^*}[I_{\cK}-K(z)]^{-1}A(T_0-zI_{\cH})^{-1},&\\
z\in \{\zeta\in \rho(T_0)\,|\,1\in \rho(K(\zeta))\}.&
\end{split}
\end{equation}
Under the assumptions of Hypothesis \ref{h2.1}, $R(z)$ given by \eqref{2.4} defines a densely defined, closed, linear operator $T$ in $\cH$ (cf. \cite{GLMZ05}, \cite{Ka66}) by
\begin{equation}\lb{2.5}
R(z)=(T-zI_{\cH})^{-1}, \quad z\in \{\zeta\in \rho(T_0) \, | \, 1\in \rho(K(\zeta))\}.
\end{equation}
Combining \eqref{2.4} and \eqref{2.5} yields Kato's resolvent equation (in the slightly more general form of \cite{GLMZ05}, in which $T_0$ is no longer assumed to be self-adjoint), 
\begin{align}\lb{2.6}
& (T-zI_{\cH})^{-1}=(T_0-zI_{\cH})^{-1}-\overline{(T_0-zI_{\cH})^{-1}B^*}
[I_{\cK}-K(z)]^{-1}A(T_0-zI_{\cH})^{-1},    \no \\
& \hspace*{6.6cm} z\in \{\zeta\in \rho(T_0)\,|\, 1\in \rho(K(\zeta))\}. 
\end{align}
The operator $T$ defined by \eqref{2.5} is an extension of $(T_0+B^*A)|_{\dom(T_0)\cap \dom(B^{\ast}A)}$,
\begin{equation}\lb{2.7}
T\supseteq (T_0+B^{\ast}A)|_{\dom(T_0)\cap \dom(B^{\ast}A)}. 
\end{equation} 

We add that the operator sum $(T_0+B^*A)|_{\dom(T_0)\cap \dom(B^*A)}$ can be problematic 
since it is possible that $\dom(T_0)\cap \dom(B^*A)=\{0\}$ (for a pertinent example, see, e.g., 
\cite[Sect.\ I.6]{Si71}; see also \cite{SV85}).  In light of \eqref{2.7}, $T$ defined by \eqref{2.5} 
should be viewed as a generalized sum of $T_0$ and $B^*A$, that is, the perturbation $W$ of 
$T_0$ has been factored according to $W = B^* A$. Under the additional assumption that 
$\dom\big(T_0^{1/2}\big) = \dom\big((T_0^*)^{1/2}\big)$, and the additional hypotheses \eqref{2.7B}
on $A$ and $B$, we will show in Theorem \ref{tA.3} that $T$ also extends the form sum of $T_0$ 
and $B^* A$. 

We recall that a linear operator $D$ in $\cH$ is called {\it accretive} if the numerical range of $D$ 
(i.e., the set $\{(f,Df)_{\cH}\in\bbC \,|\, f \in \dom(D), \, \|f\|_{\cH}=1\}$) is a subset of the closed 
right complex half-plane. 
$D$ is called {\it m-accretive} if $D$ is a closed and maximal accretive operator (i.e., $D$ has no proper accretive extension). One recalls that an equivalent definition of an m-accretive operator $D$ in $\cH$ reads
\begin{equation}
(D + \zeta I_{\cH})^{-1} \in \cB(\cH), \quad
\big\|(D + \zeta I_{\cH})^{-1} \big\|_{\cB(\cH)} \leq [\Re(\zeta)]^{-1}, \quad \Re(\zeta) > 0.   \lb{2.7A}
\end{equation}
One also recalls that any m-accretive operator is necessarily densely defined. Following 
\cite[p.~279]{Ka80}, 
$D$ is called {\it quasi-m-accretive} if for some $\alpha \in \bbR$, $D + \alpha I_{\cH}$ is m-accretive.  
Moreover,  $D$ is called an {\it m-sectorial} operator with a {\it vertex} $\gamma \in \bbR$ and a corresponding {\it semi-angle} $\theta\in [0,\pi/2)$, in short, {\it m-sectorial}, if $D$ is 
quasi-m-accretive, closed (and hence densely defined) operator, and the numerical range of $D$ is contained 
in the sector $\cS_{\gamma,\theta}$.

We let $S_{\gamma,\theta}$, $\gamma \in \bbR$, $\theta\in [0,\pi/2)$, denote the sector with vertex located at $\gamma$ and semi-angle $\theta$,
\begin{equation}\lb{2.7aa}
\cS_{\gamma,\theta}:=\{\zeta \in \bbC\, |\, |\text{arg}(\zeta-\gamma)|\leq \theta\}, 
\quad \theta \in [0,\pi/2).
\end{equation}

It is not {\it a priori} clear that $T$, defined according to \eqref{2.5}, is m-accretive if $T_0$ is. For this purpose we now state a sufficient condition of m-accretivity of $T$: 

\begin{lemma}\lb{l2.2}
In addition to Hypothesis \ref{h2.1} assume that 
\begin{equation}\lb{2.7B}
\dom(A)\supseteq \dom\big(T_0^{1/2}\big), \quad \dom(B)\supseteq \dom\big((T_0^*)^{1/2}\big),
\end{equation}
and suppose that $T_0$ is m-sectorial.  If there exist $\gamma>0$, $\theta\in [0,\pi/2)$, and $0<\delta<1$ such that
\begin{equation}\lb{2.7a}
\big\|A(T_0-z)^{-1/2}\big\|_{\cB(\cH,\cK)} \, 
\big\|\overline{(T_0-z)^{-1/2}B^*} \big\|_{\cB(\cK,\cH)}\leq 1-\delta, 
\quad z\in \bbC\backslash S_{\gamma,\theta},
\end{equation}
then $T$ defined by \eqref{2.5} is m-sectorial.
\end{lemma}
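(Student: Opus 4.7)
The plan is to identify the operator $T$, as defined via the resolvent formula \eqref{2.5}, with the m-sectorial operator associated to a closed sectorial sesquilinear form, thereby reducing the claim to standard form-perturbation theory.

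First, I factor
\begin{equation*}
K(z) = -\bigl[A(T_0-zI_{\cH})^{-1/2}\bigr]\bigl[\overline{(T_0-zI_{\cH})^{-1/2}B^{\ast}}\bigr], \qquad z \in \bbC \setminus S_{\gamma, \theta},
\end{equation*}
observing that each factor is bounded: $A(T_0-zI_{\cH})^{-1/2} \in \cB(\cH, \cK)$ by the closed graph theorem together with \eqref{2.7B}, and $\overline{(T_0-zI_{\cH})^{-1/2}B^{\ast}} \in \cB(\cK, \cH)$ as the adjoint of the bounded operator $B(T_0^*-\bar z I_{\cH})^{-1/2}$. Hypothesis \eqref{2.7a} yields $\|K(z)\|_{\cB(\cK)} \leq 1 - \delta$, so $[I_{\cK} - K(z)]^{-1}$ exists with norm at most $\delta^{-1}$. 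After enlarging $S_{\gamma, \theta}$ if necessary so that it contains the numerical range of the m-sectorial operator $T_0$ (which only strengthens \eqref{2.7a} on the smaller complement), \eqref{2.5} gives $\bbC \setminus S_{\gamma, \theta} \subseteq \rho(T_0) \cap \rho(T)$.

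Next, m-sectoriality of $T_0$ implies $\dom\bigl(T_0^{1/2}\bigr) = \dom\bigl((T_0^*)^{1/2}\bigr)$, so Theorem \ref{tA.3} applies and shows that $T$ extends the operator $T_\gq^{}$ associated with the sesquilinear form
\begin{equation*}
\gq(f, g) := \gq_{T_0}^{}(f, g) + (Af, Bg)_{\cK}, \qquad f, g \in \dom\bigl(T_0^{1/2}\bigr).
\end{equation*}
I would then verify that $\gq$ is closed sectorial. The key ingredient is the off-diagonal estimate
\begin{equation*}
|(Af, Bg)_{\cK}| \leq (1-\delta)\,\bigl\|(T_0 + EI_{\cH})^{1/2} f\bigr\|_{\cH}\,\bigl\|(T_0^* + EI_{\cH})^{1/2} g\bigr\|_{\cH},
\end{equation*}
valid for $-E \in \bbC \setminus S_{\gamma, \theta}$ with $E > 0$ large and $f, g \in \dom\bigl(T_0^{1/2}\bigr)$; this follows from decomposing $Af = A(T_0+EI_{\cH})^{-1/2}(T_0+EI_{\cH})^{1/2}f$ and similarly for $Bg$, and then applying \eqref{2.7a} together with $\|B(T_0^*+EI_{\cH})^{-1/2}\|_{\cB(\cH, \cK)} = \|\overline{(T_0+EI_{\cH})^{-1/2}B^{\ast}}\|_{\cB(\cK, \cH)}$. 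Combined with the standard norm equivalence on $\dom\bigl(T_0^{1/2}\bigr)$ between the half-power norms and $\bigl(\Re \gq_{T_0}^{}(f,f) + E\|f\|_{\cH}^2\bigr)^{1/2}$ for m-sectorial $T_0$, this supplies a relative form bound with constant strictly less than $1$, whereupon Kato's form-perturbation theorem gives $\gq$ closed sectorial and $T_\gq^{}$ m-sectorial.

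Finally, as both $T$ and $T_\gq^{}$ have $-E$ in their resolvent sets for $E$ sufficiently large and $T \supseteq T_\gq^{}$, we conclude $T = T_\gq^{}$, whence $T$ is m-sectorial. The main obstacle I anticipate is in the verification of closedness and sectoriality of $\gq$: the off-diagonal estimate controls $|(Af, Bg)_{\cK}|$ by a product of half-power norms, not directly by the form norm of $\gq_{T_0}^{}$, so establishing the relative form bound strictly below $1$ requires careful use of the functional-calculus estimates for m-sectorial operators underlying the claimed norm equivalences.
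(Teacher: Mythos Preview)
Your approach has a genuine gap at its core: you assert that ``m-sectoriality of $T_0$ implies $\dom\bigl(T_0^{1/2}\bigr) = \dom\bigl((T_0^*)^{1/2}\bigr)$,'' and then invoke Theorem~\ref{tA.3}. But this implication is \emph{false} in general---indeed, McIntosh's 1972 counterexample (discussed in the introduction of the paper) exhibits an m-sectorial operator for which the equality fails. The square root domain equality is exactly the content of the Kato square root problem, and is \emph{not} a consequence of m-sectoriality. Since Hypothesis~\ref{hA.2} explicitly assumes \eqref{A.7} as a separate condition, and Lemma~\ref{l2.2} does \emph{not} include it among its hypotheses, you cannot invoke Theorem~\ref{tA.3} here. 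Without the square root domain equality, you also lose the identification of $\dom(\gq^{}_{T_0})$ with $\dom\bigl(T_0^{1/2}\bigr)$ (cf.\ Remark~\ref{r2.14}), so your perturbation form $\gq$ is not even guaranteed to be defined on the form domain of $T_0$---the inclusions in \eqref{2.7B} only give $\dom(A)\supseteq\dom\bigl(T_0^{1/2}\bigr)$ and $\dom(B)\supseteq\dom\bigl((T_0^*)^{1/2}\bigr)$, neither of which is known to contain $\dom(\gq^{}_{T_0})$. (There is also a transposition: the perturbation form should be $(Bf,Ag)_{\cK}$, not $(Af,Bg)_{\cK}$, since $T$ extends $T_0+B^*A$.)

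The paper's proof avoids form theory entirely and proceeds by direct resolvent estimates. First, \eqref{2.7a} gives $\|K(z)\|_{\cB(\cK)}\le 1-\delta$ for $z\notin S_{\gamma,\theta}$, whence $\sigma(T)$ lies in a sector $S_{\gamma',\theta'}$. Then, choosing $t>\gamma$ so that $T_0+tI_{\cH}$ is m-accretive, the paper bounds $\|(T+(t+z)I_{\cH})^{-1}\|_{\cB(\cH)}$ directly from the resolvent equation \eqref{2.6}, splitting off factors of $(T_0+(t+z)I_{\cH})^{-1/2}$ and using \eqref{2.7a} together with the m-accretive resolvent bounds for $T_0$ to obtain $\|(T+(t+z)I_{\cH})^{-1}\|\le C[\Re(z)]^{-1}$ for $\Re(z)>0$. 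This establishes quasi-m-accretivity of $T$ and hence m-sectoriality, with no appeal to square root domains or form sums.
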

\begin{proof}
It suffices to show that $\sigma(T)$ is contained in a sector and that $T$ is quasi-m-accretive.  One 
observes that
\begin{align}
\sigma(T)\subseteq\sigma(T_0) \cup (\bbC \backslash  \{\zeta\in \bbC \, |\, 1\in \rho(K(\zeta))\}).\lb{2.7b}
\end{align}
On the other hand, one has
\begin{align}
\bbC\backslash S_{\gamma,\theta}&\subseteq \{\zeta\in \bbC \, |\, \|K(\zeta)\|_{\cB(\cK)}<1\} 
\subseteq \{\zeta\in \bbC \, |\, 1\in \rho(K(\zeta))\}.\lb{2.7c}
\end{align}
Thus, 
\begin{equation}\lb{2.7d}
\bbC\backslash S_{\gamma,\theta} \subset \{\zeta\in \bbC \, |\, 1\in \rho(K(\zeta))\},
\end{equation}
and taking complements in $\bbC$, 
\begin{equation}\lb{2.7e}
\bbC\backslash \{\zeta\in \bbC \, |\, 1\in \rho(K(\zeta))\} \subset S_{\gamma,\theta}.
\end{equation}
Therefore, \eqref{2.7b} and \eqref{2.7e} yield 
\begin{equation}\lb{2.7f}
\sigma(T)\subseteq \sigma(T_0)\cup S_{\gamma,\theta}.
\end{equation}
Since $T_0$ is assumed to be m-sectorial, $\sigma(T_0)$ is contained in a sector.  As a result, $\sigma(T_0)\cup S_{\gamma,\theta}$ is contained in some sector, say $S_{\gamma',\theta'}$.  By \eqref{2.7f}, $S_{\gamma',\theta'}$ also contains $\sigma(T)$.

Since $T_0$ is quasi-m-accretive there exists a constant $t>0$ such that $T_0+tI_{\cH}$ is m-accretive and for which
\begin{equation}\lb{2.7g}
S_{\gamma',\theta'}+t\subset \{\zeta\in \bbC \, | \, \Re(\zeta)\geq 0\}.
\end{equation}
Without loss of generality, we may assume that $t> \gamma$.  In this case, $\rho(T_0+tI_{\cH})\cap\rho(T+tI_{\cH})$ covers the left half-plane, $\{\zeta\in \bbC\, |\, \Re(\zeta)<0\}$, and one concludes that 
\begin{align}
&\big\| ((T+(t+z)I_{\cH})^{-1} \big\|_{\cB(\cH)}\no\\
&\quad \leq \big\|(T_0+(t+z)I_{\cH})^{-1} \big\|_{\cB(\cH)} 
+ \big\|\overline{(T_0+(t+z)I_{\cH})^{-1}B^*} \big\|_{\cB(\cK,\cH)}\no\\
&\qquad \times\big\|(I_{\cK}-K(-(t+z)))^{-1} \big\|_{\cB(\cK)} \, 
\big\|A(T_0+(t+z)I_{\cH})^{-1} \big\|_{\cB(\cH,\cK)}\no\\
&\quad \leq  \big\|(T_0+(t+z)I_{\cH})^{-1} \big\|_{\cB(\cH)} 
+ \big\|(T_0+(t+z)I_{\cH})^{-1/2} \big\|_{\cB(\cH)}^2\no\\
&\qquad \times \big\|\overline{(T_0+(t+z)I_{\cH})^{-1/2}B^*} \big\|_{\cB(\cK,\cH)} \, 
\big\|(I_{\cK}-K(-(t+z)))^{-1} \big\|_{\cB(\cK)}\no\\
&\qquad \times \big\|A(T_0+(t+z)I_{\cH})^{-1/2} \big\|_{\cB(\cH,\cK)}\no\\
&\quad \leq C[\Re(z)]^{-1}, \quad z\in \bbC,\; \Re(z)>0,\lb{2.7h}
\end{align}
for some constant $C= C(T_0,\delta)>0$. To obtain \eqref{2.7h}, we have used three estimates; the first two follow from m-accretivity of $T_0$:
\begin{align}\lb{2.7i}
& \big\|(T_0+(t+z)I_{\cH})^{-1} \big\|_{\cB(\cH)} \leq \frac{C_1}{\Re(z)},   \quad 
\big\|(T_0+(t+z)I_{\cH})^{-1/2} \big\|_{\cB(\cH)} \leq \frac{C_2}{[\Re(z)]^{1/2}},    \no \\
& \hspace*{8cm}  z\in \bbC, \; \Re(z)>0.
\end{align}
The third is
\begin{align}
\big\|[I_{\cK}-K(-(t+z))]^{-1}\big\|_{\cB(\cK)}\leq \delta^{-1},\quad z\in \bbC, \; \Re(z)>0,\lb{2.7j}
\end{align}
which follows from 
\begin{align}
\|K(-(t+z))\|_{\cB(\cK)} & \leq \big\|A(T_0+(t+z)I_{\cH})^{-1/2} \big\|_{\cB(\cH,\cK)}     \no \\
& \quad \times \big\|\overline{(T_0+(t+z)I_{\cH})^{-1/2}B^*} \big\|_{\cB(\cK,\cH)}\no\\
& \leq 1-\delta,\quad z\in \bbC, \; \Re(z)>0,\lb{2.7k}
\end{align}
taking the Neumann series for $[I_{\cK}-K(-(t+z))]^{-1}$ into account.  One notes that \eqref{2.7k} holds based on \eqref{2.7a} since $\Re(z)>0$ guarantees $-(t+z)\in \bbC\backslash S_{\gamma,\theta}$.  In light of \eqref{2.7h}, $T+tI_{\cH}$ is m-accretive and hence $T$ is quasi-m-accretive. 
\end{proof}

\begin{hypothesis}\lb{h2.3}
$(i)$ Suppose $T_0$ is m-accretive. \\ 
$(ii)$ Suppose that $A:\dom(A)\rightarrow \cK$, $\dom(A)\subseteq \cH$, is a closed, linear operator 
from $\cH$ to $\cK$, and $B:\dom(B)\rightarrow \cK$, $\dom(B)\subseteq \cH$, is a closed, linear 
operator from $\cH$ to $\cK$ such that 
\begin{equation}\lb{2.9}
\dom(A)\supseteq \dom\big(T_0^{1/2}\big), \quad \dom(B)\supseteq \dom\big((T_0^*)^{1/2}\big).
\end{equation}
$(iii)$  Suppose that there exist constants $R>0$ and $E_0>0$ such that
\begin{align}\lb{2.10}
\begin{split} 
\int_R^{\infty}d\lambda\, \lambda^{-1} & \big\|A(T_0+(\lambda+E)I_{\cH})^{-1/2}\big\|_{\cB(\cH,\cK)}   \\
& \times \big\|\overline{(T_0+(\lambda+E)I_{\cH})^{-1/2}B^{\ast}}\big\|_{\cB(\cK,\cH)}<\infty,   
\quad  E \geq E_0,
\end{split} 
\end{align}
with
\begin{align}\lb{2.11}
\begin{split} 
\lim_{E\rightarrow \infty}\int_R^{\infty}d\lambda\, \lambda^{-1} & 
\big\|A(T_0+(\lambda+E)I_{\cH})^{-1/2}\big\|_{\cB(\cH,\cK)}     \\
& \times \big\|\overline{(T_0+(\lambda+E)I_{\cH})^{-1/2}B^{\ast}}\big\|_{\cB(\cK,\cH)}=0,
\end{split} 
\end{align}
and 
\begin{align}
\lim_{E\rightarrow \infty}\big\|A(T_0+EI_{\cH})^{-1/2}\big\|_{\cB(\cH,\cK)}\big\|\overline{(T_0+EI_{\cH})^{-1/2}B^{\ast}}\big\|_{\cB(\cK,\cH)}=0.\lb{2.25ccc}
\end{align}

\end{hypothesis}

Our principal stability result of square root domains then reads as follows.

\begin{theorem}\lb{t2.4}
Assume Hypotheses \ref{h2.1} and \ref{h2.3}.  If $T$ defined as in \eqref{2.5} is m-accretive, then
\begin{equation}\lb{2.12a}
\dom\big(T^{1/2}\big) = \dom\big(T_0^{1/2}\big).
\end{equation}
\end{theorem}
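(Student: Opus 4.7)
The plan is to express $(T+EI_{\cH})^{-1/2}$, for $E$ sufficiently large, as a multiplicative perturbation of $(T_0+EI_{\cH})^{-1/2}$ by a bounded operator invertible in $\cB(\cH)$. First I will apply the Balakrishnan-type representation \eqref{1.9} to both $T$ and $T_0$ (legitimate as both are m-accretive), subtract, and insert Kato's resolvent identity \eqref{2.6} at $z = -(E+\lambda)$ to obtain
\begin{equation*}
(T+EI_{\cH})^{-1/2} - (T_0+EI_{\cH})^{-1/2} = -\frac{1}{\pi}\int_0^\infty d\lambda\, \lambda^{-1/2}\, J(\lambda,E),
\end{equation*}
where $J(\lambda,E) = \overline{(T_0+(E+\lambda)I_{\cH})^{-1}B^*}\,[I_{\cK}-K(-(E+\lambda))]^{-1}\,A(T_0+(E+\lambda)I_{\cH})^{-1}$.

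Next I will extract a common left factor $(T_0+EI_{\cH})^{-1/2}$ from the integrand through the factorizations
\begin{align*}
\overline{(T_0+(E+\lambda)I_{\cH})^{-1}B^*} &= (T_0+EI_{\cH})^{-1/2}\,F(\lambda,E)\,\overline{(T_0+(E+\lambda)I_{\cH})^{-1/2}B^*}, \\
A(T_0+(E+\lambda)I_{\cH})^{-1} &= \bigl[A(T_0+(E+\lambda)I_{\cH})^{-1/2}\bigr]\bigl[(T_0+(E+\lambda)I_{\cH})^{-1/2}\bigr],
\end{align*}
where $F(\lambda,E) := (T_0+EI_{\cH})^{1/2}(T_0+(E+\lambda)I_{\cH})^{-1/2}$ lies in $\cB(\cH)$ with norm uniformly bounded in $\lambda\geq 0$ by the Kato--Balakrishnan fractional-power calculus for the m-accretive operator $T_0+EI_{\cH}$. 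This yields the key identity
\begin{equation*}
(T+EI_{\cH})^{-1/2} = (T_0+EI_{\cH})^{-1/2}\bigl[I_{\cH} - N(E)\bigr],
\end{equation*}
with $N(E) \in \cB(\cH)$ given by a single Bochner integral. Once $\|N(E)\|_{\cB(\cH)} < 1$ for all sufficiently large $E$, a Neumann series argument makes $I_{\cH} - N(E)$ invertible in $\cB(\cH)$, so that $\ran((T+EI_{\cH})^{-1/2}) = \ran((T_0+EI_{\cH})^{-1/2}) = \dom(T_0^{1/2})$, which is precisely \eqref{2.12a}.

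The required bound $\|N(E)\|_{\cB(\cH)} \to 0$ as $E\to\infty$ reduces, after absorbing $F(\lambda,E)$ and using the functional-calculus estimate $\|(T_0+(E+\lambda)I_{\cH})^{-1/2}\|_{\cB(\cH)} \lesssim (E+\lambda)^{-1/2}$, to controlling
\begin{equation*}
\int_0^\infty \lambda^{-1/2}(E+\lambda)^{-1/2}\,\bigl\|A(T_0+(E+\lambda)I_{\cH})^{-1/2}\bigr\|_{\cB(\cH,\cK)}\,\bigl\|\overline{(T_0+(E+\lambda)I_{\cH})^{-1/2}B^*}\bigr\|_{\cB(\cK,\cH)}\,d\lambda,
\end{equation*}
together with a uniform Neumann control on $[I_{\cK}-K(-(E+\lambda))]^{-1}$ (whose Neumann series converges since $K(-(E+\lambda))$ has small norm by \eqref{2.25ccc}). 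Splitting the integral at a fixed $R>0$, on $[R,\infty)$ I will use $\lambda^{-1/2}(E+\lambda)^{-1/2} \leq \lambda^{-1}$ together with \eqref{2.11}; on $[0,R]$ I will use $\lambda^{-1/2}\in L^1([0,R])$ together with the decay $(E+\lambda)^{-1/2} \leq E^{-1/2}$ and \eqref{2.25ccc} to drive the contribution to zero.

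The main obstacle I anticipate is the rigorous justification of the left-factorization of $\overline{(T_0+(E+\lambda)I_{\cH})^{-1}B^*}$: extracting $(T_0+EI_{\cH})^{-1/2}$ as a genuine bounded left factor requires each intervening object — notably $F(\lambda,E)$ and $\overline{(T_0+(E+\lambda)I_{\cH})^{-1/2}B^*}$ — to be a genuine element of $\cB(\cH)$ or $\cB(\cK,\cH)$ rather than merely a closable product of densely defined pieces. This hinges on the Dunford--Riesz/Kato functional calculus for the m-accretive (but not assumed m-sectorial) operator $T_0$, together with the square-root relative boundedness \eqref{2.9} of $A$ and $B$. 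Confirming Bochner integrability of the integrand defining $N(E)$ in $\cB(\cH)$ uniformly as $E$ ranges over an unbounded set, and thereby passing the operator-norm estimate through the integral, is the remaining technical point.
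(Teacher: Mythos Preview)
Your proposal is correct and follows essentially the same route as the paper: the paper defines $J(E)$ (your $\pi N(E)$) via the Balakrishnan formula and Kato's resolvent identity, factors out $(T_0+EI_{\cH})^{-1/2}$ using $(T_0+EI_{\cH})^{1/2}(T_0+(E+\lambda)I_{\cH})^{-1/2}\in\cB(\cH)$ uniformly in $\lambda$ (justified there by the $H^\infty$-calculus for m-accretive operators), and then splits the integral at $R$ exactly as you propose, using \eqref{2.25ccc} and $(E+\lambda)^{-1/2}\leq E^{-1/2}$ on $[0,R]$ and \eqref{2.10}--\eqref{2.11} on $[R,\infty)$. The technical obstacle you flag---bounding $F(\lambda,E)$ and confirming Bochner integrability---is precisely what the paper handles via \eqref{2.20} and \eqref{2.23}, so your outline matches the paper's proof in both structure and estimates.
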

\begin{proof}
Define
\begin{align}
J(E)&=\int_0^{\infty} d\lambda \, \lambda^{-1/2}(T_0+EI_{\cH})^{1/2}
\overline{(T_0+(E+\lambda)I_{\cH})^{-1}B^*} [I_{\cK}-K(-(E+\lambda))]^{-1}  \no\\
&\hspace*{1.6cm}  \times A(T_0+(E+\lambda)I_{\cH})^{-1}, \quad E \geq E_0.    \lb{2.18} 
\end{align}
The integral in \eqref{2.18} is to be viewed as a norm convergent Bochner integral.  To confirm that the integral is indeed norm convergent, one estimates the norm of the integrand of \eqref{2.18} as follows, 
\begin{align}
&\big\|\lambda^{-1/2}(T_0+EI_{\cH})^{1/2}\overline{(T_0+(E+\lambda)I_{\cH})^{-1}B^*}\no\\
&\qquad \times[I_{\cK}-K(-(E+\lambda))]^{-1}A(T_0+(E+\lambda)I_{\cH})^{-1}\big\|_{\cB(\cH)}\no\\
&\quad =\lambda^{-1/2}\big\|(T_0+EI_{\cH})^{1/2}(T_0+(E+\lambda)I_{\cH})^{-1/2}\overline{(T_0+(E+\lambda)I_{\cH})^{-1/2}B^*}\no\\
&\hspace*{.75cm} \times[I_{\cK}-K(-(E+\lambda))]^{-1}A(T_0+(E+\lambda)I_{\cH})^{-1/2}(T_0+(E+\lambda)I_{\cH})^{-1/2}\big\|_{\cB(\cH)}\no\\
& \quad \leq \lambda^{-1/2}\big\|(T_0+EI_{\cH})^{1/2}(T_0+(E+\lambda)I_{\cH})^{-1/2}
\big\|_{\cB(\cH)}     \no \\ 
& \qquad \times \big\|[I_{\cK}-K(-(E+\lambda))]^{-1}\big\|_{\cB(\cK)} 
\big\|A(T_0+(E+\lambda)I_{\cH})^{-1/2}\big\|_{\cB(\cH,\cK)}     \no\\
&\qquad \times \big\|\overline{(T_0+(E+\lambda)I_{\cH})^{-1/2}B^*}\big\|_{\cB(\cK,\cH)} 
\big\|(T_0+(E+\lambda)I_{\cH})^{-1/2}\big\|_{\cB(\cH)},       \lb{2.19} \\ 
& \hspace*{8.4cm} \lambda>0, \; E \geq E_0.     \no 
\end{align}
By the $H^{\infty}$-calculus for m-accretive operators (cf., e.g., \cite[Sect.\ 7.1.3]{Ha06}, 
\cite[Sect.\ 11]{KW04}), there exists $M_1>0$ for which
\begin{equation}\lb{2.20}
\big\|(T_0+EI_{\cH})^{1/2}(T_0+(E+\lambda)I_{\cH})^{-1/2}\big\|_{\cB(\cH)}\leq M_1, 
\quad  \lambda>0,\; E \geq E_0,
\end{equation}
and by \eqref{2.25ccc}, there exists an $E_1>E_0$ for which
\begin{align}
&\| K(-E)\|_{\cB(\cK)}\no\\
&\quad \leq \big\|A(T_0+EI_{\cH})^{-1/2}\big\|_{\cB(\cH,\cK)}\big\|\overline{(T_0+EI_{\cH})^{-1/2}B^{\ast}}\big\|_{\cB(\cK,\cH)}\leq 1/2,\quad E\geq E_1.
\end{align}
As a result,
\begin{equation}\lb{2.21}
\big\|[I_{\cK}-K(-(E+\lambda))]^{-1}\big\|_{\cB(\cK)}\leq 2, \quad  \lambda>0, \; E \geq E_1.
\end{equation}
Using \eqref{2.20} and \eqref{2.21} the estimate in \eqref{2.19} can be continued as follows
\begin{align}
&\big\|\lambda^{-1/2}(T_0+EI_{\cH})^{1/2}\overline{(T_0+(E+\lambda)I_{\cH})^{-1}B^*}\no\\
&\qquad \times [I_{\cK}-K(-(E+\lambda))]^{-1}A(T_0+(E+\lambda)I_{\cH})^{-1}\big\|_{\cB(\cH)}\no\\
& \quad \leq 2M_1\lambda^{-1/2} \big\|A(T_0+(E+\lambda)I_{\cH})^{-1/2}\big\|_{\cB(\cH,\cK)}\no\\
&\qquad \times \big\|\overline{(T_0+(E+\lambda)I_{\cH})^{-1/2}B^*}\big\|_{\cB(\cK,\cH)} \big\|(T_0+(E+\lambda)I_{\cH})^{-1/2}\big\|_{\cB(\cH)},\lb{2.22}\\
&\hspace*{8.4cm}\lambda>0, \; E \geq E_1.    \no
\end{align}
Again by the $H^{\infty}$-calculus for m-accretive operators one obtains the estimate
\begin{equation}\lb{2.23}
\begin{split}
\big\|(T_0+(E+\lambda)I_{\cH})^{-1/2}\big\|_{\cB(\cH)}\leq
\begin{cases}
E^{-1/2}, & 0<\lambda<R, \\
\lambda^{-1/2}, & R\leq\lambda<\infty, \end{cases} 
 \quad E>0. &
 \end{split}
\end{equation}
Together with \eqref{2.25ccc} and \eqref{2.23}, the estimate \eqref{2.22} yields 
\begin{align}
&\big\|\lambda^{-1/2}(T_0+EI_{\cH})^{1/2}\overline{(T_0+(E+\lambda)I_{\cH})^{-1}B^*}\no\\
&\qquad \times [I_{\cK}-K(-(E+\lambda))]^{-1}A(T_0+(E+\lambda)I_{\cH})^{-1}\big\|_{\cB(\cH)}\no\\
& \quad \leq 2M_1E^{-1/2}\lambda^{-1/2}\chi_{(0,R)}(\lambda) \no\\
&\qquad+2M_1\lambda^{-1}\chi_{(R,\infty)}(\lambda)\big\|A(T_0+(E+\lambda)I_{\cH})^{-1/2}\big\|_{\cB(\cH,\cK)}
\no \\
&\qquad \quad\times \big\|\overline{(T_0+(E+\lambda)I_{\cH})^{-1/2}B^*}\big\|_{\cB(\cK,\cH)},   \quad \lambda>0, \; E \geq E_1.    \lb{2.24} 
\end{align}
By hypothesis (cf.\ \eqref{2.10}), the majorizing function in \eqref{2.24} lies in 
$L^1((0,\infty); d\lambda)$.  Thus, the integral in \eqref{2.18} is norm convergent, and by \eqref{2.24},
\begin{align}
\|J(E)\|_{\cB(\cH)} & \leq 4R^{1/2}M_1E^{-1/2} \no\\
&\quad+ 2M_1\int_R^{\infty}d\lambda \, 
\lambda^{-1}\big\|A(T_0+(\lambda+E)I_{\cH})^{-1/2}\big\|_{\cB(\cH,\cK)}    \no \\
&\qquad \times \big\|\overline{(T_0+(\lambda+E)I_{\cH})^{-1/2}B^{\ast}}\big\|_{\cB(\cK,\cH)}, 
\quad E \geq E_1.    \lb{2.25} 
\end{align}
In particular, \eqref{2.11} and \eqref{2.25} show that 
\begin{equation}\lb{2.26}
\lim_{E\rightarrow \infty}\|J(E)\|_{\cB(\cH)}=0,
\end{equation}
which implies the existence of an $E_2>E_1$ for which 
\begin{equation}\lb{2.27}
\|J(E)\|_{\cB(\cH)}<1, \quad E \geq E_2.
\end{equation}

Returning to square roots of m-accretive operators, one recalls the representation (cf., e.g., \cite[Sect.\ V.3.11]{Ka80})
\begin{equation}\lb{2.28}
(T+EI_{\cH})^{-1/2}=\frac{1}{\pi}\int_0^{\infty}d\lambda \, \lambda^{-1/2}(T+(E+\lambda)I_{\cH})^{-1}, \quad E>0.
\end{equation}
Applying the resolvent equation \eqref{2.6} to the integrand in \eqref{2.28} yields
\begin{align}
&(T+EI_{\cH})^{-1/2} 
= \frac{1}{\pi}\int_0^{\infty}d\lambda \, \lambda^{-1/2}\big[(T_0+(E+\lambda)I_{\cH})^{-1}-\overline{(T_0+(E+\lambda)I_{\cH})^{-1}B^*}     \no \\
&\hspace*{3.9cm}\times [I_{\cK}-K(-(E+\lambda))]^{-1}A(T_0+(E+\lambda)I_{\cH})^{-1} \big],   \lb{2.29} \\
&\hspace*{10.1cm} E \geq E_2.\no
\end{align}
Multiplication of $\overline{(T_0+(E+\lambda)I_{\cH})^{-1}B^*}$ in \eqref{2.29} from the left by the identity 
\begin{equation}\lb{2.30}
I_{\cH}
= \ol{(T_0+EI_{\cH})^{-1/2}(T_0+EI_{\cH})^{1/2}}, \quad E \geq E_2,
\end{equation}
and use of
\begin{equation}\lb{2.31}
(T_0+EI_{\cH})^{-1/2}=\frac{1}{\pi}\int_0^{\infty}d\lambda \, \lambda^{-1/2}(T_0+(E+\lambda)I_{\cH})^{-1}, \quad E>0,
\end{equation}
implies
\begin{equation}\lb{2.32}
(T+EI_{\cH})^{-1/2}=(T_0+EI_{\cH})^{-1/2}\big[I_{\cH}-\pi^{-1}J(E)\big], 
\quad E \geq E_2.
\end{equation}
Since $I_{\cH}-\pi^{-1}J(E)$ is boundedly invertible for $E\geq E_2$ (cf. \eqref{2.27}), \eqref{2.32} proves that 
\begin{equation}\lb{2.33}
\ran\big((T+EI_{\cH})^{-1/2}\big)=\ran\big((T_0+EI_{\cH})^{-1/2}\big), \quad E \geq E_2,
\end{equation}
and consequently, 
\begin{equation}\lb{2.34}
\dom\big((T+EI_{\cH})^{1/2}\big)=\dom\big((T_0+EI_{\cH})^{1/2}\big), \quad E \geq E_2.
\end{equation}
Since
\begin{equation}\lb{2.35}
\begin{split}
\dom\big((T+EI_{\cH})^{1/2}\big)&=\dom\big(T^{1/2}\big),  \\
\dom\big((T_0+EI_{\cH})^{1/2}\big)&=\dom\big(T_0^{1/2}\big),
\end{split}
\quad E>0,
\end{equation}
relation \eqref{2.34} implies \eqref{2.12a}.
\end{proof}

\begin{remark}
The assumption in \eqref{2.25ccc} can be weakened.  In fact, it would suffice to assume that 
for some $0<\delta<1$, 
\begin{equation}\lb{2.25cccc}
\|K(-E)\|_{\cB(\cK)}\leq 1-\delta,\quad E\geq E_0. 
\end{equation}
However, in the applications that will be considered subsequently, one can obtain decay estimates for the individual factors in \eqref{2.25ccc} that immediately yield convergence to zero as $E\rightarrow \infty$.  Moreover, the assumption \eqref{2.25ccc} enters explicitly into the proof to Theorem \ref{t2.12} below.
\end{remark}

Next, we strengthen Hypothesis \ref{h2.3} as follows:

\begin{hypothesis}\lb{h2.5}
In addition to the assumptions in Hypothesis \ref{h2.3} suppose that 
\begin{equation}\lb{2.8}
\dom\big(T_0^{1/2}\big)=\dom\big((T_0^*)^{1/2}\big).
\end{equation}
\end{hypothesis}

\begin{corollary}\lb{c2.5a}
Assume Hypotheses \ref{h2.1} and \ref{h2.5}.  If $T$ defined as in \eqref{2.5} is m-accretive, then
\begin{equation}\lb{2.12}
\dom\big(T^{1/2}\big) = \dom\big(T_0^{1/2}\big) = \dom\big((T_0^*)^{1/2}\big)= \dom\big((T^*)^{1/2}\big).
\end{equation}\end{corollary}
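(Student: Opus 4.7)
The first equality in \eqref{2.12} follows directly from Theorem \ref{t2.4}, since Hypothesis \ref{h2.5} subsumes Hypothesis \ref{h2.3}; the second equality is precisely the additional assumption \eqref{2.8}. Hence the only remaining task is to establish
\begin{equation*}
\dom\big((T^*)^{1/2}\big) = \dom\big((T_0^*)^{1/2}\big).
\end{equation*}

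The plan is to apply Theorem \ref{t2.4} a second time, but now to the adjoint operator $T^*$ in place of $T$. Taking adjoints in Kato's resolvent identity \eqref{2.6} gives
\begin{align*}
(T^* - \bar z I_{\cH})^{-1} &= (T_0^* - \bar z I_{\cH})^{-1} \\
&\quad - \overline{(T_0^* - \bar z I_{\cH})^{-1} A^*}\bigl[I_{\cK} - K(z)^*\bigr]^{-1} B(T_0^* - \bar z I_{\cH})^{-1},
\end{align*}
which is precisely the Kato resolvent formula \eqref{2.4} with the substitutions $(T_0,A,B) \mapsto (\widetilde{T}_0, \widetilde{A}, \widetilde{B}) := (T_0^*, B, A)$ and with the associated operator-valued function $\widetilde{K}(z) = K(\bar z)^*$. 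In particular, the resulting operator $\widetilde{T}$ coincides with $T^*$, so the conclusion of Theorem \ref{t2.4} applied to the swapped data will read $\dom((T^*)^{1/2}) = \dom((T_0^*)^{1/2})$, which is exactly what is needed.

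It remains to verify that Hypotheses \ref{h2.1} and \ref{h2.5} hold for the swapped data, and that $T^*$ is m-accretive. Hypothesis \ref{h2.1} for $(\widetilde{T}_0, \widetilde{A}, \widetilde{B})$ follows from that for $(T_0, A, B)$ by standard adjoint manipulations: $\rho(T_0^*) = \{\bar z : z \in \rho(T_0)\}$; the operator $\widetilde{K}(z) = K(\bar z)^*$ lies in $\cB(\cK)$; and $1 \in \rho(\widetilde{K}(z_0))$ iff $1 \in \rho(K(\bar z_0))$. The square-root domain inclusions $\dom(\widetilde{A}) \supseteq \dom(\widetilde{T}_0^{1/2})$ and $\dom(\widetilde{B}) \supseteq \dom((\widetilde{T}_0^*)^{1/2})$ reduce, via \eqref{2.8}, to the original inclusions \eqref{2.9}. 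The integrability conditions \eqref{2.10}, \eqref{2.11} and \eqref{2.25ccc} for the swapped data coincide with the original ones, because taking adjoints yields the norm identity
\begin{equation*}
\big\| B(T_0^* + s I_{\cH})^{-1/2} \big\|_{\cB(\cH,\cK)} = \big\| \overline{(T_0 + s I_{\cH})^{-1/2} B^*} \big\|_{\cB(\cK,\cH)}, \quad s > 0,
\end{equation*}
together with the analogous identity obtained by swapping $A$ and $B$. Finally, $T^*$ is m-accretive because $T$ is. Theorem \ref{t2.4} then delivers $\dom((T^*)^{1/2}) = \dom((T_0^*)^{1/2})$, which combines with the first two equalities to yield \eqref{2.12}.

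The main point requiring care is the identification of the adjoint of \eqref{2.6} with the Kato formula for the swapped data: one must justify that the adjoint of the densely defined product $A(T_0 - zI_{\cH})^{-1}$ equals $\overline{(T_0^* - \bar z I_{\cH})^{-1} A^*}$, and that the bracketed middle factor transforms as claimed. These manipulations rely on the closedness and density of $A$ and $B$, on the relation $A^{**} = A$ (and similarly for $B$), and on the identity $K(\bar z)^* = -\overline{B(T_0^* - \bar z I_{\cH})^{-1} A^*}$, which follows by taking adjoints in \eqref{2.3}. Beyond this bookkeeping, the argument is essentially a symmetric re-application of Theorem \ref{t2.4}.
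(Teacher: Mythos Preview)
Your proof is correct and follows essentially the same approach as the paper: both obtain $\dom((T^*)^{1/2})=\dom((T_0^*)^{1/2})$ by passing to adjoints in the Kato resolvent identity (the paper's Lemma \ref{lA.1}), swapping the roles of $A$ and $B$ with $T_0$ replaced by $T_0^*$, and re-running Theorem \ref{t2.4}. The paper phrases this as ``repeating the proof of Theorem \ref{t2.4}'' with the swapped data and the norm identities \eqref{2.83zz}, while you phrase it as ``applying Theorem \ref{t2.4}'' after checking that the swapped data satisfy Hypotheses \ref{h2.1} and \ref{h2.5}; the content is the same.
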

\begin{proof} 
By symmetry with respect to $T_0$ and $T_0^*$ in our current hypotheses, the proof of 
Theorem \ref{t2.4} also implies 
\begin{equation}  \lb{2.37}
\dom\big((T^*)^{1/2}\big)=\dom\big((T_0^*)^{1/2}\big).
\end{equation}
The proof of \eqref{2.37} mirrors that of \eqref{2.12a}, replacing $T_0$ by $T_0^*$ and 
$T$ by $T^*$ (interchanging the roles of $A$ and $B$) throughout \eqref{2.18}--\eqref{2.35}.  Here, one employs the adjoint resolvent equation \eqref{A.1} and the fact that norms are preserved under taking adjoints:
\begin{align}
&\big\|A(T_0+EI_{\cH})^{-1/2}\big\|_{\cB(\cH,\cK)} = \big\|\ol{(T_0^*+EI_{\cH})^{-1/2}A^*}\big\|_{\cB(\cK,\cH)}, \no\\
&\big\|\overline{(T_0+EI_{\cH})^{-1/2}B^{\ast}}\big\|_{\cB(\cK,\cH)} = \big\|B(T_0^*+EI_{\cH})^{-1/2}\big\|_{\cB(\cH,\cK)},\lb{2.83zz}\\
& \big\|\ol{A(T_0+EI_{\cH})^{-1}B^*} \big\|_{\cB(\cK)} = \big\|\ol{B(T_0^*+EI_{\cH})^{-1}A^*} \big\|_{\cB(\cK)},\quad E>0.\no
\end{align}
We omit further details at this point.  The result \eqref{2.12} then follows from \eqref{2.8}, \eqref{2.12a}, and \eqref{2.37}. 
\end{proof}

\begin{corollary}\lb{c2.5}
Assume Hypotheses \ref{h2.1}, Hypothesis \ref{h2.3}\,$(ii)$,\,$(iii)$, equation \eqref{2.8}, and that $T$ defined by \eqref{2.5} is m-accretive. If there exist constants $E_1>1$, $E_2>1$, $C_1>0$, $C_2>0$, $\e>0$, such that
\begin{align}
\big\| A(T_0+EI_{\cH})^{-1/2}\big\|_{\cB(\cH,\cK)}&\leq C_1(\ln (E))^{-1/2+\e}, \quad E \geq E_1,\lb{2.50a}\\
\big\| \overline{(T_0+EI_{\cH})^{-1/2}B^*} \big\|_{\cB(\cK,\cH)}&\leq C_2(\ln (E))^{-1/2+\e}, 
\quad E \geq E_2,       \lb{2.51a}
\end{align}
then \eqref{2.12} holds.
\end{corollary}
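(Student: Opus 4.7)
The plan is to deduce this corollary directly from Corollary \ref{c2.5a} by showing that the logarithmic decay bounds \eqref{2.50a}, \eqref{2.51a} imply the three convergence conditions comprising Hypothesis \ref{h2.3}$(iii)$, namely \eqref{2.10}, \eqref{2.11}, and \eqref{2.25ccc}. Combined with Hypothesis \ref{h2.1}, Hypothesis \ref{h2.3}$(ii)$, equation \eqref{2.8}, and the assumed m-accretivity of $T$, this completes all the hypotheses of Corollary \ref{c2.5a}, and \eqref{2.12} then follows at once.

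All three verifications will reduce to a single pointwise product bound. Multiplying \eqref{2.50a} and \eqref{2.51a} (with $E$ replaced by $\lambda + E$) yields, for $E \geq \max(E_1,E_2)$ and $\lambda \geq 0$,
\begin{equation*}
\big\|A(T_0+(\lambda+E)I_{\cH})^{-1/2}\big\|_{\cB(\cH,\cK)}
\big\|\overline{(T_0+(\lambda+E)I_{\cH})^{-1/2}B^{\ast}}\big\|_{\cB(\cK,\cH)}
\leq C_1 C_2 \, (\ln(\lambda+E))^{-(1+2\e)},
\end{equation*}
where the exponent has magnitude $1+2\e > 1$. Setting $\lambda = 0$ and sending $E \to \infty$ immediately gives \eqref{2.25ccc}.

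For \eqref{2.10} and \eqref{2.11} I will split the $\lambda$-integral at $\lambda = E$. On $[R,E]$, the bound $\ln(\lambda+E) \geq \ln E$ will control the integrand by $(\ln E)^{-(1+2\e)}/\lambda$, yielding a contribution of size $\ln(E/R)\,(\ln E)^{-(1+2\e)}$, which is finite for each $E\geq E_0$ and of order $(\ln E)^{-2\e}$ as $E\to\infty$. On $[E,\infty)$, the bound $\ln(\lambda+E) \geq \ln\lambda$ combined with the change of variables $u = \ln\lambda$ will reduce the tail to $\int_{\ln E}^{\infty} u^{-(1+2\e)}\,du = (2\e)^{-1}(\ln E)^{-2\e}$. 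Both pieces are thus finite (giving \eqref{2.10}) and tend to zero as $E \to \infty$ (giving \eqref{2.11}), precisely because $\e > 0$.

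No real obstacle is anticipated; the corollary is essentially a repackaging of sufficient conditions for Hypothesis \ref{h2.3}$(iii)$. The one substantive feature is that the integrability in \eqref{2.10} and the vanishing in \eqref{2.11} make strictly critical use of $\e > 0$---at $\e = 0$ both would fail---so the logarithmic decay rate assumed here lies precisely on the borderline for the additive perturbation framework of Theorem \ref{t2.4} and Corollary \ref{c2.5a}.
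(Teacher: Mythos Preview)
Your proposal is correct and follows essentially the same route as the paper: reduce to Corollary~\ref{c2.5a} by verifying the three conditions \eqref{2.10}, \eqref{2.11}, \eqref{2.25ccc} from the single pointwise product bound $C_1C_2(\ln(\lambda+E))^{-(1+2\e)}$. The only difference is in handling \eqref{2.11}: the paper takes $R=2$, uses the uniform majorant $\lambda^{-1}(\ln\lambda)^{-(1+2\e)}\in L^1((2,\infty))$ obtained from $\ln(\lambda+E)\geq\ln\lambda$, and invokes dominated convergence, whereas you split the integral at $\lambda=E$ and compute an explicit $O((\ln E)^{-2\e})$ bound for each piece. Both arguments are valid; the paper's dominated-convergence route is slightly shorter, while yours has the minor advantage of giving a quantitative rate.
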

\begin{proof}
In view of Corollary \ref{c2.5a}, it suffices to verify \eqref{2.10}, \eqref{2.11}, and \eqref{2.25ccc} for some $R>0$ and some $E_0>0$.  \eqref{2.50a} and \eqref{2.51a} yield 
\begin{align}
&\lambda^{-1}\big\| A(T_0+(\lambda+E)I_{\cH})^{-1/2}\big\|_{\cB(\cH,\cK)}\big\| 
\overline{(T_0+(\lambda+E)I_{\cH})^{-1/2}B^*} \big\|_{\cB(\cK,\cH)}\no\\
&\quad \leq \lambda^{-1}[\ln(\lambda+E)]^{-(1+2\e)}\lb{2.52a}\\
&\quad \leq \lambda^{-1}[\ln(\lambda)]^{-(1+2\e)}, \quad E \geq \max \{E_1,E_2\}.       \lb{2.53a}
\end{align}
Since \eqref{2.53a} belongs to $L^1((2,\infty);d\lambda)$, relation \eqref{2.10} holds with $R=2$ and 
$E\geq \max\{E_1,E_2\}$.  Moreover, by \eqref{2.52a},
\begin{align}
&\int_2^{\infty} d\lambda \ \lambda^{-1}\big\| A(T_0+(\lambda+E)I_{\cH})^{-1/2}\big\|_{\cB(\cH,\cK)}\big\| \overline{(T_0+(\lambda+E)I_{\cH})^{-1/2}B^*} \big\|_{\cB(\cK,\cH)}\no\\
&\quad \leq \int_2^{\infty} d\lambda \  \lambda^{-1}[\ln(\lambda+E)]^{-(1+2\e)}, 
\quad E \geq \max \{E_1.E_2\}.       \lb{2.54a}
\end{align}
Dominated convergence (applied to the r.h.s. of \eqref{2.54a}) then implies \eqref{2.11} (with $R=2$).  Finally, \eqref{2.50a}, \eqref{2.51a} and the simple estimate
\begin{equation}
\|K(-E)\|_{\cB(\cK)}\leq \big\|A(T_0+EI_{\cH})^{-1/2} \big\|_{\cB(\cH,\cK)} \big\|\ol{(T_0+EI_{\cH})^{-1/2}B^*} \big\|_{\cB(\cK,\cH)},\quad E>0,
\end{equation}  
imply the existence of a constant $E_3>1$ for which
\begin{equation}
\|K(-E)\|_{\cB(\cK)}\leq 1/2,\quad E\geq E_3.
\end{equation}
Hence, the assumptions of Hypothesis \ref{h2.3} hold with $R=2, E_0=\max\{E_1,E_2,E_3 \}$.
\end{proof}

The same strategy of proof also permits one to discuss fractional powers different from $1/2$  
(we also note the weaker assumption \eqref{2.61z}).

\begin{theorem}\lb{t2.8z}
Assume Hypothesis \ref{h2.1} and let $\alpha \in (0,1)$.  Suppose that $T_0$ is m-accretive and  
\begin{equation}\lb{2.57z}
\dom\big(T_0^{\alpha} \big)=\dom\big((T_0^*)^{\alpha} \big).
\end{equation}
In addition, assume that $A:\dom(A)\rightarrow \cK$, $\dom(A)\subseteq \cH$, is a closed, linear 
operator from $\cH$ to $\cK$, and $B:\dom(B)\rightarrow \cK$, $\dom(B)\subseteq \cH$, is a closed, 
linear operator from $\cH$ to $\cK$ such that
\begin{equation}\lb{2.58z}
\dom(A)\supseteq \dom\big(T_0^{1-\alpha}\big),\quad \dom(B)\supseteq \dom\big((T_0^*)^{1-\alpha}\big).
\end{equation}
If there exist constants $R>0$, $E_0>0$ for which
\begin{align}
&\int_R^{\infty} d\lambda\, \lambda^{-2\alpha}\big\|\ol{(T_0+(E+\lambda)I_{\cH})^{\alpha-1}B^*} \big\|_{\cB(\cH)}\no\\
& \hspace*{1.9cm} \times\big\|A(T_0+(E+\lambda)I_{\cH})^{\alpha-1} \big\|_{\cB(\cH)}<\infty,\quad E\geq E_0,\lb{2.59z}
\end{align}
with
\begin{align}
&\lim_{E\rightarrow \infty}\int_R^{\infty} d\lambda\, \lambda^{-2\alpha}\big\|\ol{(T_0+(E+\lambda)I_{\cH})^{\alpha-1}B^*} \big\|_{\cB(\cH)}\no\\
& \hspace*{2.7cm} \times\big\|A(T_0+(E+\lambda)I_{\cH})^{\alpha-1} \big\|_{\cB(\cH)}=0,\lb{2.60z}
\end{align}
and
\begin{equation}\lb{2.61z}
\|K(-E)\|_{\cB(\cK)}\leq 1-\delta,\quad E\geq E_0,
\end{equation}
for some $0<\delta<1$, then 
\begin{equation}\lb{2.62z}
\dom\big(T^{\alpha} \big) = \dom\big(T_0^{\alpha} \big) =\dom\big((T_0^*)^{\alpha} \big)
=\dom\big((T^*)^{\alpha} \big).
\end{equation}
\end{theorem}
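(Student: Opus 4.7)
The plan is to mimic the proof of Theorem \ref{t2.4} essentially verbatim, replacing the case $\alpha=1/2$ Kato--Balakrishnan formula \eqref{2.28} by the general Balakrishnan representation
\begin{equation*}
(T+EI_{\cH})^{-\alpha}=\frac{\sin(\pi\alpha)}{\pi}\int_0^\infty d\lambda\,\lambda^{-\alpha}(T+(E+\lambda)I_{\cH})^{-1}, \quad E>0,\; \alpha\in(0,1),
\end{equation*}
valid for any m-accretive operator $T$, and its analog with $T$ replaced by $T_0$. Substituting Kato's resolvent equation \eqref{2.6} into the integrand for $(T+EI_{\cH})^{-\alpha}$ splits the expression into $(T_0+EI_{\cH})^{-\alpha}$ plus a correction term, and the goal is to recast that correction as $(T_0+EI_{\cH})^{-\alpha}J_\alpha(E)$ for a bounded operator $J_\alpha(E)$ whose norm decays to zero as $E\to\infty$.

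Concretely, I would define the $\alpha$-analog of $J(E)$ by
\begin{equation*}
J_\alpha(E)=\int_0^\infty d\lambda\,\lambda^{-\alpha}(T_0+EI_{\cH})^{\alpha}\,\overline{(T_0+(E+\lambda)I_{\cH})^{-1}B^\ast}\,[I_{\cK}-K(-(E+\lambda))]^{-1}A(T_0+(E+\lambda)I_{\cH})^{-1},
\end{equation*}
and to control its norm I would split every inner resolvent as $(T_0+(E+\lambda)I_{\cH})^{-1}=(T_0+(E+\lambda)I_{\cH})^{-\alpha}(T_0+(E+\lambda)I_{\cH})^{\alpha-1}$, pairing the factor $(T_0+\cdots)^{\alpha-1}$ with $B^\ast$ on the left and with $A$ on the right so as to match the integrability hypotheses \eqref{2.59z}--\eqref{2.60z}. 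The $H^\infty$-calculus for m-accretive operators then yields uniform bounds
\begin{equation*}
\big\|(T_0+EI_{\cH})^{\alpha}(T_0+(E+\lambda)I_{\cH})^{-\alpha}\big\|_{\cB(\cH)}\leq M_\alpha, \qquad \big\|(T_0+(E+\lambda)I_{\cH})^{-\alpha}\big\|_{\cB(\cH)}\leq C_\alpha(E+\lambda)^{-\alpha},
\end{equation*}
paralleling \eqref{2.20} and \eqref{2.23}. Together with \eqref{2.61z} (which replaces \eqref{2.25ccc} and directly gives $\|[I_{\cK}-K(-(E+\lambda))]^{-1}\|_{\cB(\cK)}\leq\delta^{-1}$), the integrand is dominated by $C\lambda^{-\alpha}E^{-\alpha}\chi_{(0,R)}(\lambda)$ plus $C\lambda^{-2\alpha}\|\overline{(T_0+\cdots)^{\alpha-1}B^\ast}\|\|A(T_0+\cdots)^{\alpha-1}\|\chi_{(R,\infty)}(\lambda)$. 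Both pieces tend to $0$ as $E\to\infty$, the first because $\int_0^R\lambda^{-\alpha}d\lambda=R^{1-\alpha}/(1-\alpha)<\infty$, and the second by \eqref{2.60z}. Hence $\|J_\alpha(E)\|_{\cB(\cH)}\to 0$ and $\|J_\alpha(E)\|_{\cB(\cH)}<\pi/\sin(\pi\alpha)$ for all sufficiently large $E$.

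Once this is in place, the same insertion-of-identity trick used in \eqref{2.30}--\eqref{2.32} gives
\begin{equation*}
(T+EI_{\cH})^{-\alpha}=(T_0+EI_{\cH})^{-\alpha}\bigl[I_{\cH}-\tfrac{\sin(\pi\alpha)}{\pi}J_\alpha(E)\bigr],
\end{equation*}
and bounded invertibility of the bracket for large $E$ yields $\ran((T+EI_{\cH})^{-\alpha})=\ran((T_0+EI_{\cH})^{-\alpha})$, i.e.\ $\dom(T^\alpha)=\dom(T_0^\alpha)$. Invoking the same argument with $(T_0,T,A,B)$ replaced by $(T_0^*,T^*,B,A)$, exactly as in Corollary \ref{c2.5a}, produces $\dom((T^*)^\alpha)=\dom((T_0^*)^\alpha)$, and combining with the assumed equality \eqref{2.57z} yields \eqref{2.62z}.

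The main obstacle I anticipate is the rigorous justification of the two $H^\infty$-calculus estimates (the uniform boundedness of $(T_0+EI_{\cH})^{\alpha}(T_0+(E+\lambda)I_{\cH})^{-\alpha}$ and the sharp $(E+\lambda)^{-\alpha}$ decay of $\|(T_0+(E+\lambda)I_{\cH})^{-\alpha}\|$) for \emph{m-accretive, not just m-sectorial,} $T_0$, and the careful handling of the closures implicit in writing $\overline{(T_0+(E+\lambda)I_{\cH})^{-1}B^\ast}=(T_0+EI_{\cH})^{-\alpha}\,\overline{(T_0+EI_{\cH})^{\alpha}\overline{(T_0+(E+\lambda)I_{\cH})^{-1}B^\ast}}$ as an identity in $\cB(\cK,\cH)$; the latter is the fractional analog of the identity \eqref{2.30} used in the $\alpha=1/2$ case and requires a density/approximation argument on $\dom((T_0^*)^{1-\alpha})\supseteq\dom(B)$ to commute the unbounded factor $(T_0+EI_{\cH})^{\alpha}$ past the Bochner integral.
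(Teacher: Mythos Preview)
Your proposal is correct and follows essentially the same approach as the paper's proof: the same definition of $J_\alpha(E)$, the same splitting $(T_0+(E+\lambda)I_{\cH})^{-1}=(T_0+(E+\lambda)I_{\cH})^{-\alpha}(T_0+(E+\lambda)I_{\cH})^{\alpha-1}$, the same $H^\infty$-calculus bounds, the same $\chi_{(0,R)}/\chi_{(R,\infty)}$ decomposition of the majorant, and the same adjoint argument for the $(T^*)^\alpha$ domain. The obstacles you flag (the $H^\infty$-calculus for m-accretive operators and the closure manipulations) are handled in the paper simply by citing the standard references \cite{Ha06}, \cite{KW04} and proceeding formally, so you are not missing anything the paper supplies.
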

\begin{proof}
We begin by showing the first equality in \eqref{2.62z}.  To this end, define
\begin{align}
J_{\alpha}(E)&=\int_0^{\infty}d\lambda\, \lambda^{-\alpha} (T_0+EI_{\cH})^{\alpha}\ol{(T_0+(E+\lambda)I_{\cH})^{-1}B^*}\no\\
& \hspace*{1.6cm} 
\times[I_{\cK}-K(-(E+\lambda))]^{-1}A(T_0+(E+\lambda)I_{\cH})^{-1},\quad E>E_0.\lb{2.66z}
\end{align}
The integral in \eqref{2.66z} is to be viewed as a norm-convergent Bochner integral.  To actually show the integral is norm-convergent, one estimates the norm of the integrand in \eqref{2.66z} as follows:
\begin{align}
&\big\|\lambda^{-\alpha} (T_0+EI_{\cH})^{\alpha}\ol{(T_0+(E+\lambda)I_{\cH})^{-1}B^*}[I_{\cK}-K(-(E+\lambda))]^{-1}\no\\
&\quad \times A(T_0+(E+\lambda)I_{\cH})^{-1}\big\|_{\cB(\cH)}\no\\
&\quad =\big\|\lambda^{-\alpha}(T_0+EI_{\cH})^{\alpha} (T_0+(E+\lambda)I_{\cH})^{-\alpha}\ol{(T_0+(E+\lambda)I_{\cH})^{\alpha-1}B^*}\no\\
&\qquad \times[I_{\cK}-K(-(E+\lambda))]^{-1}A(T_0+(E+\lambda)I_{\cH})^{\alpha-1}(T_0+(E+\lambda)I_{\cH})^{-\alpha} \big\|_{\cB(\cH)}\no\\
&\quad \leq \lambda^{-\alpha} \big\|(T_0+EI_{\cH})^{\alpha} (T_0+(E+\lambda)I_{\cH})^{-\alpha} \big\|_{\cB(\cH)}\big\|[I_{\cK}-K(-(E+\lambda))]^{-1} \big\|_{\cB(\cK)}\no\\
&\qquad \times \big\|\ol{(T_0+(E+\lambda)I_{\cH})^{\alpha-1}B^*} \big\|_{\cB(\cK,\cH)}\big\|A(T_0+(E+\lambda)I_{\cH})^{\alpha-1} \big\|_{\cB(\cH,\cK)}\no\\
&\qquad \times \big\|(T_0+(E+\lambda)I_{\cH})^{-\alpha} \big\|_{\cB(\cH)},\quad \lambda>0,\, E>0.\lb{2.67z}
\end{align}
The $H^{\infty}$-calculus for m-accretive operators (cf., e.g., \cite[Sect.~7.1.3]{Ha06}, \cite[Sect.~11]{KW04}), implies the existence of a constant $M_1>0$ for which
\begin{align}
\big\|(T_0+EI_{\cH})^{\alpha} (T_0+(E+\lambda)I_{\cH})^{-\alpha} \big\|_{\cB(\cH)}\leq M_1,\quad \lambda>0,\, E\geq E_0.\lb{2.68z}
\end{align}
Moreover,
\begin{align}
\big\|[I_{\cK}-K(-(E+\lambda))]^{-1} \big\|_{\cB(\cK)}\leq \delta^{-1},\quad \lambda>0,\, E\geq E_0.\lb{2.69z}
\end{align}
Using \eqref{2.68z} and \eqref{2.69z}, the estimate in \eqref{2.67z} may be continued, and the norm of the integrand in \eqref{2.66z} is bounded by
\begin{align}
&\delta^{-1}M_1\big\|\ol{(T_0+(E+\lambda)I_{\cH})^{\alpha-1}B^*} \big\|_{\cB(\cK,\cH)}\big\|A(T_0+(E+\lambda)I_{\cH})^{\alpha-1} \big\|_{\cB(\cH,\cK)}\no\\
&\quad \times\big\|(T_0+(E+\lambda)I_{\cH})^{-\alpha} \big\|_{\cB(\cH)},\quad \lambda>0,\, E\geq E_0.\lb{2.70z}
\end{align}
Again, the $H^{\infty}$-calculus for m-accretive operators implies
\begin{align}
\begin{split}
\big\|(T_0+(E+\lambda)I_{\cH})^{-\alpha}\big\|_{\cB(\cH)}\leq
\begin{cases}
E^{-\alpha}, & 0<\lambda<R, \\
\lambda^{-\alpha}, & R\leq\lambda<\infty, \end{cases} 
 \quad E>0. &
 \end{split}\lb{2.71z}
\end{align}
Inserting the estimate in \eqref{2.71z} into \eqref{2.70z} implies
\begin{align}
&\big\|\lambda^{-\alpha} (T_0+EI_{\cH})^{\alpha}\ol{(T_0+(E+\lambda)I_{\cH})^{-1}B^*}[I_{\cK}-K(-(E+\lambda))]^{-1}\no\\
&\qquad \times A(T_0+(E+\lambda)I_{\cH})^{-1}\big\|_{\cB(\cH)}\no\\
&\quad \leq M_1\delta^{-1}E^{-\alpha}\lambda^{-\alpha}\chi_{(0,R)}(\lambda)\no\\
&\qquad +M_1\delta^{-1}\lambda^{-2\alpha}\chi_{(R,\infty)}(\lambda)\big\|\ol{(T_0+(E+\lambda)I_{\cH})^{\alpha-1}B^*} \big\|_{\cB(\cK,\cH)}\no\\
&\qquad \times\big\|A(T_0+(E+\lambda)I_{\cH})^{\alpha-1} \big\|_{\cB(\cH,\cK)},\quad \lambda>0,\, E\geq E_0.\lb{2.72z}
\end{align}
By hypothesis, the majorizing term in \eqref{2.72z} lies in $L^1((0,\infty);d\lambda)$.  Thus, the integral in \eqref{2.66z} is norm convergent and 
\begin{align}
\|J_{\alpha}(E)\|_{\cB(\cH)}&\leq \frac{M_1}{1-\alpha}\delta^{-1}R^{1-\alpha}E^{-\alpha}\no\\
&\quad  + M_1\delta^{-1}\int_R^{\infty}d\lambda\, \lambda^{-2\alpha}\big\|A(T_0+EI_{\cH})^{\alpha-1}\big\|_{\cB(\cH,\cK)}       \lb{2.73z} \\
&\hspace*{3cm} \times\big\|\overline{(T_0+EI_{\cH})^{-\alpha}B^{\ast}}\big\|_{\cB(\cK,\cH)},\quad E\geq E_0.     \no 
\end{align}
In particular, \eqref{2.60z} and \eqref{2.73z} show that
\begin{equation}\lb{2.74z}
\lim_{E\rightarrow 0}\big\|J_{\alpha}(E)\|_{\cB(\cH)}=0,
\end{equation}
hence there exists a constant $E_1>0$ for which
\begin{equation}\lb{2.75z}
\big\|J_{\alpha}(E)\|_{\cB(\cH)}<1,\quad E\geq E_1.
\end{equation}
Returning to fractional powers of m-accretive operators, we recall the representation for the $\alpha$-th power of the resolvent of an m-accretive operator (cf., e.g., \cite[Remark V.3.50]{Ka80}),
\begin{equation}\lb{2.76z}
(T_0+EI_{\cH})^{-\alpha}=\frac{\sin(\pi \alpha)}{\pi}\int_0^{\infty} d\lambda\, \lambda^{-\alpha}(T_0+(E+\lambda)I_{\cH})^{-1},
\quad E>0,
\end{equation}
Insing Kato's resolvent identity (cf.\ \eqref{2.6}) into \eqref{2.76z} yields,
\begin{align}
(T+EI_{\cH})^{-\alpha} &= \frac{\sin(\pi \alpha)}{\pi}\int_0^{\infty}d\lambda\, \lambda^{-\alpha} \big[(T_0+(E+\lambda)I_{\cH})^{-1}\nonumber\\
& \hspace*{2.8cm}
-\overline{(T_0+(E+\lambda)I_{\cH})^{-1}B^*}[I_{\cK}-K(-(E+\lambda))]^{-1}\no\\
& \hspace*{2.8cm} \quad \times A(T_0+(E+\lambda)I_{\cH})^{-1} \big],\no\\
&=\frac{\sin(\pi \alpha)}{\pi}\int_0^{\infty} d\lambda\, \lambda^{-\alpha} (T_0+(E+\lambda)I_{\cH})^{-1}\no\\
&\quad  - \frac{\sin(\pi \alpha)}{\pi}\int_0^{\infty}d\lambda\, \lambda^{-\alpha} \ol{(T_0+(E+\lambda)I_{\cH})^{-1}B^*}\no\\
&\hspace*{2.7cm} \times[I_{\cK}-K(-(E+\lambda))]^{-1}A(T_0+(E+\lambda)I_{\cH})^{-1}\no\\
&= (T_0+EI_{\cH})^{-\alpha}\bigg[I_{\cH} -\frac{\sin(\pi \alpha)}{\pi}J_{\alpha}(E)\bigg],\quad  E \geq E_0.\lb{2.77z}
\end{align}
Since the factor $[\cdots]$ containing $J_{\alpha}(E)$ is boundedly invertible for $E\geq E_1$, \eqref{2.77z} implies
\begin{align}
\ran\big((T+EI_{\cH})^{-\alpha}\big) = \ran\big((T_0+EI_{\cH})^{-\alpha} \big),\quad E\geq \max\{E_0,E_1\}.\lb{2.78z}
\end{align}
Consequently,
\begin{equation}\lb{2.79z}
\dom\big((T+EI_{\cH})^{\alpha} \big) = \dom\big((T_0+EI_{\cH})^{\alpha} \big), \quad E\geq \max\{E_0,E_1\},
\end{equation}
which implies
\begin{equation}\lb{2.80z}
\dom\big(T^{\alpha} \big) = \dom\big(T_0^{\alpha} \big),
\end{equation}
by applying 
\begin{equation}\lb{2.81z}
\begin{split}
\dom\big((T+EI_{\cH})^{1/2}\big)&=\dom\big(T^{1/2}\big),  \\
\dom\big((T_0+EI_{\cH})^{1/2}\big)&=\dom\big(T_0^{1/2}\big),
\end{split}
\quad E>0.
\end{equation}
By symmetry with respect to $T_0$ and $T_0^*$ in our current hypotheses, the proof of 
\eqref{2.80z} also implies 
\begin{equation}\lb{2.82z}
\dom\big((T^*)^{\alpha}\big)=\dom\big((T_0^*)^{\alpha}\big).
\end{equation}
The proof of \eqref{2.82z} mirrors that of \eqref{2.80z}, replacing $T_0$ by $T_0^*$ and 
$T$ by $T^*$ (interchanging the roles of $A$ and $B$) throughout \eqref{2.66z}--\eqref{2.81z}.  Here, one employs the adjoint resolvent equation \eqref{A.1} and the fact that norms are preserved under taking adjoints:
\begin{align}
&\big\|A(T_0+EI_{\cH})^{\alpha-1}\big\|_{\cB(\cH,\cK)} = \big\|\ol{(T_0^*+EI_{\cH})^{\alpha-1}A^*}\big\|_{\cB(\cK,\cH)}, \no\\
&\big\|\overline{(T_0+EI_{\cH})^{1-\alpha}B^{\ast}}\big\|_{\cB(\cK,\cH)} = \big\|B(T_0^*+EI_{\cH})^{1-\alpha}\big\|_{\cB(\cH,\cK)},\lb{2.83z}\\
& \big\|\ol{A(T_0+EI_{\cH})^{-1}B^*} \big\|_{\cB(\cK)} = \big\|\ol{B(T_0^*+EI_{\cH})^{-1}A^*} \big\|_{\cB(\cK)},\quad E>0.\no
\end{align}
We omit further details at this point.
\end{proof}

\begin{remark}\lb{r2.7}
The assumption in Theorems \ref{t2.4} and \ref{t2.8z}, and Corollaries \ref{c2.5a} 
and \ref{c2.5}, that $T_0$ and $T$ 
are m-accretive may be replaced by the assumption that $T_0$ and $T$ are {\it m-sectorial}. 
If $T_0$ and $T$ are m-sectorial, $T_0$ and $T$ are {\it quasi-m-accretive}:  there exists a 
$t>0$ such that $T_0+tI_{\cH}$ and $T+tI_{\cH}$ are m-accretive.  Since the assumptions 
\eqref{2.10} and \eqref{2.11} on $T_0$ remain valid with $T_0$ replaced by $T_0+tI_{\cH}$, 
the proof of Theorem \ref{t2.4} can be carried out with $T_0$ (resp., $T$) replaced by 
$T_0+tI_{\cH}$ (resp., $T+tI_{\cH}$) to show that
\begin{equation}\lb{2.50}
\dom\big((T+tI_{\cH})^{1/2}\big) = \dom\big((T_0+tI_{\cH})^{1/2}\big) = 
\dom\big(((T+tI_{\cH})^*)^{1/2}\big).
\end{equation}
\end{remark}

\begin{lemma}\lb{l2.8}
Let $S$ and $T$ be m-accretive operators in $\cH$ and suppose that 
\begin{equation}\lb{2.52}
\dom\big(T^{1/2}\big)\subseteq \dom\big(S^{1/2}\big). 
\end{equation}
Then there exists a constant $C>0$ such that
\begin{equation}\lb{2.53}
\sup_{E\geq 1}\big\|(S+EI_{\cH})^{1/2}(T+EI_{\cH})^{-1/2}\big\|_{\cB(\cH)}\leq C.
\end{equation}
\end{lemma}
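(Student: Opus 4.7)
My plan is to split the argument into two stages: first boundedness at $E=1$ via the closed graph theorem, then uniform boundedness for $E\geq 1$ via a rescaling combined with the Banach--Steinhaus principle.

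For Stage 1, I would argue as follows. Since $T$ is m-accretive, $(T+I_{\cH})^{-1/2}\in\cB(\cH)$ with range equal to $\dom((T+I_{\cH})^{1/2})=\dom(T^{1/2})$, and by hypothesis \eqref{2.52} this range lies in $\dom(S^{1/2})=\dom((S+I_{\cH})^{1/2})$. Consequently the composition $(S+I_{\cH})^{1/2}(T+I_{\cH})^{-1/2}$ is everywhere defined on $\cH$; since $(S+I_{\cH})^{1/2}$ is closed (being itself m-accretive) and is composed after a bounded operator whose range lies in its domain, the product is closed, and the closed graph theorem furnishes a constant $M>0$ with $\|(S+I_{\cH})^{1/2}(T+I_{\cH})^{-1/2}\|_{\cB(\cH)}\leq M$.

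For Stage 2, the key observation is the rescaling identity
\[
(S+EI_{\cH})^{1/2}(T+EI_{\cH})^{-1/2}=(E^{-1}S+I_{\cH})^{1/2}(E^{-1}T+I_{\cH})^{-1/2},\qquad E\geq 1,
\]
together with the fact that $E^{-1}S$ and $E^{-1}T$ are again m-accretive with $\dom((E^{-1}S)^{1/2})=\dom(S^{1/2})$ and $\dom((E^{-1}T)^{1/2})=\dom(T^{1/2})$, so hypothesis \eqref{2.52} persists after rescaling. Applying Stage 1 to the rescaled pair gives boundedness at each $E\geq 1$, but with an a priori $E$-dependent constant. I plan to upgrade this to uniformity via the Banach--Steinhaus principle: it suffices to verify pointwise boundedness, i.e., $\sup_{E\geq 1}\|(E^{-1}S+I_{\cH})^{1/2}(E^{-1}T+I_{\cH})^{-1/2}f\|_{\cH}<\infty$ for each $f\in\cH$. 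This will follow from the strong convergence $(E^{-1}S+I_{\cH})^{1/2}(E^{-1}T+I_{\cH})^{-1/2}f\to f$ in $\cH$ as $E\to\infty$, which I would establish via dominated convergence applied to the Bochner integral representation \eqref{2.31} combined with the $H^{\infty}$-functional calculus for m-accretive operators on Hilbert space (cf.\ \cite[Sect.\ 7.1.3]{Ha06}, \cite[Sect.\ 11]{KW04}).

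The main obstacle is this strong convergence step, since $(E^{-1}S+I_{\cH})^{1/2}$ is unbounded and varies with $E$. I plan to handle it by decomposing
\[
(E^{-1}S+I_{\cH})^{1/2}(E^{-1}T+I_{\cH})^{-1/2}f-f=\bigl[(E^{-1}S+I_{\cH})^{1/2}-I_{\cH}\bigr](E^{-1}T+I_{\cH})^{-1/2}f+\bigl[(E^{-1}T+I_{\cH})^{-1/2}-I_{\cH}\bigr]f,
\]
exploiting that $(E^{-1}T+I_{\cH})^{-1/2}f\in\dom(T^{1/2})\subseteq\dom(S^{1/2})$ for all $E\geq 1$, and estimating each summand via the resolvent calculus and dominated convergence inside the Bochner integral representations of the square roots.
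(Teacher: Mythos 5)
Your Stage 1 is fine and is exactly the paper's step \eqref{2.78bbb}: $S^{1/2}(T+I_{\cH})^{-1/2}$ is everywhere defined by \eqref{2.52} and \eqref{2.77bbb}, closed, hence bounded. The rescaling identity in Stage 2 is also correct. The gap is in how you propose to feed Banach--Steinhaus. To apply it you need $\sup_{E\geq 1}\big\|(E^{-1}S+I_{\cH})^{1/2}(E^{-1}T+I_{\cH})^{-1/2}f\big\|_{\cH}<\infty$ for \emph{every} $f\in\cH$, and you want to extract this from strong convergence to $f$. But look at the first summand in your decomposition, $\big[(E^{-1}S+I_{\cH})^{1/2}-I_{\cH}\big]g_E$ with $g_E:=(E^{-1}T+I_{\cH})^{-1/2}f$. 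The vector $g_E$ moves with $E$, its limit is $f$, and a generic $f\in\cH$ does \emph{not} lie in $\dom\big(S^{1/2}\big)$; so you cannot split off $\big[(E^{-1}S+I_{\cH})^{1/2}-I_{\cH}\big]f$, and you cannot pass the limit through the unbounded operator acting on $g_E-f\to 0$. The only non-circular route to convergence is to prove it first on the dense set $\dom\big(T^{1/2}\big)$ (there $\big\|\big[(E^{-1}S+I_{\cH})^{1/2}-I_{\cH}\big]g_E\big\|_{\cH}\lesssim E^{-1/2}\big\|S^{1/2}g_E\big\|_{\cH}\lesssim E^{-1/2}\big\|(T+I_{\cH})^{1/2}f\big\|_{\cH}\to 0$) and then extend by density --- but that extension requires precisely the uniform operator-norm bound \eqref{2.53} you are trying to prove. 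As stated, the argument is circular. (A secondary point: convergence as $E\to\infty$ alone does not control $\sup_{E\in[1,E_0]}$ without some local boundedness in $E$.)

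The irony is that if you run the estimate for general $f\in\cH$ you get, via the $H^{\infty}$-calculus and Stage 1,
\begin{equation*}
\big\|\big[(E^{-1}S+I_{\cH})^{1/2}-I_{\cH}\big]g_E\big\|_{\cH}\lesssim E^{-1/2}\big\|S^{1/2}(T+I_{\cH})^{-1/2}\big\|_{\cB(\cH)}\big\|(T+I_{\cH})^{1/2}(T+EI_{\cH})^{-1/2}\big\|_{\cB(\cH)}E^{1/2}\|f\|_{\cH}\leq M\|f\|_{\cH},
\end{equation*}
which is only $O(1)$, not $o(1)$ --- but it is $O(1)$ \emph{uniformly in $f$}, so it already yields \eqref{2.53} directly and Banach--Steinhaus becomes superfluous. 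I recommend dropping the convergence/Banach--Steinhaus scaffolding and arguing as the paper does: pair $(S+EI_{\cH})^{1/2}(T+EI_{\cH})^{-1/2}g$ against $f$, move $(S^*+EI_{\cH})^{-1/2}$ onto $f$, split $(S+EI_{\cH})=S+EI_{\cH}$, and use the three uniform bounds $\big\|(S^*)^{1/2}(S^*+EI_{\cH})^{-1/2}\big\|_{\cB(\cH)}\leq C_1$, $\big\|E^{1/2}(A+EI_{\cH})^{-1/2}\big\|_{\cB(\cH)}\leq 1$ for $A\in\{S^*,T\}$, and $\big\|S^{1/2}(T+EI_{\cH})^{-1/2}\big\|_{\cB(\cH)}\leq M$ (your Stage 1 composed with $\big\|(T+I_{\cH})^{1/2}(T+EI_{\cH})^{-1/2}\big\|_{\cB(\cH)}\leq 1$). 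That gives $C=C_1M+1$.
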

\begin{proof}
Employing the $H^{\infty}$-calculus for m-accretive operators (cf.\ \cite[Sect.\ 7.1.3]{Ha06}, 
\cite[Sect.\ 11]{KW04}) once again, 
\begin{equation}\lb{2.70bbb}
\big\|(S^*)^{1/2}(S^*+EI_{\cH})^{-1/2}\big\|_{\cB(\cH)} \leq \sup_{\substack{\lambda\in \bbC\\ \Re(\lambda)>0}}\frac{|\lambda|^{1/2}}{|\lambda + E|^{1/2}}, \quad E>0.
\end{equation}
In light of the trivial estimate,
\begin{align}
|\lambda + E| &= [(\Re(\lambda)+E)^2 + \Im(\lambda)^2]^{1/2} \no\\
&\geq [(\Re(\lambda)+1)^2 + \Im(\lambda)^2]^{1/2} \lb{2.71bbb}\\
&= |\lambda + 1|,\quad \lambda\in \bbC,\, \Re(\lambda)\geq 0,\, E\geq 1,\no
\end{align}
\eqref{2.70bbb} implies
\begin{equation}\lb{2.72bbb}
\big\|(S^*)^{1/2}(S+EI_{\cH})^{-1/2}\big\|_{\cB(\cH)} \leq C_1,\quad E\geq 1,
\end{equation}
where
\begin{equation}\lb{2.73bbb}
C_1:=\sup_{\substack{\lambda\in \bbC\\ \Re(\lambda)>0}}\frac{|\lambda|^{1/2}}{|\lambda + 1|^{1/2}}<\infty.
\end{equation}
Applying the $H^{\infty}$-calculus once more, 
\begin{align}
\big\|E^{1/2}(A+EI_{\cH})^{-1/2} \big\|_{\cB(\cH)}\leq \sup_{\substack{\lambda \in \bbC \\ \Re(\lambda)>0}}\frac{|E|^{1/2}}{|\lambda + E|^{1/2}}\quad E>0,\, A\in \{S^*,T\}.\lb{2.74bbb}
\end{align}
Then
\begin{equation}\lb{2.75bbb}
\frac{|E|^{1/2}}{|\lambda + E|^{1/2}}\leq \frac{|E|^{1/2}}{|\Re(\lambda)+E|^{1/2}}\leq \frac{|E|^{1/2}}{|E|^{1/2}}\leq 1,\quad \lambda \in \bbC,\, \Re(\lambda)>0,\, E>0,
\end{equation}
implies
\begin{equation}\lb{2.76bbb}
\big\|E^{1/2}(A+EI_{\cH})^{-1/2} \big\|_{\cB(\cH)}\leq 1,\quad E> 0,\, A\in \{S^*,T\}.
\end{equation}
Now consider the operator $S^{1/2}(T+I_{\cH})^{-1/2}$ which is defined on all of $\cH$ 
by the well-known fact that
\begin{equation}\lb{2.77bbb}
\dom\big((T+\varepsilon I_{\cH})^{1/2} \big)=\dom\big(T^{1/2}\big),\quad \varepsilon>0.
\end{equation}
In addition, $S^{1/2}(T+I_{\cH})^{-1/2}$ is closed, so it is bounded by the closed graph theorem:
\begin{equation}\lb{2.78bbb}
\big\|S^{1/2}(T+I_{\cH})^{-1/2} \big\|_{\cB(\cH)}\leq M,
\end{equation}
for a suitable constant $M>0$.  One then obtains the following estimate:
\begin{align}
&\big|\big(f,(S+EI_{\cH})^{1/2}(T+EI_{\cH})^{-1/2}g\big)_{\cH}\big|\no\\
&\quad=\big|\big(f,(S+EI_{\cH})^{-1/2}(S+EI_{\cH})(T+EI_{\cH})^{-1/2}g\big)_{\cH}\big|\no\\
&\quad=\big|\big((S^*+EI_{\cH})^{-1/2}f,(S+EI_{\cH})(T+EI_{\cH})^{-1/2}g\big)_{\cH}\big|\no\\
&\quad\leq \big|\big((S^{\ast}+EI_{\cH})^{-1/2}f,S(T+EI_{\cH})^{-1/2}g\big)_{\cH}\big|\no\\
&\qquad +\big|\big((S^*+EI_{\cH})^{-1/2}f,E(T+EI_{\cH})^{-1/2}g\big)_{\cH}\big|\no\\
&\quad=\big|\big((S^*)^{1/2}(S^*+EI_{\cH})^{-1/2}f,S^{1/2}(T+EI_{\cH})^{-1/2}g\big)_{\cH}\big|\no\\
&\qquad +\big|\big(E^{1/2}(S^*+EI_{\cH})^{-1/2}f,E^{1/2}(T+EI_{\cH})^{-1/2}g\big)_{\cH}\big|\no\\
&\quad\leq \big\|(S^*)^{1/2}(S^*+EI_{\cH})^{-1/2}\big\|_{\cB(\cH)}\big\|S^{1/2}(T+EI_{\cH})^{-1/2}\big\|_{\cB(\cH)}\no\\
&\qquad +\big\|E^{1/2}(S^*+EI_{\cH})^{-1/2}\big\|_{\cB(\cH)}\big\|E^{1/2}(T+EI_{\cH})^{-1/2}\big\|_{\cB(\cH)}, \no \\ 
& \quad\leq  C_1 \big\|S^{1/2}(T+I_{\cH})^{-1/2} \big\|_{\cB(\cH)} \big\|(T+I_{\cH})^{1/2}(T+EI_{\cH})^{-1/2} \big\|_{\cB(\cH)} + 1\no\\
&\quad \leq C_1M+1,\quad f,g\in \cH,\, \|f\|_{\cH}=\|g\|_{\cH}=1,\, E\geq 1,\lb{2.79bbb}
\end{align}
where we used the $H^{\infty}$-calculus once more to estimate
\begin{equation}\lb{2.80bbb}
\big\|(T+I_{\cH})^{1/2}(T+EI_{\cH})^{-1/2} \big\|_{\cB(\cH)}\leq \sup_{\substack{\lambda \in \bbC \\ \Re(\lambda)>0}}\frac{|\lambda +1|^{1/2}}{|\lambda + E|^{1/2}}\leq 1,\quad E\geq 1.
\end{equation}
Thus, \eqref{2.79bbb} implies \eqref{2.53} with $C=C_1M+1$.
\end{proof}

\begin{corollary}\lb{c2.9}
Let $S$ be a densely defined, closed, linear operator in $\cH$. Suppose that 
$T_j$, $j=1,2$, are m-accretive operators in $\cH$, with
\begin{equation}\lb{a4.26.1}
\dom(T_1^{1/2})=\dom(T_2^{1/2})\subseteq \dom(S).
\end{equation}
Then there exist constants $0<c\leq C<\infty$ such that
\begin{align}
\begin{split} 
& c\big\|S(T_1+EI_{\cH})^{-1/2}\big\|_{\cB(\cH)} 
\leq \big\|S(T_2+EI_{\cH})^{-1/2}\big\|_{\cB(\cH)}    \\
& \quad \leq C\big\|S(T_1+EI_{\cH})^{-1/2}\big\|_{\cB(\cH)}, \quad E\geq 1.     \lb{a4.26.2}
\end{split} 
\end{align}
\end{corollary}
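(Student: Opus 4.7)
The strategy is a straightforward factorization argument that reduces the comparison to two applications of Lemma~\ref{l2.8}, with the roles of $T_1$ and $T_2$ interchanged. Before carrying this out, I first verify that the quantities in \eqref{a4.26.2} are finite. Since
\begin{equation*}
\ran\big((T_j+EI_{\cH})^{-1/2}\big)=\dom\big((T_j+EI_{\cH})^{1/2}\big)=\dom\big(T_j^{1/2}\big)\subseteq\dom(S), \quad j\in\{1,2\},
\end{equation*}
the product $S(T_j+EI_{\cH})^{-1/2}$ is everywhere defined on $\cH$; being the composition of the bounded operator $(T_j+EI_{\cH})^{-1/2}$ with the closed operator $S$, it is itself closed, and hence bounded by the closed graph theorem.

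The heart of the proof is the operator identity
\begin{equation*}
S(T_2+EI_{\cH})^{-1/2} = \big[S(T_1+EI_{\cH})^{-1/2}\big]\big[(T_1+EI_{\cH})^{1/2}(T_2+EI_{\cH})^{-1/2}\big], \quad E\geq 1,
\end{equation*}
which holds as an equality of bounded operators on $\cH$: for $f\in\cH$, setting $g:=(T_2+EI_{\cH})^{-1/2}f$ and using $g\in\dom\big(T_2^{1/2}\big)=\dom\big(T_1^{1/2}\big)$, the vector $h:=(T_1+EI_{\cH})^{1/2}g$ is well-defined and satisfies $g=(T_1+EI_{\cH})^{-1/2}h$, so that $Sg=[S(T_1+EI_{\cH})^{-1/2}]h$, yielding the asserted identity. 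The same reasoning gives the symmetric identity with $T_1$ and $T_2$ interchanged.

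I then apply Lemma~\ref{l2.8} twice---first with the lemma's $(S,T)$ equal to $(T_1,T_2)$, and then with $(T_2,T_1)$. In both cases the domain inclusion $\dom(T^{1/2})\subseteq\dom(S^{1/2})$ required by the lemma is satisfied (in fact as an equality) in view of \eqref{a4.26.1}. This produces constants $C_1,C_2>0$ for which
\begin{equation*}
\sup_{E\geq 1}\big\|(T_1+EI_{\cH})^{1/2}(T_2+EI_{\cH})^{-1/2}\big\|_{\cB(\cH)}\leq C_1, \qquad \sup_{E\geq 1}\big\|(T_2+EI_{\cH})^{1/2}(T_1+EI_{\cH})^{-1/2}\big\|_{\cB(\cH)}\leq C_2.
\end{equation*}
Taking operator norms in the displayed factorization yields $\big\|S(T_2+EI_{\cH})^{-1/2}\big\|_{\cB(\cH)}\leq C_1\big\|S(T_1+EI_{\cH})^{-1/2}\big\|_{\cB(\cH)}$ for $E\geq 1$; the symmetric factorization gives the reverse bound with constant $C_2$, and setting $C:=C_1$ and $c:=C_2^{-1}$ yields \eqref{a4.26.2}. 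I do not anticipate any substantial obstacle; the only mild subtlety is the rigorous justification of the operator-product identity via the domain inclusions, which is precisely what the symmetry assumption $\dom\big(T_1^{1/2}\big)=\dom\big(T_2^{1/2}\big)$ is tailored to handle.
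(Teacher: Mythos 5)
Your proof is correct and follows essentially the same route as the paper: factor $S(T_2+EI_{\cH})^{-1/2}$ through $(T_1+EI_{\cH})^{1/2}(T_2+EI_{\cH})^{-1/2}$ and invoke Lemma~\ref{l2.8} to bound that factor uniformly in $E\geq 1$ (the paper handles one inequality and appeals to symmetry, whereas you write out both directions). Your additional verifications -- the closed-graph argument for boundedness of $S(T_j+EI_{\cH})^{-1/2}$ and the domain-based justification of the operator factorization -- are details the paper leaves implicit, and they are carried out correctly.
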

\begin{proof}
By symmetry (in $T_1$ and $T_2$), it suffices to show the existence of a $C>0$ for which the second inequality in \eqref{a4.26.2} holds.  Since $\dom(T_1^{1/2})=\dom(T_2^{1/2})$, Lemma \ref{l2.8} provides for the existence of a constant $C$ such that
\begin{equation}\lb{a4.26.3}
\big\|(T_1+EI_{\cH})^{1/2}(T_2+EI_{\cH})^{-1/2} \big\|_{\cB(\cH)}\leq C, \quad E\geq 1.
\end{equation}
Hence, one estimates
\begin{align}
\big\|S(T_2+EI_{\cH})^{-1/2} \big\|_{\cB(\cH)} 
& \leq \big\|S(T_1+EI_{\cH})^{-1/2} \big\|_{\cB(\cH)}    \no \\ 
& \quad \times \big\|(T_1+EI_{\cH})^{1/2}(T_2+EI_{\cH})^{-1/2} \big\|_{\cB(\cH)}\no\\
& \leq C\big\|S(T_1+EI_{\cH})^{-1/2} \big\|_{\cB(\cH)}, \quad E\geq 1.\lb{a4.26.4}
\end{align}
\end{proof}

\begin{lemma}\lb{l2.10}
Let $S$ and $T$ denote non-negative self-adjoint operators in $\cH$ with $\dom\big(T^{1/2}\big)\subseteq \dom\big(S^{1/2}\big)$.  Then items $(i)$ and $(ii)$ below are equivalent: \\
$(i)$ There exist constants $M_1>0$, $E_0 \geq 1$, and $\beta_1>0$ such that
\begin{equation}\lb{4.24.1}
\big\|S^{1/2}(T+EI_{\cH})^{-1/2} \big\|_{\cB(\cH)}\leq M_1 E^{-\beta_1}, \quad E \geq E_0.
\end{equation}
$(ii)$  There exist constants $M_2>0$, $\varepsilon_0>0$, and $\beta_2>0$ such that
\begin{align}
\big\|S^{1/2}f\big\|_{\cH}\leq \varepsilon \big\|T^{1/2}f\big\|_{\cH}+M_2\varepsilon^{-\beta_2}\|f\|_{\cH},\quad 0<\varepsilon<\varepsilon_0,\; f\in \dom\big(T^{1/2}\big).\lb{4.24.2}
\end{align}
\end{lemma}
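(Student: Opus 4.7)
The plan is to prove both implications by parameterized substitution $f = (T+EI_{\cH})^{-1/2}g$ (in one direction) and $g = (T+EI_{\cH})^{1/2}f$ (in the other), exploiting the fundamental identity
\begin{equation}
\big\|(T+EI_{\cH})^{1/2}f\big\|_{\cH}^2 = \big\|T^{1/2}f\big\|_{\cH}^2 + E\|f\|_{\cH}^2, \quad f \in \dom\big(T^{1/2}\big),\; E > 0,
\end{equation}
which follows directly from the spectral theorem for the nonnegative self-adjoint operator $T$. After making the substitution, one optimizes over the remaining free parameter.

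For $(i) \Rightarrow (ii)$: Given $f \in \dom(T^{1/2})$, set $g = (T+EI_{\cH})^{1/2}f$ so that $f = (T+EI_{\cH})^{-1/2}g$. Applying $(i)$ and then $\|g\|_{\cH} \leq \|T^{1/2}f\|_{\cH} + E^{1/2}\|f\|_{\cH}$ yields
\begin{equation}
\big\|S^{1/2}f\big\|_{\cH} \leq M_1 E^{-\beta_1}\big\|T^{1/2}f\big\|_{\cH} + M_1 E^{1/2-\beta_1}\|f\|_{\cH}, \quad E \geq E_0.
\end{equation}
Setting $\varepsilon = M_1 E^{-\beta_1}$ (so $\varepsilon \to 0^+$ as $E \to \infty$) and eliminating $E$ yields the second-term coefficient $M_1^{1/(2\beta_1)}\varepsilon^{1-1/(2\beta_1)}$; choosing $\beta_2 = \max\{1/(2\beta_1) - 1, \, 1\}$ (and noting $\varepsilon^{1-1/(2\beta_1)} \leq \varepsilon^{-\beta_2}$ for $\varepsilon < 1$) gives $(ii)$ with an explicit $M_2 > 0$ and $\varepsilon_0 > 0$.

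For $(ii) \Rightarrow (i)$: Given $g \in \cH$, set $f = (T+EI_{\cH})^{-1/2}g \in \dom(T^{1/2})$. The spectral identity above gives $\|T^{1/2}f\|_{\cH} \leq \|g\|_{\cH}$ and $\|f\|_{\cH} \leq E^{-1/2}\|g\|_{\cH}$. Applying $(ii)$ yields
\begin{equation}
\big\|S^{1/2}(T+EI_{\cH})^{-1/2}g\big\|_{\cH} \leq \big(\varepsilon + M_2 \varepsilon^{-\beta_2} E^{-1/2}\big)\|g\|_{\cH}, \quad 0 < \varepsilon < \varepsilon_0.
\end{equation}
Minimizing the bracket in $\varepsilon$ (critical point: $\varepsilon^{\beta_2+1} = \beta_2 M_2 E^{-1/2}$) gives an optimal decay of order $E^{-1/(2(\beta_2+1))}$, hence $(i)$ with $\beta_1 = 1/(2(\beta_2+1))$ and an explicit $M_1 > 0$. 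One verifies that $E$ sufficiently large makes the optimizing $\varepsilon$ lie in $(0, \varepsilon_0)$, so the bound holds for all $E \geq E_0$ with some $E_0 \geq 1$.

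The argument is essentially routine; the only mild bookkeeping issue is ensuring $\beta_2 > 0$ strictly in the first direction (handled by taking the maximum with $1$ as above) and verifying the admissibility range of $\varepsilon$ after optimization in the second. No appeal to the $H^\infty$-calculus or to Heinz--Kato type inequalities is needed, because self-adjointness of $T$ makes the relation between $(T+EI_{\cH})^{1/2}$ and $T^{1/2}$ exact.
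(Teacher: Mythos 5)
Your proof is correct and follows essentially the same route as the paper's: the same substitutions $f=(T+EI_{\cH})^{-1/2}g$ and $g=(T+EI_{\cH})^{1/2}f$, the same spectral identity, and the same elimination of $E$ in favor of $\varepsilon$. The only (harmless) cosmetic differences are that you absorb the paper's case split $\beta_1<1/2$ versus $\beta_1\geq 1/2$ into the single choice $\beta_2=\max\{1/(2\beta_1)-1,\,1\}$, and in the converse direction you minimize exactly over $\varepsilon$ where the paper takes the power-law ansatz $\varepsilon=E^{-\alpha}$ with $\alpha(1+\beta_2)<1/2$; both yield an admissible $\beta_1>0$.
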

\begin{proof}
Assume that item $(i)$ holds.  Applying \eqref{4.24.1}, one estimates
\begin{align}
&\big\|S^{1/2}f\big\|_{\cH}^2 
=\big\|S^{1/2}(T+EI_{\cH})^{-1/2}(T+EI_{\cH})^{1/2}f \big\|_{\cH}^2\no\\
&\quad \leq \big\|S^{1/2}(T+EI_{\cH})^{-1/2} \big\|_{\cB(\cH)}^2\big\|(T+EI_{\cH})^{1/2}f\big\|_{\cH}^2\no\\
&\quad \leq M_1^2E^{-2\beta_1}\Big[\big\|T^{1/2}f \big\|_{\cH}^2+E\|f\|_{\cH}^2 \Big]\no\\
&\quad = M_1^2E^{-2\beta_1}\big\|T^{1/2}f\big\|_{\cH}^2+M_1^2E^{1-2\beta_1}\|f\|_{\cH}^2, \quad 
E \geq E_0, \; f\in \dom\big(T^{1/2}\big).    \lb{3.10} 
\end{align}
Taking square roots throughout \eqref{3.10} and using $(a+b)^{1/2}\leq a^{1/2}+b^{1/2}$, $a,b\geq 0$, 
yields
\begin{equation}\lb{3.11}
\big\|S^{1/2}f\big\|_{\cH}\leq M_1E^{-\beta_1}\big\|T^{1/2}f\big\|_{\cH}+M_1E^{\frac{1-2\beta_1}{2}}\|f\|_{\cH},\quad
E \geq E_0, \; f\in \dom\big(T^{1/2}\big).
\end{equation}
Since $E$ may be taken arbitrarily large, \eqref{3.11} shows that $S$ is infinitesimally form bounded with respect to $T$.  Indeed, setting $\e=M_1E^{-\beta_1}$, \eqref{3.11} may be expressed as 
\begin{equation}\lb{3.12}
\big\|S^{1/2}f\big\|_{\cH}\leq \e\big\|T^{1/2}f\big\|_{\cH}+M_1^{\frac{1}{2\beta_1}}\e^{\frac{2\beta_1-1}{2\beta_1}}\|f\|_{\cH},\quad
0<\e<M_1E_0^{-\beta_1}, \; f\in \dom\big(T^{1/2}\big).
\end{equation}
If $0<\beta_1<1/2$, then \eqref{3.12} implies \eqref{4.24.2}, choosing
\begin{equation}\lb{4.24.3}
\e_0=M_1E_0^{-\beta_1}, \quad M_2=M_1^{\frac{1}{2\beta_1}}, \quad 
\beta_2=\frac{1-2\beta_1}{2\beta_1}.
\end{equation}
If $\beta_1\geq 1/2$, then one further estimates \eqref{3.12} by
\begin{align}
\big\|S^{1/2}f\big\|_{\cH}\leq \e\big\|T^{1/2}f\big\|_{\cH}+M_1^{\frac{1}{2\beta_1}}\e^{-1}\|f\|_{\cH},\quad 0<\e<\min\{1,M_1E_0^{-\beta_1}\},&\lb{4.24.4}\\
\; f\in \dom\big(T^{1/2}\big),&\no
\end{align}
so that \eqref{4.24.2} and hence item $(ii)$ holds with the choice for $M_2$ given in \eqref{4.24.3} and
\begin{equation}\lb{4.24.5}
\varepsilon_0=\min\{1,M_1E_0^{-\beta_1}\}, \quad \beta_2=1.
\end{equation} 

Next, assume that item $(ii)$ holds.  Choosing $f=(T+EI_{\cH})^{-1/2}g$, $g\in \cH$, $E>0$, in \eqref{4.24.2}, one estimates using the functional calculus,
\begin{align}
&\big\|S^{1/2}(T+EI_{\cH})^{-1/2}g \big\|_{\cH}  \no \\ 
&\quad \leq \Big[\e\big\|T^{1/2}(T+EI_{\cH})^{1/2}\big\|_{\cB(\cH)}+M_2\e^{-\beta_2}\big\|(T+EI_{\cH})^{-1/2}\big\|_{\cB(\cH)}\Big] \|g\|_{\cH},\lb{4.5}\\
&\quad \leq \big[\e+M_2\e^{-\beta_2}E^{-1/2}\big]\|g\|_{\cH},\quad g\in \cH, \; 0<\e<\e_0, \; E>0,\no
\end{align}
which shows that 
\begin{equation}\lb{4.24.6}
\big\|S^{1/2}(T+EI_{\cH})^{-1/2}\big\|_{\cB(\cH)}\leq\e+M_2\e^{-\beta_2}E^{-1/2}, \quad 0<\e<\e_0, \; E>0.
\end{equation}
Choosing $\e=E^{-\alpha}$ for a fixed $\alpha$ satisfying 
\begin{equation}\lb{4.6}
0<\alpha(1+\beta_2)<1/2
\end{equation}
 and $E>\e_0^{-1/\alpha}$ in \eqref{4.24.6} yields
\begin{equation}\lb{4.7}
\big\|S^{1/2}(T+E)^{-1/2} \big\|_{\cB(\cH)}\leq E^{-\alpha}+M_2E^{\alpha\beta_2-1/2}, \quad E>\e_0^{-1/\alpha}.
\end{equation}
Condition \eqref{4.6} guarantees 
\begin{equation}\lb{4.8}
E^{\alpha\beta_2-1/2}\leq E^{-\alpha}, \quad E\geq 1.
\end{equation}
Using \eqref{4.8} to continue the estimate in \eqref{4.7}, one obtains
\begin{equation}\lb{4.24.7}
\big\|S^{1/2}(T+E)^{-1/2} \big\|_{\cB(\cH)}\leq (1+M_2)E^{-\alpha}, \quad 
E > \max\big\{1,\e_0^{-1/\alpha}\big\}.
\end{equation}
Thus, \eqref{4.24.1} follows from \eqref{4.24.7}, choosing
\begin{equation}\lb{4.24.8}
M_1=(1+M_2),\quad E_0 = \max \big\{1,\e_0^{-1/\alpha}\big\}, \quad \beta_1=\alpha.
\end{equation}
\end{proof}

Finally, we turn to another stability result which will be applied to concrete PDE situations in 
Section \ref{s4}.

\begin{hypothesis}\lb{h2.11}
$(i)$  Suppose that $T$, is an m-accretive operator in $\cH$ with 
\begin{equation}\lb{2.62}
\dom\big(T^{1/2}\big)=\dom\big((T^*)^{1/2}\big),
\end{equation}
and $A:\dom(A)\rightarrow \cK$, $\dom(A)\subseteq \cH$, is a densely defined, closed, linear operator from $\cH$ to $\cK$, and $B:\dom(B)\rightarrow \cK$, $\dom(B)\subseteq \cH$, is a densely defined, closed, linear operator from $\cH$ to $\cK$ such that
\begin{equation}\lb{2.63}
\dom(A)\supseteq \dom\big(T^{1/2}\big), \quad \dom(B)\supseteq \dom\big(T^{1/2}\big).
\end{equation}
$(ii)$ For some $($and hence for all\,$)$ $z\in \rho(T)$, the operator $-A(T-zI_{\cH})^{-1}B^{\ast}$, defined on $\dom(B^*)$, has a bounded extension in $\cK$, denoted by $K(z)$,
\begin{equation}\lb{2.64}
K(z)=-\overline{A(T-zI_{\cH})^{-1}B^*}\in \cB(\cK).
\end{equation}
$(iii)$  $1\in \rho(K(z_0))$ for some $z_0\in \rho(T)$. \\ 
$(iv)$  Define the densely defined, closed, linear operator $T_1$ in $\cH$ by
\begin{align}\lb{2.65}
& (T_1-zI_{\cH})^{-1}=(T-zI_{\cH})^{-1}-\overline{(T-zI_{\cH})^{-1}B^*}[I_{\cK}-K(z)]^{-1}A(T-zI_{\cH})^{-1},    \no \\
& \hspace*{6.2cm} z\in \{\zeta\in \rho(T) \, | \, 1\in \rho(K(\zeta))\}. 
\end{align}
$(v)$  Suppose $T_1$ defined by \eqref{2.65} is m-accretive. 
\end{hypothesis}

\begin{theorem}\lb{t2.12}
Assume that $T_0$ and $T$ satisfy Hypothesis \ref{h2.5} and and that $T$ and $T_1$ satisfy Hypothesis \ref{h2.11} for the same $A$ and $B$ as in Hypothesis \ref{h2.5}.  If 
\begin{equation}\lb{2.66}
\dom\big(T_0^{1/2}\big)=\dom\big(T^{1/2}\big),
\end{equation}
then
\begin{align}\lb{2.67}
\begin{split} 
& \dom\big(T_1^{1/2}\big) = \dom\big(T_0^{1/2}\big) = \dom\big(T^{1/2}\big)    \\
& \quad = \dom\big((T^*)^{1/2}\big) = \dom\big((T_0^*)^{1/2}\big) = \dom\big((T_1^*)^{1/2}\big).
\end{split} 
\end{align}
\end{theorem}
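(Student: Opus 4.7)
The plan is to apply Corollary \ref{c2.5a} to the pair $(T,T_1)$, with $T$ now playing the role formerly occupied by $T_0$. This will furnish
\begin{equation*}
\dom\big(T_1^{1/2}\big) = \dom\big(T^{1/2}\big) = \dom\big((T^*)^{1/2}\big) = \dom\big((T_1^*)^{1/2}\big).
\end{equation*}
Combining this with the assumed identities $\dom\big(T_0^{1/2}\big) = \dom\big((T_0^*)^{1/2}\big)$ (Hypothesis \ref{h2.5}, equation \eqref{2.8}) and $\dom\big(T_0^{1/2}\big) = \dom\big(T^{1/2}\big)$ (equation \eqref{2.66}) yields the full chain \eqref{2.67}.

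Invoking Corollary \ref{c2.5a} for the pair $(T,T_1)$ requires verifying Hypothesis \ref{h2.1} for this pair (which is immediate from Hypothesis \ref{h2.11}(i)--(iv)) and Hypothesis \ref{h2.5} with $T$ in place of $T_0$. From Hypothesis \ref{h2.11} I already obtain m-accretivity of $T$, the identity $\dom\big(T^{1/2}\big) = \dom\big((T^*)^{1/2}\big)$, and the required closedness and domain inclusions for $A$ and $B$. Only the integrability conditions \eqref{2.10}, \eqref{2.11}, and \eqref{2.25ccc}, with $T_0$ replaced by $T$, remain to be established; this will be the main step and the principal obstacle.

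The key tool for this step is Corollary \ref{c2.9}. The assumptions \eqref{2.8}, \eqref{2.62}, and \eqref{2.66} together give
\begin{equation*}
\dom\big(T_0^{1/2}\big) = \dom\big(T^{1/2}\big) \subseteq \dom(A), \qquad \dom\big((T_0^*)^{1/2}\big) = \dom\big((T^*)^{1/2}\big) \subseteq \dom(B).
\end{equation*}
Applying Corollary \ref{c2.9} with $(T_1,T_2,S) = (T_0,T,A)$ will furnish
\begin{equation*}
\big\|A(T+EI_{\cH})^{-1/2}\big\|_{\cB(\cH,\cK)} \approx \big\|A(T_0+EI_{\cH})^{-1/2}\big\|_{\cB(\cH,\cK)}, \quad E \geq 1,
\end{equation*}
and applying it with $(T_1,T_2,S) = (T_0^*,T^*,B)$, combined with the norm-preserving adjoint identities from \eqref{2.83zz}, will furnish
\begin{equation*}
\big\|\overline{(T+EI_{\cH})^{-1/2}B^*}\big\|_{\cB(\cK,\cH)} \approx \big\|\overline{(T_0+EI_{\cH})^{-1/2}B^*}\big\|_{\cB(\cK,\cH)}, \quad E \geq 1.
\end{equation*}
Multiplying these two two-sided equivalences and integrating in $\lambda$ will transfer \eqref{2.10}, \eqref{2.11}, and \eqref{2.25ccc} from $T_0$ to $T$ after possibly enlarging $R$ and $E_0$. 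Hypothesis \ref{h2.5} is then verified for $(T,A,B)$, and Corollary \ref{c2.5a} applied to $(T,T_1)$ supplies the remaining identities and completes the argument.
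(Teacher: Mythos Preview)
Your proposal is correct and follows essentially the same route as the paper: reduce to checking that Hypothesis \ref{h2.5} (specifically \eqref{2.10}, \eqref{2.11}, \eqref{2.25ccc}) holds with $T$ in place of $T_0$, then invoke Corollary \ref{c2.5a} for the pair $(T,T_1)$. The only cosmetic difference is that the paper carries out the transfer of the integrability bounds by hand via Lemma \ref{l2.8} (writing out the factorization $A(T+\cdot)^{-1/2} = A(T_0+\cdot)^{-1/2}\,(T_0+\cdot)^{1/2}(T+\cdot)^{-1/2}$ explicitly, together with the adjoint identity \eqref{2.72}), whereas you invoke the packaged version Corollary \ref{c2.9}; the underlying estimate is identical. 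One small point worth noting: Corollary \ref{c2.9} is stated for $S$ acting \emph{in} $\cH$, while you apply it with $S=A$ and $S=B$ mapping $\cH\to\cK$; the proof of Corollary \ref{c2.9} goes through verbatim in that generality (it only uses Lemma \ref{l2.8} and a trivial factorization), so this is harmless, but you might acknowledge it.
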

\begin{proof}
It suffices to verify \eqref{2.10}, \eqref{2.11}, and \eqref{2.25ccc} with $T_0$ replaced by $T$, that is,
\begin{equation}\lb{2.68}
\begin{split}
&\int_R^{\infty}d\lambda\, \lambda^{-1}\big\|A(T+(\lambda+E)I_{\cH})^{-1/2}\big\|_{\cB(\cH,\cK)}\\
&\hspace*{1.75cm} \times \big\|\overline{(T+(\lambda+E)I_{\cH})^{-1/2}B^{\ast}}\big\|_{\cB(\cK,\cH)}<\infty,   
\quad E \geq E_0,
\end{split}
\end{equation}
\begin{equation}\lb{2.69}
\begin{split}
&\lim_{E\rightarrow \infty}\int_R^{\infty}d\lambda\, \lambda^{-1}\big\|A(T+(\lambda+E)I_{\cH})^{-1/2}\big\|_{\cB(\cH,\cK)}\\
&\hspace*{2.6cm}\times \big\|\overline{(T+(\lambda+E)I_{\cH})^{-1/2}B^{\ast}}\big\|_{\cB(\cK,\cH)}=0,
\end{split}
\end{equation}
and
\begin{equation}
\lim_{E\rightarrow \infty}\big\|A(T+EI_{\cH})^{-1/2}\big\|_{\cB(\cH,\cK)}\big\|\overline{(T+EI_{\cH})^{-1/2}B^{\ast}}\big\|_{\cB(\cK,\cH)} =0,     \lb{2.69ccc}
\end{equation}
as \eqref{2.67} then follows from Corollary \ref{c2.5a} with $T$ playing the role of $T_0$ in 
Corollary \ref{c2.5a}. We note that Hypothesis \ref{h2.11} (in particular, \eqref{2.62} and \eqref{2.63}) 
guarantees that Corollary \ref{c2.5a} applies to $T$ once \eqref{2.68}, \eqref{2.69}, and \eqref{2.69ccc} are verified.

To verify \eqref{2.68}, one observes 
 \begin{align}
& \big\|A(T+(\lambda+E)I_{\cH})^{-1/2}\big\|_{\cB(\cH,\cK)} \big\|\overline{(T+(\lambda+E)I_{\cH})^{-1/2}B^{\ast}}\big\|_{\cB(\cK,\cH)}\no\\
&\quad = \big\|A(T_0+(\lambda+E)I_{\cH})^{-1/2}(T_0+(\lambda+E)I_{\cH})^{1/2}(T+(\lambda+E)I_{\cH})^{-1/2}\big\|_{\cB(\cH,\cK)}\no\\
& \qquad \times \big\|\{(T+(\lambda+E)I_{\cH})^{-1/2}(T_0+(\lambda+E)I_{\cH})^{1/2} \times    \no \\
& \qquad \quad \times 
(T_0+(\lambda+E)I_{\cH})^{-1/2}B^{\ast}\}^{\rm cl}\big\|_{\cB(\cK,\cH)}\no\\ 
& \quad = \big\|A(T_0+(\lambda+E)I_{\cH})^{-1/2}(T_0+(\lambda+E)I_{\cH})^{1/2}(T+(\lambda+E)I_{\cH})^{-1/2}\big\|_{\cB(\cH,\cK)}\no\\
& \qquad \times \big\| \big[B(T_0^*+(\lambda+E)I_{\cH})^{-1/2}(T_0^*+(\lambda+E)I_{\cH})^{1/2} \times   \no \\ 
& \qquad \quad \times 
(T^*+(\lambda+E)I_{\cH})^{-1/2}\big]^*\big\|_{\cB(\cK,\cH)}\no\\
& \quad \leq \big\|A(T_0+(\lambda+E)I_{\cH})^{-1/2}\big\|_{\cB(\cH,\cK)}\big\|B(T_0^*+(\lambda+E)I_{\cH})^{-1/2}\big\|_{\cB(\cH,\cK)}\no\\
&\qquad \times\big\|(T_0+(\lambda+E)I_{\cH})^{1/2}(T+(\lambda+E)I_{\cH})^{-1/2}\big\|_{\cB(\cH)} 
\lb{2.70} \\
&\qquad \times \big\|(T_0^*+(\lambda+E)I_{\cH})^{1/2}(T^*+(\lambda+E)I_{\cH})^{-1/2}\big\|_{\cB(\cH)},  \quad  \lambda>0,\; E>0,     \no 
 \end{align}
where for typographical reasons only, the symbol $\{\dots\}^{\rm cl}$ abbreviates the operator closure (which is usually 
denoted by just a bar in this paper).  
 In view of \eqref{2.8}, \eqref{2.62}, and \eqref{2.66}, Lemma \ref{l2.8} guarantees the existence of a constant $M>0$ for which 
\begin{equation}\lb{2.71}
\begin{split}
\big\|(T_0+EI_{\cH})^{1/2}(T+EI_{\cH})^{-1/2}\big\|_{\cB(\cH)}&\leq M^{1/2}, \\
\big\|(T_0^*+EI_{\cH})^{1/2}(T^*+EI_{\cH})^{-1/2}]\big\|_{\cB(\cH)}&\leq M^{1/2}, 
\end{split}
\quad E\geq 1.
\end{equation}
Using
\begin{equation}\lb{2.72}
\big\|B(T_0^*+EI_{\cH})^{-1/2}\big\|_{\cB(\cH,\cK)}=\big\|\overline{(T_0+EI_{\cH})^{-1/2}B^*}\big\|_{\cB(\cK,\cH)},  \quad  E>0, 
\end{equation}
and \eqref{2.71}, one further estimates \eqref{2.70} by
\begin{align}
& \big\|A(T+(\lambda+E)I_{\cH})^{-1/2}\big\|_{\cB(\cH,\cK)} \big\|\overline{(T+(\lambda+E)I_{\cH})^{-1/2}B^{\ast}}\big\|_{\cB(\cK,\cH)}\no\\
&\quad \leq M \big\|A(T_0+(\lambda+E)I_{\cH})^{-1/2}\big\|_{\cB(\cH,\cK)}\big\|\overline{(T_0+EI_{\cH})^{-1/2}B^*}\big\|_{\cB(\cK,\cH)},\lb{2.73}\\
&\hspace*{8.43cm} \lambda>0, \, E>1.\no
\end{align}
By hypothesis, $T_0$ satisfies \eqref{2.10}, \eqref{2.11}, and \eqref{2.25ccc}.  With the estimate \eqref{2.73} in hand, \eqref{2.68}, \eqref{2.69}, and \eqref{2.69ccc} follow from \eqref{2.10}, \eqref{2.11}, and \eqref{2.25ccc}, respectively. 
\end{proof}

\begin{remark} \lb{r2.13}
We note that the stability of square root domains has important consequences for trace formulas 
involving resolvent differences and symmetrized Fredholm (perturbation) determinants as 
discussed in detail in \cite{GZ12}: 
Let $A$ and $A_0$ be densely defined, closed, linear operators in $\cH$. 
Suppose there exists $t_0 \in \bbR$ such that $A + t_0 I_{\cH}$ and
$A_0 + t_0 I_{\cH}$ are of positive-type and $(A  + t_0 I_{\cH}) \in \Sect(\omega_0)$,
$(A_0  + t_0 I_{\cH}) \in \Sect(\omega_0)$ for some $\omega_0 \in [0,\pi)$. 
In addition, assume that for some $t_1 \geq t_0$,
\begin{align}
& \dom\big((A_0 + t_1 I_{\cH})^{1/2}\big) \subseteq
\dom\big((A + t_1 I_{\cH})^{1/2}\big),     \lb{7.18} \\
& \dom\big((A_0^* + t_1 I_{\cH})^{1/2}\big) \subseteq
\dom\big((A^* + t_1 I_{\cH})^{1/2}\big),     \lb{7.19} \\
& \ol{(A + t_1 I_{\cH})^{1/2} \big[(A + t_1 I_{\cH})^{-1} - (A_0 + t_1 I_{\cH})^{-1}\big]
	(A + t_1 I_{\cH})^{1/2}} \in \cB_1(\cH).    \lb{7.20}
\end{align}
Then 
\begin{align}
\begin{split}
& - \f{d}{dz} \ln\Big({\det}_{\cH}\Big(\ol{(A - z I_{\cH})^{1/2}(A_0 - z I_{\cH})^{-1}
	(A - z I_{\cH})^{1/2}}\Big)\Big)   \\
&\quad = {\tr}_{\cH}\big((A - z I_{\cH})^{-1} - (A_0 - z I_{\cH})^{-1}\big),     \lb{7.21}
\end{split}
\end{align}
for all $z \in \bbC \backslash \big(\ol{- t_0 + S_{\omega_0}}\big)$ such that 
$\ol{(A - z I_{\cH})^{1/2}(A_0 - z I_{\cH})^{-1}(A - z I_{\cH})^{1/2}}$ is boundedly invertible. 
(Analogous trace formulas can be derived for modified Fredholm determinants replacing 
$\cB_1(\cH)$ by $\cB_k(\cH)$, $k \in \bbN$, $k \geq 2$, including additional terms under 
the trace.) 

Here $A$ is said to be of {\it positive-type} if
\begin{align}
(-\infty, 0] \subset \rho(A) \, \text{ and } \, 
M_A = \sup_{t \geq 0} \big\|(1 + t) (A + t I_{\cH})^{-1}\big\|_{\cB(\cH)} < \infty,   \lb{7.7} 
\end{align}
and $A$ is called {\it sectorial of angle $\omega \in [0,\pi)$}, denoted by
$A \in \Sect(\omega)$, if
\begin{align}
\sigma(A) \subseteq \ol{S_{\omega}}  \, 
\text{ and for all $\omega' \in (\omega,\pi)$, }  \,
M(A,\omega') = \sup_{z\in \bbC\backslash \ol{S_{\omega'}}}
\big\| z (A - z I_{\cH})^{-1}\big\|_{\cB(\cH)} < \infty,       \lb{7.8} 
\end{align}
where $S_{\omega} \subset \bbC$, $\omega \in [0,\pi)$, denotes the open sector
\begin{equation}
S_{\omega} = \begin{cases} \{z\in\bbC \,|\, z \neq 0, \, |\arg(z)|<\omega\},
& \omega \in (0,\pi), \\
(0,\infty), & \omega = 0,
\end{cases}
\end{equation}
with vertex at $z=0$ along the positive real axis and opening angle $2 \omega$. Assumptions 
such \eqref{7.18}, \eqref{7.19} make it plain that control over square root domains is at the core 
for the validity of the trace formula \eqref{7.21}.  

In the special case where in addition $A$ and $A_0$ are self-adjoint in $\cH$, trace relations of 
the type \eqref{7.21} are intimately related to the notion of the spectral shift function associated 
with the pair $(A,A_0)$, and hence underscore its direct relevance to spectral and scattering 
theory for this pair (cf.\ \cite{GZ12} for details). 
\end{remark}

\begin{remark} \lb{r2.14}
We conclude this section by recalling that if $\gq^{}_T(\cdot,\cdot)$ denotes the sectorial form uniquely 
associated with m-sectorial operator $T$ (cf.\ \cite[Theorem\ VI.2.7]{Ka80}), then, as shown by 
Kato \cite{Ka62}, the fact 
$\dom\big(T^{1/2}\big) = \dom\big((T^*)^{1/2}\big)$ also implies 
\begin{equation}
\dom(\gq^{}_T) = \dom\big(T^{1/2}\big) = \dom\big((T^*)^{1/2}\big) = \dom(\gq^{}_{T^*}). 
\end{equation}
In fact, if two out of $\dom(\gq^{}_T)$, $\dom\big(T^{1/2}\big)$, $\dom\big((T^*)^{1/2}\big)$ are equal, all three are equal, see \cite{Ka62}, \cite{Li62}, and \cite[Theorem\ 8.2]{Ou05}. 
\end{remark}

\section{Applications to Divergence Form Elliptic \\ 
Partial Differential Operators}  \lb{s3}

In this section we apply the abstract formalism of Section \ref{s2} to uniformly elliptic 
second-order partial differential operators of the form $- {\rm div}(a\nabla \, \cdot \,) + V$ 
on certain open sets $\Omega \subseteq \bbR^n$, $n \in \bbN$, with appropriate 
boundary conditions on $\partial\Omega$.  

For simplicity of notation, we agree in the following to abbreviate $L^p(\Omega; d^n x)$  simply by 
$L^p(\Omega)$,  
$p \in [1,\infty) \cup \{\infty\}$, $n \in \bbN$. 

We start with some general considerations and eventually strengthen our hypotheses on $\Omega$. 

\begin{hypothesis}\lb{h3.1} 
Let $n\in\bbN$ and assume that $\Omega \subseteq \bbR^n$ is nonempty and open\footnote{If $n=1$, we assume 
that $\Omega \subseteq \bbR$ equals a nonempty open interval.}. \\
$(i)$ Suppose $a: \Omega \rightarrow \bbC^{n\times n}$ is a Lebesgue measurable, matrix-valued function which
is bounded and uniformly elliptic, that is, there exist constants 
$0 < a_1 \leq a_2 < \infty$ such that 
\begin{align}\lb{3.2}
\begin{split} 
a_1 \|\xi\|_{\bbC^n}^2 \leq \Re\big[(\xi,a(x)\xi)_{\bbC^n}\big] \, \text{ and } \, 
|(\zeta, a(x)\xi)_{\bbC^n}|\leq a_2 \|\xi\|_{\bbC^n} \|\zeta\|_{\bbC^n},&  \\  
\text{$\xi,\zeta \in \bbC^n$, a.e.\ $x\in \Omega$.}&
\end{split} 
\end{align}
$(ii)$ Denote by $\cW(\Omega)$ a closed subspace of $W^{1,2}(\Omega)$ satisfying 
\begin{equation}\lb{3.3}
C_0^{\infty}(\Omega)\subset \cW(\Omega).
\end{equation}
$(iii)$  Let $L_{a,\cW(\Omega)}$ denote the m-accretive $($in fact, m-sectorial\,$)$ operator uniquely associated with the densely defined, accretive, and closed sesquilinear form 
\begin{equation}\lb{3.4}
\gq^{}_{a,\cW(\Omega)}(f,g) = 
\int_{\Omega} d^nx \, \big((\nabla f)(x), a(x)(\nabla g)(x)\big)_{\bbC^n}, 
\quad f, g \in \dom(\gq^{}_{a,\cW(\Omega)})=\cW(\Omega).
\end{equation}
$(iv)$ Suppose that $V:\Omega\rightarrow \bbC$ is $($Lebesgue$)$ measurable and factored according to 
\begin{equation}\lb{3.13}
V(x) = u(x) v(x), \quad v(x)=|V(x)|^{1/2}, \quad u(x) = e^{i\arg(V(x))} v(x) \, 
\text{ for a.e. $x\in \Omega$,}
\end{equation}
such that 
\begin{equation}\lb{3.6}
\cW(\Omega)\subseteq \dom(v).
\end{equation}
\end{hypothesis}

The quadratic form $\gq^{}_V$ in $L^2(\Omega)$ uniquely associated with $V$ is defined by
\begin{equation}
\gq^{}_V(f,g) = \big(v f, e^{i \arg(V)} v g\big)_{L^2(\Omega)}, \quad 
f, g \in \dom(\gq^{}_V) = \dom(v).     \lb{3.7a} 
\end{equation}
Under appropriate assumptions on $V$ (see Hypotheses \ref{h3.2} and \ref{h3.3} below), the 
form sum of $\gq^{}_{a,\cW(\Omega)}$ and $\gq^{}_V$ will define a sectorial form on $\cW(\Omega)$ 
and the uniquely associated operator with $\gq^{}_{a,\cW(\Omega)} + \gq^{}_V$ will be denoted by 
$L_{a, \cW(\Omega)} +_{\gq} V$ (see also the paragraph following \eqref{B.46}).

With respect to the potential coefficient $V$ we introduce the following hypotheses:

\begin{hypothesis}\lb{h3.2} 
Let $n\in\bbN$, assume that $\Omega \subseteq \bbR^n$ is nonempty and 
open\footnote{\label{note1}If $n=1$, 
we assume that $\Omega \subseteq \bbR$ equals a nonempty open interval.}, and let 
$V\in L^p(\Omega) + L^\infty(\Omega)$ for some $p>n/2$, $p\geq 1$. 
\end{hypothesis}

In addition, we also discuss the critical $L^p$-index $p = n/2$ for $V$ for $n \geq 3$:

\begin{hypothesis}\lb{h3.3} 
Let $n \in \bbN \backslash \{2\}$, assume that $\Omega \subseteq \bbR^n$ is nonempty and 
open\footnotemark[\value{footnote}], and let $V\in L^{n/2}(\Omega) + L^\infty(\Omega)$ 
for $n \geq 3$ and $V\in L^1(\Omega) + L^\infty(\Omega)$ for $n = 1$. 
\end{hypothesis}

Here $V\in L^q(\Omega) + L^\infty(\Omega)$ means that $V$ permits a decomposition 
$V = V_q + V_{\infty}$ with $V_q \in L^q(\Omega)$ for some $q \geq 1$ and 
$V_{\infty} \in L^\infty(\Omega)$. 

\begin{remark}\lb{r3.4}
$(i)$ One notes that by assumption \eqref{3.2}, the real part of $\gq^{}_{a,\cW(\Omega)}$ is 
non-negative,
\begin{equation}\lb{4.26.1}
\Re[\gq^{}_{a,\cW(\Omega)}(f,f)]\geq a_1\|\nabla f\|^2_{L^2(\Omega)}, \quad f\in \cW(\Omega).
\end{equation}
$(ii)$ To see that the (sesquilinear) form $\gq^{}_{a,\cW(\Omega)}$ defined by \eqref{3.4} 
is closed, one recalls that by definition, $\gq^{}_{a,\cW(\Omega)}$ is closed if and only if 
$\dom(\gq^{}_{a,\cW(\Omega)})$ is closed with respect to the norm 
$\|\cdot\|_{\gq^{}_{a,\cW(\Omega)}}=(\Re[\gq^{}_{a,\cW(\Omega)}(\cdot,\cdot)] 
+ \|\cdot\|_{L^2(\Omega)}^2)^{1/2}$. Under condition \eqref{3.2}, $\|\cdot\|_{W^{1,2}(\Omega)}$ 
and $\|\cdot\|_{\gq^{}_{a,\cW(\Omega)}}$ are 
equivalent norms on $\cW(\Omega)$.  Since $\cW(\Omega)$ is assumed to be closed under 
the $\|\cdot\|_{W^{1,2}(\Omega)}$ norm, $\cW(\Omega)$ is closed under the 
$\|\cdot\|_{\gq^{}_{a,\cW(\Omega)}}$ norm and it follows that 
$\gq^{}_{a,\cW(\Omega)}$ is a closed form. 
For additional details in connection with $L_{a,\cW(\Omega)}$ associated to \eqref{3.4} (especially, in the case where $a$ is a symmetric $n \times n$ matrix), see, for 
instance, \cite{AtE97}, \cite{GGKR02}, \cite{GKR01}, \cite{Gr89}, \cite{GR89}, \cite{HDHR08}, 
\cite{HDMRS09}, \cite{HDR09}, \cite{HKR09}, \cite{KR97}, \cite{KR99}, \cite{KR00}, \cite{Ou04}, 
and \cite[Sect.\ 4.1]{Ou05}. 
\end{remark}

\begin{remark}\lb{r3.5}
Next, we briefly discuss three special cases corresponding to specific choices for 
$\cW(\Omega)$ and use this to introduce some convenient notation:   \\
($i$)  Choosing $\cW(\Omega) = W_0^{1,2}(\Omega)$ formally corresponds to imposing a {\it Dirichlet boundary condition} on $\partial \Omega$.  For simplicity of notation, we denote the corresponding operator $L_{a,W_0^{1,2}(\Omega)}$ simply by $L_{a,\Omega,D}$, that is, 
\begin{equation}\lb{4.25.1}
L_{a,\Omega,D}:=L_{a,W_0^{1,2}(\Omega)}.
\end{equation}
($ii$)  Choosing $\cW(\Omega) = W^{1,2}(\Omega)$ formally corresponds to imposing a {\it Neumann boundary condition} on $\partial \Omega$.  The corresponding operator $L_{a,W^{1,2}(\Omega)}$ will be denoted by $L_{a,\Omega,N}$, that is,  
\begin{equation}\lb{4.25.2}
L_{a,\Omega,N}:=L_{a,W^{1,2}(\Omega)}.
\end{equation}
($iii$) With $\Pi$ denoting a fixed, relatively open subset of $\partial \Omega$, consider 
\begin{align}
\begin{split} 
W^{1,2}_{\Pi}(\Omega)&= \overline{\{f|_{\Omega} \; | \; f \in C_0^{\infty}(\bbR^n); \, 
\supp \, (f) \cap (\partial \Omega \backslash \Pi) = \emptyset\}}^{W^{1,2}(\Omega)}     \lb{4.25.2a} \\
&= \ol{\big\{f|_{\Omega} \, \big|\, f \in C_0^{\infty}(\bbR^n \backslash 
(\partial \Omega \backslash \Pi))\big\}}^{W^{1,2}(\Omega)},   
\end{split} 
\end{align}
where $\overline{\cM}^{W^{1,2}(\Omega)}$ denotes the closure of 
$\cM\subseteq W^{1,2}(\Omega)$ in $W^{1,2}(\Omega)$. (One observes in connection with \eqref{4.25.2a}, that since $\partial \Omega \backslash \Pi$ is relatively closed in 
$\partial \Omega$, it is also closed in $\bbR^n$.) The corresponding operator 
$L_{a,W^{1,2}_{\Pi}(\Omega)}$ will be denoted by $L_{a,\Omega,\Pi}$, that is,  
\begin{equation}
L_{a,\Omega,\Pi}:=L_{a,W_{\Pi}^{1,2}(\Omega)},       \lb{4.25.3}
\end{equation} 
The choice of $\cW(\Omega) = W^{1,2}_{\Pi}(\Omega)$ formally corresponds 
to {\it mixed boundary conditions} with a {\it Dirichlet boundary condition} on 
$\partial \Omega \backslash \overline{\Pi}$ and a {\it Neumann-type boundary condition} on $\Pi$ 
(cf., e.g., \cite{AtE97}, \cite[Definition 2.3 and Remark 2.4]{HKR09}, \cite[Definition 2.2]{KR00}, 
\cite{Ou04}, and \cite[\S4.1]{Ou05} as well as the references and discussions therein.). 
The choice $\Pi = \emptyset$ is permitted and corresponds to the Dirichlet case $(i)$, 
\begin{equation}
W^{1,2}_{\emptyset}(\Omega) = W_0^{1,2}(\Omega). 
\end{equation}
The choice $\Pi=\partial \Omega$ is permitted as well, but we caution that in general,  
$W^{1,2}_{\partial \Omega}(\Omega) \subsetneq W^{1,2}(\Omega)$. However, under additional 
assumptions, for instance, if $\Omega$ possesses the {\it segment 
property} (cf., e.g., \cite[Theorem 3.22]{AF03}), equivalently, if $\partial \Omega$ is {\it of class $C$}  
(cf.\ \cite[Theorems\ V.4.4 and V.4.7]{EE89}), it is known that 
\begin{equation} 
W^{1,2}_{\partial \Omega}(\Omega) = W^{1,2}(\Omega).
\end{equation} 
\end{remark}

Moreover, in the particular case where $\Omega=\bbR^n$, one infers that 
$\cW(\bbR^n)=W^{1,2}(\bbR^n)$ by \eqref{3.3} and the fact that $C_0^{\infty}(\bbR^n)$ is dense 
in $W^{1,2}(\bbR^n)$ (cf., e.g., \cite[Theorem 3.22]{AF03}). We then drop the subscript 
$W^{1,2}(\bbR^n)$ and simply write
\begin{equation}\lb{4.25.4}
L_a:=L_{a,W^{1,2}(\bbR^n)}.
\end{equation}
Finally, in the special case where the matrix-valued function $a$ coincides with 
the $n\times n$ identity matrix $I_n$ a.e.\ on $\Omega$, we shall write
\begin{equation}\lb{5.4.1}
\begin{split}
& -\Delta := L_{I_n}, \quad -\Delta_{\cW(\Omega)} := L_{I_n,\cW(\Omega)},      \\
& -\Delta_{\Omega,N}:=L_{I_n,\Omega,N},\quad\quad -\Delta_{\Omega,D}:=L_{I_n,\Omega,D},\quad\quad -\Delta_{\Omega,\Pi}:=L_{I_n,\Omega,\Pi},
\end{split}
\end{equation}
where $\Delta:=\sum_{j=1}^n \partial_{x_j}^2$ denotes the usual Laplacian on 
$\mathbb{R}^n$. In particular, $-\Delta_{\cW(\Omega)}$ is self-adjoint, and 
consequently, its square root domain coincides with its form domain, that is,
\begin{equation}\lb{4.26.5}
\dom\big((-\Delta_{\cW(\Omega)})^{1/2} \big)=\cW(\Omega).
\end{equation}

Our next result uses the relations \eqref{4.26.2} as input and then proves that if $V$ is infinitesimally 
form bounded with respect to $-\Delta_{\cW(\Omega)}$, then $V$ is also infinitesimally form bounded 
with respect to $L_{a,\cW(\Omega)}$ and that the square root domains \eqref{4.26.2} remain stable 
under perturbations $V$ satisfying Hypotheses \ref{h3.1}\,$(iv)$. 

\begin{theorem}\lb{t3.6}
Assume Hypotheses \ref{h3.1} and suppose that $L_{a,\cW(\Omega)}$ satisfies
\begin{equation}\lb{4.26.2}
\dom\big(L_{a,\cW(\Omega)}^{1/2}\big)=\dom\big((L_{a,\cW(\Omega)}^*)^{1/2}\big)=\cW(\Omega). 
\end{equation} 
If there exist constants $M>0$, $\beta > 0$, and $\e_0>0$ such that for all $0<\e<\e_0$, 
\begin{equation}\lb{3.9aa}
\big\| |V|^{1/2}f \big\|_{L^2(\Omega)}^2\leq \e\big\|(-\Delta_{\cW(\Omega)})^{1/2}f\big\|_{L^2(\Omega)}^2+M\e^{-\beta}\|f\|_{L^2(\Omega)}^2,  \quad  f\in \cW(\Omega),
\end{equation}
then the following items $(i)$ and $(ii)$ hold: \\
$(i)$ $V$ is infinitesimally form bounded with respect to $L_{a,\cW(\Omega)}$ and 
\begin{align}\lb{3.9a}
\begin{split} 
\big\| |V|^{1/2}f \big\|_{L^2(\Omega)}^2\leq \e\Re[\gq^{}_{a,\cW(\Omega)}(f,f)] 
+ M a_1^{-\beta} \e^{-\beta}\|f\|_{L^2(\Omega)}^2,&   \\ 
f\in \cW(\Omega),\; 0<\e< a_1^{-1} \e_0.&
\end{split} 
\end{align}
$(ii)$  The form sum $L_{a, \cW(\Omega)} +_{\gq} V$ is an m-sectorial operator satisfying  
\begin{align}\lb{3.9}
\begin{split} 
& \dom\big((L_{a, \cW(\Omega)} +_{\gq} V)^{1/2} \big) = \dom\big(((L_{a, \cW(\Omega)} +_{\gq} V)^*)^{1/2} \big)  \\
& \quad =\dom\big( L_{a,\cW(\Omega)}^{1/2}\big) = \dom\big((L_{a, \cW(\Omega)}^*)^{1/2} \big) 
= \cW(\Omega).
\end{split} 
\end{align}
\end{theorem}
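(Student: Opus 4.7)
The plan is to deduce (i) by an elementary computation exploiting the coercivity of $\gq^{}_{a,\cW(\Omega)}$, and to deduce (ii) by invoking Corollary~\ref{c2.5a} with $T_0 = L_{a,\cW(\Omega)}$, $A = M_v$, and $B = M_{\bar u}$, where $M_g$ denotes multiplication by the function $g$.

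For (i), the lower bound \eqref{4.26.1} together with \eqref{4.26.5} immediately gives
\begin{equation*}
\big\|(-\Delta_{\cW(\Omega)})^{1/2} f\big\|_{L^2(\Omega)}^2
\leq a_1^{-1} \Re\big[\gq^{}_{a,\cW(\Omega)}(f,f)\big], \quad f \in \cW(\Omega).
\end{equation*}
Inserting this into the hypothesis \eqref{3.9aa} and relabelling $\e \mapsto a_1 \e$ yields \eqref{3.9a} at once.

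For (ii), I would observe that with $A=M_v$ and $B=M_{\bar u}$ the product $B^{\ast}A$ acts as multiplication by $uv=V$, and that the identity $|u|=v=|V|^{1/2}$ forces $\dom(A)=\dom(B)=\dom(v)\supseteq\cW(\Omega)$; combined with \eqref{4.26.2} this verifies the domain inclusions required in Hypotheses~\ref{h2.1} and~\ref{h2.5} with $T_0=L_{a,\cW(\Omega)}$. By part (i) the form sum $\gq^{}_{a,\cW(\Omega)} + \gq^{}_V$ is sectorial and closed on $\cW(\Omega)$ via the standard KLMN-type argument, and therefore defines an m-sectorial operator $T:=L_{a,\cW(\Omega)}+_\gq V$; by Theorem~\ref{tA.3} this operator coincides with the one produced by Kato's resolvent formula \eqref{2.5}, so $T$ is in particular quasi-m-accretive (cf.\ Remark~\ref{r2.7}).

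It remains to verify the decay conditions \eqref{2.10}, \eqref{2.11}, and \eqref{2.25ccc} of Hypothesis~\ref{h2.3}. Taking square roots in \eqref{3.9aa} and setting $\tilde\e=\e^{1/2}$ recasts the hypothesis as item~(ii) of Lemma~\ref{l2.10} applied with $S=M_{|V|}$ and $T=-\Delta_{\cW(\Omega)}$, so item~(i) of the same lemma yields $M_1>0$, $\beta_1>0$, $E_0\geq 1$ for which
\begin{equation*}
\big\|v(-\Delta_{\cW(\Omega)}+EI_\cH)^{-1/2}\big\|_{\cB(L^2(\Omega))}
\leq M_1 E^{-\beta_1}, \quad E\geq E_0.
\end{equation*}
Corollary~\ref{c2.9}, together with \eqref{4.26.2} and \eqref{4.26.5}, transports this bound both to $L_{a,\cW(\Omega)}$ and to $L_{a,\cW(\Omega)}^{\ast}$. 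Using duality together with $|\bar u|=v$ to rewrite
\begin{equation*}
\big\|\overline{(L_{a,\cW(\Omega)}+EI_\cH)^{-1/2}M_{\bar u}}\big\|_{\cB(L^2(\Omega))}
= \big\|M_v(L_{a,\cW(\Omega)}^{\ast}+EI_\cH)^{-1/2}\big\|_{\cB(L^2(\Omega))},
\end{equation*}
the product of operator norms appearing in \eqref{2.25ccc} is $O(E^{-2\beta_1})$ as $E\to\infty$, verifying \eqref{2.25ccc}; the same product is dominated by $C\lambda^{-1-2\beta_1}$ on $(R,\infty)$ for $R\geq 1$ and $E\geq E_0$, so \eqref{2.10} and \eqref{2.11} follow by dominated convergence. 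The conclusion \eqref{3.9} is then an immediate application of Corollary~\ref{c2.5a}.

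The main obstacle will be the careful bookkeeping required to confirm Hypothesis~\ref{h2.1}(ii)--(iii) for the multiplication operators, namely boundedness of $K(z_0)=-\overline{M_v(T_0-z_0 I_\cH)^{-1}M_u}$ together with $1\in\rho(K(z_0))$ for some $z_0\in\rho(T_0)$, and to invoke Theorem~\ref{tA.3} to identify the operator $T$ defined by \eqref{2.5} with the form-sum operator $L_{a,\cW(\Omega)}+_\gq V$. Both checks follow from the same polynomial estimate: the factorisation $\|K(-E)\|_{\cB(L^2(\Omega))}\leq \|M_v(L_{a,\cW(\Omega)}+EI_\cH)^{-1/2}\|\,\|M_v(L_{a,\cW(\Omega)}^{\ast}+EI_\cH)^{-1/2}\|=O(E^{-2\beta_1})$ ensures that $K(-E)$ is bounded and has norm strictly less than one for all sufficiently large $E$, in which case $1\in\rho(K(-E))$ via a Neumann series.
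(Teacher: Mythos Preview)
Your proposal is correct and follows essentially the same route as the paper's own proof: the same identifications $T_0=L_{a,\cW(\Omega)}$, $A=M_v$, $B^*=M_u$, the same appeal to Lemma~\ref{l2.10} to convert the form bound \eqref{3.9aa} into a polynomial resolvent decay for $v(-\Delta_{\cW(\Omega)}+E)^{-1/2}$, the same transfer of this decay to $L_{a,\cW(\Omega)}$ and $L_{a,\cW(\Omega)}^*$ (you cite Corollary~\ref{c2.9}, the paper cites Lemma~\ref{l2.8} directly, which is the same thing), the same verification of \eqref{2.10}--\eqref{2.25ccc}, the same use of Theorem~\ref{tA.3} to identify the Kato resolvent operator with the form sum, and the same conclusion via Corollary~\ref{c2.5a} together with Remark~\ref{r2.7}. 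Only the order of presentation differs slightly.
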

\begin{proof}
Let $\mathfrak{h}_V$ denote the sesquilinear form associated to the operator of multiplication 
by $V$, that is, 
\begin{equation}\lb{4.26.3}
\mathfrak{h}_V(f,g) = \big(vf,e^{i\arg(V(\cdot))} vg\big)_{L^2(\Omega)}, 
\quad f,g\in \dom(v)=\dom(\mathfrak{h}_V).
\end{equation}
Since $\cW(\Omega) \subseteq \dom(v)$ by hypothesis, in order to prove that $V$ is infinitesimally 
form bounded with respect to $L_{a,\cW(\Omega)}$, it suffices to prove the existence of an 
$\widetilde \e_0>0$ and an $\eta:(0,\widetilde \e_0)\rightarrow (0,\infty)$ for which 
\begin{equation}\lb{4.26.4}
\|vf\|_{L^2(\Omega)}^2\leq \e \Re[\gq^{}_{a,\cW(\Omega)}(f,f)]+\eta(\e)\|f\|_{L^2(\Omega)}^2, 
\quad f\in \cW(\Omega),\; 0<\e<\widetilde\e_0.
\end{equation}
Sufficiency of the condition in \eqref{4.26.4} is due to the obvious estimate
\begin{equation}
|\mathfrak{h}_V(f,f)|\leq \|vf\|_{L^2(\Omega)}^2,\quad f\in \dom(v),
\end{equation}
and the fact $\mathfrak{h}_V$ is infinitesimally form bounded with respect to 
$\gq^{}_{a,\cW(\Omega)}$ if and only if the former is infinitesimally form bounded with 
respect to $\Re[\gq^{}_{a,\cW(\Omega)}]$ (cf., e.g., \cite[p.\ 319]{Ka80}).

Freely using \eqref{3.9aa}, one estimates
\begin{align}
\|vf\|_{L^2(\Omega)}^2&\leq \epsilon\big\|(-\Delta_{\cW(\Omega)})^{1/2}f\big\|_{L^2(\Omega)}^2 
+M\epsilon^{-\beta}\|f\|_{L^2(\Omega)}^2   \lb{4.26.8a}\\
&= \epsilon\|\nabla f\|_{L^2(\Omega)^n}^2+M\epsilon^{-\beta}\|f\|_{L^2(\Omega)}^2    \lb{4.26.8b}\\
&\leq a_1^{-1}\epsilon\Re [\gq^{}_{a,\cW(\Omega)}(f,f)]+M\epsilon^{-\beta}\|f\|_{L^2(\Omega)}^2,   
\quad f\in \cW(\Omega), \; 0<\epsilon<\e_0,      \lb{4.26.8c}  
\end{align}
where the 2nd representation theorem (cf., e.g., \cite[Theorem VI.2.23]{Ka80}) was applied to 
the (non-negative, self-adjoint) reference operator $-\Delta_{\cW(\Omega)}$ and its associated  
form $\gq^{}_{I_n}$ in  \eqref{4.26.8b}.  To arrive at \eqref{4.26.8c} the 
first inequality in \eqref{3.2} was used.  Choosing $\epsilon= a_1 \e$, $0<\e< a_1^{-1} \e_0$ in 
\eqref{4.26.8a}--\eqref{4.26.8c} yields the form bound \eqref{3.9a}.

On the basis of \eqref{3.9a}, $L_{a,\cW(\Omega)} +_{\gq} V$ defines an m-sectorial operator by  
standard perturbation theoretic results (see, e.g., \cite[Theorem\ VI.1.33]{Ka80}). 
Next, one notes that by Theorem \ref{tA.3}, the operator $T$ defined as in \eqref{2.4} and \eqref{2.5} 
in terms of the identifications 
\begin{equation}\lb{3.15}
T_0=L_{a,\cW(\Omega)}, \quad A=v, \quad  B^*=u,
\end{equation}
coincides with the form sum $L_{a,\cW(\Omega)} +_{\gq} V$. In particular, according to the 
factorization \eqref{3.13}, $L_{a,\cW(\Omega)}$ and $L_{a,\cW(\Omega)} +_{\gq}V$ satisfy the 
resolvent equation
\begin{align}
&(L_{a,\cW(\Omega)} +_{\gq} V-zI_{L^2(\Omega)})^{-1}=(L_{a,\cW(\Omega)}-zI_{L^2(\Omega)})^{-1}  \no\\
& \quad -\overline{(L_{a,\cW(\Omega)}-zI_{L^2(\Omega)})^{-1}u}\big[I_{L^2(\Omega)} 
-\overline{v(L_{a,\cW(\Omega)}-zI_{L^2(\Omega)})^{-1}u} 
\big]^{-1}\lb{3.14}\\
&\qquad \times v(L_{a,\cW(\Omega)}-zI_{L^2(\Omega)})^{-1},  \quad 
z\in \rho(L_{a,\cW(\Omega)})\cap\rho(L_{a,\cW(\Omega)} +_{\gq} V). \no
\end{align}
Given \eqref{3.15}, one now proceeds to verifying items $(i)$, $(iii)$ of Hypothesis \ref{h2.1}: 
For this purpose one notes that by Lemma \ref{l2.10}, \eqref{3.9aa} implies the existence
of constants $M'>0$, $E_1\geq 1$, and $\beta>0$ such that
\begin{equation}\lb{3.7}
\big\| v(-\Delta_{\cW(\Omega)}+EI_{L^2(\Omega)})^{-1/2}\big\|_{\cB(L^2(\Omega))}\leq M'E^{-\beta}, \quad E>E_1. 
\end{equation}
Subsequently, to verify Hypothesis \ref{h2.1}\,$(iii)$, one uses \eqref{3.7} and estimates, 
\begin{align}
&\big\|\overline{(L_{a,\cW(\Omega)}+EI_{L^2(\Omega)})^{-1/2}u}\big\|_{\cB(L^2(\Omega))}\no\\
&\quad =\big\|{\ol u}(L_{a,\cW(\Omega)}^*+EI_{L^2(\Omega)})^{-1/2}\big\|_{\cB(L^2(\Omega))}\no\\
&\quad =\big\|v(L_{a,\cW(\Omega)}^*+EI_{L^2(\Omega)})^{-1/2}\big\|_{\cB(L^2(\Omega))}\no\\
&\quad\leq  \big\|v(-\Delta_{\cW(\Omega)}+EI_{L^2(\Omega)})^{-1/2}\big\|_{\cB(L^2(\Omega))}\no\\
&\qquad \times\big\|(-\Delta_{\cW(\Omega)}+EI_{L^2(\Omega)})^{1/2}(L_{a,\cW(\Omega)}^*+EI_{L^2(\Omega)})^{-1/2}\big\|_{\cB(L^2(\Omega))}\no\\
&\quad \leq C M' E^{-\beta},\quad E>E_1,\lb{3.16}
\end{align}
where $C>0$ is an appropriate constant (guaranteed to exist by \eqref{4.26.5}, \eqref{3.14}, and Lemma \ref{l2.8}) for which
\begin{equation}
\big\|(-\Delta_{\cW(\Omega)}+EI_{L^2(\Omega)})^{1/2}(L_{a,\cW(\Omega)}^*
+ EI_{L^2(\Omega)})^{-1/2}\big\|_{\cB(L^2(\Omega))}\leq C,\quad E\geq 1.
\end{equation}
We note that \eqref{4.26.2} is used freely throughout \eqref{3.16}.  Consequently, \eqref{3.7} 
and \eqref{3.16} can be used to estimate
\begin{align}
&\big\|\overline{v(L_{a,\cW(\Omega)}+EI_{L^2(\Omega)})^{-1}u}\big\|_{\cB(L^2(\Omega))}\no\\
&\quad =\big\| v(L_{a,\cW(\Omega)}+EI_{L^2(\Omega)})^{-1/2}\overline{(L_{a,\cW(\Omega)}+EI_{L^2(\Omega)})^{-1/2}u}\big\|_{\cB(L^2(\Omega))}\no\\
&\quad \leq C(M')^2E^{-2\beta},  \quad E>E_1.\lb{3.17}
\end{align}
Thus, $1\in \rho\big(\overline{v(L_{a,\cW(\Omega)}+EI_{L^2(\Omega)})^{-1}u} \big)$ for all $E>E_1$ sufficiently large, implying the validity of Hypothesis \ref{h2.1}\,$(iii)$.

Thus, by Corollary \ref{c2.5a} and 
Remark \ref{r2.7}, the proof of \eqref{3.9} reduces to verifying Hypothesis \ref{h2.5}, that is,  
Hypothesis \ref{h2.3} and \eqref{2.8} 
with the identifications \eqref{3.15}.  Hypothesis \ref{h2.3}\,$(i)$ and \eqref{2.8} are satisfied by 
assumption (cf.\ \eqref{4.26.2}).  Regarding parts $(ii)$ and $(iii)$ of Hypotheses \ref{h2.3}, \eqref{2.9} is clear in light of 
\eqref{4.26.2} and \eqref{3.6}. In order to verify \eqref{2.10}, one notes that by \eqref{3.7} 
and \eqref{3.16},
\begin{align}
&\lambda^{-1}\big\|v(L_{a,\cW(\Omega)}+(E+\lambda)I_{L^2(\Omega)})^{-1/2}
\big\|_{\cB(L^2(\Omega))}\no\\
&\qquad \times \big\|\overline{(L_{a,\cW(\Omega)} 
+ (E+\lambda)I_{L^2(\Omega)})^{-1/2}u} \big\|_{\cB(L^2(\Omega))}\no\\
&\quad \leq \lambda^{-1}C(M')^2(E+\lambda)^{-2\beta}\no\\
&\quad \leq  C(M')^2\lambda^{-1-2\beta}, 
\quad  E>E_1, \; \lambda>0,      \lb{3.20}
\end{align}
implying \eqref{2.10} with $R=1$, $E_0=E_1$, and the identifications \eqref{3.15}.  An application 
of Lebesgue's dominated convergence theorem, using \eqref{3.20}, then yields \eqref{2.11}.  The statement in \eqref{2.25ccc} follows immediately from \eqref{3.17}.
Finally, the first two equalities in \eqref{3.9} follow from Corollary \ref{c2.5a} and Remark \ref{r2.7}, 
and the last equality in \eqref{3.9} follows from assumption \eqref{4.26.2}.
\end{proof}

Next we investigate the special case $\Omega =\bbR^n$, $n\in \bbN$, in detail and show how to derive 
the basic inputs \eqref{4.26.2} and \eqref{3.9aa} for Theorem \ref{t3.6} in this particular situation. We 
start with \eqref{3.9aa}. 

\begin{lemma} \lb{l3.7}
Suppose $V$ satisfies Hypothesis \ref{h3.2} with $\Omega =\bbR^n$, $n\in \bbN$, and let 
$-\Delta$, $\dom(-\Delta)=W^{2,2}(\bbR^n)$, denote the self-adjoint realization of $($minus$)$ the 
Laplacian in $L^2(\bbR^n)$. Then $V$ is infinitesimally form bounded with respect to $-\Delta$ and there exists a 
constant $M_{n,p,V}>0$ $($depending only upon $n$, $p$, and $V$$)$ such that for all $\e > 0$ sufficiently small, 
\begin{align}
\begin{split} 
\big\||V|^{1/2} f\big\|_{L^2(\bbR^n)}^2\leq \e \big\|(-\Delta)^{1/2} f\big\|_{L^2(\bbR^n)}^2
+ M_{n,p,V}\e^{-n/(2p-n)}\|f\|_{L^2(\bbR^n)}^2,&\lb{5.2.1}\\
 f\in W^{1,2}(\bbR^n).& 
 \end{split} 
\end{align}
\end{lemma}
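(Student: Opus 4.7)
The plan is to decompose $V$ according to its class $V = V_p + V_\infty$ with $V_p \in L^p(\bbR^n)$ and $V_\infty \in L^\infty(\bbR^n)$, so that pointwise a.e.
\begin{equation*}
|V(x)| \leq |V_p(x)| + \|V_\infty\|_{L^\infty(\bbR^n)},
\end{equation*}
and therefore
\begin{equation*}
\big\||V|^{1/2} f\big\|_{L^2(\bbR^n)}^2 \leq \int_{\bbR^n} d^n x\, |V_p(x)|\, |f(x)|^2 + \|V_\infty\|_{L^\infty(\bbR^n)} \|f\|_{L^2(\bbR^n)}^2.
\end{equation*}
The second term is already of the desired form (contributing to the $\eps^{-n/(2p-n)}$ coefficient), so everything reduces to controlling the $V_p$-term.

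Next, I would apply H\"older's inequality with conjugate exponents $p$ and $p' = p/(p-1)$ to get
\begin{equation*}
\int_{\bbR^n} d^n x\, |V_p(x)|\, |f(x)|^2 \leq \|V_p\|_{L^p(\bbR^n)}\, \|f\|_{L^{2p'}(\bbR^n)}^2,
\end{equation*}
and then invoke the Gagliardo--Nirenberg inequality: for $q = 2p/(p-1) = 2p'$, setting $\theta = n(1/2 - 1/q) = n/(2p)$, one has $\theta \in (0,1)$ precisely because $p > n/2$, and
\begin{equation*}
\|f\|_{L^{2p'}(\bbR^n)}^2 \leq C_{n,p} \big\|(-\Delta)^{1/2}f\big\|_{L^2(\bbR^n)}^{n/p}\, \|f\|_{L^2(\bbR^n)}^{2-n/p}, \quad f \in W^{1,2}(\bbR^n).
\end{equation*}
(For $n=1$ with $p=1$, the Sobolev embedding $W^{1,2}(\bbR) \hookrightarrow L^\infty(\bbR)$ together with the one-dimensional estimate $\|f\|_{L^\infty}^2 \leq C\|(-\Delta)^{1/2}f\|_{L^2}\|f\|_{L^2}$ plays the role of the above; the exponents match with the convention $2p/(p-1) = \infty$.)

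Finally, I would apply Young's inequality $ab \leq \eps a^r + C_{r,\eps} b^{r'}$ with $r = 2p/n$ and $r' = 2p/(2p-n)$, chosen so that $a = \|(-\Delta)^{1/2}f\|_{L^2}^{n/p}$ raised to the $r$-th power returns $\|(-\Delta)^{1/2}f\|_{L^2}^2$, and $b = \|f\|_{L^2}^{2-n/p}$ raised to $r'$ returns $\|f\|_{L^2}^2$. This produces
\begin{equation*}
\|f\|_{L^{2p'}(\bbR^n)}^2 \leq \eps \big\|(-\Delta)^{1/2} f\big\|_{L^2(\bbR^n)}^2 + C'_{n,p}\, \eps^{-n/(2p-n)} \|f\|_{L^2(\bbR^n)}^2,
\end{equation*}
with a constant $C'_{n,p}$ depending only on $n,p$ (and absorbing $C_{n,p}$). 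Combining this with the H\"older bound above (and, after rescaling $\eps$ by the factor $\|V_p\|_{L^p(\bbR^n)}$, absorbing $\|V_p\|_{L^p}$ and $\|V_\infty\|_{L^\infty}$ into the constant $M_{n,p,V}$) yields \eqref{5.2.1} for all $\eps > 0$ sufficiently small. This is essentially a routine argument; the only subtlety lies in verifying that the exponent $\theta = n/(2p)$ is admissible in the Gagliardo--Nirenberg inequality in each dimension $n \in \bbN$, which is precisely the content of the hypothesis $p > n/2$.
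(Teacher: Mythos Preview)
Your argument is correct and yields exactly the claimed exponent $\eps^{-n/(2p-n)}$; the H\"older--Gagliardo--Nirenberg--Young chain is a perfectly valid and elementary route to \eqref{5.2.1}.

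It is, however, genuinely different from the paper's proof. The paper does \emph{not} use the Gagliardo--Nirenberg interpolation inequality. Instead it invokes the Cwikel-type Schatten bound (\cite[Theorem~XI.20]{RS79}, \cite[Theorem~4.1]{Si05})
\[
\big\|v(-\Delta+EI)^{-1/2}\big\|_{\cB_{2p}(L^2(\bbR^n))}\leq (2\pi)^{-n/2p}\|V\|_{L^p}^{1/2}\,\big\|(|\cdot|^2+E)^{-1/2}\big\|_{L^{2p}},
\]
computes the last norm explicitly as $C_n E^{(n-2p)/(4p)}$, and then feeds the resulting \emph{operator-norm} decay $\|v(-\Delta+E)^{-1/2}\|_{\cB(L^2)}\leq C_{n,p,V}E^{(n-2p)/(4p)}$ into the calculation \eqref{3.10} of Lemma~\ref{l2.10} (with $S=|V|$, $T=-\Delta$) to recover the form bound with the stated $\eps$-dependence.

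What each approach buys: your argument is lighter (no trace-ideal machinery, just Sobolev interpolation), and generalizes immediately to settings where one has the right Gagliardo--Nirenberg inequality. The paper's argument, on the other hand, produces as a byproduct the operator-norm estimate \eqref{3.35}, which is precisely the input \eqref{3.7} needed in the proof of Theorem~\ref{t3.6}; in that sense the paper's route is more tightly integrated with the surrounding framework, where everything is phrased in terms of decay of $\|A(T_0+E)^{-1/2}\|$.
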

\begin{proof}
Without loss of generality we put the $L^\infty$-part $V_{\infty}$ of $V$ equal to zero. 
By Hypothesis \ref{h3.2} one infers 
\begin{equation}\lb{3.32}
v(-\Delta + E I_{L^2(\bbR^n)})^{-1/2}\in \cB_{2p}\big(L^2(\bbR^n)\big), \quad E > 0.
\end{equation}
Since $v\in L^{2p}(\bbR^n)$, \cite[Theorem XI.20]{RS79} (cf. also \cite[Theorem 4.1]{Si05}) yields
\begin{align}
&\big\|v(-\Delta + E I_{L^2(\bbR^n)})^{-1/2} \big\|_{\cB_{2p}(L^2(\bbR^n))}\no\\
&\quad \leq (2\pi)^{-n/2p}\|V\|_{L^{p}(\bbR^n)}^{1/2} 
\big\|(|\cdot|^2 + E)^{-1/2}\big\|_{L^{2p}(\bbR^n)}, \quad E > 0.\lb{3.33}
\end{align}
Employing  
\begin{equation}\lb{3.34}
\big\|(|\cdot|^2 + E)^{-1/2}\big\|_{L^{2p}(\bbR^n)}=C_n E^{(n-2p)/(4p)}, \quad E > 0,
\end{equation}
where $C_n>0$ only depends on $n\in\bbN$, \eqref{3.33} and \eqref{3.34} 
imply 
\begin{equation}\lb{3.35}
\big\|v(-\Delta + E I_{L^2(\bbR^n)})^{-1/2} \big\|_{\cB_{2p}(L^2(\bbR^n))} 
\leq C_{n,p,V} E^{(n-2p)/(4p)}, \quad E > 0,
\end{equation}
where  
\begin{equation}\lb{3.36}
C_{n,p,V}:=(2\pi)^{-n/2p}\|V\|_{L^p(\bbR^n)}^{1/2}C_n.
\end{equation}
The form bound in \eqref{5.2.1} now follows by repeating the estimate in \eqref{3.10} 
with $S=V$ and $T=-\Delta$.
\end{proof}

\begin{remark} \lb{r3.8} 
The infinitesimal form bound result of Lemma \ref{l3.7} may be found in \cite[Proposition\ 6.4]{Fa72}, 
but the exact form of the bound (in particular, the coefficient $M_{n,p,V}\e^{-n/(2p-n)}$ in 
\eqref{5.2.1}) is not explicitly stated there.  The proof given above is based directly on this circle 
of ideas.  An alternative proof for the case $n\geq 3$, in the more general setting of domains 
in $\bbR^n$,  may be found in \cite[Theorem\ 1.8.4] {Da89} (our proof of item $(iii)$ in 
Theorem \ref{t3.14} mirrors this alternative approach).
\end{remark}

Next we focus on \eqref{4.26.2} in the particular case $\Omega = \bbR^n$, $n \in \bbN$. In this 
context we recall that the ``Kato square root problem" for second-order divergence form elliptic 
operators in $\bbR^n$, has been solved in \cite{AHLMT02} (we note that the case $n=1$ was 
treated earlier in \cite{CMM82}, the case $n=2$ in \cite{HM02}).  To be precise, we assume 
Hypothesis \ref{h3.1}\,$(i)$--$(iii)$ with 
$\Omega=\bbR^n$ and hence have $\cW(\Omega)=W^{1,2}(\bbR^n)$.  Then $L_a$ defined by 
\eqref{4.25.4} (cf.\ also \eqref{3.4}) formally takes the form of a second-order divergence form 
operator,
\begin{equation}\label{eq3}
L_a:= - {\rm div}(a\nabla \, \cdot \,) = - \sum_{1\leq j,k\leq n} \partial_j a_{j,k} \partial_k.
\end{equation}
The main result of \cite{AHLMT02} is that Hypothesis \ref{h2.3}\,$(i)$ holds with $T_0:=L_a$,
$\mathcal{H} =L^2(\bbR^n)$, and that
\begin{equation}\label{eq4}
\dom\big(L_a^{1/2}\big) = \dom\big((L_a^*)^{1/2}\big) = W^{1,2}(\bbR^n), 
\end{equation}
and hence provides precisely \eqref{4.26.2} in the particular case $\Omega = \bbR^n$, $n \in \bbN$.
Quantitatively, one has
\begin{equation} 
\big\|L_a^{1/2} f \big\|_{L^2(\bbR^n)}\approx \|\nabla f\|_{L^2(\bbR^n)^n},
\end{equation} 
where the implicit constants depend only upon $a_1$, $a_2$, and $n$ (and we recall that the 
symbol $\approx$ indicates equivalent norms). 

Thus, Lemma \ref{l3.7} and \eqref{eq4} together with Theorem \ref{t3.6}, in the particular 
case $\Omega = \bbR^n$, $n \in \bbN$, imply Theorem \ref{t3.9} below. However, we prefer to  
provide a second and alternative proof, reducing it directly to Corollary \ref{c2.5a} as follows: 

\begin{theorem}\label{t3.9} 
Assume Hypotheses \ref{h3.1} and \ref{h3.2} with $\Omega=\bbR^n$ 
$($so that $\cW(\Omega)=W^{1,2}(\bbR^n)$$)$, $n \in \bbN$, 
and let $L_a$ denote the operator defined in \eqref{4.25.4}. Then 
 \begin{align}
 \begin{split}  
\dom\big((L_a +_{\gq} V)^{1/2}\big) = \dom\big(((L_a +_{\gq} V)^*)^{1/2}\big)   \\
=\dom\big(L_a^{1/2}\big) =\dom\big((L_a^*)^{1/2}\big) = W^{1,2}(\bbR^n).
\end{split} 
 \end{align} 
\end{theorem}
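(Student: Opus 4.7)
The plan is to apply Corollary \ref{c2.5a} directly with $T_0 := L_a$, $\cH = \cK := L^2(\bbR^n)$, $A := v$, and $B := \bar u$, so that $B^* = u$ and $B^* A = V$ on a natural domain. By the solution of the Kato square root problem in \cite{AHLMT02} (see \eqref{eq4}), the symmetry hypothesis \eqref{2.8} holds with $\dom(L_a^{1/2}) = \dom((L_a^*)^{1/2}) = W^{1,2}(\bbR^n)$. Under Hypothesis \ref{h3.2}, standard H\"older and Sobolev embedding estimates place $W^{1,2}(\bbR^n)$ inside $\dom(v) = \dom(\bar u)$, which takes care of the domain inclusions needed in Hypothesis \ref{h2.1}(i) and Hypothesis \ref{h2.3}(ii).

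The core step is to establish polynomial decay
\begin{equation*}
\big\|v(L_a + E I_{L^2(\bbR^n)})^{-1/2}\big\|_{\cB(L^2(\bbR^n))} + \big\|\overline{(L_a + E I_{L^2(\bbR^n)})^{-1/2} u}\big\|_{\cB(L^2(\bbR^n))} \leq C E^{-\beta}
\end{equation*}
for some $\beta > 0$ and all sufficiently large $E$. First, Lemma \ref{l3.7} supplies infinitesimal form boundedness of $V$ with respect to $-\Delta$ in the explicit quantitative form with $M_{n,p,V}\varepsilon^{-n/(2p-n)}$ as the coefficient of $\|f\|_{L^2}^2$. Feeding this into Lemma \ref{l2.10} with $S = |V|$ and $T = -\Delta$ converts the form bound into the operator estimate $\|v(-\Delta + E I_{L^2(\bbR^n)})^{-1/2}\|_{\cB(L^2(\bbR^n))} \lesssim E^{-\beta}$. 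Because $\dom(L_a^{1/2}) = \dom((-\Delta)^{1/2}) = W^{1,2}(\bbR^n)$, Corollary \ref{c2.9} transfers this bound to $L_a$ itself, yielding $\|v(L_a + E I_{L^2(\bbR^n)})^{-1/2}\|_{\cB(L^2(\bbR^n))} \lesssim E^{-\beta}$. Running the same argument with $L_a^*$ in place of $L_a$ (which has the same square root domain) and then taking adjoints, while observing $|u| = v$, produces the analogous bound for $\overline{(L_a + E I_{L^2(\bbR^n)})^{-1/2} u}$.

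With these polynomial decay estimates in hand, the integrability conditions \eqref{2.10}, \eqref{2.11}, and \eqref{2.25ccc} of Hypothesis \ref{h2.3} follow at once (they reduce to integrability of $\lambda^{-1-2\beta}$ on $[1,\infty)$ and to $E^{-2\beta}\to 0$), and the bounded extension \eqref{2.3} together with Hypothesis \ref{h2.1}(iii) are automatic from $\|K(-E)\|_{\cB(L^2)} \leq C E^{-2\beta} < 1$ for large $E$. For m-accretivity of the operator $T$ defined by \eqref{2.5}, the form bound on $V$ with respect to $-\Delta$ together with the norm equivalence $\|L_a^{1/2} f\|_{L^2} \approx \|\nabla f\|_{L^2}$ (again from the Kato square root theorem, via the second representation theorem applied to $-\Delta$) yields infinitesimal form boundedness of $V$ relative to $L_a$; Kato's perturbation theorem then produces an m-sectorial form sum $L_a +_{\gq} V$, which Theorem \ref{tA.3} identifies with $T$. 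Corollary \ref{c2.5a} now delivers the claimed chain of equalities. The main obstacle is the transfer step from $-\Delta$ to $L_a$: without the deep $L^2$-norm equivalence $\|L_a^{1/2} f\|_{L^2} \approx \|\nabla f\|_{L^2}$ of \cite{AHLMT02}, there is no direct comparison between $(L_a + E)^{-1/2}$ and $(-\Delta + E)^{-1/2}$ at the form-domain level, and this is exactly what makes Corollary \ref{c2.9} applicable.
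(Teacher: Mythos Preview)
Your argument is correct and is precisely the route the paper itself acknowledges just before stating the theorem: Lemma~\ref{l3.7} combined with \eqref{eq4} feeds into Theorem~\ref{t3.6} (whose proof contains exactly the Lemma~\ref{l2.10}/Lemma~\ref{l2.8}--Corollary~\ref{c2.9} transfer mechanism you describe), and this already yields the conclusion. So your proposal matches the paper's \emph{first} proof essentially verbatim.

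The proof actually written out under Theorem~\ref{t3.9}, however, is a deliberately different alternative. Instead of first establishing decay of $v(-\Delta+E)^{-1/2}$ and then transferring to $L_a$ via the square root domain equality, the paper works directly with $L_a$: it factors $V=V_1V_2$ with $V_j\in L^{n+\eps}(\bbR^n)$, exploits H\"older to get $L^{2_*+\eps}\to L^2$ and $L^2\to L^{2^*-\eps}$ mapping properties of multiplication by $V_j$, and then uses the hypercontractive bound $\|e^{-tL_a}\|_{\cB(L^p,L^q)}\lesssim t^{-\frac{n}{2}(\frac1p-\frac1q)}$ together with the integral representation $(L_a+\tau I)^{-1/2}=\int_0^\infty s^{-1/2}e^{-s\tau}e^{-sL_a}\,ds$ to obtain $\|(L_a+\tau I)^{-1/2}\|_{\cB(L^{2_*+\eps},L^2)}\lesssim\tau^{-\eps}$ directly. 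This gives the required decay of $A(L_a+\tau I)^{-1/2}$ and $\overline{(L_a+\tau I)^{-1/2}B^*}$ without ever passing through $-\Delta$. Your route is shorter given the machinery already assembled in Section~\ref{s2} and Theorem~\ref{t3.6}; the paper's alternative is more self-contained with respect to $L_a$ and illustrates a technique (off-diagonal $L^p$--$L^q$ resolvent bounds via semigroup hypercontractivity) that becomes essential in Section~\ref{s4}.
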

\begin{proof} Again, without loss of generality, we may 
assume in the following that $V_{\infty} = 0$ and hence $V \in L^p(\bbR^n)$ for some $p > n/2$, 
$p \geq 1$.
  
For the sake of notational convenience, we shall adapt the following convention
for the duration of the proof of Theorem \ref{t3.9}: The symbol $\e$ will denote a generic
positive number
(typically small, but not necessarily so), which may differ from one occurrence to the next, but 
which depends upon its value in the previous occurrence. We also frequently denote an identity 
or injection operator simply by the symbol $I$ when the underlying spaces are understood.  

Using this convention, we write $V \in L^{\frac{n}2+\e}(\bbR^n)$, and we factor $V=V_1V_2$, where
$V_j \in L^{n+\e}(\bbR^n)$, $j=1,2$.  Here,  we have assumed that $p$ is finite, 
as the case $V\in L^\infty(\bbR^n)$ is simpler and is hence omitted in the following. 
  We let $B^*$ and $A$ denote, respectively, the operators 
defined by multiplication by $V_1$ and $V_2$.  To prove the theorem, it suffices, in view of
\eqref{eq4}, to verify the hypotheses of Corollary \ref{c2.5a}, with $\cK=L^2(\bbR^n)=\cH$,
$T_0=L_a$, and $T=L_a +_{\gq} V$.  As we have noted above, Hypothesis \ref{h2.3}\,$(i)$ is the
principal theorem of \cite{AHLMT02}.  To verify the remaining hypotheses, we begin by recording 
some simple observations. We set 
\begin{equation} 
2^*:= 2n/(n-2) \text{ (which equals $\infty$ when $n=2$), and } \, 
2_*:= 2n/(n+2).  
\end{equation} 
We then have by H\"older's inequality,
\begin{equation}\label{eq5}
A: L^2(\bbR^n) \rightarrow L^{2_*+\e}(\bbR^n), \quad 
A: L^{2^*-\e}(\bbR^n) \rightarrow L^{2}(\bbR^n),
\end{equation}
and similarly for $A^*,B$ and $B^*$.  Here, when $n=2$, we interpret ``$2^*-\e$" to be an appropriate
finite exponent (to be precise, it is dual to the exponent $1+\e$).  Let 
\begin{equation} 
L^2_\alpha(\bbR^n) :=
\big\{f\in L^2(\bbR^n) \,\big|\, (-\Delta)^{\alpha/2} f\in L^2(\bbR^n)\big\}, \quad \alpha > 0,  
\end{equation} 
denote the usual fractional order Sobolev space.  For an appropriate choice of $\alpha\in (0,1)$, we have,
by \eqref{eq4}, the Hardy--Littlewood--Sobolev theorem of fractional integration (see, e.g., 
\cite[Theorem\ V.1]{St70}, 
and \eqref{eq5},  that
\begin{equation}\label{eq6}
\dom\big(L_a^{1/2}\big)=W^{1,2}(\bbR^n)\subset L^2_{\alpha} (\bbR^n)
\subset L^{2^*-\e} (\bbR^n) \cap L^2 (\bbR^n) \subset \dom(A).
\end{equation}
We obtain the same containment with $B$ in place of $A$, and with
$L_a^*$ in place of $L_a$.  Thus, \eqref{2.9} holds.  
Next, we verify \eqref{2.10} and \eqref{2.11}.  We recall that the heat semigroup
$e^{-tL_a}$ enjoys the ``hypercontractive" estimate (cf.\ \cite[Corollary\ 3.5]{Au07}), 
\begin{equation}\label{eq7}
\big\|e^{-tL_a}f \big\|_{L^q(\bbR^n)} \lesssim t^{-\frac{n}2\left(\frac1p-\frac1q\right)}\|f\|_{L^p(\bbR^n)},   
\quad 2_*\leq p\leq q \leq 2^*. 
\end{equation}
Here we used the convention that $\lesssim$ abbreviates $\leq C$ for an appropriate constant 
$C>0$. In this particular context, the underlying constant depends on 
$p$, $q$, $n$, $a_1$ and $a_2$,  see \cite{Au07}. (We remark that if $n\geq 3$, then one can 
improve this interval by an $\e$ at both endpoints, but we do not require that fact here.)
We observe that for $\tau>0$, we have by the functional calculus that
\begin{equation}\label{eq8}
(L_a+\tau I)^{-1/2}= \int_0^{\infty} \frac{ds}{s^{1/2}} \, e^{-s(L_a+\tau I)} 
= \int_0^{\infty} \frac{ds}{s^{1/2}} \, e^{-sL_a}e^{-s\tau}.
\end{equation}
Combining  \eqref{eq7} and \eqref{eq8}, we find that
\begin{align}\label{eq9}
\begin{split}
& \|(L_a+\tau I)^{-1/2}\|_{\cB(L^{2_*+\e}(\bbR^n), L^{2}(\bbR^n))} + 
\|(L_a+\tau I)^{-1/2}\|_{\cB(L^2(\bbR^n), L^{2^*-\e}(\bbR^n))}    \\
& \quad \lesssim \int_0^\infty \frac{ds}{s^{1-\e}} \, e^{-s\tau} 
\approx \tau^{-\e}.
\end{split}
\end{align}
In turn, combining the latter bound with  \eqref{eq5}, and its equivalent for $B^*$, 
and taking $\tau = \lambda +E$, we obtain \eqref{2.10} and \eqref{2.11}, with, say, $R=1$ and
$E_0=0$ as well as \eqref{2.25ccc}. This verifies Hypothesis \ref{h2.3}\,$(ii)$ and $(iii)$.

It remains to consider Hypothesis \ref{h2.1}.  Properties \eqref{2.1} and \eqref{2.2} are well-known.
Property \eqref{2.3} follows from  \eqref{eq5}, and its equivalent for $B^*$, and the fact that, in 
particular, the resolvent $(L_a - z I)^{-1}$ satisfies a hypercontractive estimate from 
$L^{2_*+\e I}(\bbR^n)$ to $L^{2^*-\e}(\bbR^n)$ (see, e.g., \cite{Au07}). Moreover, this 
hypercontractive bound holds 
with norm $\lesssim |z|^{-\e}$ for $z\in\rho(L)$ with $-\Re (z)$ large, so that, for such $z$, the 
operator norm of $K(z)$ is small; thus, property \eqref{2.4} holds as well.
\end{proof}

After focusing on the special case $\Omega = \bbR^n$, $n\in\bbN$ (which avoids the consideration 
of boundary conditions), we now turn to more general 
domains $\Omega \subset \bbR^n$ and to Dirichlet, Neumann, and mixed boundary conditions 
on $\partial \Omega$. To do so requires further assumptions on $\Omega$ 
and hence we will now assume that $\Omega$ is a {\it strongly Lipschitz domain}.  We recall that a 
strongly Lipschitz domain is a connected open set $\Omega \subseteq \bbR^n$ (bounded or unbounded), 
whose boundary consists of finitely many parts, each of which is covered by a coordinate patch on 
which the boundary coincides (in an appropriately rotated coordinate system), with the graph 
of a Lipschitz function, such that at most one of these parts is possibly unbounded. This includes, for 
instance, special Lipschitz domains (cf.\ \cite[Sect.\ VI.3.2]{St70}), bounded Lipschitz domains, and 
exteriors of the latter (see also \cite[Appendix\ B]{AR03}).

\begin{hypothesis} \lb{h3.10}
In addition to Hypothesis \ref{h3.1}, suppose $\Omega\subseteq \bbR^n$ is a strongly Lipschitz domain.
\end{hypothesis}

\begin{remark} \lb{r3.11} 
More generally, by virtue of the results in \cite{AKM06}, 
one can consider domains which arise as  bi-Lipschitz images of some smooth domain 
$\Omega' \subset \bbR^n$,  where $\Omega'$ may be $\bbR^n$, a half-space
in $\bbR^n$, or a smooth bounded sub-domain of $\bbR^n$ (or the complement,
in either $\bbR^n$ or a half space, of the closure of such). We refer the reader to \cite{AKM06} 
for details. In particular, it is possible to replace the strong Lipschitz assumption in 
Hypothesis \ref{h3.10} by the assumption in this remark consistently in all subsequent results where Hypothesis \ref{h3.10} is employed. 
\end{remark}

We note that the result of \cite{AHLMT02}, the solution of the Kato square 
root problem for divergence form operators in $\bbR^n$, has been extended to the setting 
of a strongly Lipschitz domain $\Omega$, where boundary conditions now play a non-trivial role.  
In the case that $L_a$ has Dirichlet or Neumann boundary condition, this was done in \cite{AT03}, 
using a transference procedure from the global result of \cite{AHLMT02}. The case of mixed boundary condition was treated in \cite{AKM06}. More precisely, the result of \cite{AT03}
is that for $\Omega$ satisfying Hypothesis \ref{h3.10}, Hypothesis \ref{h2.3}\,$(i)$ holds with $T_0
=L_{a,\Omega,D}$ (Dirichlet condition), or $T_0=L_{a,\Omega,N}$ (Neumann condition), 
$\cH=L^2(\Omega)$, and
\begin{align}\label{eq10}
\begin{split} 
\dom\big(L_{a,\Omega,D}^{1/2}\big) &= \dom\big((L_{a,\Omega,D}^*)^{1/2}\big) = W^{1,2}_0(\Omega), \\ 
\dom\big(L_{a,\Omega,N}^{1/2}\big) &= \dom\big((L_{a,\Omega,N}^*)^{1/2}\big) = W^{1,2}(\Omega),
\end{split} 
\end{align}
where as usual $W^{1,2}_0(\Omega)$ is the closure of $C^\infty_0(\Omega)$ in $W^{1,2}(\Omega)$.
The result of \cite{AKM06} is that 
\begin{equation}\label{eq11}
\dom\big(L_{a,\Omega,\Pi}^{1/2}\big)=\dom\big((L_{a,\Omega,\Pi}^*)^{1/2}\big) 
= W^{1,2}_{\Pi}(\Omega),
\end{equation}
where $L_{a,\Omega,\Pi}$ is defined in \eqref{4.25.3} and $\Pi \subseteq \partial \Omega$ is relatively 
open.

We will now use the strategy behind Theorem \ref{t3.6} to extend Theorem \ref{t3.9} to the setting 
of a strongly Lipschitz domain and hence prove stability of square root domains in connection with 
Dirichlet, Neumann, and mixed boundary conditions on $\partial \Omega$.

\begin{theorem}\lb{t3.12}
Assume Hypotheses \ref{h3.2} and \ref{h3.10}, let $\Pi \subseteq \partial \Omega$ be relatively open 
$($$\Pi\in \{\partial \Omega, \emptyset\}$ permitted, cf.\ Remark \ref{r3.15}\,$(ii)$$)$ and consider 
$\cW(\Omega)=W^{1,2}_{\Pi}(\Omega)$. Then the following items $(i)$ and $(ii)$ hold: \\
$(i)$ $V$ is infinitesimally form bounded with respect to $-\Delta_{\Omega,\Pi}$, and there exist constants $M_{n,p,V,\Omega}>0$ and $\e_0>0$ $($depending only on $p$, $n$ and $V$ and $\Omega$$)$ such 
that for all $0<\e<\e_0$, 
\begin{equation}\lb{4.28.12}
\begin{split}
\big\| |V|^{1/2}f \big\|_{L^2(\Omega)}^2\leq \e\big\|(-\Delta_{\Omega,\Pi})^{1/2}f \big\|_{L^2(\Omega)}^2
+ M_{n,p,V,\Omega}\e^{-n/(2p-n)}\|f\|_{L^2(\Omega)}^2,&\\
 f\in W^{1,2}_{\Pi}(\Omega).&
 \end{split}
\end{equation}
$(ii)$  $V$ is infinitesimally form bounded with respect to $L_{a,\Omega,\Pi}$ and 
\begin{align}\lb{4.29.1} 
& \big\| |V|^{1/2}f \big\|_{L^2(\Omega)}^2\leq \e \Re[\gq^{}_{a,W^{1,2}_{\Pi}(\Omega)}(f,f)] 
+ M_{n,p,V,\Omega} a_1^{-n/(2p-n)}\e^{-n/(2p-n)}\|f\|_{L^2(\Omega)}^2,   \no \\
& \hspace*{6.6cm} f\in W^{1,2}_{\Pi}(\Omega),\; 0<\e< a_1^{-1}\e_0. 
\end{align}
The form sum $L_{a,\Omega,\Pi} +_{\gq} V$ is an m-sectorial operator which satisfies
\begin{align}
\begin{split}
& \dom\big((L_{a,\Omega,\Pi} +_{\gq} V)^{1/2} \big) = \dom\big(((L_{a,\Omega,\Pi} +_{\gq} V)^*)^{1/2} \big)   \\
& \quad = \dom\big(L_{a,\Omega,\Pi}^{1/2}\big) = \dom\big((L_{a,\Omega,\Pi}^*)^{1/2}\big) 
= W^{1,2}_{\Pi}(\Omega).      \lb{4.28.13}
\end{split} 
\end{align}
\end{theorem}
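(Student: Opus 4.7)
The plan is to invoke Theorem \ref{t3.6} with $\cW(\Omega) = W^{1,2}_\Pi(\Omega)$. The two hypotheses of Theorem \ref{t3.6} that must be verified in this setting are: the Kato-type identity \eqref{4.26.2} for $L_{a,\Omega,\Pi}$, and the quantitative infinitesimal form bound \eqref{3.9aa} of $V$ relative to $-\Delta_{\Omega,\Pi}$. The latter is precisely part $(i)$ of the statement, and the former is the Auscher--Kim--McIntosh resolution of the Kato square root problem on strongly Lipschitz domains with mixed boundary conditions, already recorded as \eqref{eq11}.

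For part $(i)$, I will reduce to Lemma \ref{l3.7} via a Stein-type extension. Since $\Omega$ is strongly Lipschitz, there exists a bounded linear extension operator $\mathcal{E}: W^{1,2}(\Omega) \to W^{1,2}(\bbR^n)$ that is also bounded as a map $L^2(\Omega) \to L^2(\bbR^n)$ (cf., e.g., \cite[Theorem~5.24]{AF03}). Splitting $V = V_p + V_\infty$ per Hypothesis \ref{h3.2} and extending $V_p$ by zero to $\widetilde V_p \in L^p(\bbR^n)$, one has the identity $\int_\Omega |V_p||f|^2\, d^n x = \int_{\bbR^n} |\widetilde V_p||\mathcal{E}f|^2\, d^n x$ for $f \in W^{1,2}_\Pi(\Omega) \subseteq W^{1,2}(\Omega)$. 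Applying Lemma \ref{l3.7} to $\mathcal{E}f$ and $\widetilde V_p$, using the operator bounds for $\mathcal{E}$, rescaling $\e$ to preserve the exponent $\e^{-n/(2p-n)}$, and absorbing the bounded part via $\int_\Omega |V_\infty||f|^2 \leq \|V_\infty\|_{L^\infty(\Omega)} \|f\|_{L^2(\Omega)}^2$, one arrives at \eqref{4.28.12}. Here the second representation theorem (cf.\ \cite[Theorem~VI.2.23]{Ka80}) identifies $\|(-\Delta_{\Omega,\Pi})^{1/2}f\|_{L^2(\Omega)}$ with $\|\nabla f\|_{L^2(\Omega)^n}$ for $f \in W^{1,2}_\Pi(\Omega)$.

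For part $(ii)$, hypothesis \eqref{4.26.2} of Theorem \ref{t3.6} is supplied by \eqref{eq11}, and its hypothesis \eqref{3.9aa} by part $(i)$. Theorem \ref{t3.6} then delivers simultaneously the form bound \eqref{4.29.1} (the factor $a_1^{-n/(2p-n)}$ arising from uniform ellipticity, $a_1 \|\nabla f\|_{L^2(\Omega)^n}^2 \leq \Re[\gq^{}_{a, W^{1,2}_\Pi(\Omega)}(f,f)]$, cf.\ \eqref{4.26.1}) and the stability of square root domains \eqref{4.28.13}.

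The hard part will be the bookkeeping in part $(i)$: one must manage the Stein-extension constants carefully so that the rescaling of $\e$ preserves the explicit decay $\e^{-n/(2p-n)}$ rather than degrading the exponent. An alternative route that avoids extension and makes the exponent manifest is to work directly on $\Omega$ via the Sobolev embedding $W^{1,2}(\Omega) \hookrightarrow L^{2^*}(\Omega)$ with $2^* = 2n/(n-2)$ (for $n \geq 3$; with obvious modifications for $n \in \{1,2\}$), followed by H\"older's inequality with dual exponent $p/(p-1)$, Gagliardo--Nirenberg-type interpolation between $L^2(\Omega)$ and $L^{2^*}(\Omega)$ (yielding parameter $\theta = 1 - n/(2p)$ for the exponent $2p/(p-1)$), and Young's inequality. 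Once $(i)$ is secured, part $(ii)$ follows formally from Theorem \ref{t3.6}.
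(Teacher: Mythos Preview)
Your proposal is correct and follows essentially the same route as the paper: reduce part~$(i)$ to Lemma~\ref{l3.7} via the Stein extension operator on the strongly Lipschitz domain $\Omega$, extend $V_p$ by zero, rescale $\e$ to recover the exponent $-n/(2p-n)$, and then invoke Theorem~\ref{t3.6} together with the Kato square root result \eqref{eq11} to obtain part~$(ii)$. The only cosmetic difference is that the paper sets $V_\infty=0$ at the outset rather than carrying it along and absorbing it at the end; your alternative Sobolev-embedding route is not used here but is precisely the method of Theorem~\ref{t3.14}.
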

\begin{proof}
Without loss of generality we put the $L^\infty$-part $V_{\infty}$ of $V$ equal to zero. 
Since $\Omega$ is assumed to be a strongly Lipschitz domain, the Stein extension theorem (cf., e.g., 
\cite[Theorem 5.24]{AF03} or \cite[Theorem 5 in \S VI.3.1]{St70}) guarantees the existence of a 
{\it total extension operator} (cf., e.g., \cite[Definition 5.17]{AF03}) $\cE$ satisfying
\begin{align}
\begin{split} 
&\cE:W^{m,2}(\Omega)\rightarrow W^{m,2}(\bbR^n),    \\
&\|\cE f\|_{W^{m,2}(\bbR^n)}^2\leq C_{m,\Omega}\|f\|_{W^{m,2}(\Omega)}^2, 
\quad f\in W^{m,2}(\Omega),\; m\in \{0,1\},    \lb{5.2.2}
\end{split} 
\end{align}
for some constants $C_{m,\Omega}>0$, $m=1,2$, and
\begin{equation}\lb{5.2.2a}
(\cE f)(x)=f(x) \, \text{ for a.e.\ }\, x\in \Omega,\; f\in L^2(\Omega).
\end{equation}

Let $V_{\text{ext}}$ and $v_{\text{ext}}$ denote the extensions of $V$ and $v$, respectively, to 
all of $\bbR^n$ defined by setting $V_{\text{ext}}$ and $v_{\text{ext}}$ identical to zero on 
$\bbR^n\backslash\Omega$.  Evidently, $V_{\text{ext}}\in L^p(\bbR^n)$, and consequently 
$V_{\text{ext}}$ satisfies the hypotheses of Lemma \ref{l3.7}.  As a result, there exists a 
constant $M_{n,p,V}>0$ for which
\begin{equation}\lb{5.2.3}
\begin{split}
\big\|v_{\text{ext}}f\big\|_{L^2(\bbR^n)}^2\leq \e \big\|(-\Delta)^{1/2} f\big\|_{L^2(\bbR^n)}^2+M_{n,p,V}\e^{-n/(2p-n)}\|f\|_{L^2(\bbR^n)}^2,&\\
f\in W^{1,2}(\bbR^n),\; \e>0.&
\end{split}
\end{equation}
Consequently, using \eqref{5.2.2} and \eqref{5.2.2a}, one estimates
\begin{align}
\|vf\|_{L^2(\Omega)}^2&=\|v_{\text{ext}}\cE f\|_{L^2(\bbR^n)}^2\no\\
&\leq \epsilon\big\|(-\Delta)^{1/2} \cE f\big\|_{L^2(\bbR^n)}^2+M_{n,p,V}
\epsilon^{-n/(2p-n)}\|\cE f\|_{L^2(\bbR^n)}^2\no\\
&=\epsilon\|\nabla \cE f\|_{L^2(\bbR^n)^n}^2+M_{n,p,V}\epsilon^{-n/(2p-n)}\|\cE f\|_{L^2(\bbR^n)}^2\no\\
&\leq \epsilon C_{1,\Omega}\|\nabla f\|_{L^2(\Omega)^n}^2+\big(\epsilon C_{1,\Omega} 
+ C_{0,\Omega}M_{n,p,V}\epsilon^{-n/(2p-n)}\big)\|f\|_{L^2(\Omega)}^2\no\\
&\leq \epsilon C_{1,\Omega}\|\nabla f\|_{L^2(\Omega)^n}^2+\big(C_{1,\Omega}+C_{0,\Omega}M_{n,p,V}\big)\epsilon^{-n/(2p-n)}\|f\|_{L^2(\Omega)}^2,\lb{5.2.4}\\
&\hspace*{6.05cm}f\in W^{1,2}_{\Pi}(\Omega), \; 0<\epsilon<1.\no
\end{align}
The form bound in \eqref{4.28.12} now follows by choosing $\e=\epsilon C_{1,\Omega}$ throughout \eqref{5.2.4} and noting that
\begin{equation}
\|\nabla f\|_{L^2(\Omega)^n}^2=\big\|(-\Delta_{\Omega,\Pi})^{1/2}f\big\|_{L^2(\Omega)}^2, 
\quad f\in W^{1,2}_{\Pi}(\Omega),
\end{equation}
by the 2nd representation theorem (cf., e.g., \cite[VI.2.23]{Ka80}), see \eqref{3.4} with $a(\cdot) = I_n$.

Finally, we note that the results of \cite{AT03} and \cite{AKM06} guarantee that (cf.\ \eqref{eq11})
\begin{equation}\lb{4.29.3aa}
\dom\big(L_{a,\Omega,\Pi}^{1/2} \big)=\dom\big((L_{a,\Omega,\Pi}^*)^{1/2} \big)=W^{1,2}_{\Pi}(\Omega).
\end{equation}
In light of \eqref{4.28.12} and \eqref{4.29.3aa}, item $(ii)$ now follows from Theorem \ref{t3.6}.
\end{proof}

\begin{remark} \lb{r3.13} 
Without providing details, we note that in order to prove Theorem \ref{t3.12} one can alternatively 
follow the proof of Theorem \ref{t3.9} essentially verbatim. The key observations are: first, that the containment \eqref{eq6} continues to hold since elements of $W^{1,2}(\Omega)$ may be extended 
to all of $\bbR^n$ as discussed in \eqref{5.2.2}, \eqref{5.2.2a}; second, that the hypercontractive 
estimate \eqref{eq7} continues to hold, with $L_{a,\Omega,D}$, $ L_{a,\Omega,N}$ or 
$L_{a,\Omega,\Pi}$ in place of the global operator $L_a$, as does the analogous hypercontractive 
estimate for the resolvent.
\end{remark}

The following result improves on the infinitesimally bounded aspects in Theorem \ref{t3.12} and 
proves the analog of input \eqref{3.9aa} in Theorem \ref{t3.6} assuming $V$ satisfies the critical 
$L^p$-index $p=n/2$, $n \geq 3$, in Hypothesis \ref{h3.3}; at the same time it extends the result 
of \cite[Theorem\ 1.8.3]{Da89} to the case of mixed boundary conditions on strongly Lipschitz domains: 

\begin{theorem}\lb{t3.14}
Assume Hypotheses \ref{h3.1} and \ref{h3.3}. Then the following items $(i)$--$(iii)$ hold, and in each case $\eta$ and $\widetilde \eta$ are non-negative functions defined on $(0,\infty)$ possibly depending on $\Omega$, $n$, and the choice of boundary condition: \\
$(i)$  $V$ is infinitesimally form bounded with respect to $-\Delta_{\Omega,D}$,
\begin{equation}\lb{5.3.3}
\big\| |V|^{1/2} f \big\|_{L^2(\Omega)}^2\leq \e \big\|(-\Delta_{\Omega,D})^{1/2}f \big\|_{L^2(\Omega)}^2 
+ \eta(\e)\|f\|_{L^2(\Omega)}^2,\quad f\in W^{1,2}_0(\Omega),\; \e>0.
\end{equation}
As a result, $V$ is infinitesimally form bounded with respect to $L_{a,\Omega,D}$,
\begin{equation}\lb{4.28.1aa}
\|vf\|_{L^2(\Omega)}^2\leq \e\Re[\gq^{}_{a,W^{1,2}_0(\Omega)}(f,f)] 
+ \widetilde \eta(\e)\|f\|_{L^2(\Omega)}^2,\quad f\in W^{1,2}_0(\Omega),\; \e>0.
\end{equation}
$(ii)$  If $n=1$ $($and hence $\Omega$ is an interval\,$)$, then $V$ is infinitesimally form bounded 
with respect to $-\Delta_{\Omega,N}$, 
\begin{equation}\lb{4.28.1}
\big\| |V|^{1/2} f \big\|_{L^2(\Omega)}^2\leq \e\big\|(-\Delta_{\Omega,N})^{1/2}f \big\|_{L^2(\Omega)}^2 
+ \eta(\e)\|f\|_{L^2(\Omega)}^2,\quad f\in W^{1,2}(\Omega),\; \e>0.
\end{equation}
As a result, $V$ is infinitesimally form bounded with respect to $L_{a,\Omega,N}$,
\begin{equation}\lb{4.28.1a}
\big\| |V|^{1/2} f \big\|_{L^2(\Omega)}^2\leq \e\Re[\gq^{}_{a,W^{1,2}(\Omega)}(f,f)] 
+ \widetilde \eta(\e)\|f\|_{L^2(\Omega)}^2,\quad f\in W^{1,2}(\Omega),\; \e>0.
\end{equation}
$(iii)$ Assume, in addition, Hypothesis \ref{h3.10}, $n\geq 3$, and let $\Pi \subseteq \partial \Omega$ 
be relatively open $($$\Pi\in \{\partial \Omega, \emptyset\}$ permitted, cf.\ Remark \ref{r3.15}\,$(ii)$$)$. 
Then $V$ is infinitesimally form bounded with respect to $-\Delta_{\Omega,\Pi}$,
\begin{equation}\lb{5.3.4}
\big\| |V|^{1/2} f \big\|_{L^2(\Omega)}^2\leq \e \big\|(-\Delta_{\Omega,\Pi})^{1/2}f \big\|_{L^2(\Omega)}^2 
+ \eta(\e)\|f\|_{L^2(\Omega)}^2,\quad f\in W^{1,2}_{\Pi}(\Omega),\; \e>0.
\end{equation}
As a result, $V$ is infinitesimally form bounded with respect to $L_{a,\Omega,\Pi}$,
\begin{equation}\lb{5.3.5}
\big\| |V|^{1/2} f \big\|_{L^2(\Omega)}^2\leq \e\Re[\gq^{}_{a,W^{1,2}_{\Pi}(\Omega)}(f,f)]
+ \widetilde \eta(\e)\|f\|_{L^2(\Omega)}^2,\quad f\in W^{1,2}_{\Pi}(\Omega),\; \e>0.  
\end{equation}
\end{theorem}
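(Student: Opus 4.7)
The plan is to handle all three parts via a uniform two-step strategy: first establish infinitesimal form boundedness of $V$ with respect to the appropriate Laplacian ($-\Delta_{\Omega,D}$, $-\Delta_{\Omega,N}$, or $-\Delta_{\Omega,\Pi}$); then deduce the corresponding bound relative to $L_{a,\Omega,\cdot}$ by invoking the uniform ellipticity hypothesis \eqref{3.2}, in verbatim analogy with the passage \eqref{4.26.8a}--\eqref{4.26.8c} from the proof of Theorem \ref{t3.6}. Accordingly I would focus on the Laplacian-relative bounds \eqref{5.3.3}, \eqref{4.28.1}, and \eqref{5.3.4}; the transfer to $L_{a,\Omega,\cdot}$ is immediate.

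For part $(i)$, I would observe that extension by zero yields an isometric embedding $W^{1,2}_0(\Omega) \hookrightarrow W^{1,2}(\bbR^n)$ preserving both the $L^2$ and gradient norms, while extending $V$ by zero off $\Omega$ preserves its $L^{n/2} + L^\infty$ (respectively, $L^1 + L^\infty$ when $n=1$) decomposition. Hence it suffices to establish \eqref{5.3.3} with $\Omega = \bbR^n$. When $n \geq 3$, I would split $V = V_M + V^M$ with $V_M := V \chi_{\{|V| \leq M\}} \in L^\infty$ and $V^M \in L^{n/2}$ satisfying $\|V^M\|_{L^{n/2}(\bbR^n)} \to 0$ as $M \to \infty$ (dominated convergence on the $L^{n/2}$-part). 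H\"older's inequality combined with the Sobolev embedding $W^{1,2}(\bbR^n) \hookrightarrow L^{2n/(n-2)}(\bbR^n)$ then yields
\begin{equation*}
\big\||V|^{1/2} f\big\|_{L^2(\bbR^n)}^2 \lesssim \|V^M\|_{L^{n/2}(\bbR^n)} \|\nabla f\|_{L^2(\bbR^n)^n}^2 + \|V_M\|_{L^\infty(\bbR^n)} \|f\|_{L^2(\bbR^n)}^2,
\end{equation*}
and choosing $M$ large makes the first coefficient arbitrarily small, giving \eqref{5.3.3}. For $n=1$, I would use the Gagliardo--Nirenberg estimate $\|f\|_{L^\infty(\bbR)}^2 \leq 2\|f\|_{L^2(\bbR)}\|f'\|_{L^2(\bbR)}$, an analogous splitting $V = V_M + V^M$ with $V^M$ of arbitrarily small $L^1$ norm, and AM--GM, to obtain infinitesimal form boundedness.

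Part $(ii)$ is then an immediate variant of the one-dimensional case above: on any interval $I \subseteq \bbR$ one has the embedding $W^{1,2}(I) \hookrightarrow L^\infty(I)$ with a Gagliardo--Nirenberg-type estimate $\|f\|_{L^\infty(I)}^2 \lesssim \|f'\|_{L^2(I)}\|f\|_{L^2(I)} + \|f\|_{L^2(I)}^2$, so no boundary condition is needed in dimension one. For part $(iii)$ (mixed boundary condition, $n \geq 3$, $\Omega$ strongly Lipschitz), I would invoke Stein's total extension operator $\cE : W^{1,2}(\Omega) \to W^{1,2}(\bbR^n)$ recorded in \eqref{5.2.2}--\eqref{5.2.2a} to transfer the global Sobolev embedding on $\bbR^n$ to the embedding $W^{1,2}_\Pi(\Omega) \subseteq W^{1,2}(\Omega) \hookrightarrow L^{2n/(n-2)}(\Omega)$, with a constant depending only on $\Omega$ and $n$. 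The same splitting $V = V_M + V^M$ and H\"older argument as in $(i)$ then produce
\begin{equation*}
\big\||V|^{1/2} f\big\|_{L^2(\Omega)}^2 \lesssim \|V^M\|_{L^{n/2}(\Omega)} \|\nabla f\|_{L^2(\Omega)^n}^2 + \|V_M\|_{L^\infty(\Omega)} \|f\|_{L^2(\Omega)}^2,
\end{equation*}
and letting $M\to \infty$ yields \eqref{5.3.4}.

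The main obstacle will be securing the critical Sobolev embedding for $W^{1,2}_\Pi(\Omega)$ on a general strongly Lipschitz domain in part $(iii)$; this is precisely where Stein's extension theorem plays its decisive role, transferring the global embedding on $\bbR^n$ back to $\Omega$. Once this embedding is available, the remainder of the argument parallels the Euclidean case treated in part $(i)$. Part $(i)$ itself bypasses the regularity issue entirely through extension by zero (which is available only in the Dirichlet setting), and part $(ii)$ exploits the universal one-dimensional embedding $W^{1,2}(I) \hookrightarrow L^\infty(I)$, so the essential subtlety is concentrated in $(iii)$.
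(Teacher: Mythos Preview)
Your proposal is correct and follows essentially the same route as the paper: split $V$ into a small $L^{n/2}$ (or $L^1$) part and an $L^\infty$ part, apply H\"older together with the critical Sobolev embedding (obtained directly for $W^{1,2}_0(\Omega)$ via zero extension in $(i)$, from the one-dimensional embedding in $(ii)$, and via the extension property of strongly Lipschitz domains in $(iii)$), and then transfer to $L_{a,\Omega,\cdot}$ using the ellipticity lower bound \eqref{4.26.1}. The only cosmetic discrepancy is that in $(iii)$ the Sobolev embedding on $\Omega$ is inhomogeneous, so your displayed estimate should carry an additional $\|V^M\|_{L^{n/2}(\Omega)}\|f\|_{L^2(\Omega)}^2$ term, which is harmlessly absorbed into $\eta(\e)$.
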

\begin{proof}
As before, we put the $L^\infty$-part $V_{\infty}$ of $V$ equal to zero. \\ 
$(i)$.~The infinitesimal form bound statement \eqref{5.3.3} may be found, for example, in \cite[Theorem 1.8.4]{Da89} in the case $n\geq 3$ under the stronger assumption that 
$\Omega$ is connected.  However, a careful look at the proof of \cite[Theorem 1.8.4]{Da89} and 
the results needed there, \cite[Theorems 1.7.1, 1.7.6, and 1.8.3]{Da89}, reveals that the connectedness assumption on $\Omega$ is not necessary.  Indeed, the principal elements of the proof of 
\cite[Theorem 1.8.4]{Da89} are the fact that $W^{1,2}_0(\Omega)$ may be embedded into 
$L^{2^*}(\Omega)$, $2^*=2n/(n-2)$ (cf., e.g., \cite[Theorem V.3.6]{EE89}), and that 
$C_0^{\infty}(\Omega)$ is a form core for $-\Delta_{\Omega,D}$; these facts do not require 
connectedness of $\Omega$.  For the case $n=1$, the form bound in \eqref{5.3.3} may be 
found, for example, in \cite[p.\ 345--346]{Ka80}.  The form bound \eqref{4.28.1aa} follows from 
the uniform ellipticity condition on $a$.  In fact, from \eqref{4.26.1} and \eqref{5.3.3}, one infers
\begin{equation}\lb{5.3.6}
\|vf\|_{L^2(\Omega)}^2\leq a_1^{-1}\epsilon\Re[\gq^{}_{a,W^{1,2}_0(\Omega)}(f,f)] 
+ \eta(\epsilon)\|f\|_{L^2(\Omega)}^2, \quad f\in W^{1,2}_0(\Omega),\; \epsilon>0.
\end{equation}
Choosing $\epsilon= a_1 \e$, $\e>0$, in \eqref{5.3.6} yields \eqref{4.28.1aa}.

\noindent 
$(ii)$.~For the form bound given in \eqref{4.28.1}, we refer once again to \cite[p.\ 345--346]{Ka80}.  Subsequently, \eqref{4.28.1} and \eqref{4.26.1} together imply
\begin{equation}\lb{5.3.7}
\|vf\|_{L^2(\Omega)}^2\leq a_1^{-1}\epsilon\Re[\gq^{}_{a,W^{1,2}(\Omega)}(f,f)]
+ \eta(\epsilon)\|f\|_{L^2(\Omega)}^2,  \quad f\in W^{1,2}(\Omega),\; \epsilon>0,
\end{equation}
and \eqref{4.28.1a} follows, choosing $\epsilon= a_1 \e$.

\noindent 
$(iii)$.~We closely follow the proof for the corresponding Dirichlet case given 
in \cite{Da89} and mentioned above in the proof of item $(i)$. By a special case of the Sobolev 
embedding theorem (cf., e.g., \cite[Theorem 4.12, Part I, Case C]{AF03}),
\begin{equation}\lb{4.28.5}
W^{1,2}(\Omega) \hookrightarrow L^{2^*}(\Omega), \quad 2^* = 2n/(n-2),
\end{equation}
where $\hookrightarrow$ abbreviates continuous (and dense) embedding, and hence there 
exists a constant $c>0$ such that
\begin{equation}\lb{4.28.6}
\|f\|_{L^{2^*}(\Omega)}^2\leq c\big(\|\nabla f \|_{L^2(\Omega)^n}^2+\|f\|_{L^2(\Omega)}^2 \big), 
\quad f \in W^{1,2}(\Omega).
\end{equation}
Using H\"older's inequality, \eqref{4.28.6} implies
\begin{equation}\lb{4.28.6a}
\begin{split}
(f,|W|f)_{L^2(\Omega)}\leq c\|W\|_{L^{n/2}(\Omega)}\big(\|\nabla f \|_{L^2(\Omega)^n}^2 
+ \|f\|_{L^2(\Omega)}^2\big),&\\
f \in W^{1,2}(\Omega), \; W\in L^{n/2}(\Omega).&
\end{split}
\end{equation}
Next, let $\e>0$ be given.  Since $V\in L^{n/2}(\Omega)$, there exist functions 
$V_{n/2,\e}\in L^{n/2}(\Omega)$ and $V_{\infty,\e}\in L^{\infty}(\Omega)$ with
\begin{equation}\lb{4.28.7}
\|V_{n/2,\e}\|_{L^{n/2}(\Omega)}\leq\e/c, \quad V(x)=V_{n/2,\e}(x)+V_{\infty,\e}(x) \, 
\text{ for a.e.\ $x\in \Omega$}.
\end{equation}
Applying \eqref{4.28.6a} with $W=V_{n/2,\e}$, one estimates
\begin{align}
\|v f\|_{L^2(\Omega)} &= (f,|V|f)_{L^2(\Omega)} 
\leq (f,[|V_{n/2,\e}|+\|V_{\infty,\e}\|_{L^{\infty}(\Omega)}]f)_{L^2(\Omega)}\no\\
&\leq \e\|\nabla f \|_{L^2(\Omega)^n}^2+\eta(\e)\|f\|_{L^2(\Omega)}^2, 
\quad f \in W^{1,2}(\Omega),    \lb{4.28.8}
\end{align}
with 
\begin{equation}\lb{4.28.9}
\eta(\e):=\e+\|V_{\infty,\e}\|_{L^{\infty}(\Omega)}.
\end{equation}
Noting the fact that 
\begin{equation}\lb{4.28.11}
\|\nabla f\|_{L^2(\Omega)^n}^2=\big\|(-\Delta_{\Omega,\Pi})^{1/2}f \big\|_{L^2(\Omega)}^2, 
\quad f \in W_{\Pi}^{1,2}(\Omega),
\end{equation}
by the 2nd representation theorem (cf., e.g., \cite[Theorem VI.2.23]{Ka80}), and using 
that $W^{1,2}_{\Pi}(\Omega)$ is a closed subspace of $W^{1,2}(\Omega)$, \eqref{4.28.8} then 
also yields 
\begin{equation}\lb{4.28.10}
\|vf\|_{L^2(\Omega)}^2\leq \e\|\nabla f\|_{L^2(\Omega)^n}^2 
+ \eta(\e)\|f\|_{L^2(\Omega)}^2,\quad f \in W_{\Pi}^{1,2}(\Omega).
\end{equation}
Since $\e>0$ was arbitrary, \eqref{5.3.4} follows.

To prove \eqref{5.3.5}, one notes that the uniform ellipticity condition on $a$ implies
\begin{equation}\lb{5.3.1}
\|\nabla f\|_{L^2(\Omega)^n}^2 \leq a_1^{-1} \Re[\gq^{}_{a,W^{1,2}_{\Pi}(\Omega)}(f,f)], 
\quad f\in W^{1,2}_{\Pi}(\Omega).
\end{equation}
Taking \eqref{5.3.1} together with \eqref{5.3.4} and \eqref{4.28.11}, one infers that
\begin{equation}\lb{5.3.2}
\|vf\|_{L^2(\Omega)}^2\leq a_1^{-1}\epsilon\Re[\gq^{}_{a,W^{1,2}_{\Pi}(\Omega)}(f,f)]
+\eta(\epsilon)\|f\|_{L^2(\Omega)}^2, \quad f\in W^{1,2}_{\Pi}(\Omega),\; \epsilon>0.
\end{equation}
The form bound in \eqref{5.3.5} follows by taking $\epsilon = a_1 \e$, $\e>0$, in \eqref{5.3.2}. 
\end{proof}

\begin{remark} \lb{r3.15} 
$(i)$ The main idea for the proof of Theorem \ref{t3.14}, is taken from the proof of the corresponding 
Dirichlet case in \cite[Theorem\ 1.8.3]{Da89}. \\
$(ii)$ Due to our strongly Lipschitz hypothesis on $\Omega$ in Theorems \ref{t3.12} and \ref{t3.14}\,$(iii)$, 
\begin{equation} 
W^{1,2}_{\partial \Omega}(\Omega)=W^{1,2}(\Omega),  
\end{equation} 
as noted in Remark \ref{r3.5}\,$(iii)$.  Consequently, taking $\Pi=\partial \Omega$ in Theorems \ref{t3.14} 
and \ref{t3.12} shows that \eqref{4.28.1} and \eqref{4.28.12} (resp., \eqref{4.29.1} and \eqref{4.28.13}) 
hold for the Neumann-type Laplacian $-\Delta_{\Omega,N}$ (resp., $L_{a,\Omega,N}$).  Alternatively, 
taking $\Pi=\emptyset$ yields  
\begin{equation} 
W^{1,2}_{\emptyset}(\Omega)=W_0^{1,2}(\Omega),  
\end{equation} 
and shows that \eqref{4.28.1} and \eqref{4.28.12} (resp., \eqref{4.29.1} and 
\eqref{4.28.13}) hold for the Dirichlet-type Laplacian $-\Delta_{\Omega,D}$ (resp., 
$L_{a,\Omega,D}$). \\
$(iii)$ Estimates of the type \eqref{5.3.3}-- \eqref{5.3.5} can also be used to include 
first-order terms and hence consider the operator 
$-\dv (a\nabla \, \cdot \,) + \big(\B_1\cdot \nabla \cdot \big) + \dv \big(\B_2 \cdot \big) +V$ 
on $\Omega$, assuming 
$\B_j \in L^{n + \varepsilon}(\Omega)^n + L^\infty(\Omega)^n$, $j=1,2$, for some $\varepsilon > 0$. 
Rather than repeating the analysis for this more general situation, we refer to our detailed treatment 
of the critical $L^p$-index for the case $\Omega = \bbR^n$ in Section \ref{s4} (see also item $(iv)$ 
below). \\
$(iv)$ In contrast to Theorem \ref{t3.12}, where the analogs of $\eta (\varepsilon)$ and 
$\wti \eta (\varepsilon)$ are explicitly provided in terms of $C \varepsilon^{-n/(2p-n)}$, the extension  
to the critical $L^p$-index $p=n/2$ in Theorem \ref{t3.14} no longer permits one to determine 
explicit estimates for $\eta (\varepsilon)$ and $\wti \eta (\varepsilon)$. Consequently, we can no 
longer use \eqref{3.9} in Theorem \ref{t3.6}\,$(ii)$ to conclude the stability of square root domains as 
in \eqref{4.28.13}. A systematic extension of this circle of ideas to the square root problem for 
the more general situation 
\begin{equation} 
-\dv (a\nabla \, \cdot \,) + \big(\B_1\cdot \nabla \cdot \big) + \dv \big(\B_2 \cdot \big) +V \,  
\text{ on } \, \bbR^n 
\end{equation} 
at the critical $L^p$-index will be undertaken in the following Section \ref{s4}. \\ 
$(v)$ Introducing the set 
\begin{equation}\lb{4.28.3}
\cD_{\Pi}(\Omega) = 
\{f|_{\Omega} \; | \; f \in C_0^{\infty}(\bbR^n); \, 
\supp \, (f) \cap (\partial \Omega \backslash \Pi) = \emptyset\} \subset W^{1,2}(\Omega),
\end{equation}
one recalls that $\|\cdot\|_{W^{1,2}(\Omega)}$ and $\|\cdot\|_{\gq^{}_{a,\cW(\Omega)}}$ are 
equivalent norms on $\cW(\Omega)$ (cf.\ Remark \ref{r3.4}\,$(ii)$), and hence on 
$W_{\Pi}^{1,2}(\Omega)$. Thus, one infers that 
\begin{equation}\lb{4.28.4}
\text{$\cD_{\Pi}(\Omega)$ is a form core for $-\Delta_{\Omega,\Pi}$ and 
$L_{a,\Omega,\Pi}$, and hence also for $L_{a,\Omega,\Pi} +_{\gq} V$,}
\end{equation} 
employing the infinitesimal form boundedness in \eqref{5.3.5} (see also 
\cite[Theorem\ IV.5.1]{EE89}, \cite[Theorems\ VI.1.21, VI.1.33]{Ka80}). 
\end{remark}

\section{The square root problem for $-\dv (a\nabla \, \cdot \,) + \big(\B_1\cdot \nabla \cdot \big) 
+ \dv \big(\B_2  \cdot \big) + V$ \\ 
at the critical index}\label{s4}

In this section we extend Theorem \ref{t3.9} to include first-order differential operator terms  
of the type $\big(\B_1\cdot \nabla \cdot \big) + \dv\big(\B_2 \cdot \big)$ in addition to the zero-order 
potential term $V$ and we impose the critical (i.e., optimal) $L^p$-conditions on $\B_j$, $j=1,2$, 
and $V$. 

For this purpose we introduce the following hypotheses:

\begin{hypothesis} \lb{h4.1}
In addition to Hypothesis \ref{h3.1} with $\Omega = \bbR^n$, $n\in\bbN$, $n \geq 2$, suppose that 
\begin{equation} 
\B_1,\B_2 \in L^n(\bbR^n)^n + L^\infty(\bbR^n)^n, \quad V\in L^{n/2}(\bbR^n) + L^\infty(\bbR^n).    
\lb{4.1} 
\end{equation} 
\end{hypothesis} 

Setting again $L_a:= -\dv (a\nabla \, \cdot \,)$, the operator formally given in \eqref{eq3} and 
precisely defined by \eqref{4.25.4}, we now consider the operator 
\begin{equation}\label{eq4.1}
M_a := L_a  +_{\gq} \big(\B_1 \cdot \nabla \cdot \big) +_{\gq} \dv \big(\B_2 \cdot \big) +_{\gq} V,
\end{equation}
in $L^2(\bbR^n)$.   We recall that we use $a_1, a_2$ to denote the ellipticity parameters of the coefficient matrix $a$
(see \eqref{3.2} above).

Decomposing $\B_j$, $j=1,2$, and $V$ as
\begin{align}
& \B_j = \B_{j,n} + \B_{j,\infty}, \quad 
\big\| \B_{j,n} \big\|_{L^n(\bbR^n)^n} <\infty, \quad \big\| \B_{j,\infty} \big\|_{L^{\infty}(\bbR^n)^n} < \infty, \; j=1,2,    \no \\
& V = V_{n/2} + V_{\infty}, \quad 
\|V_{n/2}\|_{L^{n/2}(\bbR^n)} <\infty, \quad 
\|V_{\infty}\|_{L^{\infty}(\bbR^n)} < \infty,     \lb{4.3ccc}
\end{align}
proves that under hypothesis 
\eqref{4.1}, $\big(\B_1 \cdot \nabla \cdot \big)$, $\dv \big(\B_2 \cdot \big)$, and $V$ are all 
infinitesimally form bounded with respect to $L_a$, rendering the form sum $M_a$ in \eqref{eq4.1} 
well-defined.  

For simplicity of notation, we will henceforth assume without loss of generality that the $L^\infty$-parts 
of $\B_j$, $j=1,2$, and $V$ in \eqref{4.3ccc} equal zero. (It will be clear from the proof of Theorem \ref{t4.5} 
that these $L^\infty$-parts can be handled in a straightforward manner.)

We start with the following  
result, in which we suppose that $\B_1$, $\B_2$, 
and $V$ are small in the corresponding $L^p$-norms, as in \eqref{eq4.smallness} below:

\begin{theorem} \lb{t4.2} 
Assume Hypothesis \ref{h4.1} with $n \in \bbN$, $n\geq 3$, and let $L_a$ and $M_a$ 
be defined as in \eqref{eq4.1} on $\bbR^n$, with  
$\B_j \in L^n(\mathbb{R}^n)$, $j=1,2$,  
$V \in L^{n/2}(\bbR^n)$, supposing in addition the smallness condition
\begin{equation}\label{eq4.smallness}
\big\|\B_{1}\big\|_{L^n(\bbR^n)^n} + \big\|\B_{2}\big\|_{L^n(\bbR^n)^n} 
+ \|V\|_{L^{n/2}(\bbR^n)} \leq \eps_0.
\end{equation}
If $0 <\eps_0 = \eps_0 (n,a_1,a_2)$ is sufficiently small, then  
\begin{equation}\lb{eq4.2}
\dom(M_{a}^{1/2}) = W^{1,2}(\bbR^n).
\end{equation}
Moreover,
\begin{equation}\label{eq4.3}
\|M_{a}^{1/2} f\|_{L^2(\bbR^n)} \leq C \|\nabla f\|_{L^2(\bbR^n)^n}, 
\quad f \in W^{1,2}(\bbR^n), 
\end{equation}
where $C = C\big(n,a_1,a_2, \big\|\B_{1}\big\|_{L^n(\bbR^n)^n},
\big\|\B_{2}\big\|_{L^n(\bbR^n)^n}, \|V\|_{L^{n/2}(\bbR^n)}\big)$.  

Finally, these results continue to hold when $n=2$, provided that $\B_{j}$, 
$j=1,2$, are divergence free, and that $V \equiv 0$.
\end{theorem}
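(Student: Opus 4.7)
My plan is to apply Corollary \ref{c2.5a} (together with Remark \ref{r2.7}, since $L_a$ is m-sectorial) to $T_0 := L_a$ and $T := M_a$. The foundational ingredient is the solved Kato square root problem for divergence form operators on $\bbR^n$ from \cite{AHLMT02}, which gives $\dom\big(L_a^{1/2}\big) = \dom\big((L_a^*)^{1/2}\big) = W^{1,2}(\bbR^n)$ with norm equivalence $\|L_a^{1/2} f\|_{L^2(\bbR^n)} \approx \|\nabla f\|_{L^2(\bbR^n)^n}$, constants depending only on $n, a_1, a_2$. This immediately verifies \eqref{2.8} of Hypothesis \ref{h2.5}. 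As in the proof of Theorem \ref{t3.9}, one has uniform hypercontractive bounds $\|(L_a + \tau)^{-1/2}\|_{\cB(L^{2_*},L^2)} + \|(L_a + \tau)^{-1/2}\|_{\cB(L^2,L^{2^*})} \lesssim 1$ together with the Kato-type bound $\|\nabla(L_a + \tau)^{-1/2}\|_{\cB(L^2)} \lesssim 1$, uniformly for $\tau > 0$.

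The central step is the factorization $W = B^* A$. I would write $V = V_1 V_2$ with $V_j \in L^n(\bbR^n)$, $\|V_j\|_{L^n} = \|V\|_{L^{n/2}}^{1/2}$, take the auxiliary Hilbert space $\cK := L^2(\bbR^n) \oplus L^2(\bbR^n)^n \oplus L^2(\bbR^n)^n$, and set
$$
A f := (V_2 f,\, \B_2 f,\, \nabla f), \qquad B g := (\overline{V_1}\, g,\, -\nabla g,\, \overline{\B_1}\, g).
$$
An integration-by-parts computation yields $B^* A = V + \dv(\B_2 \cdot) + \B_1 \cdot \nabla = W$; the Sobolev embedding $W^{1,2}(\bbR^n) \hookrightarrow L^{2n/(n-2)}(\bbR^n)$ (valid for $n \geq 3$) combined with Hölder gives the domain containment $\dom(A) \cap \dom(B) \supseteq W^{1,2}(\bbR^n) = \dom\big(L_a^{1/2}\big)$. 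Viewing $K(-E) = -\overline{A(L_a + E)^{-1} B^*}$ as a $3\times 3$ block operator on $\cK$, each entry takes the form $-\phi_i(L_a + E)^{-1}\phi_j$ with $\phi_i, \phi_j \in \{V_i, \B_j, \nabla, \dv\}$; the hypercontractive/Sobolev estimates combined with the $H^\infty$-calculus for $L_a$ bound each block by constants of the form $C(n, a_1, a_2)$ times appropriate $L^p$-norms of the coefficients. The integrability conditions \eqref{2.10}--\eqref{2.25ccc} of Hypothesis \ref{h2.3} are verified analogously using the $H^\infty$-decay $\|L_a^{1/2}(L_a + E + \lambda)^{-1}\|_{\cB(L^2)} \lesssim (E+\lambda)^{-1/2}$.

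Once $\|K(-E)\|_{\cB(\cK)} < 1$ is secured, Corollary \ref{c2.5a} via Remark \ref{r2.7} (noting that $M_a$ is m-sectorial by the KLMN-type form bound $|\gq_W(f,f)| \leq C\eps_0 \Re[\gq_{L_a}(f,f)]$ under smallness) yields $\dom\big(M_a^{1/2}\big) = \dom\big(L_a^{1/2}\big) = W^{1,2}(\bbR^n)$, and \eqref{eq4.3} follows from closed graph applied to $M_a^{1/2}(L_a + E)^{-1/2}: L^2 \to L^2$ composed with the Kato identification. The $n=2$ case with $V \equiv 0$ and $\dv\B_j = 0$ is handled analogously using the Kato solution of \cite{HM02}; the divergence-free hypothesis renders the first-order form $\gq_{\B_1\cdot\nabla + \dv(\B_2\cdot)}$ skew-symmetric on $W^{1,2}(\bbR^n)$, which compensates for the failure of the embedding $W^{1,2} \hookrightarrow L^{2^*}$ when $n=2$. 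The hard part will be showing $\|K(-E)\|_{\cB(\cK)} < 1$ under smallness alone: the naive block factorization above introduces a ``pure gradient'' cross-block $-\nabla(L_a + E)^{-1}\dv$ whose operator norm on $L^2(\bbR^n)^n$ is controlled only by $1/a_1$ (via Lax--Milgram applied to the variational formulation), not by $\eps_0$. Resolving this will require either a careful refinement absorbing the $\nabla$ factors into $L_a^{1/2}$ via a polar-decomposition ansatz $W = \big(L_a^{*1/2}\hat{A}\big)\big(\hat{B}^* L_a^{1/2}\big)$ with $\hat{A}\hat{B}^* = (L_a^* + E)^{-1/2} W(L_a + E)^{-1/2}$ of operator norm $O(\eps_0)$ by the form estimate above, or a direct matrix-analytic argument exhibiting cancellations between blocks to remove the non-small intrinsic component.
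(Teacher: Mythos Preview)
Your proposal has a genuine gap that you yourself identify but do not resolve. With the factorization $A f = (V_2 f,\, \B_2 f,\, \nabla f)$ and $B g = (\overline{V_1} g,\, -\nabla g,\, \overline{\B_1} g)$, the third component of $A(L_a+E)^{-1/2}$ is $\nabla(L_a+E)^{-1/2}$, whose $\cB(L^2)$-norm is $\approx \|L_a^{1/2}(L_a+E)^{-1/2}\|$ by the Kato square root estimate; by the $H^\infty$-calculus this is bounded uniformly in $E$ but does \emph{not} decay. The same holds for the second component of $B$. Consequently the product in \eqref{2.25ccc} does not tend to zero, and the integrand in \eqref{2.10} behaves like $\lambda^{-1}$ near infinity, which is not integrable. (Your remark about $\|L_a^{1/2}(L_a+E+\lambda)^{-1}\|\lesssim (E+\lambda)^{-1/2}$ concerns the \emph{full} resolvent, not its square root, and is not the quantity entering \eqref{2.10}.) The suggested ``polar-decomposition'' fix does not fit the structure of Hypothesis~\ref{h2.1}: the resolvent formula \eqref{2.4} requires $A,B$ to be closed operators with $\dom(A)\supseteq\dom(T_0)$, $\dom(B)\supseteq\dom(T_0^*)$, not composites built from $L_a^{1/2}$. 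There is no rearrangement of the blocks that makes $\|K(-E)\|_{\cB(\cK)}$ small at the critical index, which is precisely why Remark~\ref{r3.15}\,$(iv)$ notes that the abstract machinery of Section~\ref{s2} no longer applies here.

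The paper's proof takes a completely different route: it deduces Theorem~\ref{t4.2} from Theorem~\ref{t4.5a} together with Lemmas~\ref{l4.4} and \ref{l4.5*}. The smallness of $\eps_0$ is used to establish, via a Neumann series, that the resolvent $(I+t^2M_a)^{-1}$ inherits the hypercontractive and Gaffney-type estimates of $(I+t^2L_a)^{-1}$ (Lemma~\ref{l4.4}), and that $\nabla H_a^{-1}\dv$ is $L^2$-bounded (Lemma~\ref{l4.5*}). Granting these, the heart of the argument is the square-function estimate \eqref{eq4.16}, proved by adapting the Carleson-measure and $T(b)$-type technology of \cite{AHLMT02}, \cite{HLM02}, \cite{HM02}: one introduces $\theta_t = -t(I+t^2M_a)^{-1}(\dv(a\,\cdot)-\B_1\cdot)$, splits $\theta_t\nabla F$ into a ``remainder'' handled by quasi-orthogonality and a ``main term'' controlled by the Carleson estimate \eqref{eq4.carl}, the latter verified using the test mappings $F_Q$ from the solution of the original Kato problem. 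This direct harmonic-analysis argument also delivers the \emph{homogeneous} bound \eqref{eq4.3}, which the abstract framework of Corollary~\ref{c2.5a} would not yield even if it applied (it gives only graph-norm equivalence, hence an inhomogeneous estimate with a $\|f\|_{L^2}$ term).
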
 

Later in this section we will remove the smallness condition \eqref{eq4.smallness}  
(cf.\ Theorem \ref{t4.5}). 

\begin{remark} \lb{r4.3}
$(i)$ One observes that if $\B$ is divergence free, then $\dv(\B f)= \B\cdot \nabla f$.  Thus, 
in the case $n=2$ it is equivalent to consider operators of the form $M_a=L_a+ \B\cdot\nabla$, with
$\big\|\B\big\|_{L^n(\bbR^n)^n} \leq \eps_0$.  \\
$(ii)$  Set $R:= M_a-L_a= (\B_1 \cdot \nabla \, \cdot \,) + (\dv \B_2 \, \cdot \,) + V$.
Then by H\"older's inequality, \eqref{eq4.smallness}, and Sobolev embedding,
we have for $n\geq 3$, with $\gq^{}_R(\cdot, \cdot)$ the sesquilinear form associated with $R$, 
and the decomposition $V = u v$, 
\begin{align}\label{eq4.formbound} 
\begin{split} 
|\gq^{}_R(f,f)| &=  \big|\big(\B_1^* f, \nabla f\big)_{L^2(\bbR^n)^n} -
 \big(\nabla  f, \B_2 f\big)_{L^2(\bbR^n)^n} + (u^* f, v f)_{L^2(\bbR^n)}\big|    \\
 & \lesssim \eps_0 \|\nabla f\|_{L^2(\bbR^n)^n}^2.
 \end{split} 
 \end{align}
The analogous bound continues to hold for $n=2$, but is slightly more delicate.
 In that case, by Remark \ref{r4.3}\,$(i)$, we have that $R f =\B\cdot\nabla f$,
 which belongs to the Hardy space $H^1(\mathbb{R}^2)$, 
 since $\B$ is divergence free.  Indeed, by the result of \cite{CLMS89}, 
 and \eqref{eq4.smallness},  we have
  \begin{equation}\label{eq2dclms}
  \big\|\B\cdot\nabla f\big\|_{H^1(\bbR^2)}\leq C\eps_0\|\nabla f\|_{L^2(\bbR^2)^2}.
  \end{equation}
  Moreover, the endpoint Sobolev embedding result
  in $\mathbb{R}^2$ yields that
  \begin{equation}\label{eq2dsobolev}
  \|f\|_{BMO(\bbR^2)} \leq C\|\nabla f\|_{L^2(\bbR^2)^2}.
  \end{equation}
  Thus, by Fefferman's duality theorem \cite{FS72}, one has 
  \begin{equation}\label{eq4.formbound2d}
|\gq^{}_R(f,f)| = \big|\big(\B^* f, \nabla f\big)_{L^2(\bbR^2)^2}\big| 
\lesssim \eps_0\|\nabla f\|_{L^2(\bbR^2)^2}^2.
\end{equation}\\
$(iii)$ We note
that, as a consequence of $(ii)$, for $\eps_0$ sufficiently small, depending on the ellipticity of
$a$ (see \eqref{3.2}),  the operator $M_a$ and its Hermitian
adjoint $M_a^*$ are one-to-one and m-accretive, by  
the theory of sectorial forms and m-sectorial operators (cf. 
\cite[Theorem \ V.3.2, 
Theorem\ VI.1.33, Theorem\ VI.2.1]{Ka80}).   
\end{remark}

The proof of Theorem \ref{t4.2} will be deduced as an immediate consequence
of Theorem \ref{t4.5a} below, and the following pair of lemmata,
the first of which concerns resolvent estimates for $M_a$. 
We set $2_*:=2n/(n+2)$, $2^*:=2n/(n-2)$ as before,
and more generally, introduce $p_*:=pn/(n+p)$, $p^*: = pn/(n-p)$, and 
$p^{**}: = pn/(n-2p)$.
(For simplicity, we abbreviate $I_{L^2(\bbR^n)}$ by $I$ in this section.)

\begin{lemma}\label{l4.4}   
For $0 < \eps_0 = \eps_0(n, a_1, a_2)$ sufficiently small in \eqref{eq4.smallness}, 
the resolvent operator $(I +t^2M_a)^{-1}$ 
enjoys the following properties. First, if $n\geq 3$, then 
\begin{equation}\label{eq4.resolvent}
\big\|(I +t^2M_a)^{-1}\big\|_{\cB(L^p(\bbR^n),L^2(\bbR^n))} \leq C_{p}\, t^{-n(1/p-1/2)}  \,,\quad 
2_*-\eps_1 < p\leq 2\,, 
\end{equation} 
for some $0 < \eps_1 = \eps_1(n, a_1, a_2)$. When $n=2$, the previous bound continues to 
hold with $2_* \leq p\leq 2$, where the endpoint case $p=2_*=1$ is interpreted as
\begin{equation}\label{eq4.resolvent2d}
\big\|(I +t^2M_a)^{-1}\big\|_{\cB(H^1(\bbR^2),L^2(\bbR^2))} \leq C\, t^{-1}  \,. 
\end{equation} 
In addition, for $n\geq 2$, 
\begin{equation}\label{eq4.resolventgrad}
\big\|t\nabla(I +t^2M_a)^{-1}\big\|_{\cB(L^{2}(\bbR^n),L^2(\bbR^n)^n)} 
\leq C \,, 
\end{equation} 
and for $n\geq 3$,
\begin{equation}\label{eq4.resolvent2}
\big\|t(I +t^2M_a)^{-1}\dv\big\|_{\cB(L^{q}(\bbR^n)^n,L^2(\bbR^n))} \leq C_{q}\, t^{-n(1/q-1/2)}  \,,\quad 
q=2-\eps_2 \,, 
\end{equation} 
for some $0 < \eps_2 = \eps_2(n, a_1, a_2)$. Moreover, for every pair of cubes 
$Q_1,Q_2\subset \bbR^n$, with side lengths $\ell(Q_1),\ell(Q_2)$ satisfying 
\begin{equation} 
0<t\leq 4 \ell(Q_j), \; j=1,2, \text{ and } \, \max(\ell(Q_1),\ell(Q_2)\leq 100 \dist(Q_1,Q_2), 
\end{equation} 
for  $n\geq 3$ we have the ``Gaffney-type" estimate
\begin{align} \label{eq4.gaff}
& \big\|(I +t^2M_a)^{-1}\big\|_{\cB(L^2(Q_1),L^2(Q_2))} 
+ \big\|t\nabla(I +t^2M_a)^{-1}\big\|_{\cB(L^2(Q_1),L^2(Q_2)^n)} \no \\
& \qquad + \big\|t(I +t^2M_a)^{-1}\dv\big\|_{\cB(L^2(Q_1)^n,L^2(Q_2))}    \no \\
& \quad \leq C e^{-c\dist(Q_1,Q_2)/t}, \quad k \in \bbN.
\end{align} 
For $n = 2$, 
\begin{align}\label{eq4.gaff2d}
& \big\|(I +t^2M_a)^{-1}\big\|_{\cB(L^2(Q_1),L^2(Q_2))} 
+ \big\|t\nabla(I +t^2M_a)^{-1}\big\|_{\cB(L^2(Q_1),L^2(Q_2)^2)}  \no \\
& \qquad + \big\|t(I +t^2M_a)^{-1}\dv\big\|_{\cB(L^2(Q_1)^2,L^2(Q_2))}    \no \\ 
& \quad \leq C \left(\frac{t}{\dist(Q_1.Q_2)}\right)^4, \quad k \in \bbN.
\end{align} 
 The various constants $c,\,C,\,C_p$ and $C_{q}$
in estimates \eqref{eq4.resolvent}--\eqref{eq4.gaff2d}  
depend upon $n$, $a_1$, and $a_2$.  
\end{lemma}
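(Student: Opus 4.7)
Throughout, the strategy is to perturb from the pure second-order divergence form operator $L_a$, whose hypercontractive resolvent and Davies--Gaffney off-diagonal estimates are well-established in \cite{Au07}, \cite{AHLMT02}, and which produces all the corresponding bounds with the prescribed norms. The smallness hypothesis \eqref{eq4.smallness} will make the perturbation $R := M_a - L_a = (\B_1\cdot\nabla \cdot) + \dv(\B_2 \cdot) + V$ small enough to close a Neumann-series / absorption argument based on the resolvent identity
\[
(I+t^2M_a)^{-1} = (I+t^2 L_a)^{-1} - (I+t^2 L_a)^{-1}\, t^2 R\, (I+t^2 M_a)^{-1}.
\]
First I would establish the purely $L^2$-based estimate \eqref{eq4.resolventgrad}. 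By Remark~\ref{r4.3}\,$(ii),(iii)$, for $\eps_0$ small enough depending on $a_1$, the form $\gq^{}_{M_a}$ is m-sectorial on $W^{1,2}(\bbR^n)$ with $\Re\,\gq^{}_{M_a}(f,f) \geq (a_1/2)\|\nabla f\|_{L^2}^2$. Testing $(I+t^2 M_a)h = g$ against $h$ and taking real parts yields $\|h\|_{L^2} \leq \|g\|_{L^2}$ and $\|t\nabla h\|_{L^2} \lesssim \|g\|_{L^2}$, which is \eqref{eq4.resolventgrad}. The companion bound $\|t(I+t^2 M_a)^{-1}\dv\|_{\cB(L^2(\bbR^n)^n, L^2(\bbR^n))} \lesssim 1$ follows by duality, applied to $M_a^*$ (which satisfies the same hypotheses with $\B_1,\B_2$ swapped/conjugated and $V$ replaced by $\bar V$).

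\textbf{The $L^p \to L^2$ bounds \eqref{eq4.resolvent}.} For $L_a$ the hypercontractive bound \eqref{eq7} combined with the Laplace representation \eqref{eq8} yields both
\[
\|(I+t^2 L_a)^{-1}\|_{\cB(L^p, L^2)} + \|t\nabla(I+t^2 L_a)^{-1}\|_{\cB(L^p, L^2)} \lesssim t^{-n(1/p - 1/2)}, \quad 2_* - \eps \leq p \leq 2.
\]
To transfer to $M_a$, let $g := (I+t^2 M_a)^{-1}f$. Apply the resolvent identity, and estimate $t^2 R g = t^2 \B_1\cdot\nabla g + t^2 \dv(\B_2 g) + t^2 V g$ piece-by-piece via H\"older's inequality and Sobolev embedding $W^{1,2}(\bbR^n) \hookrightarrow L^{2^*}(\bbR^n)$:
\[
\|\B_1\cdot\nabla g\|_{L^{2_*}} \lesssim \eps_0\|\nabla g\|_{L^2}, \quad
\|\B_2 g\|_{L^2} \lesssim \eps_0\|\nabla g\|_{L^2}, \quad
\|Vg\|_{L^{2_*}}\lesssim \eps_0\|\nabla g\|_{L^2},
\]
where in the second we used Sobolev directly, and in the third the product $L^{n/2}\cdot L^{2^*} \hookrightarrow L^{2_*}$. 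Feeding these into the $L^{2_*}\to L^2$ (resp.\ $L^2\to L^2$) bounds for $(I+t^2 L_a)^{-1}$ (resp.\ $t(I+t^2 L_a)^{-1}\dv$) produces
\[
\|(I+t^2 L_a)^{-1}t^2 R g\|_{L^2} + \|t\nabla(I+t^2 L_a)^{-1}t^2 R g\|_{L^2} \lesssim \eps_0\|t\nabla g\|_{L^2}.
\]
Applying the gradient form of the resolvent identity first, one obtains $(1-C\eps_0)\|t\nabla g\|_{L^2} \lesssim t^{-n(1/p-1/2)}\|f\|_{L^p}$, which one then substitutes into the identity itself to conclude \eqref{eq4.resolvent}. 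The bound \eqref{eq4.resolvent2} is obtained by the dual argument: $\|t(I+t^2 M_a)^{-1}\dv\|_{\cB(L^q,L^2)} = \|t\nabla(I+t^2 M_a^*)^{-1}\|_{\cB(L^2,L^{q'})}$ for $q' = q/(q-1)$ slightly larger than $2$, which follows from a parallel perturbation argument against the known $L^2 \to L^{q'}$ gradient bound for $L_a^*$ (valid in an open interval around $2$ by self-improvement of hypercontractivity for strictly elliptic operators).

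\textbf{Gaffney estimates and the 2D case.} For $L_a$, the Gaffney off-diagonal decay
\[
\|(I+t^2 L_a)^{-1}\|_{\cB(L^2(Q_1), L^2(Q_2))} + \|t\nabla(I+t^2 L_a)^{-1}\|_{\cB(L^2(Q_1), L^2(Q_2)^n)} \lesssim e^{-c\dist(Q_1,Q_2)/t}
\]
and its $\dv$ analog are standard (cf.\ \cite{Au07}), obtained via conjugation by $e^{\eta\phi}$ for a bounded Lipschitz $\phi$ and optimization in $\eta$. To transfer to $M_a$, iterate the resolvent identity with cutoffs $\phi$ supported in a neighborhood of $Q_2$ of width $\sim\dist(Q_1,Q_2)/2$: commutators with $\phi$ produce only localized gradient terms controlled by the already-established $L^2$ gradient bound, and the smallness of $R$ absorbs the perturbation terms while preserving the exponential factor, yielding \eqref{eq4.gaff}. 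For $n=2$, Sobolev $W^{1,2} \hookrightarrow L^{2^*}$ fails ($2^* = \infty$), and we cannot carry out the Hölder step in the Neumann-series argument directly. However, by the divergence-free hypothesis and \cite{CLMS89}, $Rf = \B\cdot\nabla f \in H^1(\bbR^2)$ with $\|Rf\|_{H^1} \lesssim \eps_0\|\nabla f\|_{L^2}$, so we apply the Hardy-space boundedness of $(I+t^2 L_a)^{-1}$ (this gives \eqref{eq4.resolvent2d}) in place of the failed endpoint $L^{2_*}\to L^2$ bound. The polynomial decay rate $t/\dist(Q_1,Q_2))^4$ in \eqref{eq4.gaff2d} reflects the loss incurred by passing through $H^1$-BMO duality rather than pure exponential decay; it is extracted by running the cutoff/commutator argument with BMO-valued test functions and using the endpoint Sobolev bound \eqref{eq2dsobolev}.

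\textbf{Main obstacle.} The most delicate point is the 2D case: the failure of $W^{1,2} \hookrightarrow L^\infty$ prevents direct Hölder manipulation of the perturbation, and one must route through the Hardy space--BMO duality and the compensated-compactness result \cite{CLMS89}, as signalled already in Remark~\ref{r4.3}. Quantifying the precise off-diagonal decay \eqref{eq4.gaff2d} (and verifying that the resulting polynomial rate is sufficient to drive the square-root argument in the subsequent Theorem~\ref{t4.5a}) requires careful bookkeeping of the scale $t/\dist(Q_1,Q_2)$ through the $H^1$-BMO pairing, which is genuinely different from the $n\geq 3$ reasoning.
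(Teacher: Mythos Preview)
Your treatment of the global bounds \eqref{eq4.resolvent}--\eqref{eq4.resolvent2} is a legitimate variant of the paper's. The paper uses the \emph{symmetric} square-root factorization
\[
(I+t^2M_a)^{-1}=(I+t^2L_a)^{-1/2}\big[I+(I+t^2L_a)^{-1/2}t^2R(I+t^2L_a)^{-1/2}\big]^{-1}(I+t^2L_a)^{-1/2},
\]
proves that the middle bracket has $\cB(L^r)$-norm $\leq C_r\eps_0$ for $|r-2|\leq\eps$ via the half-resolvent bounds \eqref{eq4.41a}--\eqref{eq4.41c}, and then reads off all the $L^p\to L^2$ mapping properties. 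Your non-symmetric resolvent identity plus absorption argument achieves the same conclusions; the symmetric route is cleaner bookkeeping for the range of exponents, but either works.

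The gap is in the Gaffney estimates. For $n\geq 3$ the paper does \emph{not} transfer from $L_a$ by resolvent iteration. It proves the off-diagonal decay \emph{directly} for $M_a$ by a Caccioppoli argument: with $u=(I+t^2M_a)^{-1}f$, $\supp f\subset Q_1$, and a cutoff $\eta$ vanishing on $Q_1$, one tests the equation against $\bar u\eta^2$, uses ellipticity, and absorbs the lower-order terms $\B_1\cdot\nabla u$, $\dv(\B_2 u)$, $Vu$ via H\"older and Sobolev (this is where $\eps_0$ small enters). This yields
\[
\int|t\nabla u|^2\eta^2+\int|u|^2\eta^2\lesssim t^2\int|u|^2|\nabla\eta|^2,
\]
and then the choice $\eta=e^{\alpha\vp}-1$, $\alpha=c_0 d/t$, extracts the exponential factor exactly as in \cite[Lemma~2.1]{AHLMT02}. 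Your proposal to ``iterate the resolvent identity with cutoffs\ldots\ while preserving the exponential factor'' is not a proof: each term of the Neumann series is a composition $(I+t^2L_a)^{-1}t^2R(I+t^2L_a)^{-1}\cdots$, and compositions do not automatically inherit sharp exponential off-diagonal decay --- one must control a convolution of decay rates over all intermediate regions, and summing the series without loss of the exponent $c$ is genuinely delicate. In fact, the paper's treatment of $n=2$ illustrates exactly this difficulty: there the direct Caccioppoli argument fails (Sobolev embedding is unavailable), so the paper \emph{does} fall back on a Neumann-series transfer, and the outcome is only the polynomial rate $(t/d)^4$. That $2$D argument is an explicit induction on $k$: one proves $\|(\text{$k$-th term})\|_{L^2(E)\to L^2(F)}\leq C_0^k\eps_0^k(t/d)^4$ by inserting a cutoff at distance $d/3$ from $F$ at each stage and invoking the known $L_a$ off-diagonal bounds together with the CLMS estimate \eqref{eq2dclms}. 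Your description (``cutoff/commutator argument with BMO-valued test functions'') does not correspond to this and is too vague to stand on its own.
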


We remark that there is nothing special about the 4th order decay in the two-dimensional Gaffney 
estimate \eqref{eq4.gaff2d}: indeed, in the proof of this part of Lemma \ref{l4.4},
we could obtain polynomial decay of 
any specified order $N$, by choice of $\eps_0$ small enough depending on $N$, 
but as it turns out, 4th order decay will suffice for our purposes here in two dimensions.

\begin{lemma}\label{l4.5*}  Set $H_a := L_a +\B_1\cdot \nabla = -\dv a\nabla +\B_1\cdot\nabla$,
and suppose that $\B_1\in L^n(\bbR^n)^n$, with 
$\big\|\B_1\big\|_{L^n(\mathbb{R}^n)^n}\leq \eps_0$.
If $\eps_0$ is small enough, depending only on dimension $n$ and the ellipticity of $a$, then 
\begin{equation}\label{eq4.11*} 
\big\|\nabla H_a^{-1}\dv \vec{f} \big\|_{L^2(\mathbb{R}^n)^n} 
\leq C \big\|\vec{f}\big\|_{L^2(\mathbb{R}^n)^n}\,,
\end{equation}
where $C = C\big(n, a_1,a_2, \big\|\B_1\big\|_{L^n(\mathbb{R}^n)^n}\big)$.
\end{lemma}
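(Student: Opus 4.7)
The plan is to deduce \eqref{eq4.11*} from the Lax-Milgram theorem applied to the sesquilinear form naturally associated with $H_a$,
\begin{equation*}
\mathfrak{h}(u,v) = (\nabla u, a\nabla v)_{L^2(\bbR^n)^n} + (u, \B_1\cdot\nabla v)_{L^2(\bbR^n)},
\end{equation*}
on the homogeneous Sobolev space $\dot{W}^{1,2}(\bbR^n)$ equipped with the seminorm $\|\nabla\,\cdot\,\|_{L^2(\bbR^n)^n}$. Once $\mathfrak{h}$ is shown to be bounded and coercive on $\dot{W}^{1,2}(\bbR^n)\times \dot{W}^{1,2}(\bbR^n)$, each $\vec f\in L^2(\bbR^n)^n$ gives rise to an antilinear functional $v\mapsto -(\nabla v, \vec f)_{L^2(\bbR^n)^n}$ of norm at most $\|\vec f\|_{L^2(\bbR^n)^n}$, and Lax-Milgram produces a unique $u\in \dot{W}^{1,2}(\bbR^n)$ satisfying $\mathfrak{h}(v,u) = -(\nabla v, \vec f)_{L^2(\bbR^n)^n}$ for every test $v$, together with $\|\nabla u\|_{L^2(\bbR^n)^n}\lesssim \|\vec f\|_{L^2(\bbR^n)^n}$. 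Unpacking the definition of $\mathfrak{h}$, this $u$ is precisely the weak realization of $H_a^{-1}\dv\vec f$, so \eqref{eq4.11*} follows.

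For $n\geq 3$ I would verify boundedness and coercivity of $\mathfrak{h}$ via the H\"older-Sobolev chain based on the embedding $\dot{W}^{1,2}(\bbR^n)\hookrightarrow L^{2^*}(\bbR^n)$ with $2^*=2n/(n-2)$ and embedding constant $C_n$. Namely,
\begin{align*}
|\mathfrak{h}(u,v)| &\leq a_2\|\nabla u\|_{L^2}\|\nabla v\|_{L^2} + \|\B_1\|_{L^n(\bbR^n)^n}\|u\|_{L^{2^*}}\|\nabla v\|_{L^2}\leq (a_2 + C_n\eps_0)\|\nabla u\|_{L^2}\|\nabla v\|_{L^2},\\
\Re\mathfrak{h}(u,u) &\geq a_1\|\nabla u\|_{L^2}^2 - \|\B_1\|_{L^n(\bbR^n)^n}\|u\|_{L^{2^*}}\|\nabla u\|_{L^2}\geq (a_1 - C_n\eps_0)\|\nabla u\|_{L^2}^2,
\end{align*}
where the diagonal bound uses the ellipticity \eqref{3.2}. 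Choosing $\eps_0 = \eps_0(n,a_1,a_2)$ so that $C_n\eps_0\leq a_1/2$ forces coercivity with constant $a_1/2$, and Lax-Milgram closes the argument with final constant $C = 2a_1^{-1}$.

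The main obstacle is the borderline case $n=2$, in which $\dot{W}^{1,2}(\bbR^2)\not\hookrightarrow L^{2^*}(\bbR^2)$ (since $2^*=\infty$) and the preceding H\"older-Sobolev chain degenerates at the endpoint. To handle this case I would invoke the divergence-free hypothesis on $\B_1$ in force for $n=2$ in Theorem \ref{t4.2}: writing $\B_1\cdot\nabla u = \dv(\B_1 u)$, the Coifman-Lions-Meyer-Semmes result \cite{CLMS89} gives $\|\B_1\cdot\nabla u\|_{H^1(\bbR^2)}\lesssim \|\B_1\|_{L^2(\bbR^2)^2}\|\nabla u\|_{L^2(\bbR^2)^2}$ (precisely \eqref{eq2dclms}), and combining this with the endpoint Sobolev embedding \eqref{eq2dsobolev} into $\mathrm{BMO}(\bbR^2)$ and Fefferman's $H^1$-$\mathrm{BMO}$ duality \cite{FS72} exactly as in \eqref{eq4.formbound2d} yields $|(u,\B_1\cdot\nabla u)_{L^2(\bbR^2)}|\lesssim \eps_0\|\nabla u\|_{L^2(\bbR^2)^2}^2$. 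This restores both boundedness and coercivity of $\mathfrak{h}$, after which Lax-Milgram concludes as in the previous paragraph.
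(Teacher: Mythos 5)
Your proof is correct and rests on the same estimate as the paper's: test the equation against $u$ itself, control the drift term by H\"older's inequality together with the Sobolev embedding $\dot W^{1,2}(\bbR^n)\hookrightarrow L^{2^*}(\bbR^n)$, and absorb it using ellipticity once $\eps_0$ is small. The only real difference is packaging: the paper asserts well-posedness of $H_au=-\dv\vec f$ ``by perturbation of the well known corresponding result for $L_a$'' and then runs the coercivity computation as an a priori bound on the weak solution, whereas your Lax--Milgram formulation delivers existence, uniqueness, and the bound \eqref{eq4.11*} in one stroke, which is marginally more self-contained. Your treatment of $n=2$ via \eqref{eq2dclms}, \eqref{eq2dsobolev}, and $H^1$--$BMO$ duality goes beyond the paper's written proof (which, as it uses $\|u\|_{L^{2^*}}$, is implicitly confined to $n\geq 3$); this is consistent with Remark \ref{r4.3}\,$(ii)$ and is harmless, though note that in two dimensions the reduction of \eqref{eq4.30a} to \eqref{eq4.29a} is not needed and Lemma \ref{l4.5*} is never actually invoked, and that working on $\dot W^{1,2}(\bbR^2)$ requires quotienting by constants, which your divergence-free hypothesis on $\B_1$ makes legitimate since $(u+c,\B_1\cdot\nabla v)_{L^2(\bbR^2)}=(u,\B_1\cdot\nabla v)_{L^2(\bbR^2)}$.
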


Let us take Lemma \ref{l4.4} and Lemma \ref{l4.5*} for granted momentarily.  We shall defer their proofs to 
the end of this section. Before proceeding further, let us note for future reference the following 
immediate consequences of \eqref{eq4.gaff}. First, by  
\eqref{eq4.gaff} and Sobolev's inequality, we have for $n\geq 3$ that
\begin{equation}\label{eq4.gaffpq}
\big\|(I +t^2M_a)^{-1}\big\|_{\cB(L^{2_*}(Q_1),L^2(Q_2))} 
\lesssim t^{-1}e^{-c\dist(Q_1,Q_2)/t}, 
\end{equation} 
where $c$ and the implicit constant 
depend on $n$, $a_1$, $a_2$, $\big\|\B_1\big\|_{L^n(\bbR^n)^n}$, 
$\big\|\B_2\big\|_{L^n(\bbR^n)^n}$, and
$\|V\|_{L^{n/2}(\bbR^n)}$. Next, given a
cube $Q\subset \bbR^n$, set $S_k(Q):=2^{k+1}Q\backslash 
2^kQ$, and let $0<t\leq 4 \ell(Q)$.
We then have the following variant of the ``Gaffney" estimate, for $n\geq 3$:
\begin{align}\label{eq4.gaffsk}
& \big\|(I +t^2M_a)^{-1}\big\|_{\cB(L^2(S_k(Q)),L^2(Q))} 
+ \big\|t\nabla(I +t^2M_a)^{-1}\big\|_{\cB(L^2(S_k(Q)),L^2(Q)^n)} \no \\
& \qquad + \big\|t(I +t^2M_a)^{-1}\dv\big\|_{\cB(L^2(S_k(Q))^n,L^2(Q))}    \no \\
& \quad \leq C e^{-c2^k\ell(Q)/t}, \quad k \in \bbN,
\end{align} 
as one may deduce from \eqref{eq4.gaff} by covering $S_k(Q)$ by a bounded number of cubes $Q'$ of
side length $\ell(Q')\approx 2^k\ell(Q)$, with $\dist(Q,Q')\approx 2^k\ell(Q)$.  Similarly, from
\eqref{eq4.gaffpq} we obtain for $n\geq 3$ that
\begin{equation}\label{eq4.gaffpqsk}
\big\|(I +t^2M_a)^{-1}\big\|_{\cB(L^{2_*}(S_k(Q)),L^2(Q))} 
\lesssim\, t^{-1}e^{-c2^k\ell(Q))/t}  \,. 
\end{equation}
When $n=2$, we have
\begin{align}\label{eq4.gaffsk2d}
& \big\|(I +t^2M_a)^{-1}\big\|_{\cB(L^2(S_k(Q)),L^2(Q))} 
+ \big\|t\nabla(I +t^2M_a)^{-1}\big\|_{\cB(L^2(S_k(Q)),L^2(Q)^2)} \no \\
& \qquad + \big\|t(I +t^2M_a)^{-1}\dv\big\|_{\cB(L^2(S_k(Q))^2,L^2(Q))}   \no \\
& \quad \leq C \left(\frac{t}{2^k\ell(Q)}\right)^4, \quad k \in \bbN. 
\end{align} 

We now observe that, assuming Lemmas \ref{l4.4} and \ref{l4.5*}, and recalling 
Remark \ref{r4.3}\,$(iii)$,
we obtain the conclusion of Theorem \ref{t4.2} as an immediate consequence of
the following theorem, which is the deep fact underlying the results of this section. 

\begin{theorem} \lb{t4.5a} 
For $n\geq 3$,  let $L_a$ and $M_a$ 
be defined as in \eqref{eq4.1} on $\bbR^n$, with  
$\B_j \in L^n(\mathbb{R}^n)^n$, $j=1,2$,  and
$V \in L^{n/2}(\bbR^n)$. 
We suppose in addition that the operator $M_a$ and its adjoint $M_a^*$ are one-to-one 
and m-accretive,  that
the resolvent bounds of Lemma \ref{l4.4} hold, and that $H_a:= L_a +\B_1\nabla$ satisfies \eqref{eq4.11*}.
Then  
\begin{equation}\lb{eq4.2a}
\dom(M_{a}^{1/2}) = W^{1,2}(\bbR^n).
\end{equation}
Moreover,
\begin{equation}\label{eq4.3a}
\big\|M_{a}^{1/2} F\big\|_{L^2(\bbR^n)} \leq C \|\nabla F\|_{L^2(\bbR^n)^n}, 
\quad F \in W^{1,2}(\bbR^n), 
\end{equation}
where $C = C\big(n,a_1,a_2, \big\|\B_{1}\big\|_{L^n(\bbR^n)^n},
\big\|\B_{2}\big\|_{L^n(\bbR^n)^n}, \|V\|_{L^{n/2}(\bbR^n)}\big)$.  

Finally, these results continue to hold when $n=2$, provided that $\B_{j}$, 
$j=1,2$, are divergence free, and that $V \equiv 0$.
\end{theorem}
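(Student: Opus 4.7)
The plan is to prove the continuous inclusion $W^{1,2}(\bbR^n) \hookrightarrow \dom(M_a^{1/2})$ with the quantitative bound \eqref{eq4.3a}, and then to deduce the reverse inclusion by applying the same estimate to the adjoint $M_a^*$ and invoking a standard duality argument. The adjoint $M_a^* = L_a^* + \dv(\overline{\B_1}\,\cdot\,) + (\overline{\B_2}\cdot\nabla) + \overline V$ satisfies the same structural hypotheses as $M_a$ (with the roles of $\B_1$ and $\B_2$ essentially interchanged and $a$ replaced by $a^*$), so the same proof will apply verbatim to both operators, and then the pairing $(M_a^{1/2} F, G)_{L^2} = (F, (M_a^*)^{1/2} G)_{L^2}$ yields the reverse inclusion as well as the two-sided equivalence of norms.

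For the forward direction, I would use the McIntosh-type square function characterization valid for one-to-one m-accretive operators, namely,
\begin{equation*}
\big\|M_a^{1/2} F\big\|_{L^2(\bbR^n)}^2 \approx \int_0^\infty \|\Theta_t F\|_{L^2(\bbR^n)}^2 \frac{dt}{t}, \qquad \Theta_t := t M_a (I + t^2 M_a)^{-1},
\end{equation*}
on $\dom(M_a^{1/2})$, so that the goal reduces to proving $\int_0^\infty \|\Theta_t F\|_{L^2}^2 \, dt/t \lesssim \|\nabla F\|_{L^2}^2$ for $F \in W^{1,2}(\bbR^n)$. Expanding $\Theta_t F$ via $M_a F = L_a F + (\B_1 \cdot \nabla F) + \dv(\B_2 F) + V F$ inside the resolvent produces four pieces to control.

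The principal piece, arising from $L_a$, is where the bound \eqref{eq4.11*} on $H_a = L_a + (\B_1\cdot\nabla)$ plays its essential role: by swapping $(I + t^2 M_a)^{-1}$ for $(I + t^2 L_a)^{-1}$ via a resolvent identity and using the smallness of the lower-order coefficients to absorb error terms, one reduces the estimation to the Kato square root result for $L_a$ from \cite{AHLMT02}, with first-order corrections dominated using \eqref{eq4.11*} and H\"older's inequality. The three lower-order pieces are handled using the resolvent bounds \eqref{eq4.resolvent}, \eqref{eq4.resolventgrad}, and \eqref{eq4.resolvent2} of Lemma \ref{l4.4}, paired with H\"older (placing $\B_j \in L^n(\bbR^n)^n$ against an $L^{2_*}$ factor and $V \in L^{n/2}(\bbR^n)$ against an $L^{2^*}$ factor), the Sobolev embedding $W^{1,2}(\bbR^n) \hookrightarrow L^{2^*}(\bbR^n)$, and the Gaffney-type estimates \eqref{eq4.gaff} to localize to dyadic annuli around a fixed cube so that the spatial sums converge geometrically.

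The hard part will be that at the scaling-critical exponents $p = n$ and $p = n/2$, no naive H\"older--Sobolev bound provides pointwise-in-$t$ decay for the operator $t(I + t^2 M_a)^{-1} R$ with $R = (\B_1\cdot\nabla) + \dv(\B_2\,\cdot\,) + V$; the $L^2(dt/t)$-integrability must instead be extracted from the combination of Gaffney off-diagonal decay and a Carleson-measure argument, with the error terms absorbed into the left-hand side via the implicit smallness embedded in the hypotheses of Lemma \ref{l4.4}. The two-dimensional case is more delicate still, because \eqref{eq4.gaff2d} gives only polynomial off-diagonal decay and the Sobolev embedding $W^{1,2}(\bbR^2) \hookrightarrow L^{2^*}$ degenerates; the divergence-free assumption on the $\B_j$ and the absence of a potential ($V \equiv 0$) are precisely what allow one to substitute the Hardy-space compensated compactness bound \eqref{eq2dclms}, after which Fefferman--Stein $H^1$--$\mathrm{BMO}$ duality restores the control needed to close the argument.
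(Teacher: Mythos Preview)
Your high-level framework is right: reduce to the square function bound
$\int_0^\infty \|t(I+t^2M_a)^{-1}M_aF\|_{L^2}^2\,dt/t \lesssim \|\nabla F\|_{L^2}^2$,
handle the adjoint symmetrically, and use Lemma~\ref{l4.4} plus the $n=2$ compensated-compactness substitute. But the core of the argument is missing, and one step as written would not go through.

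First, the suggestion that the principal piece is handled by ``swapping $(I+t^2M_a)^{-1}$ for $(I+t^2L_a)^{-1}$ via a resolvent identity and using the smallness of the lower-order coefficients to absorb error terms'' is not available here: Theorem~\ref{t4.5a} does \emph{not} assume smallness of $\B_j$ or $V$; only the resolvent bounds of Lemma~\ref{l4.4} are hypothesized. So there is nothing to absorb with, and the resolvent-swap reduces the problem to exactly the critical-scaling square function for the lower-order terms that you yourself flag as having no pointwise-in-$t$ decay. You correctly say a Carleson-measure argument is needed, but you do not say which measure or how to bound it; that is where the actual work lies.

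What the paper does instead is the following. Writing $\theta_t\vec f:=-t(I+t^2M_a)^{-1}(\dv a-\B_1)\cdot\vec f$, one has $t(I+t^2M_a)^{-1}H_aF=\theta_t\nabla F$ (after first reducing $M_aF$ to $H_aF$ via a Hodge decomposition absorbing $\dv(\B_2\,\cdot\,)+V$ into an $L_aG$ term, using \eqref{eq4.11*} and \eqref{eq2.0}). One then runs the Coifman--Meyer paraproduct splitting
\[
\theta_t\nabla F = \big(\theta_t - (\theta_tI_n)E_tP_t\big)\nabla F + (\theta_tI_n)\cdot E_tP_t\nabla F,
\]
where $E_t,P_t$ are dyadic and smooth averages. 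The first (``remainder'') term satisfies $R_tI_n=0$ and is controlled by a quasi-orthogonality estimate against the heat semigroup (this is where the Gaffney bounds and the $L^{2_*-\eps}\to L^2$ and $L^{2-\eps}$-divergence bounds of Lemma~\ref{l4.4} are used). The second term is bounded via Carleson's embedding lemma, once one proves the Carleson measure estimate
\[
\sup_Q \frac{1}{|Q|}\iint_{R_Q} |(\theta_tI_n)(x)|^2\,\frac{d^nx\,dt}{t}\leq C.
\]
The key device for this is the family of test mappings $F_Q$ from \cite{AHLMT02} satisfying \eqref{eqA.5} and the reverse-H\"older bound \eqref{eq4.lq}; property $(iii)$ of \eqref{eqA.5} reduces the Carleson bound to $\iint_{R_Q}|\theta_t\nabla F_Q|^2\,d^nx\,dt/t\lesssim |Q|$, which decomposes into the remainder piece (already handled), a piece controlled by $\|L_aF_Q\|_{L^2}^2\lesssim |Q|/\ell(Q)^2$, and a $\B_1$-piece controlled using \eqref{eq4.lq} with $q>2$ together with the $L^p\to L^2$ resolvent bound for $p>2_*$. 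None of this is absorption; it is a genuine $T(b)$-type argument, and the $F_Q$ mechanism is the missing idea in your outline.
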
 

Let us point out that smallness of $\B_j \in L^n(\mathbb{R}^n)^n$, $j=1,2$,  and
$V \in L^{n/2}(\bbR^n)$, is not used in any explicit way in the proof of Theorem \ref{t4.5a}.
On the other hand, we do not know how to establish m-accretivity of $M_a$, nor the conclusions of
Lemmas \ref{l4.4} and \ref{l4.5*}, in the absence of smallness. 

\begin{proof}[Proof of Theorem \ref{t4.5a}]  
The proof will
utilize the technology of the solution 
of the square root
problem for $L_a$ developed in \cite{AHLMT02}, \cite{HLM02}, and \cite{HM02}. 
Let us now proceed to the details.

We resolve $M_a^{1/2}$ as follows:
\begin{equation}\label{eq4.6}
M_a^{1/2} =c\int_0^\infty dt \, (I+t^2M_a)^{-3}\,t^3M^2_a t^{-1} \,.
\end{equation} 
It is enough to show that for $g,F\in C^\infty_0(\bbR^n)$, we have the estimate
\begin{equation}\label{eq4.15}
|(g, M_a^{1/2} F)_{L^2(\bbR^n)}| \leq C \|g\|_{L^2(\bbR^n)} \|\nabla F\|_{L^2(\bbR^n)^n}.
\end{equation} 
By \eqref{eq4.6} and Cauchy-Schwarz, it is enough to establish the square function
estimates
\begin{equation}\label{eq4.16a}
\iint_{\mathbb{R}^{n+1}_+}\frac{d^nxdt}{t} \left|t^2 M_a^*\left(I+t^2M_a^*\right)^{-2} g\right|^2 
\leq C \|g\|^2_{L^2(\bbR^n)}.
\end{equation} 
and
\begin{equation}\label{eq4.16}
\iint_{\mathbb{R}^{n+1}_+} \frac{d^n xdt}{t} \left|t \left(I+t^2M_a\right)^{-1} M_a F\right|^2 \leq 
C \|\nabla F\|^2_{L^2(\bbR^n)^n},
\end{equation} 
where $C = C\big(n,a_1,a_2, \big\|\B_{1}\big\|_{L^n(\bbR^n)^n},
\big\|\B_{2}\big\|_{L^n(\bbR^n)^n}, \|V\|_{L^{n/2}(\bbR^n)}\big)$, and where, as above, 
$a_1,a_2$ are the ellipticity parameters for the coefficient matrix $a$ (see \eqref{3.2}). 
Since $M_a$ is a 
one-to-one maximal accretive operator (by hypothesis),
it therefore has a bounded holomorphic functional calculus (see 
\cite[Theorem\ G, p.\ 102]{ADM96}), whence
estimate \eqref{eq4.16a} follows directly from the McIntosh--Yagi Theorem (see, e.g.,
\cite[Theorem F, pp.\ 98--99]{ADM96} or \cite{Mc86}.) We therefore turn to the proof of 
\eqref{eq4.16}, which is the heart of the matter.
Let us suppose for now that $n\geq 3$;  we shall discuss the modifications required to treat the 
two-dimensional case at the end of the proof.
 
Recalling that
$2_*:=2n/(n+2)$, $2^*:=2n/(n-2)$, we record several simple observations:
\begin{align}\label{eq4.10}
\|F\|_{L^{2^*}(\bbR^n)}& \leq C\|\nabla F\|_{L^2(\bbR^n)^n},   \\
\label{eq4.11}
\|VF\|_{L^{2_*}(\bbR^n)} & \leq \|V\|_{L^{n/2}(\bbR^n)} \|F\|_{L^{2^*}(\bbR^n)},\\
\label{eq4.12}
\big\|\vec{B}_1\cdot\vec{f}\big\|_{L^{2_*}(\bbR^n)} & \leq \big\|\B_{1}\big\|_{L^n(\bbR^n)^n} \big\|\vec{f}\big\|_{L^{2}(\bbR^n)^n},\\
\label{eq4.12a}
\big\|\vec{B}_2 F\big\|_{L^{2}(\bbR^n)^n} & \leq \big\|\B_{2}\big\|_{L^n(\bbR^n)^n} \|F\|_{L^{2^*}(\bbR^n)}.
\end{align}
The first of these is just Sobolev embedding, and the other three follow directly from 
H\"older's inequality, 

We recall that $L_a:=-\dv a\nabla$, and we set 
\begin{equation} 
H_a:= L_a +\B_1\nabla\,, 
\end{equation} 
so that 
\begin{align}
M_a:&= -\dv a\nabla + \B_1\nabla + \dv \B_2 + V   \no \\  
&= -\dv a\nabla + \B_1\nabla  +\dv a\big( a^{-1} \B_2 - \nabla L_a^{-1} V\big)    \no \\
& =H_a +\dv a\big( a^{-1} \B_2 -\nabla L_a^{-1} V\big)\,.    
\end{align}
Let $\nabla F\in L^2(\bbR^n)^n$.  By \eqref{eq4.11} and \eqref{eq4.12a}, 
$VF \in L^{2_*}(\bbR^n)$ and $\B_2 F\in L^2(\bbR^n)^n$.
Moreover,
by the Hodge decomposition for $L_a$, there is Sobolev-type embedding:
\begin{equation}\label{eq2.0} \|\nabla(L_a)^{-1} h\|_2 \lesssim  \|h\|_{2_*}\,,
\end{equation}
and therefore, $\nabla L_a^{-1} V F \in L^2(\bbR^n)^n$.  Hence, using again the Hodge decomposition for $L_a$, we can write
\begin{equation} 
\dv a\big(a^{-1}\B_2 F-\nabla L_a^{-1} V F\big) =- \dv a\nabla G = L_a G\,, 
\end{equation} 
so that
\begin{equation} 
M_a F = H_a F +L_a G\,,
\end{equation} 
where
\begin{equation} 
\|\nabla G\|_2 \lesssim \bigg(\big\|\B_2\big\|_n +\|V\|_{n/2}\big) \|\nabla F\|_2\,. 
\end{equation} 
Consequently, it is enough to prove the following pair of estimates:
\begin{equation}\label{eq4.29a}
\iint_{\mathbb{R}^{n+1}_+} \frac{d^n xdt}{t} \left|t \left(I+t^2M_a\right)^{-1} H_a F\right|^2 \leq 
C \|\nabla F\|^2_{L^2(\bbR^n)^n}\,,
\end{equation}
and
\begin{equation}\label{eq4.30a}
\iint_{\mathbb{R}^{n+1}_+} \frac{d^n xdt}{t}
\left|t \left(I+t^2M_a\right)^{-1} L_a G\right|^2 \leq 
C \|\nabla G\|^2_{L^2(\bbR^n)^n}\,.
\end{equation}

Before proceeding further, let us point out that the proof of  \eqref{eq4.30a}
may be reduced to that of \eqref{eq4.29a}, and therefore it will suffice to establish the latter.
Indeed, we may write
\begin{equation} 
L_a G = H_a G -\B_1\nabla G =:I-II\,.
\end{equation} 
Clearly, the contribution of term $I$ is of the same form as \eqref{eq4.29a}.
Moreover, 
\begin{equation} 
II = H_a H_a^{-1} \B_1 \nabla G =: H_a \widetilde{F}\,, 
\end{equation} 
where by \eqref{eq4.11*}, \eqref{eq4.12} and Sobolev embedding, $\nabla \widetilde{F}
= \nabla H_a^{-1} \B_1 \nabla G \in L^2(\bbR^n)^n$.  Thus, the contribution
of term $II$ is also of the same form as \eqref{eq4.29a}.

We therefore turn to  the proof of \eqref{eq4.29a}.
The first step 
is a standard reduction to a Carleson measure estimate, following the analogous 
step, with some modifications owing to the presence of lower order terms,
in the treatment of the square root problem for $L_a$ in \cite{AHLMT02}, \cite{AHLT01}, \cite{AT98},
\cite{HLM02}, \cite{HM02}. 

Before proceeding, we introduce two standard averaging operators that will be used in the sequel.
Let $P_t$ be a  ``nice" approximate identity, that is,
\begin{equation} 
P_t f:=\phi_t*f\,,
\end{equation}  
with $\phi_t(x):=t^{-n}\phi(x/t)$, for some non-negative, radial 
$\phi\in C^\infty_0(\mathbb{R}^n)$, supported in the unit ball in $\mathbb{R}^n$, with integral equal to 
one. Let $E_{t}$ denote the dyadic
averaging operator, that is, if $Q(x,t)$ is the minimal dyadic cube (with respect to the grid induced by $Q$) containing $x$,
with side length at least $t$, then 
\begin{equation} (E_{t}g)(x):=\frac1{|Q(x,t)|}\int_{Q(x,t)} d^n y \, g(y).
\end{equation}

Returning to the proof,  we let $\theta_t$ denote the operator defined by
\begin{equation}\label{eq4.thetadef}
\theta_t\vec{f}:= -t \left(I+t^2M_a\right)^{-1}(\dv(a) - \vec{B}_1)\cdot\vec{f}, 
\end{equation}
and observe that therefore
\begin{equation}\label{eq4.thetadef2}
t \left(I+t^2M_a\right)^{-1}H_a =\theta_t\nabla\,.
\end{equation}
Using a strategy that originates in \cite{CM86}, we may then write
\begin{align}\label{eq4.19a}
& t \big(I+t^2M_a\big)^{-1}H_aF(x) = \theta_t \nabla F(x)    \no \\
& \quad =\big(\theta_t -\theta_tI_n(x)
\cdot E_tP_t\big)\nabla F(x) + \theta_tI_n(x)
\cdot E_tP_t(\nabla F)(x)      \no \\
& \quad =: R_t (\nabla F)(x) + \theta_tI_n(x)
\cdot E_tP_t(\nabla F)(x)=:I+II, 
\end{align}
where $I_n$ is the $n\times n$ identity matrix, and where $E_t$ and $P_t$ have been defined above.  
The key term here is $II$.  By Carleson's embedding lemma, the desired bound for $II$
follows immediately from the Carleson measure estimate
\begin{equation}\label{eq4.carl}
\sup \frac1{|Q|} \iint_{R_Q} \frac{d^n xdt}{t} |(\theta_t I_n) (x)|^2  \leq C,
\end{equation}
where the supremum runs over all  cubes $Q\subset \mathbb{R}_n$, and where
$R_Q:= Q\times (0,\ell(Q))$ is the standard ``Carleson box" above $Q$.
To establish \eqref{eq4.carl} is our main task, and we postpone the proof momentarily.

Turning to the term $I$, we observe that by a standard ``Schur's lemma" argument, 
and standard Littlewood--Paley theory for the classical heat semigroup, to obtain 
the desired bound for $R_t$, that is, to show that
\begin{equation}\label{eq4.21}
\iint_{\mathbb{R}^{n+1}_+}\frac{d^nxdt}{t} \left|R_t\nabla F\right|^2 \leq 
C \|\nabla F\|^2_{L^2(\bbR^n)^n},
\end{equation}  it suffices to establish the quasi-orthogonality estimate
\begin{equation}\label{eq4.18}
\big\|R_t \nabla\, s^2 \Delta e^{2s^2\Delta} F\big\|_{L^2(\bbR^n)} 
\leq C \min\left(\left(\frac{s}{t}\right)^\beta,
\left(\frac{t}{s}\right)^\beta\right) \big\|s\,e^{s^2\Delta}\, \nabla\cdot \nabla F \big\|_{L^2(\bbR^n)},
\end{equation}
for some fixed $\beta>0$, where again, 
$\Delta = \sum_{j=1}^n \partial_{x_j}^2$ is the 
usual Laplacian on $\mathbb{R}^n$.  To this end, let us observe that by \eqref{eq4.gaffsk},
\eqref{eq4.gaffpqsk}, and \eqref{eq4.12}, we have that $\theta_t$ also satisfies 
estimate \eqref{eq4.gaffsk}
(with slightly different constants), that is, for $0<t\leq 4\ell(Q)$, we have
\begin{equation}\label{eq4.gafftheta}
\|\theta_t\|_{\cB(L^2(S_k(Q))^n,L^2(Q))}
 \leq C\, e^{-c2^k\ell(Q)/t} \,, \quad k \in \bbN.
\end{equation} 
Similarly, by \eqref{eq4.resolvent} and \eqref{eq4.12}, we have 
\begin{equation}\label{eq4.l2theta}
\sup_{t>0}\|\theta_t\|_{\cB(L^2(\mathbb{R}^n)^n, L^2(\bbR^n))}
 \leq C\,.
\end{equation}   
In particular then, by  \cite[Lemma\ 3.11]{AAAHK11}
we infer
\begin{equation}\label{eq4.25a}
\sup_{t>0}\|(\theta_t I_n) E_t\|_{\cB(L^2(\mathbb{R}^n)^n, L^2(\bbR^n))} \leq C.
\end{equation}  
Moreover, $(\theta_t I_n) E_t P_t$ trivially satisfies the same bound as $\theta_t$ in
\eqref{eq4.gafftheta}, since the kernels of $E_t$ and $P_t$ have compact support in a ball of radius $t$; 
thus $R_t$ also satisfies \eqref{eq4.gafftheta} and \eqref{eq4.l2theta}.
Note further that by construction, $R_t I_n = 0$.  Consequently, 
by \cite[Lemma\ 3.5(i)]{AAAHK11}, 
\begin{align}\label{eq4.25}
\begin{split} 
& \big\|R_t \nabla\, s^2 \Delta e^{2s^2\Delta} F \big\|_{L^2(\bbR^n)}  \leq 
C t \big\| \nabla^2\big(s^2 \Delta e^{2s^2\Delta} F\big)\big\|_{L^2(\bbR^n)}     \\
& \quad = C (t/s) \big\|\big(s^2\nabla^2
e^{s^2\Delta}\big) s\,e^{s^2\Delta} \Delta F \big\|_{L^2(\bbR^n)} 
\leq C (t/s) \big\| s\,e^{s^2\Delta} \Delta F \big\|_{L^2(\bbR^n)},
\end{split} 
\end{align}
which establishes \eqref{eq4.18} in the case $t\leq s$.    

Next, we verify \eqref{eq4.18} in the case $s<t$.  Recalling that $R_t$ is a difference, 
\begin{equation} 
R_t = \theta_t - (\theta_t I_n) E_tP_t, 
\end{equation} 
we now consider the contributions of the two terms separately.    We have
\begin{equation}\label{eq4.27}
\big\|(\theta I_n) E_tP_t\nabla\, s^2 \Delta e^{2s^2\Delta} F \big\|_{L^2(\bbR^n)} 
=\frac{s}{t} \big\|(\theta I_n) E_t\, t\nabla P_t \, e^{s^2\Delta}\,se^{s^2\Delta}\Delta F\big\|_{L^2(\bbR^n)},
\end{equation}
whence \eqref{eq4.18} follows for the present term, with $\beta=1$, by \eqref{eq4.25a} and the 
uniform $L^2$-boundedness of $t\nabla P_t$ and of $e^{s^2\Delta}$.   Consider now 
the contribution of $\theta_t$. To simplify notation, we set $Q_s:= s^2 \Delta e^{2s^2\Delta}$, 
and note that
\begin{align}\label{eq4.43a}
\theta_t \nabla Q_s F &= - t \left(I+t^2M_a\right)^{-1} \big(\dv (a \, \cdot \,) 
- \vec{B}_1 \cdot \big) \nabla Q_s F    \no \\
&= t \left(I+t^2M_a\right)^{-1} M_a \,Q_s F \,-\, t \left(I+t^2M_a\right)^{-1} 
\big[\dv(\B_2 \, \cdot \,) + V\big]\,Q_sF   \no \\
&=: G_{s,t} - H_{s,t}\,.
\end{align}
Then 
\begin{equation} 
G_{s,t} = \frac{s}{t} \big(I-(I+t^2M_a)^{-1}\big)\,s  e^{2s^2\Delta} \Delta F, 
\end{equation} 
which, by the case $p=2$ of \eqref{eq4.resolvent}, satisfies \eqref{eq4.18}  with  $\beta=1$.
To conclude our treatment of $R_t$, it remains to consider $H_{s,t}$,
which we split into
\begin{equation}\label{eq4.51}
H_{s,t} =  t\left(I+t^2M_a\right)^{-1}\dv \big(\B_2\,Q_sF\big) \,+\,  t\left(I+t^2M_a\right)^{-1}V\,Q_sF
\,=:\, H_{s,t}'+H_{s,t}''. 
\end{equation} 
By \eqref{eq4.resolvent},
with $p = 2_*-\eps$, for some $\eps>0$, and by H\"older's inequality,
we have
\begin{align}\label{eq4.52*}
\|H''_{s,t}\|_{L^2(\bbR^n)} & \lesssim t^{1-n(1/p-1/2)}\|V\,Q_s F\|_{L^p(\bbR^n)}
\lesssim  t^{1-n(1/p-1/2)}\|Q_s F\|_{L^{p^{**}}(\bbR^n)}    \no \\
& = t^{1-n(1/p-1/2)} s \big\|e^{s^2\Delta}\, s\Delta e^{s^2\Delta}F\big\|_{L^{p^{**}}(\bbR^n)}     \no \\
& \lesssim  t^{1-n(1/p-1/2)} s^{1-n(1/2-1/p^{**})} \big\|s\Delta e^{s^2\Delta}F\big\|_{L^2(\bbR^n)}     \no \\
& =\left(\frac{s}{t}\right)^{\frac{n}{p} - 1-\frac{n}{2}} \big\|s\Delta e^{s^2\Delta}F\big\|_{L^2(\bbR^n)},
\end{align}
where we recall that $p^{**}:= pn/(n-2p)$, and we have used the well-known
bound $\big\|e^{s^2\Delta} h\big\|_{L^r(\bbR^n)} \leq C_r\, s^{-n(1/2-1/r)}\|h\|_{L^2(\bbR^n)}$ 
for $r\geq 2$;  the last step is an elementary computation, using the relationship between $p$ 
and $p^{**}$.  We observe now that
\begin{equation} 
\beta: = \frac{n}{p} - 1-\frac{n}{2}= n\left(\frac1p -\frac{2+n}{2n}\right) 
=n\left(\frac1p-\frac1{2_*}\right)>0, 
\end{equation}  
since $p=2_*-\eps$.  
Similarly, by  \eqref{eq4.resolvent2},  we have that for some $q=2-\eps'$,
\begin{align}
\|H'_{s,t}\|_{L^2(\bbR^n)} & \lesssim t^{-n(1/q-1/2)} \big\|\B_2\,Q_s F\big\|_{L^q(\bbR^n)}  
\lesssim  t^{-n(1/q-1/2)}\|Q_s F\|_{L^{q^*}(\bbR^n)}     \no \\
&= t^{-n(1/q-1/2)} s \big\|e^{s^2\Delta}\, s\Delta e^{s^2\Delta}F\big\|_{L^{q^*}(\bbR^n)}       \no \\
& \lesssim  t^{-n(1/q-1/2)} s^{1-n(1/2-1/q^*)} \big\|s\Delta e^{s^2\Delta}F\big\|_{L^2(\bbR^n)}    \no \\
&=\left(\frac{s}{t}\right)^{n(1/q-1/2)} \big\|s\Delta e^{s^2\Delta}F\big\|_{L^2(\bbR^n)},
\end{align}
Since $q<2$, this concludes the proof of \eqref{eq4.18}, and hence also that of \eqref{eq4.21}.

It therefore remains only to establish the Carleson measure estimate \eqref{eq4.carl}.  To this end, we invoke a key fact in the proof of the Kato conjecture for $L_a$. 
By \cite{AHLMT02}, 
there exists, for each $Q$, a mapping
$F_{Q} : \mathbb{R}^{n}\rightarrow \mathbb{C}^{n}$ such that 
\begin{equation}
\begin{split}\label{eqA.5} 
\text{$(i)$} & \quad\int_{\mathbb{R}^{n}} d^n x \, |(\nabla F_{Q})(x)|^{2}\leq C|Q|\\
\text{$(ii)$} & \quad\int_{\mathbb{R}^{n}} d^n x \, 
|(L_aF_{Q})(x)|^{2}\leq C\frac{|Q|}{\ell(Q)^{2}}\\
\text{$(iii)$} & \quad\sup_{Q}\frac1{|Q|}\int_{0}^{\ell(Q)}\int_{Q} \frac{d^n xdt}{t} |\vec{\zeta}(x,t)|^{2}\\
 & \quad\quad\leq
C\sup_{Q}\frac1{|Q|}
\int_{0}^{\ell(Q)}\int_{Q} \frac{d^n xdt}{t} |\vec{\zeta}(x,t) E_{t}\nabla F_{Q}(x)|^{2},\end{split}
\end{equation}
 for every function $\vec{\zeta}:\mathbb{R}_{+}^{n+1}\rightarrow\mathbb{C}^{n}$, where
 $\nabla F_{Q}:=(D_{j}(F_{Q})_{k})_{1\leq j,k \leq n}$
 is the Jacobian matrix, and where the product
\begin{equation} 
\vec{\zeta} E_{t}\nabla F_{Q}=\sum_{j=1}^{n}\zeta_{j} E_{t}D_{j}F_{Q}
\end{equation} 
is a vector. 
Moreover, although not stated explicitly in \cite{AHLMT02}, the mapping $F_Q$ also satisfies a 
variant of \eqref{eqA.5}\,$(i)$, namely that for some $q=2+\eps>2$, with $\eps$ depending only on dimension and ellipticity, we also have
\begin{equation}\label{eq4.lq}
\int_{\mathbb{R}^{n}} d^n x \, |(\nabla F_{Q})(x)|^{q} \leq C |Q|
\end{equation}
(see, e.g., \cite{HM02} for a proof of this fact in the case that the kernel of the heat semigroup $e^{-tL}$ enjoys a pointwise Gaussian upper bound, but the proof there may be readily adapted to the general case in which the pointwise kernel bound is replaced by one of ``Gaffney-type").

Following the strategy in \cite[Chapter\ 3]{AT98}, and given the existence of a family of
mappings $F_{Q}$ with these properties, we see that
by \eqref{eqA.5}\,$(iii)$, applied with
$\vec{\zeta}(x,t)=\theta_{t}I_n$, in order to prove \eqref{eq4.carl},
it is enough to show that 
\begin{equation}\label{eq4.30}
\int_{0}^{\ell(Q)}\!\!\!\int_{Q} \frac{d^n xdt}{t} |\left(\theta_{t}I_n\right)(x)\,\left(E_{t}\nabla 
F_{Q}\right)(x)|^{2}
\leq C|Q|\, ,
\end{equation}
uniformly in $Q$.  To this end, we write
\begin{align}\label{eq4.46}
& \left(\theta_{t}I_n\right)\left(E_{t}\nabla F_{Q} \right)   \no \\
& \quad = \left(\theta_{t}I_n\right)\left((E_{t}-E_tP_t)\nabla 
F_{Q}\right) + \left(\left(\theta_{t}I_n\right)E_tP_t
-\theta_t\right) \nabla F_Q\, +\,\theta_t \nabla F_Q   \no \\
& \quad =:R_t^{(1)} +R_t^{(2)} + \theta_t \nabla F_Q, 
\end{align}
and notice that $-R_t^{(2)} = R_t$, as defined in \eqref{eq4.19a}.
Thus, the desired bound for this term follows immediately from 
\eqref{eq4.21} (with $F_Q$ in place of $F$) and \eqref{eqA.5}\,$(i)$.  We claim that a similar bound holds for the contribution of $R_t^{(1)}$,  by \eqref{eqA.5}\,$(i)$ and the well-known square function estimate
\begin{equation}\label{eq4.31}
\iint_{\mathbb{R}^{n+1}_+} \frac{d^n xdt}{t} |(E_t-P_t)h(x)|^2  \leq C\|h\|_{L^2(\bbR^n)}^2.
\end{equation}  
Indeed, since $E_t$ is a projection operator, we have that
\begin{equation} 
E_t-E_tP_t= E_t(E_t-P_t). 
\end{equation} 
Consequently, we may invoke \eqref{eq4.25a}  
to reduce matters to \eqref{eq4.31}, with $h=\nabla F_Q$.

The remaining term is 
\begin{equation} 
\theta_t \nabla F_Q = t\left(I+t^2M_a\right)^{-1}\left(L_a F_Q\right) \,+\,
t\left(I+t^2M_a\right)^{-1} (\vec{B}_1\cdot\nabla F_Q)=: \Phi_t +\Psi_t\,.
\end{equation} 
By the case $p=2$ of \eqref{eq4.resolvent},  and by \eqref{eqA.5} (ii), we have that
\begin{align}
\begin{split} 
\int_{0}^{\ell(Q)}\!\!\!\int_{Q} \frac{d^n xdt}{t} |\Phi_t(x)|^{2}
& \lesssim \int_{0}^{\ell(Q)}\!\!\!\int_{\mathbb{R}^n} d^n x \,t \, dt |L_a F_Q(x)|^2    \\
& \leq C |Q| \frac1{\ell(Q)^2} \int_{0}^{\ell(Q)} t \, dt = C|Q|,
\end{split} 
\end{align}
as desired.   Finally, to treat the contribution of $\Psi_t$, for $q=2+\eps$ as in \eqref{eq4.lq},
we choose $p$ so that 
$p^*=pn/(n-p) = q,$ 
and note that since $q>2$, we therefore have
$p>2_*$.
Then by \eqref{eq4.resolvent}, followed by H\"older's inequality,  
\begin{align}\label{eq4.32}
\int_{0}^{\ell(Q)}\!\!\!\int_{Q} \frac{d^n xdt}{t} |\Psi_t(x)|^{2}
& \lesssim \int_{0}^{\ell(Q)}dt \left(\int_{\mathbb{R}^n} 
\big|\vec{B}_1\cdot \nabla F_Q\big|^p d^n x\right)^{2/p}
t^{1-2n(1/p-1/2)}     \no \\
& \lesssim \int_{0}^{\ell(Q)}dt \, \big\|\vec{B}_1\big\|_{L^n(\bbR^n)^n}^2\,
\|\nabla F_Q\|_{L^{p^*}(\bbR^n)}^2 \,
t^{1-2n(1/p-1/2)}     \no \\
& \lesssim |Q|^{2/p^*} \int_{0}^{\ell(Q)}dt \,
t^{1-2n(1/p-1/2)} \,,
\end{align}
where in the last step we have used \eqref{eq4.lq} with $q=p^*$.
Since $p>2_* = 2n/(n+2)$, we infer 
\begin{equation} 
1-2n(1/p-1/2)\, >\,1-2n\left(\frac{n+2}{2n}-\frac12\right) = 1-(n+2-n) =-1. 
\end{equation} 
Consequently, the last term in \eqref{eq4.32} equals
\begin{equation} 
C \,|Q|^{2/p^* +2/n -2(1/p-1/2)} = C\,|Q|. 
\end{equation} 
This finishes the proof of Theorem \ref{t4.5a}, in the case $n\geq 3$.  Let us now treat
the two-dimensional case.  

In two dimensions, it remains to verify the square function bound \eqref{eq4.16}, but now
with $H_a=M_a=L_a+\B\cdot(\nabla \, \cdot \,)$, where $\dv \big(\B\big) = 0$.
As in the higher-dimensional case, our first step will be to reduce matters to the Carleson measure
estimate \eqref{eq4.carl}.  
The second step (again the main step), will be to prove that \eqref{eq4.carl} holds.
We define $\theta_t$ as above \eqref{eq4.thetadef}, with $\B_1=\B$, but observe that now,
\begin{equation}\label{eq4.updef}
\theta_t\nabla = t(I+t^2M_a)^{-1}M_a = t^{-1}\big(I-(I+t^2M_a)^{-1}\big)\,=: t^{-1} \U_t
\end{equation}
(cf. \eqref{eq4.thetadef2}). We note that by Lemma \ref{l4.4}, the operator $\U_t$ satisfies 
\begin{equation}\label{eq4.l2up}
\sup_{t>0}\|\U_t\|_{\cB(L^2(\mathbb{R}^2))}
 \leq C\,,
\end{equation}   
and also estimate \eqref{eq4.gaffsk2d},
that is, for $0<t\leq 4\ell(Q)$, we have
\begin{equation}\label{eq4.gaffup}
\|\U_t\|_{\cB(L^2(S_k(Q)), L^2(Q))}
 \leq C\,  \left(\frac{t}{2^k\ell(Q)}\right)^4 \,, \quad k \in \bbN.
\end{equation} 
Let us now proceed with

\medskip

\noindent {\bf Step 1}:  
reduction to the Carleson measure estimate \eqref{eq4.carl}.

\begin{proof}[Verification of Step 1]  The goal here is to show that
 the square function bound \eqref{eq4.16}, in two dimensions, follows from \eqref{eq4.carl}.
To this end, we make the same splitting as in \eqref{eq4.19a}, recalling that 
in the present setting, $H_a=M_a$.  Then main term $II$ is handled exactly as before,
and is bounded, without further ado, 
in terms of the Carleson measure estimate \eqref{eq4.carl}.   The ``remainder" term
$I$ is also handled as before:  we need to verify the quasi-orthogonality bound \eqref{eq4.18}.
To carry out the latter task will require a bit of an argument.

In the present setting, by \eqref{eq4.updef}, we have
\begin{equation}\label{eq4.52}
R_t \nabla= \theta_t\nabla -(\theta_t I_2) E_t P_t \nabla= 
t^{-1} \U_t - t^{-1} (\U_t\Phi) E_tP_t\nabla\,,
\end{equation}
where $I_2 =\nabla\Phi$ is the $2\times 2$ identity matrix.
We first establish \eqref{eq4.18} in the case $s\leq t$, and consider the two terms in
\eqref{eq4.52} separately.   The contribution of the term $t^{-1}\U_t =\theta_t \nabla$
is treated exactly like $G_{s,t}$ in \eqref{eq4.43a}; there is no term $H_{s,t}$ in the present context.
The contribution of the other term $(\theta_t I_2) E_t P_t \nabla = 
t^{-1}(\U_t\Phi) E_tP_t\nabla$ may be handled exactly like its higher-dimensional analogue in
\eqref{eq4.27}, except for one minor technical issue which we now describe. 
In estimating \eqref{eq4.27},
we used the
bound \eqref{eq4.25a}, which in turn relied upon
\eqref{eq4.gafftheta} and \eqref{eq4.l2theta}, but in two dimensions it is not clear that $\theta_t$ satisfies
the latter pair of bounds.
Thus,  we cannot simply invoke \cite[Lemma\ 3.11]{AAAHK11}
``off-the-shelf" to obtain \eqref{eq4.25a};  nonetheless, the proof of  \cite[Lemma\ 3.11]{AAAHK11}
goes through, {\it mutatis mutandi}, to establish \eqref{eq4.25a}, 
since $\theta_tI_n = t^{-1}\U_t \Phi$ (cf. \eqref{eq4.updef}),
where $\U_t$ satisfies \eqref{eq4.l2up}, \eqref{eq4.gaffup}, and $\U_t1=0$.  The rest of the argument may now be carried out as before.  We omit the details.

Next, we consider the case $t<s$.  As above, let $Q_s = 
s^2 \Delta e^{2s^2\Delta}$. 
We now claim that for $g\in W^{2,2}(\R^2)$, 
we have
\begin{equation}\label{eq4.54}
\|R_t\nabla g\|_{L^2(\bbR^2)} \,\leq\, C t\, \big\|\nabla^2 g\big\|_{L^2(\bbR^2)}.
\end{equation}
Let us take this claim for granted momentarily, and apply \eqref{eq4.54} with
$g=Q_sF$.  We then have
\begin{align} 
\begin{split} 
\big\|R_t\nabla \big(s^2 \Delta e^{2s^2\Delta} F\big)\big\|_{L^2(\bbR^2)} & \lesssim\frac{t}{s} 
\big\|s^2\nabla^2e^{s^2\Delta} \,s \,e^{s^2\Delta} \Delta F\big\|_{L^2(\bbR^2)}    \\
& \lesssim \frac{t}{s} \big\|s\, e^{s^2\Delta} \Delta F\big\|_{L^2(\bbR^2)},
\end{split} 
\end{align} 
which is \eqref{eq4.18} in the case $t<s$, with $\beta=1$.

We now turn to the proof of \eqref{eq4.54}.  For $t$ fixed, choose $j(t)$ so that
$2^{j(t)}\leq t < 2^{j(t) +1}$, and let
$\mathbb{D}_t$ denote the dyadic grid of cubes of side length $2^{j(t)}$.
For a measurable set $E$, let $\fint_E f:=|E|^{-1}\int_E f$ denote the average of $f$ over $E$. 
For $(x,t)$  fixed, we set 
\begin{equation} 
G_{x,t}(z):= g(z) - g(x) - (z-x)\cdot  \left(\nabla g\right)(x). 
\end{equation} 
Observe that, by \eqref{eq4.52},
\begin{equation} 
R_t\nabla g(y) = \big(\U_t -(\U_t\Phi) E_tP_t\nabla\big)\left(t^{-1}G_{x,t}\right)(y) 
=: \Ut_t\left(t^{-1}G_{x,t}\right)(y), 
\end{equation} 
since  $\U_t 1=0$, and since $(R_t\nabla)\Phi=0$.  We note that
$\Ut_t$  satisfies \eqref{eq4.l2up} and \eqref{eq4.gaffup} (for the term
$(\U_t\Phi) E_tP_t\nabla$, this relies on the fact that, as noted above, estimate
\eqref{eq4.25a} continues to hold in two dimensions).  Thus, setting $S_0(Q):=2Q$, and 
$S_k(Q):= 2^{k+1}Q\backslash  2^kQ$, we have
\begin{align} \lb{eq2.18}  
\|R_t\nabla g\|_{L^2(\bbR^2)} &= \bigg( 
\sum_{Q\in \mathbb{D}_{t}}\int_Q d^2 y \, | \Ut_t\left(t^{-1}G_{x,t}\right)(y)|^2\bigg)^{1/2}   \no \\
& = \bigg(\sum_{Q\in \mathbb{D}_{t}} \fint_Q \int_Q 
d^2 yd^2 x \, |\Ut_t\left(t^{-1}G_{x,t}\right)(y)|^2 \bigg)^{1/2}  \no \\
& \leq \sum_{k=0}^\infty\bigg(\sum_{Q\in \mathbb{D}_{t}} \fint_Q \int_Q 
d^2 yd^2 x \, |\Ut_t\left(t^{-1}G_{x,t}1_{S_k(Q)}\right)(y)|^2 \bigg)^{1/2}    \no \\
& \lesssim \sum_{k=0}^\infty 2^{-k}
\Bigg\{\sum_{Q\in \mathbb{D}_{t}} \int_Q \fint_{|x-y|<C2^k t} d^2 y d^2 x   \no \\
& \hspace*{2cm}
\times \bigg(\frac{|g(y) - g(x) - (y-x)\cdot (\nabla g)(x)|}{2^{2k} t}\bigg)^2 \Bigg\}^{1/2}\,,
\end{align}  
where we have used that $\ell(Q)\approx t$, 
for $Q\in \mathbb{D}_t$, and that $n=2$.  By  
Plancherel's Theorem,
the expression in curly brackets equals
\begin{align}
& \int_{\R^2} \fint_{|h|<C2^kt}d^2 h d^2 x \left(\frac{\left|g(x+h) - g(x) - h\cdot  
\left(\nabla g\right)(x)\right|}{2^{2k} t}\right)^2   \no \\   
& \quad = t^2\, \fint_{|h|<C}\int_{\R^2} d^2 \xi d^2 h 
\bigg(\frac{\big|e^{2\pi i\xi\cdot 2^kth} -1-(2\pi i\xi\cdot 2^kth)\big|}{(2^k  t|\xi|)^2}\bigg)^2
|\xi|^4\, |\hat g(\xi)|^2      \no \\
& \quad \lesssim  t^2 \|\Delta g\|_{L^2(\bbR^2)}^2.
\end{align}
Plugging this bound into \eqref{eq2.18}, we obtain \eqref{eq4.54}, which completes Step 1.
\end{proof}

\noindent {\bf Step 2}:  proof of the Carleson measure estimate \eqref{eq4.carl}.

\medskip

As in the higher-dimensional setting, we use the family of mappings $F_Q$ satisfying
\eqref{eqA.5} and \eqref{eq4.lq}, so that once again matters are reduced to proving
\eqref{eq4.30}.  We make the same splitting, in \eqref{eq4.46}, and observe that the 
the ``main term" $\theta_t\nabla F_Q$, as well as the two ``remainder terms"
$R^{(1)}_t$ and $R^{(2)}_t$, may be handled exactly as before.  To handle the remainder terms, 
we use \eqref{eq4.25a}, which, as we have noted above, is no longer a direct consequence of 
Lemma 3.11 of  \cite{AAAHK11} as stated, 
but nonetheless follows readily from its  proof; we omit the details.
The proof of Theorem \ref{t4.5a} is now complete.
\end{proof}

Next, we remove the smallness condition \eqref{eq4.smallness}, and hence formulate the principal 
result of this section:

\begin{theorem} \lb{t4.5} 
Assume Hypothesis \ref{h4.1} with $n\in\bbN$, $n \geq 3$, and let $L_a$ be defined as in 
\eqref{eq3}, and $M_a$ be defined as in \eqref{eq4.1}. Then  
\begin{equation}\lb{4.59}
\dom(M_a^{1/2}) = W^{1,2}(\bbR^n).
\end{equation}
Moreover, decomposing $\B_j$, $j=1,2$, and $V$ into 
\begin{align}
& \B_j = \B_{j,\varepsilon_0,n} + \B_{j,\varepsilon_0,\infty}, \quad 
\big\| \B_{j,\varepsilon_0,n} \big\|_{L^n(\bbR^n)^n} \leq \varepsilon_0/3,   \no \\
& \quad \big\| \B_{j,\varepsilon_0,\infty} \big\|_{L^{\infty}(\bbR^n)^n} < \infty, \; j=1,2,   \lb{4.59a} \\
& V = V_{\varepsilon_0,n/2} + V_{\varepsilon_0,\infty}, \quad 
\|V_{\varepsilon_0,n/2}\|_{L^{n/2}(\bbR^n)} \leq \varepsilon_0/3, \quad 
\|V_{\varepsilon_0,\infty}\|_{L^{\infty}(\bbR^n)} < \infty,    \no 
\end{align}
choosing $0 < \varepsilon_0$ sufficiently small such that Theorem \ref{t4.2} applies to 
$M_{a,\varepsilon_0}$, where 
\begin{equation} 
M_{a,\varepsilon_0} = L_a + \B_{1,\varepsilon_0,n} \cdot \nabla + 
\dv \big(\B_{2,\varepsilon_0,n} \cdot \big) +V_{\varepsilon_0,n/2},       \lb{4.59b}
\end{equation}
and 
\begin{equation}
M_a = M_{a,\varepsilon_0} + \B_{1,\varepsilon_0,\infty} \cdot \nabla 
+ \dv \big(\B_{2,\varepsilon_0,\infty} \cdot \big) + V_{\varepsilon_0,\infty}. 
\end{equation}
Then for all $\varepsilon > 0$, 
\begin{equation}\label{4.60}
\big\|M_a^{1/2} f \big\|_{L^2(\bbR^n)}^2 \leq [C_1 + \varepsilon C_2] \|\nabla f\|_{L^2(\bbR^n)^n}^2 
+ [C_2/(4 \varepsilon)] \| f\|_{L^2(\bbR^n)}^2,  
\quad f \in W^{1,2}(\bbR^n), 
\end{equation}
where   
\begin{align} 
\begin{split} 
C_1 &= C_1\big(n,a_1,a_2, \big\|\B_{1,\varepsilon_0,n} \big\|_{L^n(\bbR^n)^n}, 
\big\|\B_{2,\varepsilon_0,n} \big\|_{L^n(\bbR^n)^n}, \|V_{\varepsilon_0,n/2}\|_{L^{n/2}(\bbR^n)}\big),     \\
C_2 &= C_2\big(\big\|\B_{1,\varepsilon_0,\infty} \big\|_{L^{\infty}(\bbR^n)^n}, 
\big\|\B_{2,\varepsilon_0,\infty} \big\|_{L^{\infty}(\bbR^n)^n}, 
\|V_{\varepsilon_0,\infty}\|_{L^{\infty}(\bbR^n)}\big).
\end{split}
\end{align}  
\end{theorem}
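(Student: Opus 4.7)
The plan is to bootstrap Theorem \ref{t4.2} to the unrestricted setting by peeling off the bounded-coefficient perturbation $W_\infty := \B_{1,\varepsilon_0,\infty}\cdot\nabla + \dv(\B_{2,\varepsilon_0,\infty}\,\cdot\,) + V_{\varepsilon_0,\infty}$ one term at a time via Corollary \ref{c2.5a}. First, choose $\varepsilon_0 > 0$ small enough that Theorem \ref{t4.2} applies to $M_{a,\varepsilon_0}$ as defined in \eqref{4.59b}, yielding $\dom\big(M_{a,\varepsilon_0}^{1/2}\big) = W^{1,2}(\bbR^n)$ together with $\big\|M_{a,\varepsilon_0}^{1/2} f\big\|_{L^2(\bbR^n)}^2 \leq C_1 \|\nabla f\|_{L^2(\bbR^n)^n}^2$. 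Applying Theorem \ref{t4.2} to $M_{a,\varepsilon_0}^*$ (whose coefficients are the conjugate transposes of those of $M_{a,\varepsilon_0}$ and thus satisfy the same smallness condition \eqref{eq4.smallness}) yields $\dom\big((M_{a,\varepsilon_0}^*)^{1/2}\big) = W^{1,2}(\bbR^n)$, so Hypothesis \ref{h2.5} is fulfilled for the initial reference operator $M_{a,\varepsilon_0}$.

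Next, I sequentially absorb the three pieces of $W_\infty$ by applying Corollary \ref{c2.5a} (together with Theorem \ref{tA.3}, which identifies Kato's resolvent operator with the corresponding form sum) three times. The factorizations $W = B^*A$ are arranged so that at most one of $A,B$ carries a gradient: for $V_{\varepsilon_0,\infty}$, take $A = |V_{\varepsilon_0,\infty}|^{1/2}$ and $B^* = e^{i\arg V_{\varepsilon_0,\infty}}|V_{\varepsilon_0,\infty}|^{1/2}$ (both bounded on $L^2(\bbR^n)$); for $\B_{1,\varepsilon_0,\infty}\cdot\nabla$, let $A=\nabla$ and $B^*\vec{h}:=\B_{1,\varepsilon_0,\infty}\cdot\vec{h}$; for $\dv(\B_{2,\varepsilon_0,\infty}\,\cdot\,)$, let $Af:=\B_{2,\varepsilon_0,\infty}f$ and $Bg:=-\nabla g$ (so that $B^*=\dv$). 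With these choices, whichever side contains the gradient satisfies $\big\|\nabla (T+EI)^{-1/2}\big\|_{\cB(L^2(\bbR^n))}\leq C$ uniformly in $E \geq 1$, by Corollary \ref{c2.9} applied to the current reference operator and $-\Delta$ (whose square root domain also equals $W^{1,2}(\bbR^n)$) combined with the Plancherel bound $\big\|\nabla(-\Delta+EI)^{-1/2}\big\|_{\cB(L^2(\bbR^n))}\leq 1$, while the opposite side---a bounded multiplication composed with $(T+EI)^{-1/2}$---decays like $E^{-1/2}$ by m-accretivity (after a suitable shift where needed, cf.\ Remark \ref{r2.7}). In the $V_{\varepsilon_0,\infty}$ step both factors already decay like $E^{-1/2}$. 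Thus the product $\|A(T+EI)^{-1/2}\|\|\overline{(T+EI)^{-1/2}B^*}\|$ decays at least like $E^{-1/2}$, so the integrand $\lambda^{-1}(\lambda+E)^{-1/2}$ of \eqref{2.10} lies in $L^1((R,\infty);d\lambda)$, and the conditions \eqref{2.10}, \eqref{2.11}, \eqref{2.25ccc} of Hypothesis \ref{h2.3} are verified at each stage. Corollary \ref{c2.5a} then propagates the identity $\dom(T^{1/2}) = \dom((T^*)^{1/2}) = W^{1,2}(\bbR^n)$ along the chain, and after three iterations gives \eqref{4.59}.

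For the estimate \eqref{4.60}, the established domain identifications together with Remark \ref{r2.14} produce the sesquilinear form identity $\gq^{}_{M_a}(f,f) = \big((M_a^*)^{1/2}f, M_a^{1/2}f\big)_{L^2(\bbR^n)}$ on $W^{1,2}(\bbR^n)$. The closed-graph norm equivalence $\|M_a^{1/2} f\|_{L^2(\bbR^n)} \approx \|(M_a^*)^{1/2}f\|_{L^2(\bbR^n)}$, combined with m-sectoriality of $M_a$, permits $\|M_a^{1/2} f\|_{L^2(\bbR^n)}^2$ to be dominated by a constant multiple of $\Re\gq^{}_{M_a}(f,f) = \Re\gq^{}_{M_{a,\varepsilon_0}}(f,f) + \Re\gq^{}_{W_\infty}(f,f)$. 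Theorem \ref{t4.2} bounds the first summand by $C_1\|\nabla f\|_{L^2(\bbR^n)^n}^2$, while the second obeys
\begin{equation*}
|\gq^{}_{W_\infty}(f,f)| \leq \big(\|\B_{1,\varepsilon_0,\infty}\|_{L^\infty} + \|\B_{2,\varepsilon_0,\infty}\|_{L^\infty}\big) \|\nabla f\|_{L^2(\bbR^n)^n}\|f\|_{L^2(\bbR^n)} + \|V_{\varepsilon_0,\infty}\|_{L^\infty}\|f\|_{L^2(\bbR^n)}^2.
\end{equation*}
Young's inequality $ab\leq \varepsilon a^2 + b^2/(4\varepsilon)$ applied to the cross term then delivers exactly the shape $[C_1+\varepsilon C_2]\|\nabla f\|^2 + [C_2/(4\varepsilon)]\|f\|^2$ asserted in \eqref{4.60}, with the constants absorbing the sectoriality-dependent factors.

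The principal obstacle is to justify that the iteration cannot be collapsed into one single application of Corollary \ref{c2.5a}: if all three pieces of $W_\infty$ were combined into a single factorization $W_\infty=B^*A$ on a product Hilbert space $\cK$, the operator norm $\big\|A(T_0+EI)^{-1/2}\big\|^2$ would equal a sum of squared component norms and would therefore inherit the non-decaying contribution $\big\|\nabla(T_0+EI)^{-1/2}\big\|^2 = O(1)$, so that the product with $\big\|\overline{(T_0+EI)^{-1/2}B^*}\big\|$ would remain $O(1)$ and \eqref{2.10} would fail. The sequential scheme above is precisely what ensures that whenever a gradient appears on one side of the factorization, the other side carries only a bounded multiplication, recovering the decisive $E^{-1/2}$ decay at every stage.
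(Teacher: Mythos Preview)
Your proof of the main domain identity \eqref{4.59} is correct and follows essentially the same strategy as the paper: split off the small-norm part $M_{a,\varepsilon_0}$ (handled by Theorem \ref{t4.2}), then iterate the abstract stability machinery three times to absorb the bounded pieces $V_{\varepsilon_0,\infty}$, $\B_{1,\varepsilon_0,\infty}\cdot\nabla$, and $\dv(\B_{2,\varepsilon_0,\infty}\,\cdot\,)$ one at a time, using the same factorizations $B^*A$ (gradient on one side, bounded multiplication on the other). The only cosmetic difference is that the paper invokes the packaged Theorem \ref{t2.12} with the fixed reference $T_0=-\Delta$, whereas you apply Corollary \ref{c2.5a} directly and transfer the required decay estimates to the current operator via Corollary \ref{c2.9}; since Theorem \ref{t2.12} is proved precisely by this transfer (via Lemma \ref{l2.8}), the two arguments are the same in substance.

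One caution on your derivation of \eqref{4.60}: the step ``m-sectoriality of $M_a$ permits $\|M_a^{1/2}f\|^2$ to be dominated by a constant multiple of $\Re\gq^{}_{M_a}(f,f)$'' is not justified as stated. For a non-self-adjoint $M_a$ one has $\gq^{}_{M_a}(f,f)=\big((M_a^*)^{1/2}f,M_a^{1/2}f\big)$, but $\|M_a^{1/2}f\|^2=(M_a^{1/2}f,M_a^{1/2}f)$ is a different quantity, and closed-graph equivalence of $\|M_a^{1/2}f\|$ and $\|(M_a^*)^{1/2}f\|$ does not by itself yield $\|M_a^{1/2}f\|^2\lesssim \Re\gq^{}_{M_a}(f,f)$, especially since $\Re\gq^{}_{M_a}$ need not be nonnegative. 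The paper is likewise terse on \eqref{4.60}; a clean route is to observe that the equalities $\dom(M_a^{1/2})=\dom((-\Delta)^{1/2})=W^{1,2}(\bbR^n)$ together with Lemma \ref{l2.8} give $\|M_a^{1/2}f\|\lesssim \|(-\Delta+I)^{1/2}f\|$, and then track the constants through the three perturbation steps using the explicit form bounds for the $L^\infty$ pieces.
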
 

\begin{remark} We observe that, in Theorem \ref{t4.5}, 
our quantitative bounds do not depend simply on $\|\B_j\|_n$ and
$\|V\|_{n/2}$, but rather also upon the $L^\infty$ norms of the ``non-small" parts of $\B_j$ and $V$.
Thus, we do not obtain a homogeneous estimate (i.e., one which involves only
$\|\nabla f\|_2^2$) on the right hand side of \eqref{4.60}.  One should contrast this situation with that
of Theorems \ref{t4.2} and \ref{t4.5a}, in which we obtain the homogeneous estimate \eqref{eq4.3},
given the conclusions of Lemmata \ref{l4.4} and \ref{l4.5*} (in particular, in the presence  
of \eqref{eq4.smallness}). 
\end{remark} 

\begin{proof}
We note that the sequilinear forms uniquely associated with $\B_{1,\varepsilon_0,\infty} \cdot \nabla$ 
and $\dv \big(\B_{2,\varepsilon_0,\infty} \cdot \big)$ are of the type 
\begin{equation}
\big({\ol \B_{1,\varepsilon_0,\infty}} f,\nabla g\big)_{L^2(\bbR^n)^n}, \quad f, g \in W^{1,2}(\bbR^n),     \lb{4.63} 
\end{equation}
and 
\begin{equation}
- \big(\nabla f,\B_{2,\varepsilon_0,\infty} g\big)_{L^2(\bbR^n)^n}, \quad f, g \in W^{1,2}(\bbR^n),    \lb{4.64} 
\end{equation}
respectively. Thus, for $b \in L^\infty(\bbR^n)$, the elementary estimate 
\begin{align}
& \big|\big(b f, (-\Delta)^{1/2} f\big)_{L^2(\bbR^n)}\big| \leq \|b\|_{L^{\infty}(\bbR^n)} \Big[\varepsilon 
\big\|(-\Delta)^{1/2} f \big\|_{L^2(\bbR^n)}^2
+ (4 \varepsilon)^{-1} \| f \|_{L^2(\bbR^n)}^2\Big],   \no \\ 
& \hspace*{7.5cm} f \in W^{1,2}(\bbR^n), \; \varepsilon > 0,    
\end{align}
shows that the forms \eqref{4.63} and \eqref{4.64} are infinitesimally 
bounded with respect to the form $(\nabla f, \nabla g)_{L^2(\bbR^n)^n}$, $f, g \in W^{1,2}(\bbR^n)$, 
and hence also with respect to the form uniquely associated with $M_{a,\varepsilon_0}$. 

At this point it remains to apply Theorem \ref{t2.12} repeatedly as follows: We identify $T_0$ with 
$-\Delta$ defined on $W^{2,2}(\bbR^n)$. Then, in a first step, identifying $A= V_{\varepsilon_0,\infty}$, 
$B^* = I_{L^2(\bbR^n)}$,  $T_1 = M_{a,\varepsilon_0}$, and $T = T_1 + V_{\varepsilon_0,\infty}$, it 
is obvious that Hypotheses \ref{h2.3} and \ref{h2.11} are satisfied and hence Theorem \ref{t2.12} applies to 
$T = M_{a,\varepsilon_0} + V_{\varepsilon_0,\infty}$. In the second step we now identify $A = \nabla$, 
$B^* = \B_{1,\varepsilon_0,\infty}$,  $T_1 = M_{a,\varepsilon_0} + V_{\varepsilon_0,\infty}$, and 
$T = T_1 + \B_{1,\varepsilon_0,\infty} \cdot \nabla$, again Hypotheses \ref{h2.3} and \ref{h2.11} are satisfied and hence Theorem \ref{t2.12} applies to 
$T = M_{a,\varepsilon_0} + V_{\varepsilon_0,\infty} + \B_{1,\varepsilon_0,\infty} \cdot \nabla$. 
Indeed, the integrability conditions \eqref{2.10} and \eqref{2.11} are satisfied since there 
exists constants $C_1>0$, $C_2>0$ such that 
\begin{align}
& \big\| (-\Delta)^{1/2} (-\Delta + (\lambda + E) I_{L^2(\bbR^n; d^n x)})^{-1/2}\big\|_{\cB(L^2(\bbR^n))} 
\leq C_1, \quad \lambda \geq R, \; E \geq E_0,      \no \\
& \big\| \big| \B_{1,\varepsilon_0,\infty} \big|  (-\Delta 
+ (\lambda + E) I_{L^2(\bbR^n; d^n x)})^{-1/2}\big\|_{\cB(L^2(\bbR^n))}  
\leq C_2 \lambda^{-1/2}, \quad \lambda \geq R, \; E \geq E_0,     \lb{4.66}
\end{align}
for $0 < R, E_0$ sufficiently large. (Actually, to be precise, one has to apply this step by step to 
each term $\big(\B_{1,\varepsilon_0,\infty}\big)_j (\nabla)_j$, $1 \leq j \leq n$, but the necessary 
estimates are the same.) In the third and final step one identifies $A =  \B_{2,\varepsilon_0,\infty}$, 
$B^* = \dv(\cdot)$, $T_1 = M_{a,\varepsilon_0} + V_{\varepsilon_0,\infty} 
+ \big(\B_{1,\varepsilon_0,\infty} \cdot \nabla \cdot \big)$, and 
$T = M_a = T_1 + \dv \big(\B_{2,\varepsilon_0,\infty} \cdot\big)$ and once more uses the estimates 
in \eqref{4.66}.  
\end{proof}

\begin{remark} \lb{r4.6} 
$(i)$ We note that the results of Theorem \ref{t4.5} continue to hold when $n=2$, provided that 
$\B_j$ permit a decomposition as in \eqref{4.59a} such that $\B_{j,\varepsilon_0,2}$, $j=1,2$, 
are divergence free, and that $V\equiv 0$. \\
$(ii)$ The critical condition $V \in L^1(\bbR^2)$ is not sufficient in the case $n=2$ as the Sobolev 
embedding \eqref{4.28.5} fails in this case. Thus, for $n=2$ one resorts to the condition 
$V \in L^p(\bbR^2)$ for some $p > 1$ as in Hypothesis \ref{h3.2}. \\
$(iii)$ While on the $L^p$-scale the critical index for $V$ in Theorems \ref{t4.2} and \ref{t4.5} 
equals $p=n/2$ for $n \geq 3$, it is possible to go beyond $L^{n/2}(\bbR^n)$. In this context we 
refer to Yagi \cite{Ya87} (see also \cite{Ya84}) who proved stability of square root domains in the 
particular case of three-dimensional Schr\"odinger operators, that is, in the special case $a = I_3$, 
and in the case $\B_j = 0$, $j=1,2$, for potentials $V$ in the Rollnik class $\cR_3$, 
that is, measurable functions $V:\bbR^3 \to \bbC$ satisfying $\|V\|_{\cR_3} < \infty$, where 
\begin{equation}
\|V\|_{\cR_3} := \int_{\bbR^6} d^3x \, d^3 x' \, |V(x)| |x-x'|^{-2} |V(x')|. 
\end{equation}
By Sobolev's inequality, $L^{3/2}(\bbR^3) \subset \cR_3$, in fact, for some $C>0$, 
\begin{equation}
\|V\|_{\cR_3} \leq C \|V\|_{L^{3/2}(\bbR^3)}
\end{equation}
(cf.\ \cite[Section\ I.1]{Si71}). In fact, Yagi \cite{Ya87} also considers fractional powers 
$\alpha \in (0,3/4)$, not necessarily $\alpha = 1/2$, in this context. 
\end{remark}

Finally, we turn to the proofs of Lemmas \ref{l4.4} and \ref{l4.5*}, starting with the former.

\begin{proof}[Proof of Lemma \ref{l4.4}]  
We begin by proving \eqref{eq4.resolvent},  \eqref{eq4.resolventgrad} and  \eqref{eq4.resolvent2}. 
We shall obtain these estimates by establishing appropriate convergence
of a Neumann series expansion of the resolvent.  Suppose first that $n\geq 3$ and set
\begin{equation} 
R:= \big(\B_1 \cdot \nabla \cdot \big) + \dv \big(\B_2 \cdot \big) + V,
\end{equation} 
so that
\begin{align}\label{eq4.neumann}
&(I+t^2M_a)^{-1}     \\
& \quad = (I+t^2L_a)^{-1/2}
\big[I + \big((I+t^2L_a)^{-1/2} \,t^2 R\,(I+t^2L_a)^{-1/2}\big)\big]^{-1}(I+t^2L_a)^{-1/2}     \no \\ 
& \quad =(I+t^2L_a)^{-1/2}\sum_{k=0}^\infty \big[-(I+t^2L_a)^{-1/2}\,t^2R\,(I+t^2L_a)^{-1/2}\big]^k 
(I+t^2L_a)^{-1/2}.    \no 
\end{align}
We shall require the following estimates, which may be gleaned from the results in \cite{Au07}:
\begin{align} 
& \big\|(I+t^2L_a)^{-1/2}g\big\|_{L^q(\bbR^n)} \leq C_{p,q}t^{-n(1/p-1/q)}\,\|g\|_{L^p(\bbR^n)},    \lb{eq4.41a} \\
& \hspace*{3.75cm} 2_*-\eps\leq p\leq q\leq p^*\leq 2^*+\eps,    \no \\
& \big\|t\nabla(I+t^2L_a)^{-1/2}g\big\|_{L^p(\bbR^n)^n} \leq C_{p}\,\|g\|_{L^p(\bbR^n)}, 
\quad 2_*-\eps\leq p\leq 2+\eps,   \lb{eq4.41b} \\
& \big\|t(I+t^2L_a)^{-1/2}\dv\vec{(g)}\big\|_{L^q(\bbR^n)} \leq\, C_{q}\,\|\vec{g}\|_{L^q(\bbR^n)^n},
\quad 2-\eps\leq q\leq 2^*+\eps.     \lb{eq4.41c}
\end{align}
Here, $\eps>0$ is a small number, not necessarily the same in each occurrence in 
\eqref{eq4.41a}--\eqref{eq4.41c}, but
which depends only upon $n$, $a_1$, and $a_2$. 
We now claim that for some $\eps>0$, we have
\begin{equation}\label{eq4.44a}
\big\|(I+t^2L_a)^{-1/2}\,t^2R\,(I+t^2L_a)^{-1/2}\big\|_{\cB(L^r(\bbR^n))} \leq C_r \eps_0, \quad
2-\eps\leq r\leq 2+\eps
\end{equation}
(where of course $C_r$ depends also upon $n$, $a_1$, and $a_2$).
 Let us take this claim for granted momentarily.  
 
 To prove
 \eqref{eq4.resolvent}, we first take $p=q=2$ in \eqref{eq4.41a}, and $r=2$ in \eqref{eq4.44a},
 to obtain that for $\eps_0<C_2$, the Neumann series in \eqref{eq4.neumann} converges in $L^2$.
 We therefore obtain the case $p=2$ of \eqref{eq4.resolvent}.
 The remaining cases of \eqref{eq4.resolvent} then follow by interpolation from the
 endpoint case $p=2_*-\eps$, where we choose $\eps$ so that \eqref{eq4.41a} holds, and such that
 for this $p$, the exponent $r:=p^*=pn/(n-p)<2$ lies within the range specified in \eqref{eq4.44a}.
 Let us then momentarily fix this choice of $p$.
 We then obtain that the Neumann series converges as a mapping from $L^p\to L^2$, by first applying
 \eqref{eq4.41a} with $q=p^*$, then \eqref{eq4.44a} with $r=p^*$, and with $\eps_0<C_{p^*}$, and then
 \eqref{eq4.41a} with $q=2$ and with $p$ replaced by $p^*$.  This concludes the proof
 of \eqref{eq4.resolvent} when $n\geq 3$.
 
Similarly, we may obtain \eqref{eq4.resolventgrad}, 
and \eqref{eq4.resolvent2},  in the former case by invoking
 \eqref{eq4.41a} with $p=q=2$,  \eqref{eq4.44a} with $r=2$ and $\eps_0<C_2$, and
  \eqref{eq4.41b} with $p=2$.    To prove  \eqref{eq4.resolvent2}, we apply \eqref{eq4.41c}
  with $q=2-\eps$, then \eqref{eq4.44a} with $r=q=2-\eps$, and with $\eps_0<C_q$, and then 
  \eqref{eq4.41a} with $p$ replaced by the current $q=2-\eps$, and with $q$ replaced by 2.

To complete the proofs of \eqref{eq4.resolvent}, \eqref{eq4.resolventgrad}, and
\eqref{eq4.resolvent2}, with $n\geq 3$, it remains to verify the claimed bound \eqref{eq4.44a}.
To this end, let $g\in L^r(\bbR^n)$, and write
\begin{align}
& \big\|(I+t^2L_a)^{-1/2}\,t^2R\,(I+t^2L_a)^{-1/2}g\big\|_{L^r(\bbR^n)}    \no \\
& \quad \leq \big\|(I+t^2L_a)^{-1/2}\,t^2\B_1\cdot\nabla\,(I+t^2L_a)^{-1/2}g\big\|_{L^r(\bbR^n)}    \no \\
& \qquad + \big\|(I+t^2L_a)^{-1/2}\,t^2\dv \big(\B_2 \,(I+t^2L_a)^{-1/2}g\big)\big\|_{L^r(\bbR^n)}    \no \\
& \qquad + \big\|(I+t^2L_a)^{-1/2}\,t^2V\,(I+t^2L_a)^{-1/2}g\big\|_{L^r(\bbR^n)}    \no \\ 
& \quad =:\,I+II+III.
\end{align}
By \eqref{eq4.41a} with $q=r$, and $p=r_*:= rn/(n+r)$, followed by
H\"older's inequality and \eqref{eq4.smallness}, and then \eqref{eq4.41b} with $p=r$,
we obtain that 
\begin{equation} 
I\leq \,C_r\,\eps_0\|g\|_{L^r(\bbR^n)}, 
\end{equation} 
as desired.  Similarly, we may handle terms $II$ and $III$
by use of \eqref{eq4.41a} and \eqref{eq4.41c}, with appropriate choices of exponents,
along with H\"older's inequality and \eqref{eq4.smallness}.  We 
omit the details, which are by now familiar.

Let us now verify  \eqref{eq4.resolvent2d},  
\eqref{eq4.resolventgrad}, and the case $p=2$ of \eqref{eq4.resolvent}, when $n=2$. 
In this case, we use the resolvent expansion
\begin{equation}\label{eq4.expansion2d}
(I+t^2M_a)^{-1} = (I+t^2L_a)^{-1}\sum_{k=0}^\infty \big[-t^2\B\cdot \nabla(I+t^2L_a)^{-1}\big]^k\,,
\end{equation}
where now $\dv \big(\B\big) = 0$. We note that $(I+t^2 L_a)^{-1}$ satisfies the following estimates in $\R^2$:
\begin{equation}\label{eq4.resolvent2dL}
\big\|(I +t^2L_a)^{-1}\big\|_{\cB(H^1(\bbR^2),L^2(\bbR^2))} + 
\big\|t\nabla(I +t^2L_a)^{-1}\big\|_{\cB(H^1(\bbR^2),L^2(\bbR^2)^2)} \,\leq \,C\, t^{-1}  \,, 
\end{equation} 
and
\begin{align}\label{eq4.resolvent2dLl2}
& \big\|(I +t^2L_a)^{-1}\big\|_{\cB(L^2(\bbR^2),L^2(\bbR^2))} + 
\big\|t\nabla(I +t^2L_a)^{-1}\big\|_{\cB(L^2(\bbR^2),L^2(\bbR^2)^2)}    \no \\
& \quad + 
\big\|t^2\nabla(I +t^2L_a)^{-1}\dv \big\|_{\cB(L^2(\bbR^2)^2,L^2(\bbR^2)^2)} \,\leq C, 
\end{align} 
Indeed, \eqref{eq4.resolvent2dLl2} is completely standard, and estimate
\eqref{eq4.resolvent2dL}   
then follows by 
two-dimensional Sobolev embedding.  By \eqref{eq2dclms}, 
\eqref{eq4.resolvent2dL}, and a simple induction argument,
we have that
\begin{equation}\label{eq5.9}
\Big\|(I+t^2L_a)^{-1}\big[-t^2\B\cdot \nabla(I+t^2L_a)^{-1}\big]^kf\Big\|_{L^2(\R^2)}\,\leq\, 
t^{-1}C^k\eps_0^k\, \|f\|_{H^1(\R^2)}\,,
\end{equation}
whence  \eqref{eq4.resolvent2d} follows immediately from \eqref{eq4.expansion2d},
for $\eps_0$ small enough.  Similarly, for $k\geq 1$, using
\eqref{eq2dclms} and both \eqref{eq4.resolvent2dL}
and \eqref{eq4.resolvent2dLl2}, we obtain
\begin{align}\label{eq5.10}
& \Big\|(I+t^2L_a)^{-1}\big[-t^2\B\cdot \nabla(I+t^2L_a)^{-1}\big]^kf\Big\|_{L^2(\R^2)}    \no \\
& \qquad + \Big\|t\nabla(I+t^2L_a)^{-1}
\big[-t^2\B\cdot \nabla(I+t^2L_a)^{-1}\big]^kf\Big\|_{L^2(\R^2)^2} \no \\
& \quad \leq C^{k-1}\eps_0^{k-1} \big\|t\B\cdot \nabla(I+t^2L_a)^{-1}f\big\|_{H^1(\R^2)} 
\leq C^{k}\eps_0^{k}\,\|f\|_{L^2(\R^2)}.
\end{align}
By \eqref{eq4.expansion2d}, we then obtain 
\eqref{eq4.resolventgrad}, and the case $p=2$ of \eqref{eq4.resolvent}.  The remaining cases
$1<p<2$, of \eqref{eq4.resolvent} in two dimensions now follow by interpolation.

\smallskip

Next, we turn to the proof of \eqref{eq4.gaff}.
The proof follows that of
\cite[Lemma 2.1]{AHLMT02}, and is essentially the same in all dimensions $n\geq 2$.
To be precise, we give the proof in the case $n\geq3$, and then discuss, at the end of the argument,
the modifications needed to treat the two-dimensional case.

\begin{remark}\label{r4.7}
We note that $M_a$ has the property that its adjoint is of precisely the same class,
thus, any estimates that we establish for $M_a$ can be immediately dualized. 
This property holds even in two dimensions, since in that case we have supposed that $\B$
is divergence free.
\end{remark}
 
We fix two cubes $Q_1, Q_2$, set $d:=\dist(Q_1,Q_2)$,  
let $r:= \min\diam(Q_1,Q_2)$, $R:=\max \diam(Q_1,Q_2)$
and suppose that $0<t\leq 4r $, and that $R\leq 100 d$.  
Let $\eta\in C_0^\infty(\mathbb{R}^n)$, supported on a
$d/2$ neighborhood of $Q_2$. 
Thus, in particular, $\eta\equiv 0$ on $Q_1$. 
We now set $u:= (I+t^2M_a)^{-1} f$, where $\supp \, (f) \subset Q_1$.
We claim that 
\begin{equation}\label{eq4.34}
\int_{\bbR^n} d^n x \, |t\nabla u|^2\eta^2 + \int_{\bbR^n} d^n x \, |u|^2\eta^2\, 
\lesssim \,t^2 \int_{\bbR^n} d^n x \, |u|^2|\nabla \eta|^2\,.
\end{equation}
We verify the claim as follows.
Observe that by definition
\begin{equation} 
\int_{\bbR^n} d^n x \, (I+t^2M_a)u \,\overline{u} \eta^2 =\int_{\bbR^n} d^n x \, f\,\uu \eta^2\,= 0, 
\end{equation} 
since $f$ and $\eta$ have disjoint supports.
Rewriting the left-hand side of this last identity (using the definition of
$M_a$), integrating by parts,
and shuffling terms, we obtain
\begin{align}\label{eq5.12}
& t^2\int_{\bbR^n} d^n x \, a\nabla u\cdot\nabla \uu \eta^2 
+\int_{\bbR^n} d^n x \, |u|^2\eta^2    \no \\
& \quad = -t^2\int_{\bbR^n} d^n x \, \B_1\cdot \nabla u \,\uu\eta^2 
+t^2 \int_{\bbR^n} d^n x \, u \,B_2\cdot \nabla\uu\,\eta^2 
-t^2\int_{\bbR^n} d^n x \, V |u|^2\eta^2   \no \\ 
& \qquad -2t^2 \int_{\bbR^n} d^n x \, a \nabla u \cdot\nabla \eta\, \uu \eta 
+ 2t^2\int_{\bbR^n} d^n x \, B_2|u|^2\nabla \eta\eta   \no \\
& \quad =:I+II+III+IV+V. 
\end{align}
By ellipticity, this implies that
\begin{equation}\label{eq4.35}
\int_{\bbR^n} d^n x \, |t\nabla u|^2\eta^2 
+ \int_{\bbR^n} d^n x \, |u|^2\eta^2\,\lesssim\,|I+II+III+IV+V|\,.
\end{equation}
Using H\"older's inequality (equivalently,  \eqref{eq4.11} and 
the dual version of \eqref{eq4.12}), and then
Cauchy's inequality, we have
\begin{align}
& |I+II+III| \lesssim \eps_0t^2\left(\int_{\bbR^n} d^n x \, |\nabla u|^2\eta^2 
+ \left(\int_{\bbR^n} d^n x \, |u\eta|^{2^*}\right)^{2/2^*}\right),
\no \\ 
& \quad \lesssim \eps_0t^2\left(\int_{\bbR^n} d^n x \, |\nabla u|^2\eta^2 
+\int_{\bbR^n} d^n x \, |\nabla(u\eta)|^{2}\right)  \no \\
& \quad \lesssim\, \eps_0t^2\left(\int_{\bbR^n} d^n x \, |\nabla u|^2\eta^2 
+\int_{\bbR^n} d^n x \, |u\nabla \eta|^{2}\right), 
\end{align}
where in the next-to-last step we have used Sobolev embedding.
Similarly,
\begin{align} 
|V| & \lesssim \eps_0t^2\left(\int_{\bbR^n} d^n x \, | u|^2|\nabla\eta|^2 
+ \left(\int_{\bbR^n} d^n x \, |u\eta|^{2^*}\right)^{2/2^*}\right)   \no \\
& \lesssim\, \eps_0t^2\left(\int_{\bbR^n} d^n x \, |\nabla u|^2\eta^2 
+ \int_{\bbR^n} d^n x \, |u\nabla \eta|^{2}\right). 
\end{align} 
For $\eps_0$ small enough, we may hide the integral involving $\nabla u$ 
(which, by virtue of our previously established global estimate \eqref{eq4.resolventgrad},
is finite) on the left-hand side of \eqref{eq4.35}.  

By Cauchy's inequality with $\eps's$,
we have that
\begin{equation} 
|IV|\lesssim \eps t^2 \int_{\bbR^n} d^n x \, |\nabla u|^2\eta^2 
+ \frac1{\eps} t^2\int_{\bbR^n} d^n x \, |u|^2|\nabla\eta|^2, 
\end{equation} 
where $\eps>0$ is at our disposal.  Taking $\eps$ small enough, depending only upon ellipticity,
we may, as before, hide the integral involving $\nabla u$
on the left-hand side of \eqref{eq4.35}.  Collecting our various estimates, we obtain \eqref{eq4.34}.

Let us now deduce \eqref{eq4.gaff} as a consequence of \eqref{eq4.34}. 
Recall that $\eta$ is supported on a $d/2$ neighborhood of $Q_2$, which we denote by $\Q$, and observe that
$\diam \Q \approx d$ (since, in particular, $d\gtrsim \diam(Q_2)$).  Following essentially verbatim the proof of \cite[Lemma 2.1]{AHLMT02}, we apply \eqref{eq4.34}, with $\eta$ of the form
\begin{equation} 
\eta:= e^{\alpha \vp}-1, 
\end{equation} 
with $\alpha:= c_0 d/t$, where $c_0$ is a sufficiently small constant to be chosen
momentarily, and where
$0\leq \vp\in C^\infty_0(\Q)$, with $\vp\equiv 1$ on $Q_2$, satisfying $\|\nabla \vp\|_\infty \lesssim 1/d$. 
With this choice of $\eta$, and with $c_0$ chosen sufficiently small,
\eqref{eq4.34} yields in particular that
\begin{equation}
\int_{\bbR^n} d^n x \, |u|^2\left(e^{\alpha \vp}-1\right)^2 
\leq \frac14 \int_{\bbR^n} d^n x \, |u|^2|e^{\alpha \vp}|^2\,.
\end{equation}
Thus, by a simple argument involving the triangle inequality, we obtain in turn that
\begin{equation}
\int_{\bbR^n} d^n x \, |u|^2\left(e^{\alpha \vp}\right)^2 
\leq 4 \int_{\mathbb{R}^n} d^n x \, |u|^2 \lesssim 
\int_{Q_1} d^n x \, |f|^2,
\end{equation}
where in the last step we have used the case $p=2$ of \eqref{eq4.resolvent}, and the fact that
$f$ is supported on $Q_1$.  Since $\vp\equiv 1$
on $Q_2$, we then have that
\begin{equation}\label{eq4.42}
e^{2\alpha} \int_{Q_2} d^n x \, |u|^2\, \lesssim \int_{Q_1} d^n x \, |f|^2,
\end{equation}
whence the desired bound \eqref{eq4.gaff} follows for $(I+t^2M_a)^{-1}$. 

Next, let $Q_2'$ be a cube concentric with $Q_2$, having side length
$\ell(Q_2')\approx d$, such that $Q_2\subset Q_2'\subset \Q$, and such that
$\dist (Q_2',\mathbb{R}^n\backslash \Q) \approx d\approx\dist(Q_2,\R^n\backslash  Q_2')$.  
We note that the preceding argument may be applied with $Q_2'$ in place of $Q_2$ 
(indeed, we may construct $\vp$ enjoying the same properties as above, but with $\vp\equiv 1$
on $Q_2'$), so in particular \eqref{eq4.42} holds for $Q_2'$.  Thus,
\begin{equation}\label{eq4.43}
 \int_{Q'_2} d^n x \, |u|^2\, \lesssim \, e^{-cd/t}
\int_{Q_1} d^n x \, |f|^2\,,
\end{equation}
by definition of $\alpha$.  We now choose another cut-off 
$\vpt\in C^\infty_0(Q_2')$, 
such that $\|\nabla\vpt\|_\infty
\lesssim 1/d$, with $0\leq\vpt\leq 1$, and
$\vpt\equiv 1$ on $Q_2$.
We apply \eqref{eq4.34} with $\eta = \vpt$, to obtain in particular that
\begin{align}
\int_{Q_2} d^n x \, |t\nabla u|^2 & \leq \int_{\bbR^n} d^n x \, |t\nabla u|^2(\vpt)^2  
\lesssim t^2 \int_{\bbR^n} d^n x \, |u|^2|\nabla \vpt|^2
\lesssim \left(\frac{t}{d}\right)^2 \int_{Q'_2} d^n x \, |u|^2    \no \\
& \lesssim e^{-cd/t} \int_{Q_1} d^n x \, |f|^2,
\end{align}
where in the last inequality we have used
\eqref{eq4.43} and the fact that $t\lesssim d$. 
We therefore have that  \eqref{eq4.gaff} holds for $t\nabla (I+t^2M_a)^{-1}$, and thus by a duality argument (cf. Remark \ref{r4.7} above),
it holds also for $t(I+t^2M_a)^{-1}\dv(\cdot)$.  This concludes the proof of Lemma \ref{l4.4}, when $n\geq 3$.

The proof in the two-dimensional case is somewhat different:  we shall use a  resolvent expansion
as in the proof of the global estimates treated above.  This approach does not seem to yield the exponential decay that we obtained in higher dimensions, but nonetheless, as noted previously,
polynomial decay of sufficiently high order suffices for our purposes.  Let us first record some 
known estimates for the resolvent $(I+t^2L_a)^{-1}$ in two dimensions, which follow readily from 
two-dimensional estimates for the heat semigroup $e^{-sL}$ proved in \cite{AMT98}, and the formula
$(I+L)^{-1}=\int_0^\infty ds \, e^{-s}e^{-sL}$.   Let $E,F\subset \R^n$ be disjoint sets, with 
\begin{equation} 
d:=\dist(E,F) \geq t>0.
\end{equation}   
We have
\begin{align}\label{eq5.16}
\begin{split} 
& \big\|(I+t^2L_a)^{-1} \big\|_{\cB(L^1(E),L^2(F))} 
+ \big\|t\nabla(I+t^2L_a)^{-1} \big\|_{\cB(L^1(E),L^2(F)^n)}  \\
& \quad \leq Ct^{-1} e^{-cd/t} \leq C\ d^{-1} (t/d)^4,
\end{split} 
\end{align}
and also
\begin{align}\label{eq5.17}
& \big\|(I+t^2L_a)^{-1} \big\|_{\cB(L^2(E),L^2(F))} 
+ \big\|t\nabla(I+t^2L_a)^{-1} \big\|_{\cB(L^2(E),L^2(F)^n)}
\leq Ce^{-cd/t}    \no \\
& \quad \leq C  (t/d)^4,
\end{align}
where as usual the various constants depend only upon $a_1$ and $a_2$. 
We now claim that there is a constant $C_0$ such that for $f$ supported on $E$,
\begin{align}\label{eq5.18}
\begin{split} 
\mu(k) &:= \Big\|(I+t^2L_a)^{-1}\big[-t^2\B\cdot \nabla(I+t^2L_a)^{-1}\big]^k f \Big\|_{L^2(F)}   \\ 
& \leq C_0^k \, \eps_0^k \, (t/d)^4\,\|f\|_{L^2(E)},
\end{split} 
\end{align}
and
\begin{align}\label{eq5.19}
\begin{split} 
\nu(k) & := \Big\|t\nabla(I+t^2L_a)^{-1}\big[-t^2\B\cdot \nabla(I+t^2L_a)^{-1}\big]^k f \Big\|_{L^2(F)^n}    \\
& \leq\, 
C_0^k\,\eps_0^k\, (t/d)^4\,\|f\|_{L^2(E)}.
\end{split} 
\end{align}
Given the claim, and choosing $\eps_0<1/C_0$, we obtain \eqref{eq4.gaff2d} immediately from 
\eqref{eq4.expansion2d} and Remark \ref{r4.7}.  

We now establish this claim by induction.  Observe that the case $k=0$ is simply \eqref{eq5.17}.
Suppose now that \eqref{eq5.18} and \eqref{eq5.19} hold for some $k\geq 0$, for every
set $E$ and $F$ as above, with $d\geq t$, and we wish to verify
the corresponding estimates for $k+1$.   Let $F_1,\,F_2$ denote, respectively,  $d/3$ and $2d/3$
neighborhoods of $F$, and choose smooth a smooth cut-off $\vp$ such that
\begin{equation} 
\vp \equiv 1 \, \text{ on }\, F_1, \quad \supp \, (\vp)\subset F_2,  
\end{equation} 
with $\|\nabla \vp\|_{L^{\infty}(\bbR^n)^n} \lesssim 1/d$.
We may suppose that $t\leq d/3$, since the case $d\geq t>d/3$ follows from the global bound \eqref{eq5.10}.
We then have, for instance, that
\begin{align}
\mu(k+1) &= \Big\|(I+t^2L_a)^{-1}\big(-t^2\B\cdot \nabla(I+t^2L_a)^{-1}\big)   \no \\ 
& \qquad \times \big[-t^2\B\cdot \nabla(I+t^2L_a)^{-1}\big]^kf \Big\|_{L^2(F)}    \no \\   
& \leq \Big\|(I+t^2L_a)^{-1}\big(-t^2\B\cdot \nabla\big(\vp(I+t^2L_a)^{-1}\big)\big)   \no \\ 
& \qquad \times \big[-t^2\B\cdot \nabla(I+t^2L_a)^{-1}\big]^kf \Big\|_{L^2(F)}     \no \\
& \quad + 
\Big\|(I+t^2L_a)^{-1}\big(-t^2\B\cdot \nabla\big((1-\vp)(I+t^2L_a)^{-1}\big)\big)    \no \\ 
& \qquad \quad \times \big[-t^2\B\cdot \nabla(I+t^2L_a)^{-1}\big]^k f \Big\|_{L^2(F)}     \no \\ 
&=:\,I+II.
\end{align}
By \eqref{eq5.16}, since $1-\vp$ is supported on  $E_1:=\R^n\backslash  F_1$,
with $\dist(E_1,F)=d/3$, we have that
\begin{align}
II &\leq C (t/d)^5 \Big\|t\B\cdot \nabla \big((1-\vp)(I+t^2L_a)^{-1}\big)  
\big[-t^2\B\cdot \nabla(I+t^2L_a)^{-1}\big]^k f \Big\|_{L^1(\R^n\backslash  F_1)}   \no \\ 
& \leq C\eps_0 (t/d)^5 \Big\|t \nabla\left((1-\vp)(I+t^2L_a)^{-1}\right)\,
\big[-t^2\B\cdot \nabla(I+t^2L_a)^{-1}\big]^k f \Big\|_{L^2(\R^n)^n}   \no \\
& \leq C\eps_0 (t/d)^6 \Big\|(I+t^2L_a)^{-1}\,
\big[-t^2\B\cdot \nabla(I+t^2L_a)^{-1}\big]^k f \Big\|_{L^2(\R^n)}  \no \\
& \quad + C\eps_0 (t/d)^5 \Big\|t \nabla(I+t^2L_a)^{-1}\,
\big[-t^2\B\cdot \nabla(I+t^2L_a)^{-1}\big]^k f \Big\|_{L^2(\R^n)^n}   \no \\
& \leq C^{k+1} \, \eps_0^{k+1} \, (t/d)^5\|f\|_{L^2(E)},
\end{align}
where in the second inequality we have used H\"older's inequality and \eqref{eq4.smallness},
in the third inequality that $\|\nabla\vp\|_{L^{\infty}(\bbR^n)^n} \lesssim 1/d$,
and in the last inequality we have used the global bound \eqref{eq5.10}.

Turning to term $I$, we have by \eqref{eq4.resolvent2dL} and \eqref{eq2dclms},
since $\supp \, (\vp) \subset F_2$, with $\|\nabla\vp\|_{L^{\infty}(\bbR^n)^n} \lesssim 1/d$, that
\begin{align}
I &\leq C \eps_0 \Big\|t \nabla\big(\vp(I+t^2L_a)^{-1}\big) \, 
\big[-t^2\B\cdot \nabla(I+t^2L_a)^{-1}\big]^k f \Big\|_{L^2(F_2)^n}   \no \\
& \leq C \eps_0 \Big\|t \nabla(I+t^2L_a)^{-1}\,
\big[-t^2\B\cdot \nabla(I+t^2L_a)^{-1}\big]^k f \Big\|_{L^2(F_2)^n}     \no \\
& \quad + C \eps_0\, (t/d)\, \Big\|(I+t^2L_a)^{-1}\,
\big[-t^2\B\cdot \nabla(I+t^2L_a)^{-1}\big]^k f \Big\|_{L^2(F_2)}    \no \\ 
& \leq 2 C \, C_0^{k} \, \eps_0^{k+1} \, (3t/d)^4 \|f\|_{L^2(E)},
\end{align}
by the induction hypothesis, since $\dist(F_2,E)\geq d/3$.  Combining our estimates
for terms $I$ and $II$, we obtain the case $k+1$ of \eqref{eq5.18}, for a suitable choice of $C_0$.
By a very similar argument, one may establish the case $k+1$ of \eqref{eq5.19}.
We omit further details.
\end{proof}

\begin{proof}[Proof of Lemma \ref{l4.5*}]
We recall that $H_a := L_a +\B_1\cdot\nabla$.  Set
\begin{equation} 
u:= - H_a^{-1}\dv \vec{f}\,, 
\end{equation} 
where $\vec{f}\in L^2(\mathbb{R}^n)^n$, i.e., 
$u$ solves the problem
\begin{equation}\label{eq4.125}
\begin{cases}
H_a u = -\dv \vec{f}\,\,\,\, {\rm in}\,\, \mathbb{R}^n, \\[1mm] 
\nabla u \in L^2(\mathbb{R}^n), 
\end{cases}
\end{equation}
in the weak sense.
For $\eps_0$ small enough, depending only on dimension and the ellipticity of $a$, the latter problem
is well-posed, by perturbation of the well known corresponding result for $L_a$.
Moreover
\begin{align}
& \int_{\mathbb{R}^n} d^nx \, |\nabla u|^2 
\lesssim \Re \bigg(\int_{\mathbb{R}^n} d^nx \, \langle A\nabla u, \nabla u \rangle\bigg)   \no \\
& \quad = \Re \bigg(\int_{\mathbb{R}^n} d^nx \, \langle A\nabla u, \nabla u \rangle\bigg) +
\Re \bigg(\int_{\mathbb{R}^n} \B_1\cdot\nabla u \,\overline{u}\bigg) 
- \Re \bigg(\int_{\mathbb{R}^n} d^nx \, \B_1\cdot\nabla u \,\overline{u}\bigg)   \no \\
& \quad = \Re \int_{\mathbb{R}^n} d^nx \, \langle \vec{f},\nabla u\rangle \,+\, 
 \Oh\big(\|\B_1\|_{L^n(\bbR^n)^n} \|\nabla u\|_{L_2(\bbR^n)^n} \|u\|_{L^{2^*}(\bbR^n)}\big) \no \\ 
& \quad \lesssim \eps^{-1} \|\vec{f}\|^2_{L^2(\bbR^n)^n} + \eps \|\nabla u\|_{L^2(\bbR^n)^n}^2 
+ \eps_0 \|\nabla u\|_{L^2(\bbR^n)^n}^2\,,
\end{align}
where in the third line we have used  the definition of weak solution to \eqref{eq4.125}, and \ref{eq4.12a}
with $\B_1$ in place of $B_2$, and in the fourth line we have used Cauchy's inequality
and Sobolev embedding.  Of course, $\eps$ is at our disposal.   We note that the implicit constants depend only on dimension and ellipticity of $a$.  For
$\eps$ and $\eps_0$ small enough, depending only on these implicit constants, we may then
hide the small terms on the left hand side of the inequality to obtain \eqref{eq4.11*}. 
\end{proof}

\appendix

\section{Additional Facts on Kato's Resolvent Approach} \lb{sA} 

In this appendix we further investigate $T$ as defined in \eqref{2.4} and \eqref{2.5} and also make 
the connection between $T$ and the form sum of $T_0$ and $B^* A$ under stronger hypotheses 
on $T_0$, $A$, and $B$ compared to those in Hypothesis \ref{h2.1}. 

For relevant background literature in this connection we refer, for instance, to \cite[Ch.\ IV]{EE89}, 
\cite{GLMZ05}, and \cite{Ka66} (see also \cite{Fo92}, \cite{Sc72}, \cite[Ch.\ II]{Si71} in the context 
of self-adjoint operators). 
 
We start by briefly complementing \eqref{2.4}:
 
\begin{lemma} \lb{lA.1} 
Assume Hypothesis \ref{h2.1}.  If $T$ is defined by \eqref{2.5}, then $T^*$ is given in terms of 
$T_0^*$, $A$, and $B$ by 
\begin{align}
(T^*-zI_{\cH})^{-1} &= (T_0^*-zI_{\cH})^{-1}   \no \\
& \quad -\overline{(T_0^*-zI_{\cH})^{-1}A^*}\big[I_{\cK}-\widetilde{K}(z)\big]^{-1} 
B(T_0^*-zI_{\cH})^{-1},\lb{A.1}\\
&\hspace*{3.4cm}z\in \big\{\zeta\in \rho(T_0^*) \, \big| \, 1\in \rho(\widetilde{K}(\zeta))\big\},\no
\end{align}
where
\begin{equation}\lb{A.2}
\widetilde{K}(z)=-\overline{B(T_0^*-zI_{\cH})^{-1}A^*}, \quad z\in \rho(T_0^*).
\end{equation}
Therefore, $T^*$ is defined via Kato's resolvent equation by replacing $T_0$ by $T_0^*$ and by interchanging the roles of $A$ and $B$.
\end{lemma}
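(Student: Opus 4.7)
My plan is to derive \eqref{A.1} by taking Hilbert-space adjoints of both sides of Kato's resolvent equation \eqref{2.6} at the point $\overline{z}$ in place of $z$, using the standard identities $(T^*-zI_{\cH})^{-1} = [(T-\overline{z}I_{\cH})^{-1}]^*$ and $[(T_0-\overline{z}I_{\cH})^{-1}]^* = (T_0^*-zI_{\cH})^{-1}$, and then recognizing the resulting expression as the claimed right-hand side.

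The first preparatory step is to verify that each of the three factors in the perturbation term of \eqref{2.6} is genuinely bounded on (or into) the appropriate Hilbert space. By \eqref{2.2} together with the closed graph theorem, $A(T_0-\overline{z}I_{\cH})^{-1} \in \cB(\cH,\cK)$; dually, $\overline{(T_0-\overline{z}I_{\cH})^{-1}B^*} \in \cB(\cK,\cH)$; and $[I_{\cK}-K(\overline{z})]^{-1} \in \cB(\cK)$ by Hypothesis \ref{h2.1}\,$(iii)$. The adjoint of the product is therefore just the reverse composition of adjoints, so the algebraic manipulation will not involve any unbounded operator outside of the factors $(T_0-\overline{z}I_{\cH})^{-1}$ and $(T_0^*-zI_{\cH})^{-1}$.

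Next I would identify each individual adjoint via an inner-product pairing on a dense set of test vectors. For $f\in\dom(A^*)$ and $g\in\cH$, one has
\begin{equation*}
\big(f,A(T_0-\overline{z}I_{\cH})^{-1}g\big)_{\cK}=\big((T_0^*-zI_{\cH})^{-1}A^*f,g\big)_{\cH},
\end{equation*}
which forces $[A(T_0-\overline{z}I_{\cH})^{-1}]^*\supseteq (T_0^*-zI_{\cH})^{-1}A^*$; since the left-hand side is bounded, it coincides with $\overline{(T_0^*-zI_{\cH})^{-1}A^*}$. A symmetric calculation, now pairing $f\in\cH$ with $g\in\dom(B^*)$ and using $\dom(B)\supseteq\dom(T_0^*)$ from \eqref{2.2} to push $(T_0^*-zI_{\cH})^{-1}f$ into $\dom(B)$, yields $[\overline{(T_0-\overline{z}I_{\cH})^{-1}B^*}]^*=B(T_0^*-zI_{\cH})^{-1}$. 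The analogous bilinear-form computation shows that $K(\overline{z})^* = -\overline{B(T_0^*-zI_{\cH})^{-1}A^*}=\widetilde{K}(z)$, and hence $[I_{\cK}-K(\overline{z})]^{-*}=[I_{\cK}-\widetilde{K}(z)]^{-1}$; this identity also shows that $1\in\rho(K(\overline{z}))$ if and only if $1\in\rho(\widetilde{K}(z))$, so the parameter set on which \eqref{A.1} is asserted is indeed the natural one.

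Reading off the three factors in the reversed order then assembles into exactly \eqref{A.1}, which realizes $T^*$ via Kato's construction applied to $T_0^*$ with the roles of $A$ and $B$ interchanged. The only genuine obstacle is the careful bookkeeping between a closed operator and its closure when taking adjoints of unbounded products such as $(T_0-\overline{z}I_{\cH})^{-1}B^*$ and $A(T_0-\overline{z}I_{\cH})^{-1}B^*$; the domain inclusions imposed in Hypothesis \ref{h2.1}\,$(i)$ are precisely what allow these formal manipulations to be rigorously justified on dense subspaces before extension by continuity.
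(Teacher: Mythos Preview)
Your proof is correct and follows essentially the same route as the paper's own argument: take adjoints of both sides of Kato's resolvent equation \eqref{2.6}, identify the adjoints of the three factors via the identities $[A(T_0-\overline{z}I_{\cH})^{-1}]^* = \overline{(T_0^*-zI_{\cH})^{-1}A^*}$, $[\overline{(T_0-\overline{z}I_{\cH})^{-1}B^*}]^* = B(T_0^*-zI_{\cH})^{-1}$, and $K(\overline{z})^* = \widetilde{K}(z)$, and reassemble. The only cosmetic difference is that you make the substitution $z\mapsto\overline{z}$ at the outset while the paper does it at the end; your discussion of the domain and boundedness issues is slightly more explicit than the paper's but otherwise identical in content.
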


\begin{proof}
Taking adjoints on both sides of \eqref{2.6}, and making use of the identities
\begin{align}
\big[A(T_0-zI_{\cH})^{-1}\big]^*&= \big[(T_0^*-\overline{z}I_{\cH})^{-1}A^*)^*\big]^* 
=\overline{(T_0^*-\overline{z}I_{\cH})^{-1}A^*}\lb{A.3}\\
\big[\overline{(T_0-zI_{\cH})^{-1}B^*}\big]^*&=\big[(T_0-zI_{\cH})^{-1}B^*\big]^*
=B(T_0^*-\overline{z}I_{\cH})^{-1},\lb{A.4}
\end{align}
one obtains the following representation of the resolvent of $T_0^*$:
\begin{align}
&(T^*-\overline{z}I_{\cH})^{-1}\no\\
&=(T_0^*-\overline{z}I_{\cH})^{-1}-\big[A(T_0-zI_{\cH})^{-1}\big]^*
\big([I_{\cK}-K(z)]^{-1} \big)^*\big[\overline{(T_0-zI_{\cH})^{-1}B^*}\big]^*\no\\
&=(T_0^*-\overline{z}I_{\cH})^{-1}-\overline{(T_0^*-\overline{z}I_{\cH})^{-1}
A^*}(I_{\cK}-K(z)^*)^{-1}B(T_0-\overline{z}I_{\cH})^{-1},\lb{A.5}\\
&\hspace*{5.95cm}z\in \{\zeta\in \rho(T_0)|1\in \rho(K(\zeta))\}.\no
\end{align}
Finally, a short computation with adjoints and closures shows
\begin{equation}\lb{A.6}
K(z)^*=\widetilde{K}(\overline{z}), \quad z\in \rho(T_0).
\end{equation}
Thus, \eqref{A.5} and \eqref{A.6} combine to yield \eqref{A.1}.
\end{proof}

Before stating the principal result of this appendix, Theorem \ref{tA.3} below, we need to recall some preparations. The material needed can be found in  \cite[Sect.\ IV.2]{EE89},\cite[Sects.\ 1.2--1.5]{Fa75}, 
and \cite[Sect.\ VI.2.6]{Ka80}, and we here follow the summary presented in \cite[Sect.\ 2]{GM09}, 
except, the latter focused exclusively on the self-adjoint situation, whereas here we extend it to the 
non-self-adjoint (sectorial) context: 
Let $\cV$ be a reflexive Banach space continuously and densely embedded
into $\cH$. Then also $\cH$ embeds continuously and densely into $\cV^*$.
That is,
\begin{equation}
\cV  \hookrightarrow \cH  \hookrightarrow \cV^*.     \lb{B.1}
\end{equation}
Here the continuous embedding $\cH\hookrightarrow \cV^*$ is accomplished via
the identification
\begin{equation}
\cH \ni v \mapsto (\dott,v)_{\cH} \in \cV^*,     \lb{B.2}
\end{equation}
and we use the convention in this manuscript that if $\cX$ denotes a Banach space, 
$\cX^*$ denotes the {\it adjoint space} of continuous  conjugate linear functionals on $X$,
also known as the {\it conjugate dual} of $\cX$.

In particular, if the sesquilinear form
\begin{equation}
{}_{\cV}\langle \dott, \dott \rangle_{\cV^*} \colon \cV \times \cV^* \to \bbC
\end{equation}
denotes the duality pairing between $\cV$ and $\cV^*$, then
\begin{equation}
{}_{\cV}\langle u,v\rangle_{\cV^*} = (u,v)_{\cH}, \quad u\in\cV, \;
v\in\cH\hookrightarrow\cV^*,   \lb{B.3}
\end{equation}
that is, the $\cV, \cV^*$ pairing
${}_{\cV}\langle \dott,\dott \rangle_{\cV^*}$ is compatible with the
scalar product $(\dott,\dott)_{\cH}$ in $\cH$.

Let $W \in\cB(\cV,\cV^*)$. Since $\cV$ is reflexive, $(\cV^*)^* = \cV$, one has
\begin{equation}
W \colon \cV \to \cV^*, \quad  W^* \colon \cV \to \cV^*   \lb{B.4}
\end{equation}
and
\begin{equation}
{}_{\cV}\langle u, Wv \rangle_{\cV^*}
= {}_{\cV^*}\langle W^* u, v\rangle_{(\cV^*)^*}
= {}_{\cV^*}\langle W^* u, v \rangle_{\cV}
= \ol{{}_{\cV}\langle v, W^* u \rangle_{\cV^*}}.
\end{equation}
{\it Self-adjointness} of $W$ is then defined by $W = W^*$, that is,
\begin{equation}
{}_{\cV}\langle u,W v \rangle_{\cV^*}
= {}_{\cV^*}\langle W u, v \rangle_{\cV}
= \ol{{}_{\cV}\langle v, W u \rangle_{\cV^*}}, \quad u, v \in \cV,    \lb{B.5}
\end{equation}
{\it nonnegativity} of $W$ is defined by
\begin{equation}
{}_{\cV}\langle u, W u \rangle_{\cV^*} \geq 0, \quad u \in \cV,    \lb{B.6}
\end{equation}
and {\it boundedness from below of $W$ by $c_W \in\bbR$} is defined by
\begin{equation}
{}_{\cV}\langle u, W u \rangle_{\cV^*} \geq c_W \|u\|^2_{\cH},
\quad u \in \cV.
\lb{B.6a}
\end{equation}
(By \eqref{B.3}, this is equivalent to
${}_{\cV}\langle u, W u \rangle_{\cV^*} \geq c_W \,
{}_{\cV}\langle u, u \rangle_{\cV^*}$, $u \in \cV$.)

Next, let the sesquilinear form $\ga(\dott,\dott)\colon\cV \times \cV \to \bbC$
(antilinear in the first and linear in the second argument) be
{\it $\cV$-bounded}, that is, there exists a $c_a>0$ such that
\begin{equation}
|\ga(u,v)| \le c_a \|u\|_{\cV} \|v\|_{\cV},  \quad u, v \in \cV.
\end{equation}
Then $\wti A$ defined by
\begin{equation}
\wti A \colon \begin{cases} \cV \to \cV^*, \\
\, v \mapsto \wti A v = \ga(\dott,v), \end{cases}    \lb{B.7}
\end{equation}
satisfies
\begin{equation}
\wti A \in\cB(\cV,\cV^*) \, \text{ and } \,
{}_{\cV}\big\langle u, \wti A v \big\rangle_{\cV^*}
= \ga(u,v), \quad  u, v \in \cV.    \lb{B.8}
\end{equation}
Assuming further that $\ga(\dott,\dott)$ is {\it symmetric}, that is,
\begin{equation}
\ga(u,v) = \ol{\ga(v,u)},  \quad u,v\in \cV,    \lb{B.9}
\end{equation}
and that $\ga$ is {\it $\cV$-coercive}, that is, there exists a constant
$C_0>0$ such that
\begin{equation}
\ga(u,u)  \geq C_0 \|u\|^2_{\cV}, \quad u\in\cV,    \lb{B.10}
\end{equation}
respectively, then,
\begin{equation}
\wti A \colon \cV \to \cV^* \, \text{ is bounded, self-adjoint, and boundedly
invertible.}    \lb{B.11}
\end{equation}
Moreover, denoting by $A$ the part of $\wti A$ in $\cH$ defined by
\begin{align}
\dom(A) = \big\{u\in\cV \,|\, \wti A u \in \cH \big\} \subseteq \cH, \quad
A= \wti A\big|_{\dom(A)}\colon \dom(A) \to \cH,   \lb{B.12}
\end{align}
then $A$ is a (possibly unbounded) self-adjoint operator in $\cH$ satisfying
\begin{align}
& A \geq C_0 I_{\cH},   \lb{B.13}  \\
& \dom\big(A^{1/2}\big) = \cV.  \lb{B.14}
\end{align}
In particular,
\begin{equation}
A^{-1} \in\cB(\cH).   \lb{B.15}
\end{equation}
The facts \eqref{B.1}--\eqref{B.15} are a consequence of the Lax--Milgram
theorem and the 2nd representation theorem for symmetric sesquilinear forms.
Details can be found, for instance, in \cite[Sects.\ VI.3, VII.1]{DL00},
\cite[Ch.\ IV]{EE89}, and \cite{Li62}.

Next, we extend the scope of this discussion and consider a {\it sectorial} form 
$\gb(\dott,\dott)\colon \cV\times\cV\to\bbC$ defined by the existence of $c_{\gb} \in \bbR$ and 
$\theta_{\gb} \in [0, \pi/2)$ such that 
\begin{equation}
\{\gb(u,u) \in \bbC \,|\, u \in \cV, \, \|u\|_{\cH} = 1\} \subseteq 
\{\zeta \in \bbC \,|\, |\arg(\zeta - c_{\gb})|\leq \theta_{\gb}\}.     \lb{B.16} 
\end{equation}
Denoting 
\begin{equation}
\gb_{\gR}(u,v) = \big[\gb(u,v) + \ol{\gb(v,u)}\big]/2, \quad u, v \in \cV,     \lb{B.17}
\end{equation} 
this yields 
\begin{equation}
\gb_{\gR}(u,u) \geq c_{\gb} \|u\|_{\cH}^2, \quad u\in\cV.  \lb{B.19}
\end{equation}
Introducing the scalar product
$(\dott,\dott)_{\cV_{\gb}^{}} \colon \cV\times\cV\to\bbC$
(and the associated norm $\|\cdot\|_{\cV_{\gb}^{}} $) by
\begin{equation}
(u,v)_{\cV_{\gb}^{}}  = \gb_{\gR}(u,v) + (1- c_{\gb})(u,v)_{\cH}, \quad u,v \in \cV,  \lb{B.20}
\end{equation}
turns $\cV$ into a pre-Hilbert space $(\cV; (\dott,\dott)_{\cV_{\gb}^{}} )$,
which we denote by
$\cV_{\gb}^{}$. The form $\gb$ is called {\it closed} in $\cH$ if $\cV_{\gb}^{}$ is actually
complete, and hence a Hilbert space. The form $\gb$ is called {\it closable}
in $\cH$ if it has a closed extension in $\cH$. The estimates, 
\begin{equation}
|\gb_{\gR}(u,v) + (1- c_{\gb})(u,v)_{\cH}| \le \|u\|_{\cV_{\gb}^{}}  \|v\|_{\cV_{\gb}^{}} ,
\quad u,v\in \cV,
\lb{B.21}
\end{equation}
and
\begin{equation}
\big|\gb_{\gR}(u,u) + (1 - c_{\gb})\|u\|_{\cH}^2\big| = \|u\|_{\cV_{\gb}^{}} ^2, \quad u \in \cV,
\lb{B.22}
\end{equation}
show that if $\gb$ is closed in $\cH$, then the form $\gb_{\gR}(\dott,\dott)+(1 - c_{\gb})(\dott,\dott)_{\cH}$ 
is a symmetric, $\cV$-bounded, and $\cV$-coercive sesquilinear form and hence the analog of 
\eqref{B.7}--\eqref{B.15} applies. Similarly, one has the bound 
\begin{equation}
|\gb(u,v) + (1- c_{\gb})(u,v)_{\cH}| \le [1 + \tan(\theta_{\gb})] \|u\|_{\cV_{\gb}^{}}  \|v\|_{\cV_{\gb}^{}} ,
\quad u,v\in \cV.      \lb{B.22a}
\end{equation}
Hence, if $\gb$ is closed, the 1st representation theorem for forms (cf.\ \cite[Theorem\ IV.2.4]{EE89}, 
\cite[Theorem\ VI.2.1]{Ka80}) yields the existence of a linear map
\begin{equation}
\wti B_{c_{\gb}} \colon \begin{cases} \cV_{\gb}^{} \to \cV_{\gb}^*, \\ 
\, v \mapsto \wti B_{c_{\gb}} v = \gb(\dott,v) +(1 - c_{\gb})(\dott,v)_{\cH},
\end{cases}
\lb{B.23}
\end{equation}
with
\begin{equation}
\wti B_{c_{\gb}} \in\cB(\cV_{\gb},\cV_{\gb}^*) \, \text{ and } \,
{}_{\cV_{\gb}^{}} \big\langle u, \wti B_{c_{\gb}} v \big\rangle_{\cV_{\gb}^*}
= \gb(u,v)+(1 -c_{\gb})(u,v)_{\cH}, \quad  u, v \in \cV.    \lb{B.24}
\end{equation}
Introducing the linear map
\begin{equation}
\wti B = \wti B_{c_{\gb}} + (c_{\gb} - 1)\wti I \colon \cV_{\gb}\to\cV_{\gb}^*,
\lb{B.24a}
\end{equation}
where $\wti I\colon \cV_{\gb}\hookrightarrow\cV_{\gb}^*$ denotes
the continuous inclusion (embedding) map of $\cV_{\gb}^{}$ into $\cV_{\gb}^*$, one
obtains an m-sectorial operator $B$ in $\cH$ by restricting $\wti B$ to $\cH$,
\begin{align}
\dom(B) = \big\{u\in\cV \,\big|\, \wti B u \in \cH \big\} \subseteq \cH, \quad
B= \wti B\big|_{\dom(B)}\colon \dom(B) \to \cH,   \lb{B.25}
\end{align}
satisfying the following properties:
\begin{align}
& \gb(u,v) = {}_{\cV_{\gb}^{}} \big\langle u, \wti B v \big\rangle_{\cV_{\gb}^*},
\quad u, v \in \cV, \lb{B.28a} \\
& \gb(u,v) = (u, Bv)_{\cH}, \quad  u\in \cV, \; v \in\dom(B),  \lb{B.29} \\
& \dom(B) = \{v\in\cV\,|\, \text{there exists an $f_v\in\cH$ such that}  \no \\
& \hspace*{3.05cm} \gb(w,v)=(w,f_v)_{\cH} \text{ for all $w\in\cV$}\},
\lb{B.30} \\
& Bu = f_u, \quad u\in\dom(B),  \no \\
& \dom(B) \, \text{ is dense in $\cH$ and in $\cV_{\gb}^{}$}.  \lb{B.31}
\end{align}
Properties \eqref{B.30} and \eqref{B.31} uniquely determine $B$; the operator $B$ is called 
{\it the operator associated with the form $\gb$}; conversely, the form $\gb$ is called {\it the form 
associated with $B$}.

Finally, we also recall the following perturbation theoretic fact:
Suppose that $\ga$ is a sectorial and closed form with domain $\cV \subseteq \cH$, and let $\gb$ 
be a sesquilinear form bounded with respect to $\ga$ with bound less than one, that is,
$\dom(\gb)\supseteq \cV$, and that there exist $0\le \alpha < 1$ and
$\beta\ge 0$ such that
\begin{equation}
|\gb(u,u)| \le \alpha |\ga(u,u)| + \beta \|u\|_{\cH}^2, \quad u\in \cV.   \lb{B.45}
\end{equation}
Then
\begin{equation}
(\ga + \gb)\colon \begin{cases} \cV\times\cV \to \bbC, \\
\hspace*{.12cm} (u,v) \mapsto (\ga + \gb)(u,v) = \ga(u,v) + \gb(u,v) \end{cases}
\lb{B.46}
\end{equation}
defines a sectorial and closed form (cf., e.g., \cite[Sect.\,VI.1.6]{Ka80}). In the special 
case where $\alpha$ can be chosen arbitrarily small, the form $\gb$ is called {\it infinitesimally
form bounded with respect to $\ga$}. In the case where $T_{\ga}$ is the m-sectorial operator 
uniquely associated with the form $\ga$, and also $\gb$ is sectorial with uniquely associated 
m-sectorial operator $T_{\gb}$, the m-sectorial operator uniquely associated with the form sum 
$\ga + \gb$ in \eqref{B.46} will be denoted by $T_{\ga + \gb}$ as well as by 
$T_{\ga} +_{\gq} T_{\gb}$, that is, we agree to use the notation 
$T_{\ga + \gb} = T_{\ga} +_{\gq} T_{\gb}$ in this situation.

Given these preliminaries, we now turn to the connection between Kato's 
resolvent equation \eqref{2.4} defining $T$ and the form sum of $T_0$ and an extension of 
$B^*A$ under somewhat stronger hypotheses on $T_0$, $A$, and $B$ than those in 
Hypothesis \ref{h2.1}. 

We start with the following basic assumptions:

\begin{hypothesis} \lb{hA.2}
$(i)$ Suppose $T_0$ is m-sectorial and 
\begin{equation}\lb{A.7}
\dom\big(T_0^{1/2}\big)=\dom\big((T_0^*)^{1/2}\big).
\end{equation}
$(ii)$ Assume that $A:\dom(A)\rightarrow \cK$, $\dom(A)\subseteq \cH$, is a closed, linear operator 
from $\cH$ to $\cK$, and $B:\dom(B)\rightarrow \cK$, $\dom(B)\subseteq \cH$, is a closed, linear 
operator from $\cH$ to $\cK$ such that 
\begin{equation}\lb{A.8}
\dom(A)\supseteq \dom\big(T_0^{1/2}\big), \quad \dom(B)\supseteq \dom\big(T_0^{1/2}\big),
\end{equation}
and that for some $0 \leq a < 1$, $b \geq 0$, 
\begin{align}
\begin{split} 
& \|A f\|_{\cK}^2 \leq a \Re[\gq^{}_{T_0}(f,f)] + b \|f\|_{\cH}^2, \quad f \in \dom\big(T_0^{1/2}\big),    \\
& \|B f\|_{\cK}^2 \leq a \Re[\gq^{}_{T_0}(f,f)] + b \|f\|_{\cH}^2, \quad f \in \dom\big(T_0^{1/2}\big).  
\lb{A.9}
\end{split} 
\end{align}
$(iii)$ Suppose that 
\begin{equation}
\lim_{E \uparrow \infty} \|K(-E)\|_{\cB(\cK)} = 0,    \lb{A.44} 
\end{equation} 
where
\begin{align}
\begin{split} 
K(z) &= - \overline{A(T_0-zI_{\cH})^{-1}B^*}    \\
&= - \big[A (T_0-zI_{\cH})^{-1/2}\big] 
\Big[\ol{\big[B (T_0^* - \ol{z} I_{\cH})^{-1/2}\big]^*}\Big] \in \cB(\cK),  \quad z \in \rho(T_0).  \lb{A.45} 
\end{split} 
\end{align}
\end{hypothesis}

In the following we denote by $\gq^{}_{T_0}$ the sectorial form associated with $T_0$ and note 
that according to Remark \ref{r2.14}, assumption \eqref{A.7} yields that 
\begin{equation}
\dom\big(T_0^{1/2}\big) = \dom(\gq^{}_{T_0}) = \dom\big((T_0^*)^{1/2}\big).   \lb{A.46}
\end{equation} 
In addition, introducing the form 
\begin{equation}
\gq^{}_W (f,g) = (Bf, Ag)_{\cK}, \quad f, g \in \dom\big(T_0^{1/2}\big)     \lb{A.47}
\end{equation}
(formally corresponding to the operator $B^* A$), it is clear from assumption \eqref{A.9} that 
$\gq^{}_W$ is relatively bounded with respect to $\gq^{}_{T_0}$ with bound strictly less than one 
(cf.\ \eqref{B.45}). Thus, we may introduce the sectorial form 
\begin{align}
\begin{split} 
& \gq^{}_{T_0 +_{\gq} W}(f,g) = \gq^{}_{T_0}(f,g) + \gq^{}_W (f,g), \\
& f,g \in \dom(\gq^{}_{T_0 +_{\gq} W}) = \dom(\gq^{}_{T_0}) = \dom\big(T_0^{1/2}\big),   \lb{A.48}
\end{split} 
\end{align}
and hence denote the m-sectorial operator uniquely associated with $\gq^{}_{T_0 +_{\gq} W}$ 
by $T_0 +_{\gq} W$ in the following. 

The principal result of this appendix, establishing equality between $T$ defined according to 
Kato's method \eqref{2.4}, \eqref{2.5} on one hand, and the form sum $T_0 +_{\gq} W$ on 
the other, a result of interest in its own right, is proved next:
 
\begin{theorem} \lb{tA.3}
Assume Hypothesis \ref{hA.2}. Then $T$ defined as in 
\eqref{2.4} and \eqref{2.5} coincides with the form sum of $T_0 +_{\gq} W$ in $\cH$, 
\begin{equation}
T = T_0 +_{\gq} W.     \lb{A.49} 
\end{equation}
\end{theorem}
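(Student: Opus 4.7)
The plan is to show that the two operators $T$ (defined by Kato's resolvent formula) and $S:=T_0 +_{\gq} W$ (defined as the form sum via \eqref{A.48}) share the same resolvent on a common open set, whence $T = S$. Under Hypothesis \ref{hA.2}, the form $\gq^{}_{T_0 +_{\gq} W}$ is sectorial and closed since $\gq^{}_W$ is form-bounded with respect to $\gq^{}_{T_0}$ with bound strictly less than $1$ by \eqref{A.9} and \eqref{A.47} (via Cauchy--Schwarz), so $S$ is a well-defined m-sectorial operator with $\dom(\gq^{}_S)=\dom(T_0^{1/2})$. The factorization displayed in \eqref{A.45} together with \eqref{A.9} shows that $K(z)\in\cB(\cK)$ for $z\in\rho(T_0)$, and by \eqref{A.44} one can fix $z_0=-E_0$ with $E_0$ large enough that $\|K(z_0)\|_{\cB(\cK)}<1$; this verifies Hypothesis \ref{h2.1} and places $T$ at our disposal.

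Fix such a $z$ and $g\in\cH$. Set $\phi:=(T_0-zI_{\cH})^{-1}g\in\dom(T_0)$, $\psi:=[I_{\cK}-K(z)]^{-1}A\phi\in\cK$, and let
\begin{equation}\label{prop.u}
u := \phi - \overline{(T_0-zI_{\cH})^{-1}B^{*}}\,\psi = (T-zI_{\cH})^{-1}g,
\end{equation}
by \eqref{2.5}. The goal is to prove that $u=(S-zI_{\cH})^{-1}g$, equivalently that $u\in\dom(T_0^{1/2})$ and
\begin{equation}\label{prop.form}
\gq^{}_{T_0}(h,u) + (Bh,Au)_{\cK} - z(h,u)_{\cH} = (h,g)_{\cH},\quad h\in\dom(T_0^{1/2}).
\end{equation}
The containment $u\in\dom(T_0^{1/2})$ uses the factorization
\begin{equation}\label{prop.factor}
\overline{(T_0-zI_{\cH})^{-1}B^{*}} = (T_0-zI_{\cH})^{-1/2}\,\overline{(T_0-zI_{\cH})^{-1/2}B^{*}},
\end{equation}
where the second factor is bounded from $\cK$ to $\cH$ as the adjoint of $B(T_0^{*}-\bar{z}I_{\cH})^{-1/2}\in\cB(\cH,\cK)$ (by \eqref{A.7}--\eqref{A.9}); hence the range of \eqref{prop.factor} lies in $\ran((T_0-zI_{\cH})^{-1/2})=\dom(T_0^{1/2})$, and $Au$, $Bu$ are well-defined via \eqref{A.8}.

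The key algebraic identity is $Au=\psi$. To get it, I would approximate an arbitrary $\xi\in\cK$ by $\xi_n\in\dom(B^{*})$, so that $(T_0-zI_{\cH})^{-1}B^{*}\xi_n\in\dom(T_0)\subseteq\dom(A)$ and $A(T_0-zI_{\cH})^{-1}B^{*}\xi_n=-K(z)\xi_n$ by definition of $K(z)$. The boundedness of $A(T_0-zI_{\cH})^{-1/2}$ (from \eqref{A.9}) combined with \eqref{prop.factor} lets one pass to the limit and deduce $A\overline{(T_0-zI_{\cH})^{-1}B^{*}}\xi=-K(z)\xi$ on all of $\cK$. Applied to $\xi=\psi$, this yields $Au = A\phi + K(z)\psi = \psi$, the last equality because $\psi-K(z)\psi=A\phi$. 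Independently, for $w:=\overline{(T_0-zI_{\cH})^{-1}B^{*}}\psi$ and $\psi_n\to\psi$ with $\psi_n\in\dom(B^{*})$, the approximants $w_n:=(T_0-zI_{\cH})^{-1}B^{*}\psi_n\in\dom(T_0)$ satisfy $(T_0-zI_{\cH})w_n=B^{*}\psi_n$, hence
\begin{equation}\label{prop.wn}
\gq^{}_{T_0}(h,w_n) - z(h,w_n)_{\cH} = (h,B^{*}\psi_n)_{\cH} = (Bh,\psi_n)_{\cK},
\end{equation}
and \eqref{prop.factor} together with the boundedness of $B(T_0^{*}-\bar z I_{\cH})^{-1/2}$ gives convergence of $w_n$ to $w$ in the $\dom(T_0^{1/2})$-norm, yielding $\gq^{}_{T_0}(h,w) - z(h,w)_{\cH} = (Bh,\psi)_{\cK}$ in the limit.

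Combining these with $\gq^{}_{T_0}(h,\phi) - z(h,\phi)_{\cH} = (h,g)_{\cH}$ (which holds since $\phi\in\dom(T_0)$), splitting $u=\phi-w$, and using $Au=\psi$, one obtains \eqref{prop.form}. Hence $(T-zI_{\cH})^{-1}=(S-zI_{\cH})^{-1}$ on $\cH$ for such $z$, proving $T=S=T_0+_{\gq}W$. The main technical obstacle is to execute the two closure/approximation steps—identifying $A\overline{(T_0-zI_{\cH})^{-1}B^{*}}$ with $-K(z)$ on all of $\cK$, and passing \eqref{prop.wn} to the limit—which both require convergence in the $\dom(T_0^{1/2})$-graph norm (not merely in $\cH$); the factorization \eqref{prop.factor} together with the relative form bounds \eqref{A.9} is exactly what makes this work.
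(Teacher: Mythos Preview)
Your argument is correct and follows a genuinely different route from the paper. The paper introduces the Gelfand triple $\cV_{T_0}^{}\hookrightarrow\cH\hookrightarrow\cV_{T_0}^*$, extends $T_0$, $W$, and $B^*$ to operators $\wti{T_0},\wti W,\wti{B^*}\in\cB(\cV_{T_0}^{},\cV_{T_0}^*)$ (resp.\ $\cB(\cK,\cV_{T_0}^*)$), rewrites Kato's $R(-E)$ as $\hatt R(-E)$ in that setting, expands $[I_{\cK}-K(-E)]^{-1}$ as a Neumann series, and then computes $(T_0+EI_{\cH})^{-1}(\wti{T_0}+\wti{B^*}A+E\wti I\,)\hatt R(-E)$ via telescoping to identify $\hatt R(-E)$ with $(\wti{T_0}+\wti{B^*}A+E\wti I\,)^{-1}$, the resolvent of the form sum.

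By contrast, you bypass the extension machinery and the series entirely: you directly verify the variational characterization \eqref{prop.form} of $(S-zI_{\cH})^{-1}g$ for $u=(T-zI_{\cH})^{-1}g$, reducing matters to the two approximation steps you identify (both driven by the factorization \eqref{prop.factor} and the relative form bounds \eqref{A.9}). Your approach is more elementary and stays entirely within $\cH$ and $\cK$; the paper's approach is more structural and makes the algebraic reason for the identity (telescoping cancellation) explicit. One small point worth stating explicitly: you should also choose $E_0$ large enough that $-E_0\in\rho(S)$, which is automatic since $S$ is m-sectorial; otherwise \eqref{prop.form} only gives $(S-zI_{\cH})u=g$ without immediately yielding $u=(S-zI_{\cH})^{-1}g$.
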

\begin{proof}
The strategy of proof (closely following the one in \cite[Theorem\ II.34]{Si71} in connection with Rollnik 
potentials on $\bbR^3$) is to show that the resolvents of $T$ and $T_0 +_{\gq} W$ both coincide 
with the right-hand side of \eqref{2.4}.  

For this purpose we introduce $\cV_{T_0}^{}$ and $\cV_{T_0}^*$ in analogy to $\cV_{\gb}^{}$ 
and $\cV_{\gb}^*$ with the form $\gb$ replaced by $\gq^{}_{T_0}$ in \eqref{B.16}--\eqref{B.22a}, 
and similarly, the extension $\wti{T_0} : \cV_{T_0}^{} \to \cV_{T_0}^*$ of $T_0 : \dom(T_0) \to \cH$ 
as in \eqref{B.24a}. Moreover, since for some fixed $C > 0$, 
\begin{equation}
|\gq^{}_W (f,g)| \leq \|Bf\|_{\cK} \|Ag\|_{\cK} 
\leq C \|f\|_{\cV_{T_0}^{}} \|g\|_{\cV_{T_0}^{}}, \quad f, g \in \dom\big(T_0^{1/2}\big),    \lb{A.50}
\end{equation}
there is a natural map 
\begin{align}
\begin{split} 
& \wti W \in \cB(\cV_{T_0}^{}, \cV_{T_0}^*), \\ 
& \big(\wti W g\big) (f) = {}_{\cV_{T_0}^{}} \big\langle f, \wti W g \big\rangle_{\cV_{T_0}^*}
= (Bf, Ag)_{\cK}, \quad f, g \in \cV_{T_0}^{},    \lb{A.51}
\end{split} 
\end{align}  
extending $B^* A$, such that 
\begin{align}
& \wti{T_0 +_{\gq} W} = \wti{T_0} + \wti W \in \cB(\cV_{T_0}^{}, \cV_{T_0}^*),     \no \\ 
& \big(\big(\wti{T_0 +_{\gq} W}\big) g\big) (f) = \big(\wti{T_0} g\big) (f) + \big(\wti W g\big) (f) 
= {}_{\cV_{T_0}^{}} \big\langle f, \wti{T_0} g \big\rangle_{\cV_{T_0}^*} +
{}_{\cV_{T_0}^{}} \big\langle f, \wti W g \big\rangle_{\cV_{T_0}^*}     \no  \\ 
& \quad = \gq^{}_{T_0}(f,g) + (Bf, Ag)_{\cK}, \quad f, g \in \cV_{T_0}^{}.    \lb{A.52}
\end{align} 
In addition, we note that $B^* : \dom(B^*) \to \cH$, $\dom(B^*) \subseteq \cK$, extends to 
\begin{align}
& \wti{B^*} \in \cB(\cK, \cV_{T_0}^*),    \no \\ 
& \big(\wti{B^*} k\big) (f) = {}_{\cV_{T_0}^{}} \big\langle f, \wti{B^*} k \big\rangle_{\cV_{T_0}^*} 
= \big((T_0 + c I_{\cH})^{1/2} f, [B (T_0 +c I_{\cH})^{-1/2}]^* k\big)_{\cH},    \lb{A.53} \\
& \hspace*{8.55cm}  f \in \cV_{T_0}^{}, \, k \in \cK.    \no 
\end{align}  
This implies 
\begin{equation}
\wti W = \wti{B^*} A,     \lb{A.53a} 
\end{equation}
and also 
\begin{align}
K(z) &= - \overline{A(T_0 - z I_{\cH})^{-1}B^*} = - A \big(\wti{T_0} - z \wti I \, \big)^{-1} \wti{B^*} 
\in \cB(\cK), \quad z \in \rho(T_0),     \lb{A.54}
\end{align} 
where $A : \dom(A) \to \cK$, $\dom(A) \subseteq \cH$ and 
\begin{equation}
\big(\wti{T_0} - z \wti I \, \big)^{-1} \in \cB(\cV_{T_0}^*, \cV_{T_0}^{}), \quad z \in \rho(T_0).    \lb{A.55}
\end{equation} 
Next, we introduce the operator
\begin{align}
\begin{split} 
& \hatt R(z) = (T_0 - z I_{\cH})^{-1} - \big(\wti{T_0} - z \wti I \, \big)^{-1} \wti{B^*} [I_{\cK} - K(z)]^{-1} 
A (T_0 - z I_{\cH})^{-1},     \\  
& \hspace*{6.05cm}  z\in \{\zeta \in \rho(T_0) \,|\,1 \in \rho(K(\zeta))\},    \lb{A.56} 
\end{split} 
\end{align}
and hence conclude that 
\begin{align} 
\hatt R(-E) &= (T_0 + E I_{\cH})^{-1}     \lb{A.58} \\ 
& \quad - \big(\wti{T_0} + E \wti I \, \big)^{-1} \wti{B^*} \sum_{k=0}^\infty 
(-1)^k \Big[A \big(\wti{T_0} + E \wti I \, \big)^{-1} \wti{B^*}\Big]^k A (T_0 + E I_{\cH})^{-1},   \no 
\end{align}
for $E >0$ sufficiently large such that 
\begin{equation}
\|K(-E)\|_{\cB(\cK)} < 1.    \lb{A.59}
\end{equation}
We note that the infinite sum in \eqref{A.58} is convergent in $\cB(\cK)$ and hence the mapping 
properties in \eqref{A.51}--\eqref{A.55} yield $\hatt R(-E) \in \cB(\cH)$ for $E >0$ sufficiently large. One 
then computes, taking into account telescoping of series, that
\begin{align}
& (T_0 + E I_{\cH})^{-1} \big(\wti{T_0 +_{\gq} W} + E \wti I \,\big) \bigg[(T_0 + E I_{\cH})^{-1}   \no \\
& \qquad - \big(\wti{T_0} + E \wti I \,\big)^{-1} \wti{B^*} \sum_{k=0}^N (-1)^k 
\Big[A \big(\wti{T_0} + E \wti I \, \big)^{-1} \wti{B^*}\Big]^k A (T_0 + E I_{\cH})^{-1}\bigg]    \no \\
& \quad =  (T_0 + E I_{\cH})^{-1} \big(\wti{T_0} + \wti{B^*} A + E \wti I \,\big) \bigg[(T_0 + E I_{\cH})^{-1}   \no \\
& \qquad - \big(\wti{T_0} + E \wti I \,\big)^{-1} \wti{B^*} \sum_{k=0}^N (-1)^k 
\Big[A \big(\wti{T_0} + E \wti I \, \big)^{-1} \wti{B^*}\Big]^k A (T_0 + E I_{\cH})^{-1}\bigg]  \no \\
& \quad = (T_0 + E I_{\cH})^{-1}    \no \\
& \qquad + (-1)^{N+1} \big(\wti{T_0} + E \wti I \,\big)^{-1} \wti{B^*} 
\Big[A \big(\wti{T_0} + E \wti I \, \big)^{-1} \wti{B^*}\Big]^{N+1} A (T_0 + E I_{\cH})^{-1}  \no \\
& \quad \underset{N \uparrow \infty}{\longrightarrow} (T_0 + E I_{\cH})^{-1} \, \text{ in $\cB(\cH)$-norm.}
\end{align} 
Thus,
\begin{align}
& (T_0 + E I_{\cH})^{-1} \big(\wti{T_0} + \wti{B^*} A + E \wti I \,\big) R(-E) = (T_0 + E I_{\cH})^{-1}  \no \\
& \quad = (T_0 + E I_{\cH})^{-1} \big(\wti{T_0} + \wti{B^*} A + E \wti I \,\big) 
\big(\wti{T_0} + \wti{B^*} A + E \wti I \,\big)^{-1},
\end{align}
implying 
\begin{equation}
\hatt R(-E) = \big(\wti{T_0} + \wti{B^*} A + E \wti I \,\big)^{-1} \, \text{ for $E > 0$ sufficiently large,} 
\end{equation}
since $\ker\big(\wti{T_0} + E \wti I \, \big) = \ker\big(\wti{T_0} + \wti{B^*} A + E \wti I \, \big) = \{0\}$ for 
for $E > 0$ sufficiently large. Analytic continuation with respect to $E$ then proves
\begin{equation}
\hatt R(z) = \big(\wti{T_0} + \wti{B^*} A - z \wti I \,\big)^{-1}, \quad 
z\in \{\zeta \in \rho(T_0) \,|\,1 \in \rho(K(\zeta))\}.
\end{equation}
A comparison of the right-hand sides of \eqref{2.4} and \eqref{A.56} then yields $R(z) = \hatt R(z)$ 
and hence \eqref{A.49}. 
\end{proof}

\noindent
{\bf Acknowledgments.} We are indebted to Alan McIntosh for helpful discussions and 
grateful to the anonymous referee for very helpful comments and a critical reading of our 
manuscript. 


\end{document}